\def\restriction#1#2{\mathchoice
              {\setbox1\hbox{${\displaystyle #1}_{\scriptstyle #2}$}
              \restrictionaux{#1}{#2}}
              {\setbox1\hbox{${\textstyle #1}_{\scriptstyle #2}$}
              \restrictionaux{#1}{#2}}
              {\setbox1\hbox{${\scriptstyle #1}_{\scriptscriptstyle #2}$}
              \restrictionaux{#1}{#2}}
              {\setbox1\hbox{${\scriptscriptstyle #1}_{\scriptscriptstyle #2}$}
              \restrictionaux{#1}{#2}}}
\def\restrictionaux#1#2{{#1\,\smash{\vrule height .8\ht1 depth .85\dp1}}_{\,#2}} 
\newcommand{\zeroes}{\ell} % number of simple zeroes quadratic differential principal strata = 4g-4+p
\newlength{\halfbls}\setlength{\halfbls}{.5\baselineskip}
\newcommand{\cC}{\mathcal{C}}
\newcommand{\cH}{\mathcal{H}}
\newcommand{\cM}{\mathcal{M}}
\newcommand{\cQ}{\mathcal{Q}}
\newcommand{\cZ}{\mathcal{Z}}
\newcommand{\cST}{\mathcal{ST}}
\newcommand{\cG}{\mathcal{G}}
\newcommand{\N}{{\mathbb N}}
\newcommand{\Z}{{\mathbb Z}}
\newcommand{\Proj}{{\mathbb P}}
\newcommand{\Q}{{\mathbb Q}}
\newcommand{\R}{{\mathbb R}}
\newcommand{\C}{{\mathbb C}}
\renewcommand{\L}{{\mathbb L}}
\newcommand{\Area}{\operatorname{Area}}
\newcommand{\Vol}{\operatorname{Vol}}
\newcommand{\cVol}{\operatorname{\overline{Vol}}}
\newcommand{\Aut}{\operatorname{Aut}}
\newcommand{\card}{\operatorname{card}}
\newcommand{\Cov}{\operatorname{Cov}}
\newcommand{\we}{\operatorname{wt}}
\newcommand{\Covol}{\operatorname{Covol}}
\renewcommand{\epsilon}{\varepsilon}
\renewcommand{\u}{\underline}
\newcommand{\valu}{{\underline{\kappa}}}% set of valencies
\newcommand{\dgu}{{\underline{k}}}% set of degrees of zeros of diff
\newcommand{\val}{{\kappa}}
\newcommand{\dg}{{k}}
\newtheorem{Theorem}{Theorem}[section]
\newtheorem*{Theorem*}{Theorem}
\newtheorem{defi}[Theorem]{Definition}%[section]
\newtheorem{Corollary}[Theorem]{Corollary}%[section]
\newtheorem{prop}[Theorem]{Proposition}%[section]
\newtheorem{lemma}[Theorem]{Lemma}%[section]
\newtheorem{conj}[Theorem]{Conjecture}%[section]
\theoremstyle{definition}
\newtheorem{Convention}[Theorem]{Convention}%[section]
\theoremstyle{remark}
\newtheorem{Remark}{Remark}[section]
\newcommand{\RGc}{\mathcal{RG}}
\newcommand{\vp}{\operatorname{vp}}
\newcommand{\ov}{\overline}
\newcommand{\bl}{\bullet}
\newcommand{\wh}{\circ}
\title[]
{Volumes of odd strata of quadratic differentials}
\author[E.~Duryev]{Eduard Duryev}
\address{
}
\email{edwardduriev@gmail.com}
\author[\'E.~Goujard]{\'Elise Goujard}
\address{
Institut de Math\'ematiques de Bordeaux,
Universit\'e de Bordeaux,
351 cours de la Lib\'eration, 33405 Talence, FRANCE
and Institut Universitaire de France
}
\thanks{Research of the second author is partially supported by
the grants ANR-19-CE40-0003, ANR-23-CE40-0020-02, ANR-23-CE40-0001, ANR-23-CE48-0018, as well as IUF}
\email{elise.goujard@gmail.com}
\author[I.~Yakovlev]{Ivan Yakovlev}
\address{
Max Planck Institute for Mathematics,
Vivatsgasse 7, 53111 Bonn, Germany
}
\email{yakovlev@mpim-bonn.mpg.de}
\date{February 2025}
\begin{document}

\begin{abstract}
    We express the Masur--Veech volumes of ``completed" strata of quadratic differentials with only odd singularities as a sum over stable graphs. This formula generalizes the formula of Delecroix-Goujard-Zograf-Zorich for principal strata. The coefficients of the formula are in our case intersection numbers of psi classes with the Witten-Kontsevich combinatorial classes; they naturally appear in the count of metric ribbon graphs with prescribed odd valencies. The ``completed" strata that we consider are unions of odd strata and some adjacent strata, that contribute to the Masur--Veech volume with explicit weights. We present several conjectures on the large genus asymptotics of these Masur--Veech volumes that could be tackled with this formula.
\end{abstract}

\maketitle
\tableofcontents
\section{Introduction}

Let $g, n$ be non-negative integers with $2g-2+n > 0$. Consider the moduli space $\cM_{g,n}$ of complex curves of genus $g$ with $n$ distinct labeled marked points. The total space $\cQ_{g,n}$ of the cotangent bundle over $\cM_{g,n}$ can be identified with the moduli space of pairs $(X, q)$, where $X \in\cM_{g,n}$ is a smooth complex curve with $n$ labeled marked points and $q$ is a meromorphic quadratic differential on $X$ with at most simple poles at the marked points and no other poles. $\cQ_{g,n}$ is stratified by the subsets of differentials with fixed degrees of zeros, namely 
$\cQ_{g,n}=\bigsqcup_{\dgu=[\tilde\dgu, -1^n]}\cQ(\dgu)$ 
where $\tilde\dgu$ is a partition of $4g-4+n$ and $\dgu=[\tilde\dgu, -1^n]$ is the set of singularity degrees: zeroes of degrees given by $\tilde \dgu$ and $n$ poles of degree $-1$ (see Convention~\ref{convpart} for more details about the notations). 
On each stratum there exist natural coordinates (see section~\ref{sec:counting}) that endow the stratum with a structure of a complex orbifold of dimension $2g-2+\ell(\dgu)=2g-2+n+\ell(\tilde\dgu)$, and such that the Lebesgue measure in these coordinates gives rise to a well defined measure on the stratum. 
A non-zero differential $q$ in $\cQ(\dgu)$ defines a flat metric $|q|$ on the complex curve $X$. This metric has a conical singularity of angle $(k_i+2)\pi$ at each singularity of $q$ of degree $k_i$.
The total area of $(X, q)$
\[\Area(X, q) =\int_X |q|\]
is positive and finite. For any real $a > 0$, consider the following subset in $\cQ(\dgu)$:
\[ \cQ(\dgu)^{\Area\leq a}:= \{(X, q) \in\cQ(\dgu) | \Area(X, q) \leq a\}. \]
The set $\cQ(\dgu)^{\Area\leq a}$ is a ball bundle over $\cM_{g,n}$, in particular, it is non-compact. However, by independent results of H. Masur~\cite{Masur} and W. Veech~\cite{Veech}, the total mass of $\cQ(\dgu)^{\Area\leq a}$ with respect to the chosen measure is finite. Up to some normalization choices (see section \ref{subsec:norm}), this mass is called the Masur--Veech volume of the stratum $\cQ(\dgu)$. The values of the Masur--Veech volumes are particularly relevant to study quantitative dynamics in rational polygonal billiards: they are related by renormalization to the dynamics of the $SL(2, \R)$-action on the moduli spaces $\cQ(\dgu)$ with respect to the Masur--Veech measure~\cite{Zo}.
In this paper, we will consider strata $\cQ(\dgu)$ of quadratic differentials with only odd singularities, that we call \emph{odd strata}, and at least three singularities (the last condition is only technical and is explained in Remark~\ref{rem:ge-3-sing}), and we provide a formula for their Masur--Veech volumes. This work extends previous results of~\cite{DGZZ-vol}, where the Masur--Veech volumes of principal strata (strata $\cQ(1^{4g-4+n}, -1^n)$ of full dimension $6g-6+2n=\dim(\cQ_{g,n})$) are expressed as polynomials in the intersection numbers $\int_{\overline{\cM}_{g',n'}}\psi_1^{d_1}\dots\psi_{n'}^{d_{n'}}$ with some explicit rational coefficients. This result was derived by counting lattice points of strata called square-tiled surfaces (see section~\ref{sec:counting}) via counting integer metrics on trivalent ribbon graphs using~\cite{Kon}. 
In this paper we prove a similar formula, that expresses the ``completed" Masur--Veech volumes of odd strata as polynomials in the intersection numbers $\int_{W_{m_*,n'}}\psi_1^{d_1}\dots \psi_n^{d_{n'}}$, where $W_{m_*,n'}$ are the Witten-Kontsevich combinatorial classes $W_{m^*,n}$ defined in \cite{Kon} and \cite{Witten} (see section~\ref{subsec:combi_classes}), with explicit rational coefficients. The ``completed" volumes are equal to the volumes of the strata plus a linear combination of volumes of adjacent strata, with explicit rational coefficients. The formula is still obtained by counting square-tiled surfaces in the strata, based on counting integer metrics on ribbon graphs with odd valencies. Unlike the case of trivalent ribbon graphs, the counting functions for integer metrics on ribbon graphs with odd valencies are only piecewise quasi-polynomial, and are only known (by results of \cite{Kon}) outside of some hyperplanes called walls. The novelty of our work is to determine these counting functions on the walls (section~\ref{sec:compl}), which is the key to our formula. The discontinuities of the counting functions on the walls come from the counts of metrics on degenerated ribbon graphs. These contributions explain the presence of additional terms (volumes of adjacent strata) in the volume formula. To illustrate all these ideas, we provide a detailed computation for the stratum $\cQ(3, -1^3)$ in section~\ref{subsec:ex}.

\subsection{Context} \label{subsec:context}

The computation of Masur--Veech volumes of strata of quadratic differentials dates back to the work of Eskin--Okounkov~\cite{EO_pillow}, where they prove that the counting function for lattice points in a stratum (in this case ramified covers of the ``pillowcase") is a quasimodular form of level 2 and bounded weight for a certain subgroup of $SL(2,\Z)$, using techniques of representation of the symmetric group and vertex operator algebras. This provides an algorithm to get volumes of strata of small dimension~\cite{Gou}. Independently, closed formula for the volumes of all strata in genus 0 were obtained using the dynamics of the $GL(2, \R)$-action on the strata in~\cite{AEZ_right}.
More recently, the volumes of odd strata were interpreted as Hodge integrals on appropriate compactifications of the projectivized strata in \cite{CMS}, and a similar conjecture for other strata was formulated. In parallel, formulas for the Masur--Veech volumes of principal strata in terms of intersection numbers on $\cM_{g,n}$ were obtained in~\cite{DGZZ-vol} using combinatorial techniques introduced first in~\cite{AEZ}, that we generalize in this paper. Both these last results led to important advances in the computation of volumes of principal strata and their asymptotics in the regime where $g\to\infty$ or $n\to\infty$ \cite{Agg, CMS, ABC, Kaz, YZZ}. The main motivation for the formula presented in this paper is to extend several of these results to all odd strata of quadratic differentials.

Actually all these techniques were first applied to the study of Masur--Veech volumes of strata $\cH(\dgu)$ of Abelian differentials (i.e. holomorphic 1-forms) with zeroes of prescribed degrees $\dgu$. Notice that these strata can be interpreted as the subsets of strata $\cQ(2\dgu)$ formed by quadratic differentials that are globally squares of some Abelian differentials. Concerning the Masur--Veech volumes of these strata, much more is known: expressions in terms of Hodge integrals, explicit recursions, large genus asymptotic expansions~\cite{Sau_min, CMSZ, Sau_large}.

Finally, note that the families of odd strata of quadratic differentials (that we focus on in this paper) play the same role as the minimal strata $\cH(2g-2)$ with respect to all other strata of Abelian differentials: they are minimal in the sense that their natural coordinates involve only absolute periods (integrals over absolute cycles in homology), and no relative periods (relative to the singularities), contrary to other strata, see section~\ref{subsec:norm} for more details. This minimality property suggests that these families can be used to initiate a recursion between the Masur--Veech volumes of all strata as it is conjectured in~\cite{CMS} and as it is done in~\cite{CMSZ} in the case of Abelian differentials.

\subsection{Illustrative example: stratum $\cQ(3, -1^3)$}\label{subsec:ex}

We review all the ideas of this paper informally on the example of the stratum $\cQ(3, -1, -1, -1)=\cQ(3, -1^3)$ of quadratic differentials on a torus with one zero of degree 3 and three labeled poles. At each step we refer to the corresponding sections of the paper for the formal and general proofs and definitions.

To evaluate the volume of $\cQ(3, -1^3)$, we count integer points in the stratum, that is, square-tiled surfaces, see section~\ref{sec:counting}. For this stratum, it is easy to list all possible types of square-tiled surfaces by considering all possible ways the square-tiled surfaces decompose into horizontal cylinders. The corresponding admissible patterns for each type are given in the first column of Table~\ref{tab:contrib} (zeros and poles are denoted by dots and crosses respectively). Each type of decomposition is encoded in a graph (called \emph{stable graph}, see Definition~\ref{def:stable:graph}) such that: the vertices correspond to connected components of the surface obtained by cutting along the waist curves of the cylinders, each vertex is decorated by the angles of conical singularities in the corresponding component divided by $\pi$ (i.e.\ degrees of zeros/poles increased by 2), edges correspond to cylinders, and an edge links two vertices if the corresponding cylinder links the corresponding components in the surface. Half-edges correspond to singularities of angle $\pi$, i.e.\ poles of the corresponding differential. See the second column of Table~\ref{tab:contrib}.

Then, for each type, we evaluate the number of possible choices for the integer parameters $b_i$, $h_i$  and $t_i$ (width, height and twist of the cylinder $i$), as well as the $\ell_j$ (length of the saddle connection $j$), given the following constraints : $\u b\cdot \u h=\sum_i b_i h_i\leq 2N$ (bound on the number of squares), $0\leq t_i < b_i$ (choices of integer twists up to full Dehn twists), and the $b_i$ given as two linear combinations of the $\ell_j$, each equality coming from a boundary of the cylinder $i$. 

In Table~\ref{tab:contrib} for the first type for instance we have $b=2(\ell_1+\ell_2)+\ell_3=2\ell_4+\ell_3$ and there are $\sim \frac{b^2}{8}$ choices of integers tuples $(\ell_1, \ell_2, \ell_3, \ell_4)$ satisfying this constraint as $b$ grows. The number of possible twists is $\sim b$ as $b$ grows, and considering that there are 3! possible ways to label the poles, we get that the number $\cST(\mbox{type }1, 2N)$ of square-tiled surfaces of this type with at most $2N$ squares is
\[\cST(\mbox{type }1, 2N)\sim 3!\sum_{\substack{bh\leq 2N\\b,h\in \N}}\frac{b^3}{8}\sim 3\zeta(4)N^4\mbox{ as }{N\to \infty}\]

In the normalization that we choose here (see section~\ref{subsec:norm}) the contribution of this type of square-tiled surfaces to the Masur--Veech volume of the stratum is \[2 d \cdot
\lim_{N\to+\infty}
\frac{\card(\cST(\mbox{type }1, 2N))}{N^{d}},\]
where $d=4$ is the complex dimension of the stratum, hence here we get $24\zeta(4)$.

\begin{table}
%\begin{tabular}{|c|c|c|c|}
\begin{tabular}{cccc}
%\hline
\toprule
Patterns & Type & Count & Contribution\\
%\hline
\midrule
\includegraphics[align=c, scale=0.6]{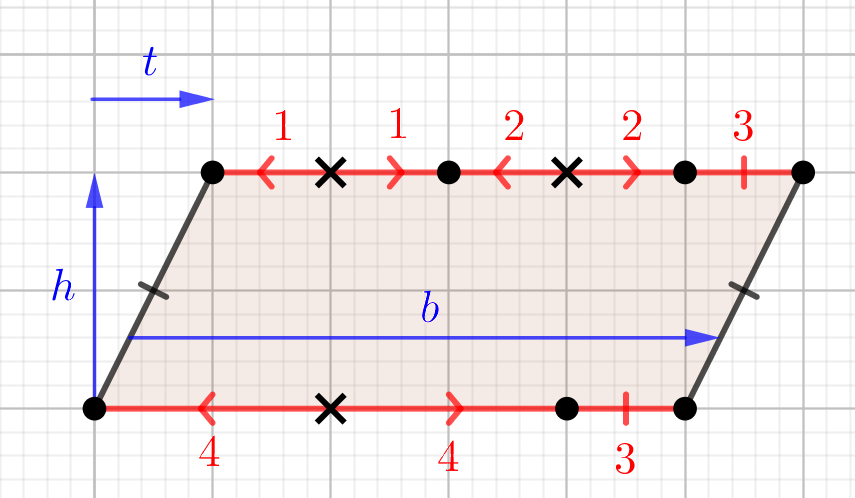} &  {\includegraphics[align=c, scale=0.3]{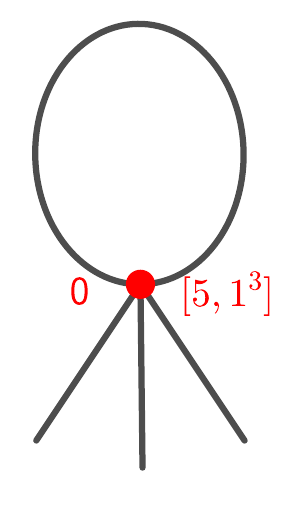} }&$3!\sum_{\substack{bh\leq 2N\\b,h\in \N}}\frac{b^3}{8}$ & $24\zeta(4)=\frac{4\pi^4}{15}$ \\
\midrule
%\hline
{\includegraphics[align=c, scale=0.6]{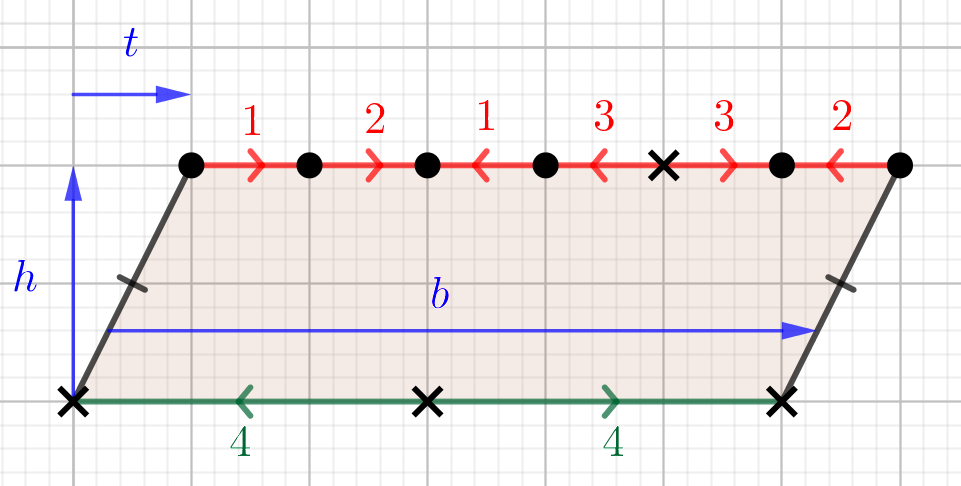}} & {\includegraphics[align=c, scale=0.3]{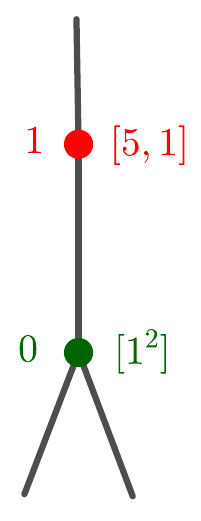}}&
$3\sum_{\substack{bh\leq 2N\\b\in 2\N, h\in\N}}\frac{b^3}{8}$ & $6\zeta(4)=\frac{\pi^4}{15}$\\
\midrule
%\hline
{\includegraphics[align=c, scale=0.6]{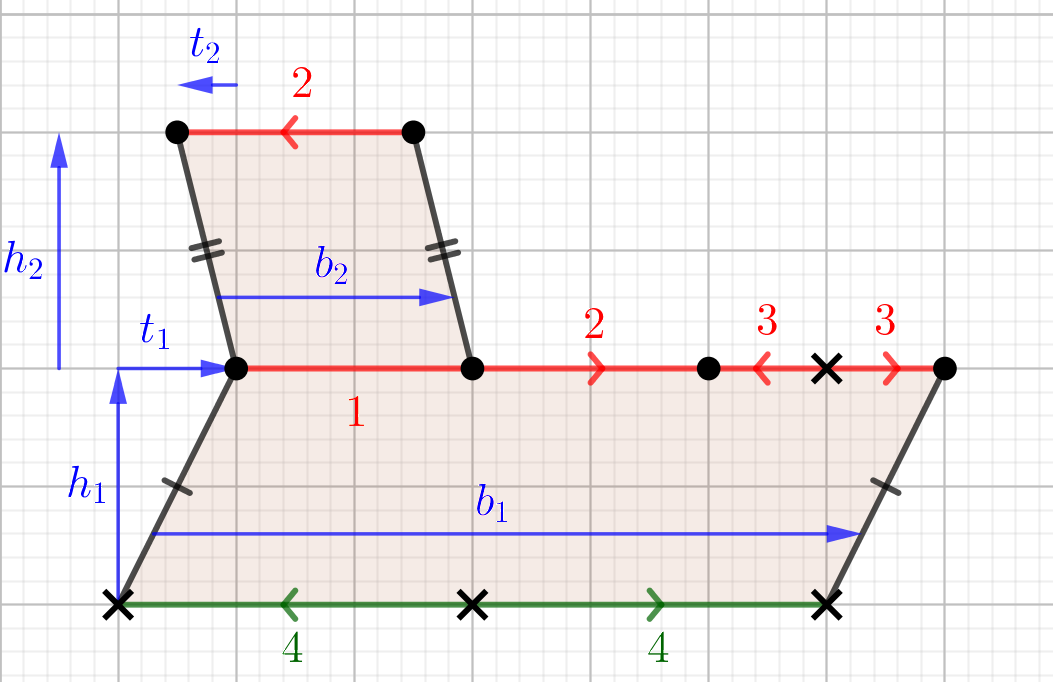}} && $3\sum_{\substack{\u b\cdot \u h\leq 2N\\b_1\in 2\N,\\ b_2, h_1, h_2\in\N\\ 2b_2<b_1}}b_1b_2$ &\\
{ \includegraphics[align=c, scale=0.6]{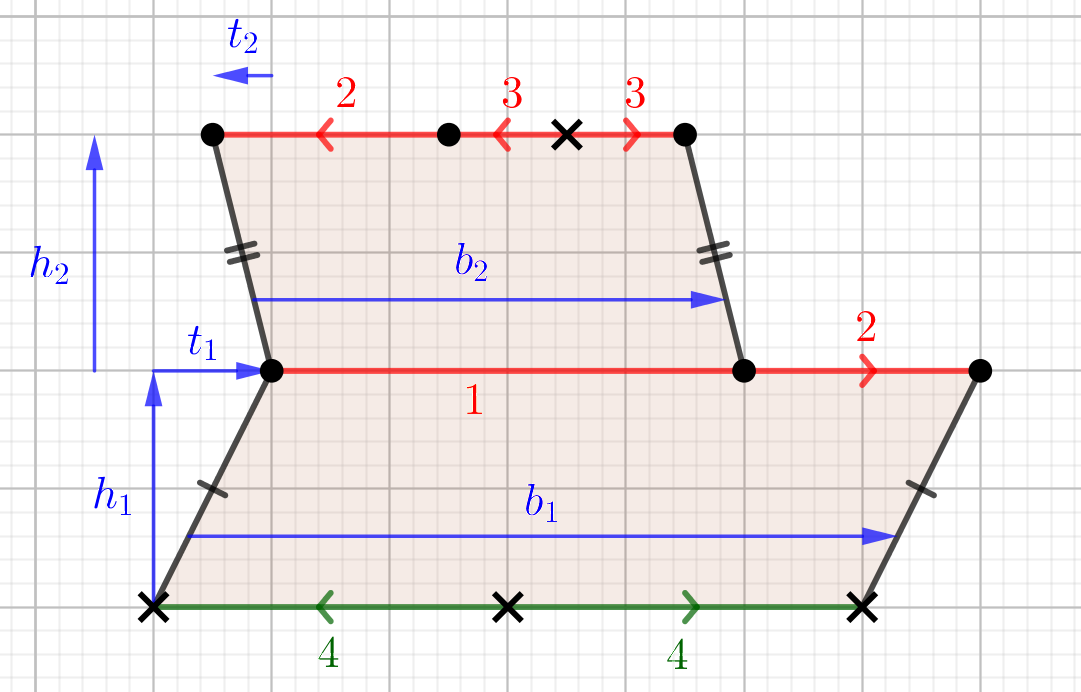}} &  {\includegraphics[align=c, scale=0.3]{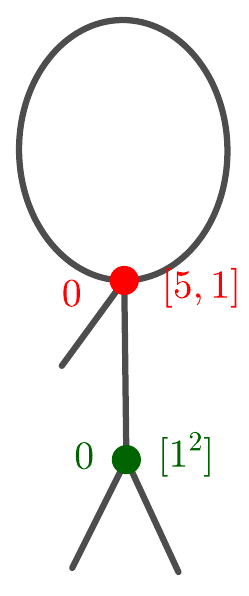}} &
$+ 3\sum_{\substack{\u b\cdot \u h\leq 2N\\b_1\in 2\N,\\ b_2, h_1, h_2\in\N\\b_2<b_1<2b_2}}b_1b_2$
&  $8\zeta(2)^2=\frac{2\pi^4}{9}$\\ 
{\includegraphics[align=c, scale=0.6]{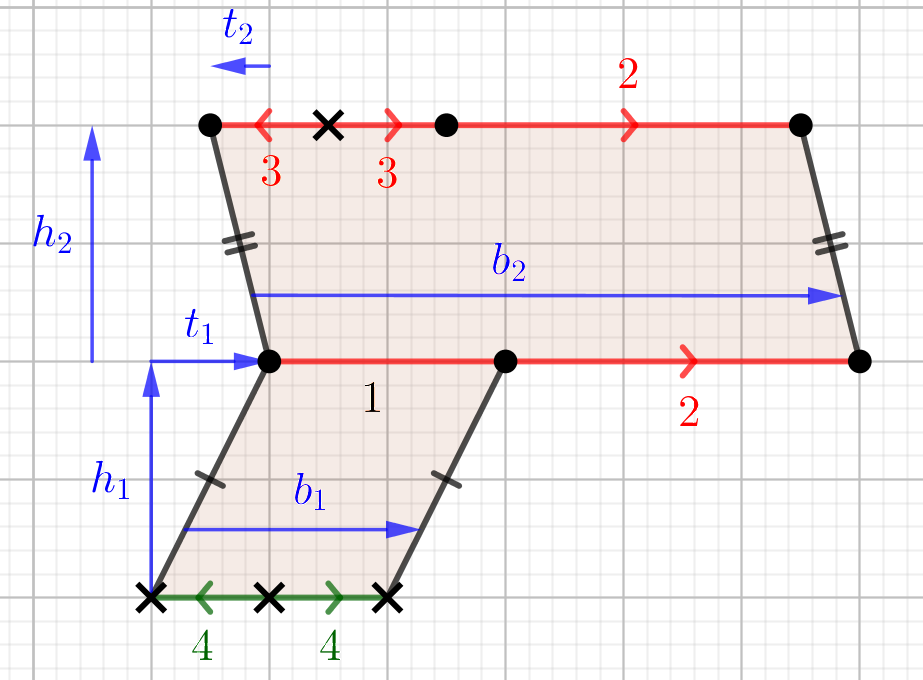}} &&
$+ 3\sum_{\substack{\u b\cdot \u h\leq 2N\\b_1\in 2\N,\\ b_2, h_1, h_2\in\N\\b_1<b_2}}b_1b_2$
&\\
 \bottomrule
\end{tabular}
\caption{Contribution to the Masur--Veech volume of each type of square-tiled surfaces.}\label{tab:contrib} 
\end{table}

Summing all contributions listed in Table~\ref{tab:contrib} we obtain a total volume of $\Vol\cQ(3, -1^3)=\frac{5}{9}\pi^4$ (the detailed computations can be found in \cite[\S 4.2]{Gou}).

For each type, the general formula for the number of square-tiled surfaces is of the form 
\[\operatorname{const} \cdot \sum_{\substack{\u b\cdot \u h\leq 2N\\b_i,h_i\in \N}}b_1\dots b_k \cdot F(b_1, \dots, b_k),\]
where $F$ accounts for the number of integer solutions $\ell_j$ of the system of equations $b_i=\sum\ell_j$, which express the perimeter of each cylinder in terms of the lengths of saddle connections. If $F$ is a polynomial, Lemma 3.7 in \cite{AEZ} (see also Lemma~\ref{lm:evaluation:for:monomial} in this paper) allows to compute the large $N$ asymptotics of these sums, hence also the contribution to the volume of strata of each type of square-tiled surfaces. Note that (although it is the case in the example) in general $F$ is not a polynomial (it is only a piecewise quasi-polynomial, see section~\ref{sec:metric}), even when restricting to the highest degree terms, making the evaluation of large $N$ asymptotics very subtle. Currently, it is only conjectured that each of these sums divided by $N^{\dim{stratum}}$ converges to a rational combination of multiple zeta values. 

Now observe that the number of solutions of the system of equations $b_i=\sum\ell_j$ is the number of integer metrics on the ribbon graphs (see section~\ref{sec:formula} for the definition) formed by the horizontal saddle connections (which also form the boundaries of the cylinders), with prescribed perimeters of the boundary components. The perimeter of any boundary component of any ribbon graph should coincide with the width of the cylinder that is glued to it. Furthermore, the vertices of these graphs correspond to the singularities of the differential, and the vertex valencies are given by $\val_i=\dg_i+2$ where $\dg_i$ is the degree of the corresponding singularity. For instance, for the first type in the example above, the (unique) ribbon graph is depicted in Figure~\ref{fig:rib}. It has genus 0, two boundary components (faces), one vertex of valency 5 and three of valency 1. The number of integer choices for the $\ell_i$ is the number of integer metrics compatible with the constraint that both boundary components have perimeter $b$.

\begin{figure}[h]
\begin{center}\begin{tabular}{cc}
\includegraphics[align=c, scale=1]{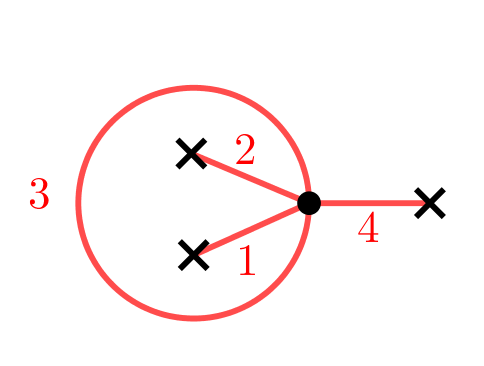} & $\begin{cases} b=2\ell_4+\ell_3\\ b=2(\ell_1+\ell_2)+\ell_3\end{cases}$
\end{tabular}\end{center}
\caption{Ribbon graph and count of integer metrics.}\label{fig:rib}
\end{figure}

In \cite{Kon}, Kontsevich relates the count of integer metrics on ribbon graphs to certain intersection numbers on $\overline{\cM}_{g,n}$. His better known result about the Witten conjecture concerns trivalent ribbon graphs (and this is the result that is used in \cite{DGZZ-vol}), but he also has a version for ribbon graphs with odd valencies. The result is formally presented in section~\ref{sec:metric} but informally states that the count of integer metrics on all ribbon graphs sharing the same combinatorics (genus $g$, valencies of vertices $\valu$, number of boundary components $n$) is an explicit polynomial $N_{g,n}^{\valu}$ in the perimeters of the boundary components, outside of a finite union of hyperplanes and up to lower degree terms. Furthermore, the coefficients of $N_{g,n}^{\valu}$ are certain intersection numbers over $\overline{\cM}_{g,n}$ or, in the case that we consider, over the Witten-Kontsevich combinatorial cycles (see section~\ref{subsec:combi_classes}).

A natural idea is then, for each type of square-tiled surfaces, to replace the computation of the sums

\[\operatorname{const} \cdot \sum_{\substack{\u b\cdot \u h\leq 2N\\b_i,h_i\in \N}}b_1\dots b_k\cdot  F(b_1, \dots, b_k)\quad \mbox{ by }\quad  \operatorname{const} \cdot \sum_{\substack{\u b\cdot \u h\leq 2N\\b_i,h_i\in \N}}b_1\dots b_k\cdot \prod_v N_{g_v,n_v}^{\valu_v}(b_1, \dots, b_{n_v}),\]
where the last product is over the vertices of the stable graph. Performing this replacement we obtain, after taking the limit as $N\to \infty$, a new quantity that we call the \emph{completed volume} $\cVol(\cQ(\dgu))$. The general formula for the completed volumes is given in Definition~\ref{def:vol}. 

To complete this example we compute in Table~\ref{tab:volc} the completed volume $\cVol(\cQ(3, -1^3))$ using the notations and the formula of Definition~\ref{def:vol} (so here $\dgu=[3, -1^3]$ and $\valu=[5,1^3]$). The stratum has complex dimension $d=4$. The constants are 
$c_d=128$, $c_{\valu}=1$ and $c_{\valu_v}=1$. We use the following Kontsevich polynomials (see also Appendix~\ref{app:tables} for the values and Appendix~\ref{app:DFIZ} that explains how these values are obtained from \cite{DFIZ}): 
\begin{equation*}
\begin{aligned}
N_{0,2}^{[5, 1^3]}(b_1,b_2)&=\frac{3}{4}(b_1^2+b_2^2)\\
N_{1,1}^{[5, 1]}(b_1)&=\frac{b_1^2}{8}
\end{aligned}
\hspace{2cm}
\begin{aligned}
N_{0,1}^{[1^2]}(b_1)&=1\\
N_{0,3}^{[5, 1]}(b_1,b_2,b_3)&=3
\end{aligned}
\end{equation*}
The second column in Table~\ref{tab:volc} accounts for the number of non-equivalent ways to label the legs of the stable graph (which is part of the data).
\begin{table}[h]
\[\begin{array}{cccccc}
%\hline
\toprule
\textrm{Stable graph }  \Gamma & \textrm{Mult.} & c_{\Gamma} & P_{\Gamma} &  \cZ(P_{\Gamma}(\u b))& \textrm{Contribution}\\
%\hline
\midrule
\includegraphics[align=c, scale=1]{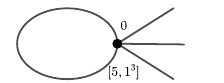}
 & 1&\frac{1}{2}&b_1N_{0,2}^{[5, 1^3]}(b_1, b_1)=\frac{3}{2}b_1^3&\frac{3}{8}\zeta(4)& 24\zeta(4)=\frac{4\pi^4}{15}\\%\addlinespace[0.5em]
 %\hline
 %\midrule
\includegraphics[align=c, scale=1]{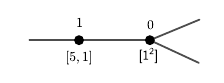} & 3&\frac{1}{2}&b_1N_{1,1}^{[5, 1]}(b_1)N_{0,1}^{[1^2]}(b_1)=\frac{b_1^3}{8}&\frac{1}{32}\zeta(4)&6\zeta(4)=\frac{\pi^4}{15}\\
%\hline
%\midrule
\includegraphics[align=c, scale=0.3]{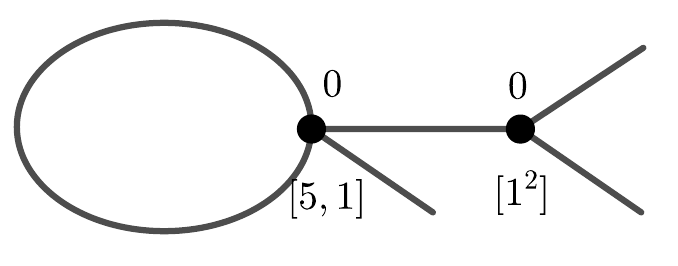} & 3&\frac{1}{4}&b_1b_2N_{0,3}^{[5,1]}(b_1,b_2,b_2)N_{0,1}^{[1^2]}(b_1)=3b_1b_2&\frac{1}{8}\zeta(2)^2& 12\zeta(2)^2=\frac{\pi^4}{3}\\
\bottomrule
\end{array}\]
\caption{Contribution to the completed volume of each type of square-tiled surface.}\label{tab:volc}
\end{table}

Summing all contributions in Table~\ref{tab:volc} we obtain $\cVol(\cQ(3, -1^3))=\frac{2}{3}\pi^4$.

Note that (see section~\ref{subsec:product} for the normalization of the volume of products of strata) \begin{align*}
	\cVol(\cQ(3, -1^3))&=\Vol(\cQ(3,-1^3))+ \frac{1}{2} \cdot \Vol\big(\cQ(-1^4) \times \cH(0)\big)\\
	&=\frac{5}{9}\pi^4+\frac{1}{2} \cdot \frac{1! \cdot 2^2 \cdot 1!}{2 \cdot 3!}\left(2\pi^2 \cdot \frac{\pi^2}{3}\right)\\
	&=\frac{5}{9}\pi^4+\frac{1}{9}\pi^4=\frac{2}{3}\pi^4.\end{align*}

To explain why the completion term appears, one should look closely at the type of square-tiled surfaces where the two computations do not match. It is the third type, where the polynomial $N_{0,3}^{[5,1]}$ appears: we obtain a contribution that is $3/2$ times bigger than the one expected.

So we will now compare $N_{0,3}^{[5,1]}$ to the actual count of integer metrics on ribbon graphs of genus $0$ with three boundary components of perimeters $b_1, b_2, b_3$, with one vertex of valency 5 and one vertex of valency 1, which is done in Table~\ref{tab:detail}.
\begin{table}[h]
\[
%\begin{array}{|c|c|c|}
\begin{array}{ccc}
%\hline
\toprule
    \mbox{Ribbon Graph} & \mbox{Number of integer metrics} & \mbox{Sector} \\
    %\hline
    \midrule
    \includegraphics[scale=0.5, align=c]{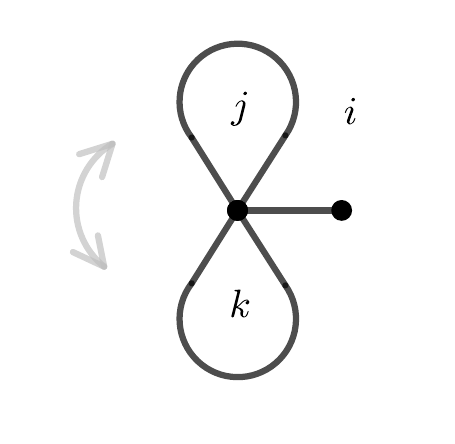} & 2 \mbox{ for } b_i>b_j+b_k & \includegraphics[scale=0.3, align=c]{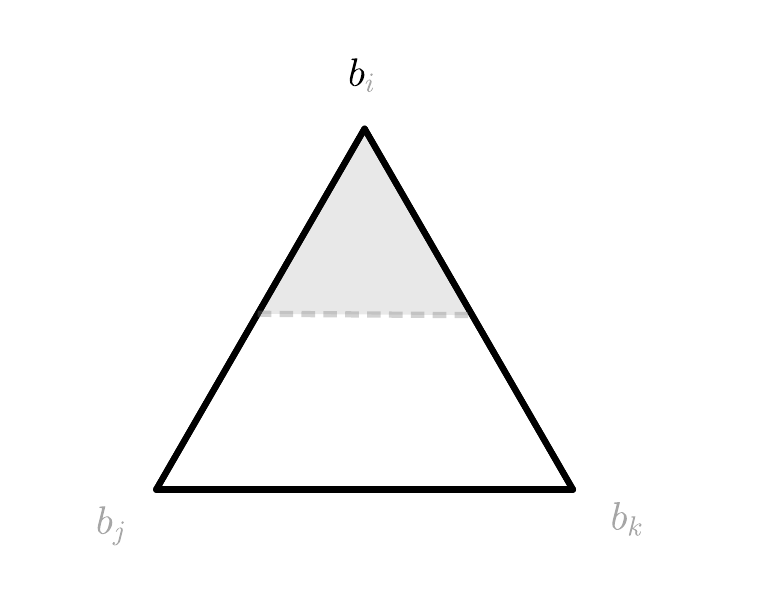}\\
    %\hline
    %\midrule
    \includegraphics[scale=0.5, align=c]{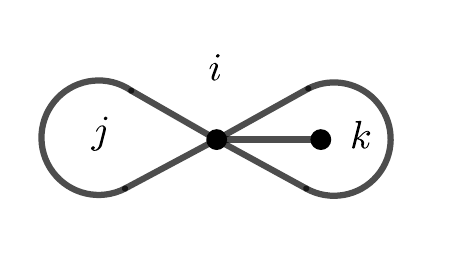} & 1 \mbox{ for } b_j+b_k>b_i>b_j & \includegraphics[scale=0.3, align=c]{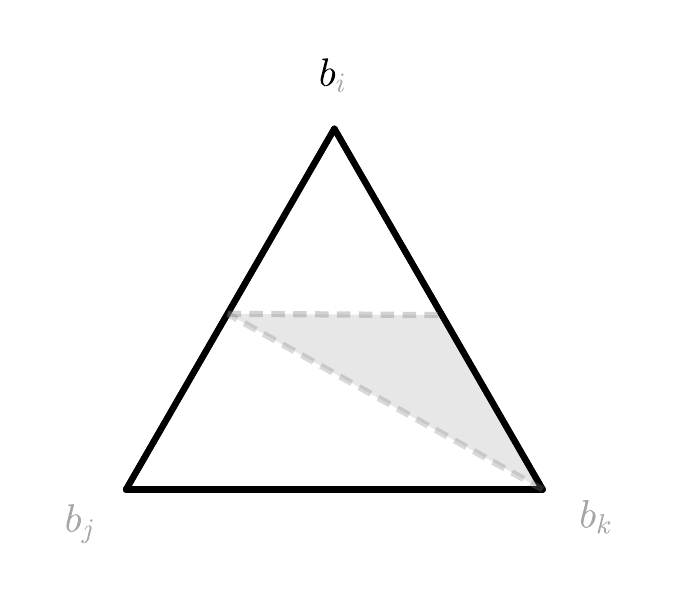}\\
    %\hline
    \bottomrule
\end{array}
\]
\caption{Number of integer metrics on genus 0 ribbon graphs with one vertex of valency 5 and one of valency 1.}\label{tab:detail}
\end{table}

Summing the contributions of Table~\ref{tab:detail}, we obtain a piecewise polynomial equal to 3 outside the codimension 1 walls $\{b_i=b_j\}$ and $\{b_i=b_j+b_k\}$, where it equals 2 and 1 respectively (see Figure~\ref{fig:param}). To ``complete" this piecewise polynomial into a global polynomial $N_{0,3}^{[5,1]}(b_1,b_2,b_3)=3$, we add the count of ``degenerated" metric ribbon graphs as depicted in Figure~\ref{fig:param}.

\begin{figure}[h]
\begin{center}
\includegraphics[scale=0.7]{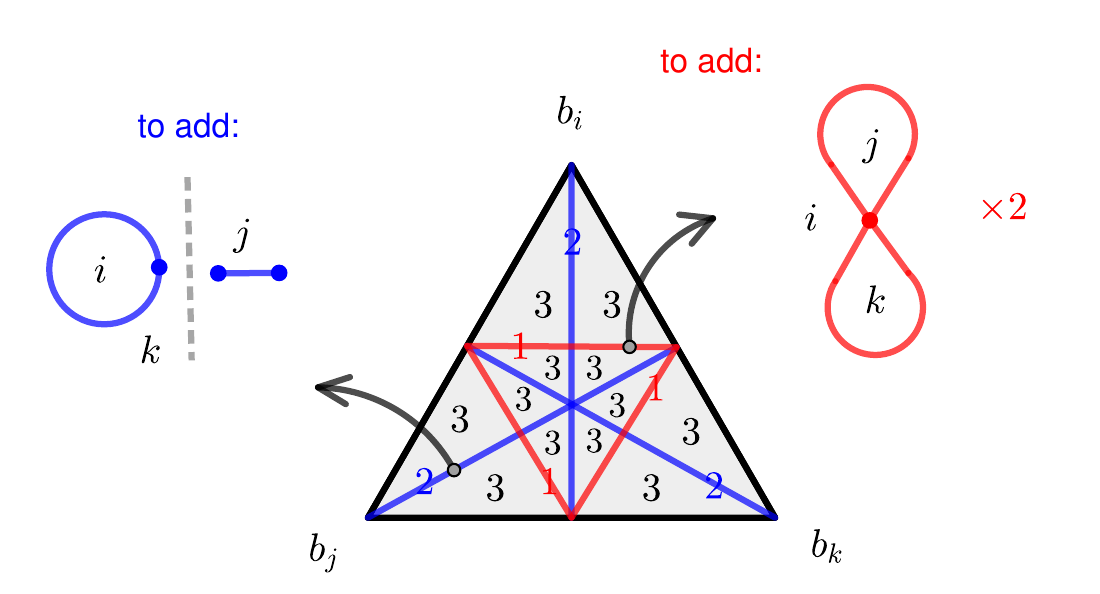}
\end{center}
\caption{Values of the counting function for integer metrics on the parameter space.}\label{fig:param}
\end{figure}
There is also a contribution of the further degenerated ribbon graphs on codimension 2 walls (intersections of the previous walls) which we will not consider here.

Since in the computation of the completed volume we evaluate the polynomial $N_{0,3}^{[5,1]}$ on the wall $b_2=b_3$, we then take into account the blue degenerated ribbon graphs. By gluing cylinders to these degenerated ribbon graphs following the global graph $\Gamma$, we obtain (non-connected) square-tiled surfaces that belong to the stratum $\cQ(-1^4) \times \cH(0)$, explaining the extra contribution in the total volume.

In the sequel we will be particularly interested in the degenerations occurring on the walls of the form $\{b_i=b_j\}$. In the example above, note that the blue ribbon graph arises as a degeneration of the second type of ribbon graphs when the length of the separating loop surrounding the univalent vertex goes to 0. In Theorem~\ref{thm:counting-funcs-on-walls} we give a general formula that expresses the additional contribution on the walls of degenerated ribbon graphs to the Kontsevich polynomials $N_{g,n}^{\valu}$. This section is the core and the novelty of this paper, and the principal ingredient of the main result, Theorem \ref{thm:coeffs}, that expresses the difference between the ``completed" volume and the usual volume as a linear combination of volumes of boundary strata.

\subsection{Outline of the paper}

In section~\ref{sec:formula} we give the main result of the paper (Theorem~\ref{thm:coeffs}) after giving all necessary definitions and in particular introducing the Kontsevich polynomials. Section~\ref{sec:metric} reviews the relation between these polynomials and the counting of integer metrics on ribbon graphs, precising all piecewise polynomiality properties of the counting functions. In section~\ref{sec:counting} we prove a formula (Proposition~\ref{prop:volsq}) for the Masur--Veech volumes of strata in terms of counting functions for metrics on ribbon graphs, via the enumeration of square-tiled surfaces. Sections~\ref{sec:compl} and~\ref{sec:count-degens} form the technical core and the novelty of the paper: they are devoted to the proof of Theorem~\ref{thm:counting-funcs-on-walls} that expresses the difference between the Kontsevich polynomials and the counting functions for metric ribbon graphs on their walls of discontinuity. The proofs in these sections are part of the PhD thesis of the third author: \cite{Yakovlev-thesis}, Chapter 7. Section~\ref{sec:loc:glob} finishes the proof of Theorem~\ref{thm:coeffs} by combining the global formula of Proposition~\ref{prop:volsq} with the local contributions obtained in Theorem~\ref{thm:counting-funcs-on-walls}. In section~\ref{sec:appl-conj} we present several conjectures on the large genus asymptotics of Masur--Veech volumes and the large genus asymptotic geometry of random square-tiled surfaces, which could be approached with our main result.

Several appendices are added to the paper. Appendix~\ref{appendix:metrics} regroups some technical proofs relevant to section~\ref{sec:metric}. Appendix~\ref{app:DFIZ} recaps briefly for completeness the algorithm developed in \cite{DFIZ, Bini} to compute the intersection numbers appearing in Kontsevich polynomials. Appendix~\ref{app:hurwitz} explains the consequences of our results in terms of shifted symmetric functions that appear in the count of ramified covers in Hurwitz theory. Finally, Appendix~\ref{app:tables} lists all volumes computed thanks to our formula as well as the corresponding Kontsevich polynomials.

\subsection{Aknowledgments} We would like to thank Alex Eskin for formulating the problem and attracting our attention to the results of~\cite[Section 3.3]{Kon}. We are extremely grateful to Adrien Sauvaget for finding the formula of Theorem~\ref{thm:coeffs} from our list of examples. We warmly thank Martin Möller and Philip Engel for numerous discussions about completed cycles from the perspective of shifted symmetric functions. We would like to thank Gabriele Mondello, Dimitri Zvonkine, Vincent Delecroix, Anton Zorich, Amol Aggarwal, Gaetan Borot and Gilberto Bini for related discussions.
%autre ??

We thank MSRI and MFO for their hospitality during the preparation of this paper.

\section{Volume formula}\label{sec:formula}

A \emph{ribbon graph} (also called a combinatorial map) is a graph with possible loops and multiple edges endowed with a cyclic ordering of the half-edges issued from each vertex. The cyclic orderings define in particular the \emph{boundary components} (or \emph{faces}) of the ribbon graph. A \emph{metric ribbon graph} is a ribbon graph endowed with a metric, i.e.\ an assignment of a positive number to each edge. Ribbon graphs arise naturally as graphs embedded in surfaces, and metric ribbon graphs as graphs embedded in surfaces equipped with a metric. The \emph{genus} of a ribbon graph is the genus of a surface in which it is cellularly embedded, meaning that the complementary regions (the faces) are homeomorphic to disks. 

As an interesting example, one can consider the union of all non-closed trajectories and  zeros of a Jenkins-Strebel differential, that is, a quadratic differential for which the set of non-closed trajectories has measure zero. A classical geometric construction gives a correspondence between such differentials and metric ribbon graphs. This correspondence  is an important ingredient of the celebrated results of Kontsevich about the Witten's conjecture \cite{Kon}. It is also a key idea of our count of square-tiled surfaces, see section~\ref{sec:counting}.

For a comprehensive introduction to ribbon graphs, including the detailed definitions and the correspondence with Jenkins-Strebel differentials, we refer to the book~\cite{LandoZvonkin}.

\subsection{Combinatorial moduli space and Kontsevich polynomial}\label{subsec:combi_classes}

Let $\cM_{g,n}^{comb}$ denote the combinatorial moduli space, that is the set of equivalence classes of connected metric ribbon graphs with vertices of valency greater than or equal to 3, of genus $g$ with $n$ labeled faces. This combinatorial moduli space is homeomorphic as an orbifold to $\cM_{g,n}\times (\R_+)^n$, where $\cM_{g,n}$ is the usual moduli space of complex curves of genus $g$ with $n$ punctures, provided that it is non-empty, i.e. $n\geq 1 $ and $2g-2+n>0$. There are two different ways to define such a homeomorphism. The first one uses hyperbolic geometry and is due to Penner~\cite{Penner} and Bowditch-Epstein~\cite{BE}. The second one uses meromorphic quadratic differentials: it combines the correspondence with Jenkins-Strebel differentials with the existence and uniqueness results of Strebel~\cite{Strebel}, and is due to Harer, Mumford and Thurston~\cite{Harer} (see also \cite{Strebel_quad}, \cite{HM}, \cite[Thm 2.2]{Kon}). For $m_*=(m_0, m_1, \dots)$ an infinite sequence of non-negative integers, almost all zero, we denote by $\cM_{m_*,n}$ the set of equivalence classes of metric ribbon graphs with $m_i$ vertices of valency $2i+1$ and $n$ faces. If $m_0=0$ it is a cell in the moduli space $\cM_{g,n}^{comb}$, where $g$ is computed from the Euler's relation for the corresponding ribbon graphs:
  \begin{equation} \label{eq:euler-rel}
  	2g-2+n=\frac{1}{2}\sum_i m_i(2i-1).
  \end{equation}
The cell has real codimension $2M$, where 
\begin{equation}
	\label{eq:notM} M=\sum_i m_i(i-1).
\end{equation}

If $m_0\neq 0$, $\cM_{m_*,n}$ still maps to $\cM_{g,n}\times (\R_+)^n$ by using the correspondence with Jenkins-Strebel differentials, and it also has real dimension $\frac{1}{2}\sum_{i} m_i(2i+1)=2 \cdot (\dim_{\C} \cM_{g,n}^{comb}-M)$ (the dimension is the number of edges of the corresponding ribbon graphs). 

The space $\cM_{g,n}^{comb}$ admits a natural compactification $\overline {\cM}_{g,n}^{comb}$ by considering stable ribbon graphs, see \cite[Def 3.9]{Zvon}. The previous homeomorphism of orbifolds extends to a homeomorphism  $\overline {\cM}_{g,n}^{comb}\to K\overline{\cM}_{g,n}\times(\R_+)^n$, where $K\overline{\cM}_{g,n}$ is a quotient of the Deligne-Mumford compactification $\overline{\cM}_{g,n}$ obtained by contracting all the components of stable curves that do not contain a marked point (see \cite{Loo} and \cite{Zvon}) for the detailed proofs). 

Each component of $\cM_{m_*,n}$ can be endowed with a natural orientation \cite{Kon}. When $m_0=0$, $\cM_{m_*, n}$ can be seen as a cycle with non-compact support in $\overline\cM^{comb}_{g,n}$ and hence defines a class $W_{m_*,n}\in H_{6g-6+2n-2M}(K\overline\cM_{g,n}; \Q)$ using the homeomorphism. 

This allows to define the following intersection numbers 
\[\langle  \tau_{\u d}\rangle_{m*}=\int_{\cM_{m_*,n}}\prod_i \psi_i^{d_i} \times [\R^+_n],\] 
where $\psi_i$ is the first Chern class of the $i$-th cotangent line bundle on $\overline \cM_{g,n}$, $[\R_+^n]$ denotes the standard fundamental class with compact support of $\R_+^n$, and if $m_0\neq 0$ there is an implicit pullback to $\cM_{m_*,n}$. When $m_0=0$ we can also write 
\[\langle  \tau_{\u d}\rangle_{m*}=\int_{W_{m_*,n}}\prod_i \psi_i^{d_i}.\]

We also use the following notation for the usual intersection numbers:
\[\langle  \tau_{\u d}\rangle=\int_{\overline \cM_{g,n}}\prod_i \psi_i^{d_i}.\]

The intersection numbers $\langle  \tau_{\u d}\rangle_{m*}$ can be computed in terms of the $\langle  \tau_{\u d}\rangle$ by the results of \cite{DFIZ}. For completeness we present the corresponding algorithm in Appendix~\ref{app:DFIZ}. The cycles $W_{m_*,n}$ and their relation to the Mumford $\kappa$ classes are studied in \cite{AC, Mondello, Igusa}.

\begin{Convention}\label{convpart} 
	Throughout this paper, we adopt the following terminology and notations:
	\begin{itemize}
		\item a \emph{composition} of a positive integer $n$ is an ordered sequence $(n_1,\ldots,n_k)$ of positive integers summing up to $n$;
		\item a \emph{partition} is an equivalence class of compositions, where two compositions are equivalent if one is obtained from another by permuting its terms; partitions are written in square brackets: $[n_1, \ldots, n_k]$; when an element of a partition has multiplicity, we use the multiplicative notation: e.g. $[5,1,1,1]=[5,1^3]$;
		\item compositions/partitions are denoted by underlined letters: $\valu, \u b$, etc.;
		\item when a composition is substituted where a partition is expected, just substitute the corresponding partition.
	\end{itemize}
	For a composition/partition $\valu$:
	\begin{itemize}
		\item we define its \emph{weight} as the sum of its terms, denoted by $|\valu|$;
		\item we define its \emph{length} as the number of its terms, denoted by $\ell(\valu)$;
		\item we denote by $\mu_i(\valu)$ the number of its terms equal to $i$;
		\item we define $|\Aut(\valu)| = \prod_{i=1}^\infty \mu_i(\valu)!$ and $c_{\valu} = \prod_{i=2}^\infty \mu_i(\valu)!$;
		\item for an integer $n$ we denote by $\valu+n$ the composition/partition obtained by adding $n$ to each term of $\valu$, and by $n\valu$ the one obtained by multiplying each term of $\valu$ by $n$.
	\end{itemize}
\end{Convention}

\begin{Convention}\label{convmk}
In the rest of the paper, $\valu$ denotes an odd composition/partition, $\dgu = \valu-2$, and $m_* = (m_0,m_1,\ldots)$ is such that $m_i = \mu_{2i+1}(\valu) = \mu_{2i-1}(\dgu)$. We still call $\dgu$ a composition/partition and use the Convention~\ref{convpart} for it, even though it may have some parts equal to $-1$.
\end{Convention}

Note that using Convention~\ref{convmk}, the Euler's relation \eqref{eq:euler-rel} can be rewritten as $2g-2+n = \frac{1}{2}|\valu| - \ell(\valu)$.

\begin{defi}
For each $g \ge 0$, $n \ge 1$, $\valu$ an odd partition, satisfying $2g-2+n = \frac{1}{2}|\valu| - \ell(\valu) > 0$, we define the unlabeled Kontsevich polynomials as
$$N_{g,n}^{\valu, unlab}(b_1, \dots, b_n)=\frac{1}{2^{5g-6+2n{-2M}}}\sum_{|\underline d|=3g-3+n{-M}}\frac{1}{\underline d!}\langle\tau_{\underline d}\rangle_{m_*}\underline b^{2\underline d},$$
and the labeled ones as 
\[N_{g,n}^{\valu}(b_1, \dots, b_n)= |\Aut(\valu)| \cdot N_{g,n}^{\valu, unlab}(b_1, \dots, b_n).\]
Here $\u b = (b_1,\ldots,b_n)$ and $\u b^{2\u d}$ denotes $b_1^{2d_1} \cdots b_n^{2d_n}$.
\label{def:Kont_poly}
\end{defi}
Note that $N^{\valu}_{g,n}$ is of degree $2g-2+\ell(\valu)$. Indeed, by definition, the degree is $6g-6+2n-2M = 6g-6+2n-|\valu|+3\ell(\valu)$, and it is enough to apply the Euler's relation.

\subsection{Decorated stable graphs}

We adapt the definition of stable graphs, that usually  encode the boundary classes of
the Deligne-Mumford compactification $\overline{\cM}_{g,n}$, to our case: here the stable graphs encode certain boundary classes of the compactification $\overline{\cQ(\dgu)}$.

\begin{defi}
\label{def:stable:graph}
Consider a 6-tuple
$\Gamma = (V, H, \iota, \alpha, \boldsymbol{\valu}, L)$, where
\begin{itemize}
\item $V$ is a finite set of \textit{vertices}.
\item $H$ is a finite set of \textit{half-edges}.
\item $\iota: H \to H$ is an involution. The fixed points
of $\iota$ are called the \textit{legs} and the 2-cycles of
$\iota$ are called the \textit{edges} of $\Gamma$.
\item $\alpha: H \to V$ is a map that attaches a half-edge
to a vertex. 
The number of legs incident to $v$ is denoted by $l_v$ and the number of half-edges incident to $v$ which are part of an edge is denoted by $n_v$, so that $|\alpha^{-1}(v)|=n_v+l_v$.
\item The graph is connected: for each pair of vertices
$(u,v)$ there exists a sequence of
half-edges $(h_1, h'_1, h_2, h'_2, \ldots, h_k, h'_k)$
such that $\iota(h_i) = h'_i$, $u = \alpha(h_1)$,
$v = \alpha(h'_k)$ and $\alpha(h'_i) = \alpha(h_{i+1})$.
\item $\boldsymbol{\valu} = \{\valu_v\}_{v \in V}$ is a set
of \emph{partitions}, one at each vertex,
called the \textit{decoration}. By convention $\valu_v$ has exactly $l_v$ parts equal to 1. 
\item $L$ is a bijection from the set of legs to
$\{1,\ldots,l\}$, where $l=\sum_{v\in V}l_v$.
\end{itemize}
We denote by $\u\kappa$ the partition obtained by concatenating all $\u\kappa_v$ (called \emph{total decoration}).

Such a 6-tuple $\Gamma$ is called a
\emph{decorated stable graph} for a stratum $\cQ(\dgu)$ (where $\dgu$ corresponds to $\valu$ by  Convention~\ref{convmk}) if the decoration $\boldsymbol{\valu}$ satisfies
the following conditions:
\begin{itemize}
\item
For each vertex $v$ of the graph, there exists a non-negative integer $g_v$ such that \begin{equation}\label{eq:cond_vertex}
|\valu_v|-2\ell(\valu_v)=4g_v-4+2n_v
\end{equation}
   \item At each vertex $v$, $2g_v-2+n_v>0$  (\textit{stability condition}).

\end{itemize}
We define the \textit{genus} of $\Gamma$ by the following formula:
\[g(\Gamma) = h_1(\Gamma) + \sum_{v \in V} g_v,\]
where $h_1(\Gamma)$ is the first Betti number of the graph.
\end{defi}

To a decorated stable graph $\Gamma$ we associate an underlying graph
whose vertex set is $V$ and each 2-cycle $(h,h')$ of
$\iota$ gives an edge attached to $\alpha(h)$ and
$\alpha(h')$. We denote the set of these edges by $E =
E(\Gamma)$. Such a graph can have multiple edges and loops.
The additional information carried by a stable graph is the
 decoration $\boldsymbol{\u\kappa}$ and the $l$ labeled legs.

Two decorated stable graphs $\Gamma =
(V,H,\iota, \alpha,\boldsymbol{\u\kappa}, L)$ and
$\Gamma'=(V',H',\iota', \alpha',\boldsymbol{\u\kappa'}, L')$ are
isomorphic if there exists two bijections $\phi: V \to V'$
and $\psi: H \to H'$ that preserve edges, legs and
decoration, that is
\[
\psi\circ\iota   = \iota' \circ \psi\,,
\qquad
L'(\psi(h)) = L(h)\,,
\qquad
\u\kappa'_{\phi(v)} = \u\kappa_v\,.
\]
Note that automorphisms of decorated stable graphs are allowed to
interchange edges and vertices respecting the decoration
but not the legs which are numbered by $L$.

We denote by $\cG_{g,l}^{{\u\kappa}}$ the set of isomorphism classes of decorated stable graphs with given genus $g$, number of legs $l$ and total decoration ${\u\kappa}$.

\begin{Remark} \label{rmk:odd-implies-stable}
	Note that the stability condition in the definition of a stable graph follows from condition~\eqref{eq:cond_vertex} if $\valu$ (or $\dgu$) is an odd partition. Indeed, in this case for each $v \in V(\Gamma)$ we have $|\valu_v| \ge \mu_1(\valu_v) + 3(\ell(\valu_v)-\mu_1(\valu_v))$ and so $2(2g_v-2+n_v) = |\valu_v|-2\ell(\valu_v)+2\mu_1(\valu_v) \ge \ell(\valu_v) > 0$.
\end{Remark}

\begin{figure}
    \centering
    \includegraphics[scale=0.4]{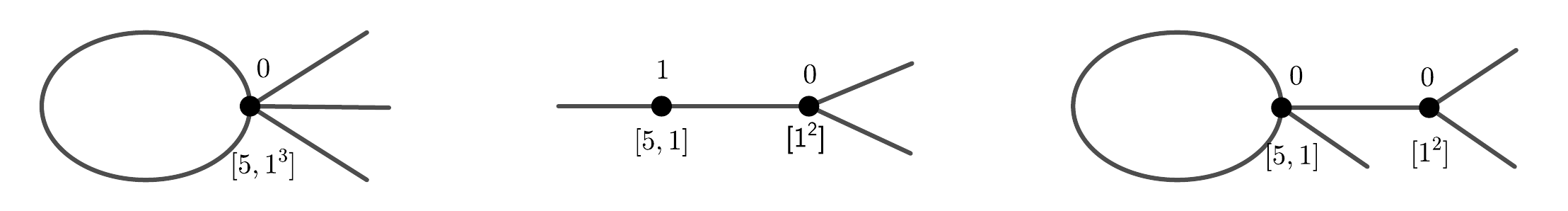}
    \caption{List of all decorated stable graphs in $\cG_{1,3}^{[5,1^3]}$. The genus $g_v$ is indicated above each vertex $v$.}
    \label{fig:stable_graphs}
\end{figure}

\subsection{Formula}\label{subsec:form} 
 
Let $\dgu= [k_1, k_2, \ldots, k_r]$ 
be an odd partition of $4g-4$, where  $k_i \ge -1$. Let $\cQ(\dgu)$ denote the stratum of meromorphic quadratic differentials with zeroes and simple poles of orders $k_i$.
Denote its dimension by $d = \dim_\C \cQ(\dgu) = 2g-2+\ell(\dgu)$. We let $\valu$ be the partition associated to $\dgu$ by Convention~\ref{convmk} ($\valu=\dgu+2$) and let $l = \mu_{-1}(\dgu) = \mu_1(\valu)$.

\begin{defi}For a decorated stable graph $\Gamma$ in $\cG_{g,l}^{\valu}$, fixing a labeling of the edges, we define the following polynomial in the variables $\u b=(b_1, \dots, b_{|E(\Gamma)|})$:
\begin{equation}\label{eq:PGamma}P_\Gamma(\u b)=\prod_{e\in E(\Gamma)}b_e\cdot
\prod_{v\in V(\Gamma)}
N^{\valu_v}_{g_v,n_v}(\u{b_v}),\end{equation}
where  $\u{b_v}$ is the $n_v$-tuple of variables $b_e$ for each $e$ such that the edge $e$ is incident to $v$ (with multiplicity 2 if it is a loop based at $v$), and $N^{\valu_v}_{g_v,n_v}$ are the labeled Kontsevich polynomials from Definition~\ref{def:Kont_poly}.

We consider the linear operator $\cZ$ on polynomials, acting on monomials as 
\[\cZ\left(\prod_i b_i^{d_i}\right)=\frac{1}{\left(\sum_i(d_i+1)\right)!}\prod_i d_i!\cdot \zeta(d_i+1).\]

Then we define the \textit{completed volume} $\cVol \cQ(\dgu)$ of the stratum $\cQ(\dgu)$ by the following formula:
\begin{equation}
\label{eq:compl:volume}
\cVol \cQ(\dgu) =
\sum_{\Gamma \in \cG_{g,l}^{\valu}}
\cVol(\Gamma),
\end{equation}     
where 
\begin{equation}\label{eq:VolGammac}
\cVol(\Gamma):=c_d\cdot \frac{c_{\valu}}{\prod_{v \in V(\Gamma)} c_{\valu_v}} \cdot c_{\Gamma}
\cdot
\cZ\left(P_\Gamma(\u b)
\right)=c_d \cdot \frac{c_{\valu}}{\prod_{v \in V(\Gamma)} c_{\valu_v}} \cdot c_{\Gamma}
\cdot \lim\limits_{N\to\infty}\frac{1}{N^d}\sum_{\substack{\u b\cdot \u h \leq N\\ \u b,\;  \u h\in \N^{|E_{\Gamma}|}}} P_\Gamma(\u b),
\end{equation}
the constants $c_{\valu}, c_{\valu_v}$ are defined in Convention~\ref{convpart} and 
\begin{equation}
\label{eq:const}
  c_d = 2^{d}\cdot 2d, \quad \quad
    c_{\Gamma} = \frac{1}{2^{|V(\Gamma)|-1}} \cdot
\frac{1}{|\operatorname{Aut}(\Gamma)|}.
\end{equation} 
The value of $\cVol(\Gamma)$ does not depend on the choice of edge labeling of $\Gamma\in\cG_{g,l}^{\u\kappa}$.
\label{def:vol}
\end{defi}

Note that $P_\Gamma(\u b)$ is of degree $d-|E(\Gamma)|=2g-2+\ell(\u k) - |E(\Gamma)|$. Indeed, \[\deg(P_{\Gamma}) = |E(\Gamma)| + \sum_v (2g_v-2+\ell(\valu_v)) = |E(\Gamma)| - 2|V(\Gamma)| + 2\cdot \sum_v g_v + \ell(\valu)\] and it is enough to use $g = |E(\Gamma)|-|V(\Gamma)|+1 + \sum_v g_v$.

Finally, the fact that for a polynomial $P_{\Gamma}$ in $|E(\Gamma)|$ variables and of degree $d-|E(\Gamma)|$, the limit $\lim\limits_{N\to\infty}\frac{1}{N^d}\sum_{\substack{\u b\cdot \u h \leq N\\ \u b,\;  \u h\in \N^{|E_{\Gamma}|}}} P(\u b)$ coincides with $\cZ\left(P_\Gamma(\u b)\right)$ is a classical computation that can be found in Lemma 3.7 of \cite{AEZ} for instance, see also Lemma~\ref{lm:evaluation:for:monomial}. 

This definition is the analog of the right-hand side of formula (1.13) in Theorem 1.5 of \cite{DGZZ-vol} in the case of principal strata (we just modified the definition of the operator $\cZ$ by the factor $d!$). The relevance of this definition and the explanation of the denomination `` completed volume'' is given by the next Theorem.

\begin{Theorem}\label{thm:coeffs}
For any odd composition $\dgu=(k_1, \dots, k_r)$ of $4g-4$ with $r\geq 3$ and $\dg_i\geq -1$, 
\[\cVol(\cQ(\dgu))=\Vol(\cQ(\dgu))+ \sum_{ \substack{\u g = \left(g_{1},\ldots, g_{r}\right) \\  0 \le g_{i} \le \frac{\dg_i+ 1}{4}}} \quad
\sum_{\substack{i\;\textrm{s.t.}\\g_i>0}}\quad \sum_{\substack{\u g_{i} = \left(g_{i}^{(1)}, g_{i}^{(2)} \ldots\right) \\ | \u g_{i}| = g_{i},\ g_{i}^{(j)} > 0}}
 C_{\u g, \u g_i} \cdot 
\Vol\left( \cQ(\dgu-4\u g) \times \prod_{i,j} \cH(2g_{i}^{(j)}-2)\right),\] where the coefficients $C_{\u g, \u g_i}$ are obtained as follows.

Let $[n], \overline{[n]}, n \ge -1$, be non-commuting variables. Declare 
\begin{equation}\label{eq:declare_vol_bar}
\overline\Vol(\cQ(\dgu))=\overline{[k_1]}\times \dots \times\overline{[k_r]}
\end{equation} and \begin{equation}\label{eq:declare_vol}
\Vol\left(\cQ(\dgu')\times \prod_{i=1}^s \cH(2g_i-2)\right)=[k'_1]\times \dots [k'_{r}]\times [2g_1-2]\times \dots \times [2g_s-2].
\end{equation}
The coefficients of the formula are obtained by applying the following change of variables in $\overline\Vol(\cQ(\dgu))$, considered as a multilinear form in the variables $\overline{[k_i]}$:
\begin{equation}\label{eq:change_var}
\overline{[k]}=[k]+\sum_{\substack{-1\leq k'<k\\k'=k\mod 4}}(k'+2)c_{k,k'}\cdot [k'], 
\end{equation}
with 
\begin{equation}\label{eq:coeffs}
c_{k,k'}=
\sum_{m>0} \frac{k!!}{(k-2m+2)!!} \cdot \frac{1}{m! \cdot 2^m} \cdot \sum_{\substack{g_1+\dots + g_m=\frac{k-k'}{4}\\ g_i>0}} \prod_i (2g_i-1)\cdot [2g_i-2].
\end{equation}

\end{Theorem}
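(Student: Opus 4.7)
The plan is to combine the square-tiled-surface enumeration formula of Proposition~\ref{prop:volsq} with the wall-discontinuity formula of Theorem~\ref{thm:counting-funcs-on-walls}. By Proposition~\ref{prop:volsq}, the actual volume $\Vol(\cQ(\dgu))$ admits a decomposition over decorated stable graphs $\Gamma \in \cG_{g,l}^{\valu}$ identical in shape to Definition~\ref{def:vol}, except that each Kontsevich polynomial $N^{\valu_v}_{g_v,n_v}(\u{b_v})$ is replaced by the actual piecewise quasi-polynomial counting function $F^{\valu_v}_{g_v,n_v}(\u{b_v})$ for integer metrics on ribbon graphs. The crucial observation is that in \eqref{eq:VolGammac} the two boundary perimeters of every cylinder coincide (they both equal the edge weight $b_e$), so the vertex Kontsevich polynomials are systematically evaluated on the walls $\{b_i = b_j\}$. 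Consequently the difference $\cVol(\cQ(\dgu))-\Vol(\cQ(\dgu))$ is precisely the sum of these wall defects weighted by the same global combinatorial factors.

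Next I would substitute Theorem~\ref{thm:counting-funcs-on-walls}, which writes $N-F$ on each wall $\{b_i = b_j\}$ as a sum over degenerations in which a separating loop bounding a subgraph of genus $g$ has been collapsed; the collapsed subgraph is a complete metric ribbon graph on a closed surface of genus $g$ with a single odd vertex. Once the cylinders are glued back according to the stable graph $\Gamma$, each such collapsed piece becomes a connected component disjoint from the main square-tiled surface: the collapsed boundary carried a horizontal trajectory traversed twice in the same direction, which forces orientability and places this component in $\cH(2g-2)$. Meanwhile the main surface inhabits the stratum $\cQ(\dgu')$ where one of the original zero-degrees has dropped by $4g$. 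Running this analysis at all univalent vertices of all faces, one produces exactly the tensor of Abelian bubbles $\prod_{i,j} \cH(2g_i^{(j)}-2)$ and the shifted stratum $\cQ(\dgu-4\u g)$ of the statement, indexed by $(\u g, \u g_i)$.

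The final step is to recognize that the wall corrections at distinct singularities of the main stratum are \emph{independent}: each involves collapsing a loop around a distinct univalent vertex lying in a distinct face of a single vertex of $\Gamma$, and the formula of Theorem~\ref{thm:counting-funcs-on-walls} factorizes over these faces. This independence is what is being encoded in the multilinear form \eqref{eq:declare_vol_bar}--\eqref{eq:change_var}, and it reduces the determination of the coefficients $C_{\u g, \u g_i}$ to computing a single universal local coefficient $c_{k,k'}$. To match \eqref{eq:coeffs}, the double factorial $k!!/(k-2m+2)!!$ counts the ways of pairing $2m$ of the $k+2$ half-edges at the degenerating vertex into $m$ ordered cycles, the factor $1/(m!\cdot 2^m)$ removes the overcounting from labeling and orienting these cycles, and $(2g_i-1)$ together with $[2g_i-2]$ records the normalization of $\Vol\,\cH(2g_i-2)$ relative to the ambient quadratic stratum, as dictated by the product convention of section~\ref{subsec:product}.

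The main obstacle I expect is the careful bookkeeping of symmetry factors. Passing from $F$ to $N-(N-F)$ in Proposition~\ref{prop:volsq}, one must check that the constants $c_d$, $c_{\valu}$, $c_{\valu_v}$, $c_\Gamma$ of Definition~\ref{def:vol}, the automorphisms $|\Aut(\Gamma')|$ of the enlarged stable graphs acquiring new Abelian vertices, and the product normalization of section~\ref{subsec:product} all conspire to produce exactly the clean coefficient \eqref{eq:coeffs} and not a more complicated alternating sum. The cleanest route is probably to verify factorization vertex-by-vertex and singularity-by-singularity, deducing the multilinear change-of-variables \eqref{eq:change_var} once the universal local factor $c_{k,k'}$ has been pinned down against Theorem~\ref{thm:counting-funcs-on-walls}.
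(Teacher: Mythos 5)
Your proposal follows essentially the same route as the paper's proof in section~\ref{sec:loc:glob}: start from the stable-graph decomposition of Proposition~\ref{prop:volsq}, observe that the loops of each stable graph force the Kontsevich polynomials to be evaluated on the walls $\{b_i=b_j\}$, substitute the wall-defect formula of Theorem~\ref{thm:counting-funcs-on-walls}, reinterpret the degenerated pieces as abelian components contributing $\Vol\big(\cQ(\dgu-4\u g)\times\prod_{i,j}\cH(2g_i^{(j)}-2)\big)$, and reduce the coefficients to the universal local factors $c_{k,k'}$ by the multilinear factorization over singularities. You correctly identify the symmetry-factor bookkeeping (the interplay of $c_{\valu_v}$, $|\Aut(\Gamma)|$, the orderings, and the product normalization of section~\ref{subsec:product}) as the genuinely delicate part, which is indeed where the bulk of the paper's argument is spent.
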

For the definition/normalization of the volume for products of strata, see section~\ref{subsec:product} and equation~\eqref{eq:Vol-prod-as-prod-Vol}.

\begin{Remark} \label{rem:ge-3-sing}
	The condition $\ell(\dgu) \ge 3$ in Theorem~\ref{thm:coeffs} is of technical nature. It ensures that all degenerations (occurring on the walls) of the metric ribbon graphs in question are contractions of separating loops that separate face-bipartite ribbon graphs (this produces the abelian strata in the final formula), see Lemma~\ref{lem:static-edges-bridges}. In terms of dual ribbon graphs, these degenerations correspond to zero-length static edges which are bridges, see section~\ref{sec:metric}.

	The alternative to these degenerations is the contraction of one or several edges between two distinct vertices, which produces one or several face-bipartite graphs. In terms of dual ribbon graphs, these degenerations correspond to zero-length static edges which are not bridges, again see section~\ref{sec:metric}. Such degenerations can only occur when the ribbon graph has exactly two vertices and the defining equations of the wall involve the perimeters of all boundary components. In our setup this can only happen when we consider square-tiled surfaces with exactly two singularities lying on the same singular level (i.e.\ the corresponding stable graph has one vertex). We exclude the case $\ell(\dgu)=2$ in order to avoid dealing with this type of degenerations: they make the formulas more cumbersome. We plan to address this case in a future work.
\end{Remark}

The proof of Theorem~\ref{thm:coeffs} occupies sections \ref{sec:counting} to \ref{sec:loc:glob}.

%-----------------------------------------

\section{Metric ribbon graphs and counting functions}\label{sec:metric}

\subsection{Counting functions for metric ribbon graphs}

%In this section we will be interested in the count of integer metric ribbon graphs with fixed perimeters of boundary components.

A metric on a ribbon graph is called \emph{integer} if the lengths of all edges are integer. Given a metric ribbon graph, the \emph{perimeter of a boundary component} is the sum of lengths of edges incident to this boundary component (if an edge is incident to the boundary component twice, then its length contributes twice to the perimeter).

Denote by $\RGc^{\valu}_{g,n}$ the set of isomorphism classes of ribbon graphs of genus $g$, with $n$ labeled boundary components and the degrees of vertices given by the partition $\valu$.

\begin{defi}
For a ribbon graph $G$ in $\RGc^{\valu}_{g,n}$ and for $\u b = (b_1, \dots, b_n) \in \Z_+^n$, we denote by 
%\begin{equation*}
   $ F_G(\u b)$
%\end{equation*}
%\begin{align*}F_G(b_1, \dots, b_n)&=\card\{\mbox{integer metrics on }G\mbox{ such that boundary lengths are }b_1, \dots, b_n\} %\\
%&=  \card\{P_G(\u b)\cap \Z^{E(G)}_+\}
%\end{align*}
the number of integer metrics on $G$ with perimeters of the corresponding boundary components equal to $b_1, \dots, b_n$, respectively. We then define the following counting functions for all $g,n,\valu$: 
\begin{equation}
    F_{g,n}^{\valu}(\u b) = \sum_{G\in \RGc^{\valu}_{g,n}} \frac{F_G(\u b)}{|\Aut(G)|}.
\end{equation}
\label{def:counting_fct}
\end{defi}

It is known (see \cite{Blakley}, \cite{Sturmfels}, \cite{Brion-Vergne}, \cite{Norbury-cell} for instance) that for each ribbon graph $G$, $F_G$ is a piecewise quasi-polynomial in $\u b\in\Z^n$ (meaning in this case that it is a piecewise polynomial on each coset of $2\Z^n\subset \Z^n$). However, for our applications we need to make this statement more precise. In particular, we need to specify how the discontinuities of $F_G$ are related to the structure of the ribbon graph $G$. This will be done in the following sections.

\subsection{Passing to the dual ribbon graphs}

It will be more convenient for us to work with dual ribbon graphs.  Recall that the \emph{dual} $G^*$ of a ribbon graph $G$ is constructed as follows (for simplicity, we assume that $G$ is cellularly embedded in a surface). Put a new vertex inside each face of $G$. Then, for each edge $e$ of $G$, join the two new vertices corresponding to the two faces of $G$ incident to $e$ by a new edge $e^*$ which only intersects $e$. Note that if $e$ was incident twice to the same face, then $e^*$ is a loop. The ribbon graph formed by the new vertices and the new edges is the dual ribbon graph $G^*$.
%We denote the dual of a ribbon graph $G$ by $G^*$.
 For any $G \in \RGc^{\valu}_{g,n}$ the dual ribbon graph $G^*$ has genus $g$, $n$ labeled vertices, and face degrees given by $\valu$. Moreover, $|\Aut(G^*)| = |\Aut(G)|$. Denote by $\RGc^{\valu,*}_{g,n}$ the set of isomorphism classes of duals of ribbon graphs in $\RGc^{\valu}_{g,n}$. 

The metrics on any $G \in \RGc^{\valu}_{g,n}$ with perimeters of the boundary components given by $\u b$ are in bijective correspondence with the metrics on $G^*$ with sums of edge lengths around each vertex given by $\u b$ (by assigning to any edge of $G^*$ the length of its dual edge in $G$). By analogy, we call these numbers the \emph{vertex perimeters} of $G^*$. For any dual ribbon graph $G \in \RGc^{\valu,*}_{g,n}$ we denote by $F^*_G(\u b)$ the number of integer metrics on $G$ with vertex perimeters equal to $b_1,\ldots,b_n$. By the remarks above, the counting function $F_{g,n}^{\valu}$ can also be defined as
\begin{equation}
\label{eq:F-dual-def}
        F_{g,n}^{\valu}(\u b) = \sum_{G\in \RGc^{\valu,*}_{g,n}} \frac{F^*_G(\u b)}{|\Aut(G)|}.
\end{equation}

\subsection{Weight functions on non-bipartite ribbon graphs}

We call a \emph{weight function} on a ribbon graph $G$ any function $w:E(G) \rightarrow \R$. For $e \in E(G)$, $w(e)$ is called the \emph{weight} of $e$. Note that the metrics on $G$ are exactly the positive weight functions on $G$. The \emph{vertex perimeters} of a weight function are defined analogously to the case of metrics. The space of all weight functions on $G$ is the vector space $\R^{E(G)}$. For $G \in \RGc^{\valu,*}_{g,n}$, denote by $\vp_G:\R^{E(G)} \rightarrow \R^n$ the map which assigns to a weight function its vertex perimeters. It is a linear map given by the incidence matrix $\big(a_{ve}\big)_{\substack{1\leq v\leq n \\ e \in E(G)}}$, with 
\[ a_{ve}=
\begin{cases} 
2, \mbox{ if } e \mbox{ is a loop based at } v,\\
1, \mbox{ if } e \mbox{ is a not a loop and is incident to } v,\\
0, \mbox{ otherwise.}
\end{cases}
\]

Recall that a graph is bipartite if and only if all of its cycles have even length. Note that if $\valu$ is an odd partition, than the face cycles of any $G \in \RGc^{\valu,*}_{g,n}$ are odd. In particular, $G$ is non-bipartite.

We now give several elementary statements about weight functions on non-bipartite ribbon graphs.

\begin{defi}%[Static edges]
%\label{def:odd-static-edges}
    Let $G$ be a non-bipartite ribbon graph. An edge $e \in E(G)$ is called \emph{static} if at least one connected component of $G-e$ is bipartite. The set of static edges of a non-bipartite ribbon graph $G$ is denoted by $S(G)$.
\end{defi}

Recall that a \emph{bridge} of a graph is an edge whose deletion disconnects the graph. Note that if a static edge $e$ is a bridge, then exactly one connected component of $G-e$ is bipartite (because otherwise $G$ would be bipartite). If $e$ is not a bridge, than $G-e$ is connected and bipartite, and $e$ is incident to two vertices from the same part of $G-e$ (again, because otherwise $G$ would be bipartite).

The terminology is explained by the following Lemma \ref{lem:odd-weight-static-edge}, which says that the weights of static edges are uniquely determined by the vertex perimeters.

\begin{lemma}[Weight of a static edge]
\label{lem:odd-weight-static-edge}
    Let $G \in \RGc^{\valu,*}_{g,n}$ be a non-bipartite ribbon graph and let $w$ be a weight function on $G$ with $\vp_G(w)=\u b$. Let also $e \in S(G)$ be a static edge. 
    \begin{itemize}
        \item If $e$ is a bridge, let $G'$ be the connected component of $G-e$ which is bipartite. Color the parts of $G'$ in black and white in such a way that $e$ is adjacent to a black vertex. Let $I, J \subset \{1,\ldots,n\}$ be the labels of black and white vertices in $G'$. Then
    \begin{equation}
    \label{eq:prelim-static-edge-weight-1}
        w(e) = \sum_{i \in I} b_i - \sum_{j \in J} b_j.
    \end{equation}
    \item If $e$ is not a bridge, $G-e$ is connected and bipartite. Color its parts in black and white in such a way that $e$ is adjacent to two black vertices. Let $I, J \subset \{1,\ldots,n\}$ be the labels of black and white vertices in $G-e$.       Then $I \cup J = \{1,\ldots, n\}$ and 
    \begin{equation}
    \label{eq:prelim-static-edge-weight-2}
        w(e) = \frac{1}{2}\cdot \left(\sum_{i \in I} b_i - \sum_{j \in J} b_j \right).
    \end{equation}
    \end{itemize}
\end{lemma}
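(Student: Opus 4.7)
The strategy rests on the following identity for a bipartite graph $H$ with a fixed 2-coloring of its vertices: for any weight function $w$ on $H$,
\[
\sum_{v\text{ black}} \sum_{e \ni v} w(e) \;=\; \sum_{v\text{ white}} \sum_{e \ni v} w(e) \;=\; \sum_{e \in E(H)} w(e),
\]
because a bipartite graph has no loops (a loop is a cycle of length $1$) and every edge contributes its weight once to its black endpoint and once to its white endpoint. The plan is to apply this identity to the appropriate bipartite (sub)graph obtained from $G$ by removing $e$, and then to bookkeep how the vertex perimeters $b_i$ in $G$ differ from the corresponding perimeters computed inside that subgraph.

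\emph{Bridge case.} Let $v_0$ denote the endpoint of $e$ lying in $G'$, which is black by hypothesis. Since $e$ is the unique edge of $G$ incident to $G'$ but not lying in $G'$, for each vertex $v$ of $G'$ with label $i$ one has $b_i = b_i^{G'} + \delta_{v,v_0} \cdot w(e)$, where $b_i^{G'}$ denotes the vertex perimeter of $v$ computed inside $G'$. Summing with signs according to the coloring and invoking the bipartite identity on $G'$, the $b_i^{G'}$ contributions cancel and one obtains $\sum_{i \in I} b_i - \sum_{j \in J} b_j = w(e)$, which is the asserted formula.

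\emph{Non-bridge case.} Here $G-e$ is connected and bipartite, so $I \cup J = \{1,\ldots,n\}$. The endpoints $v_1, v_2$ of $e$ are both black by hypothesis; indeed, they must lie in the same part of $G-e$, since otherwise the 2-coloring of $G-e$ would extend to $G$, contradicting the non-bipartiteness of $G$. Whether $e$ is a loop (counted twice at $v_1=v_2$) or a non-loop (counted once at each of $v_1,v_2$), the total contribution of $e$ to the sum of perimeters of black vertices is $2w(e)$, while its contribution to the white sum is $0$. Combined with the bipartite identity applied to $G-e$, this yields $\sum_{i\in I} b_i - \sum_{j \in J} b_j = 2w(e)$, which rearranges to the claimed expression.

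The argument is essentially bookkeeping and I do not anticipate a substantial obstacle; the only items requiring a moment's care are checking that the loop and non-loop subcases of the non-bridge situation both produce the factor $2$, and verifying that the sign conventions are consistent across the two cases. The conceptual point is that ``$e$ is static'' is precisely the condition that makes some component of $G-e$ bipartite, and the bridge versus non-bridge dichotomy corresponds to whether the relevant bipartite subgraph omits vertices outside itself (contributing a single signed $w(e)$) or contains every vertex with both $e$-endpoints in the same color class (contributing $2w(e)$).
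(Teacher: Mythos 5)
Your proof is correct and follows essentially the same route as the paper: the paper computes the total edge weight of the bipartite component in two ways (once via the white perimeters, once via the black perimeters plus the contribution of $e$), which is exactly your "black sum equals white sum, then correct for $e$" bookkeeping. The extra details you supply — that the endpoints of $e$ lie in the same part in the non-bridge case, and that a loop contributes $2w(e)$ via the incidence convention — are consistent with the paper's setup and remarks preceding the lemma.
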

\begin{proof}
    Compute the sum of all edge weights of the bipartite component of $G-e$ in two ways. On the one hand it is $\sum_{j \in J} b_j$. On the other hand, it is $\sum_{i \in I} b_i - w(e)$ if $e$ is a bridge, and  $\sum_{i \in I} b_i - 2w(e)$ otherwise.
\end{proof}

For a static edge $e \in S(G)$ we denote by $f_e(\u b)$ the linear function on $\R^n$ giving the weight of $e$ as a function of vertex perimeters, (\ref{eq:prelim-static-edge-weight-1}) or (\ref{eq:prelim-static-edge-weight-2}). We do not specify the dependency on $G$ in the notation $f_e$ as it will always be clear from the context.

The proof of the following properties is elementary and is postponed to Appendix \ref{appendix:metrics}.

\begin{lemma}
\label{lem:properties-weight-funcs}
    Let $G \in \RGc^{\valu, *}_{g,n}$ be a non-bipartite ribbon graph. Then:
    \begin{enumerate}
        \item $\operatorname{Im}(\operatorname{vp}_G) = \mathbb{R}^n$;
        \item for every $\u b \in \mathbb{R}^n$, $\operatorname{vp}_G^{-1}(\u b)$ is an affine subspace of $\mathbb{R}^{E(G)}$ of dimension $|E(G)| - |V(G)|$;
        \item regard the coordinates $w(e)$ of $\R^{E(G)}$ as linear functions on $\vp_G^{-1}(\u b)$; then for all $e\in S(G)$, $w(e)$ is constant with value $f_e(\u b)$, and all other functions $w(e), e\in E(G) \setminus S(G)$ are non-constant;
        \item if $\u b \in \Z^n$ and $b_1+\ldots+b_n = 0 \pmod{2}$, then $\vp_G^{-1}(\u b) \cap \Z^{E(G)}$ is a lattice in $\vp_G^{-1}(\u b)$; otherwise, $\vp_G^{-1}(\u b) \cap \Z^{E(G)}$ is empty.
    \end{enumerate}
\end{lemma}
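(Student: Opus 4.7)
The plan is to reduce all four statements to the classical linear algebra of the incidence matrix of a connected non-bipartite graph; $\vp_G$ is essentially this incidence map (with a $2$ on loops), and $G$ is non-bipartite because each face cycle is odd (as $\valu$ is odd, a fact noted in the preceding paragraph of the paper).

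For (1) and (2) I would prove surjectivity of $\vp_G$ over $\R$ constructively: given a vertex $v$, take a walk in $G$ from $v$ to a vertex on some odd cycle, followed by one full traversal of that cycle; assigning alternating weights $\pm\tfrac{1}{2}$ along this closed walk yields a weight function with $\vp_G(w)=e_v$, the oddness of the cycle being exactly what allows the alternation to close compatibly and cancel all intermediate vertex perimeters. This gives $\operatorname{Im}(\vp_G)=\R^n$, and (2) follows from rank--nullity together with the fact that each fiber is an affine translate of $\ker(\vp_G)$.

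For (3), Lemma~\ref{lem:odd-weight-static-edge} already handles $e\in S(G)$. The nontrivial direction is that for $e\notin S(G)$ there exists $w_0\in\ker(\vp_G)$ with $w_0(e)\neq 0$. By definition of non-static, every connected component of $G-e$ is non-bipartite. Set $w_0(e)=1$, which contributes $1$ to the vertex perimeter at each endpoint of $e$ (or $2$ at the basepoint if $e$ is a loop); then apply surjectivity from (1) separately to each connected component of $G-e$ to find weights on $E(G)\setminus\{e\}$ that cancel these contributions. This existence step is the main technical point of the lemma.

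Part (4) splits naturally. The parity obstruction follows from the identity $\sum_i \vp_G(w)_i=2\sum_e w(e)\in 2\Z$ for $w\in\Z^{E(G)}$. Conversely, assuming $\sum_i b_i$ is even, I would exhibit explicit integer preimages of the generators of the index-$2$ sublattice $\{\u b\in\Z^n:\sum_i b_i\equiv 0\bmod 2\}$: the vector $e_u-e_v$ is realized by an alternating $\pm 1$ weighting of any walk from $u$ to $v$, while $2e_v$ is realized by a $\pm 1$ weighting of a walk from $v$ to and around an odd cycle. Hence $\vp_G^{-1}(\u b)\cap\Z^{E(G)}$ is non-empty, and as a coset of $\ker(\vp_G)\cap\Z^{E(G)}$ it is an affine lattice of full rank $|E(G)|-|V(G)|$ in $\vp_G^{-1}(\u b)$, the rank being automatic because $\vp_G$ is defined over $\Z$ so its kernel is a rational subspace. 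The main obstacle overall is the kernel-element construction in (3); the rest is a routine synthesis of the classical path/odd-cycle constructions for non-bipartite graphs.
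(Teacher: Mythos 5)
Your proof is correct, and it reorganizes the paper's argument rather than reproducing it. The paper routes (1), (2) and the non-emptiness half of (4) through an auxiliary lemma on \emph{one-odd-cycle} subgraphs (a spanning tree plus one extra edge creating an odd cycle): every edge of such a subgraph is static, so Lemma~\ref{lem:odd-weight-static-edge} makes the perimeter map an isomorphism there, and one extends weight functions by zero to all of $G$. Your alternating-$\pm\tfrac12$ walk constructions hitting $e_v$, $e_u\pm e_v$ and $2e_v$ achieve the same ends directly; the only slip is that an alternating $\pm1$ walk from $u$ to $v$ of odd length realizes $e_u+e_v$ rather than $e_u-e_v$, which is harmless since $2e_v$ is also realized. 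For (3) the paper explicitly builds an even closed path running through odd cycles in both components of $G-e$ and perturbs weights alternately along it; your version (assign $e$ weight $1$ and cancel the induced perimeters by surjectivity on each non-bipartite component of $G-e$) is the same idea packaged more modularly --- just record that your surjectivity argument uses only connectedness and non-bipartiteness, so it legitimately applies to the components of $G-e$ even though these need not have odd face degrees. Finally, for the lattice claim in (4) the paper writes down an explicit integral basis of $\ker\vp_G\cap\Z^{E(G)}$ indexed by the edges outside the one-odd-cycle subgraph, while you appeal to rationality of the kernel; both are valid, the explicit basis being slightly more informative.
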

Note that the condition $b_1+\ldots+b_n = 0 \pmod{2}$ is clearly necessary for the existence of an integer weight function with vertex perimeters $\u b$, because this sum is twice the sum of weights of all edges.

\subsection{Polytopes of metrics}

Let $G \in \RGc^{\valu, *}_{g,n}$ be a non-bipartite ribbon graph and let $\u b \in \R^n$. Define the following polytope in $\vp_G^{-1}(\u b)$:
\begin{equation}
\label{eq:odd-one-graph-polytope}
    P_G(\u b) = \{w \in \vp_G^{-1}(\u b) : w(e) \ge 0, e \in E(G) \setminus S(G)\}.
\end{equation}
Note that by point 3 of Lemma \ref{lem:properties-weight-funcs}, for each non-static edge $e \in E(G) \setminus S(G)$ the weight $w(e)$ is a non-constant linear function on $\vp_G^{-1}(\u b)$, and so each condition $w(e) \ge 0$ in (\ref{eq:odd-one-graph-polytope}) defines a half-space.

The significance of this polytope for our problem is that the counting function $F^*_G$ is closely related to the count of integer points inside $P_G$. More precisely, for all $\u b \in \Z^n$ we have:
\begin{equation}
\label{eq:odd-NG-contribution-explicit}
    F^*_G(\u b) = \left( \prod_{e \in S(G)} \mathbf{1}_{f_e(\u b)>0} \right) \cdot |\operatorname{int} P_G(\u b) \cap \Z^{E(G)}|,
\end{equation}
where $\mathbf{1}$ denotes the indicator function and $\operatorname{int}$ denotes the interior relative to $\vp_G^{-1}(\u b)$. 

Define for all $\u b \in \R^n$
\begin{equation}
\label{eq:VG-def}
    V^*_G(\u b) = \left( \prod_{e \in S(G)} \mathbf{1}_{f_e(\u b)>0} \right) \cdot \Vol P_G(\u b),
\end{equation}
where the volume is with respect to the volume form on $\vp^{-1}_G(\u b)$ which is the quotient of standard Euclidean volume forms on $\R^{E(G)}$ and $\R^n$.

Define also for all $g,n,\valu$
\begin{equation}
\label{eq:V-def}
    V^{\valu}_{g,n}(\u b) = \sum_{G \in \RGc^{\valu, *}_{g,n}} \frac{V^*_G(\u b)}{|\Aut(G)|}.
\end{equation}

\subsection{Walls and piecewise (quasi-)polynomiality}

\begin{defi}\label{def:wall}
    A \emph{wall} is a hyperplane or an intersection of hyperplanes in $\R^n$ of the form
    \begin{equation}
    \label{eq:wall-eq}
        \sum_{i \in I} b_i - \sum_{j \in J} b_j = 0,
    \end{equation}
    where $I,J \subset \{1,\ldots,n\}$ and $I \cap J = \emptyset$. The walls generate a subdivision of $\R^n$ into (relatively) open polyhedral pieces of various dimensions which we call \emph{cells}.
\end{defi}
Note that the cells are stable by dilation, and so they are in fact relatively open polyhedral cones.

%Note that the kernels of linear functions $f_e(\u b)$ (which give the weights of static edges) are walls.

\begin{prop}
\label{prop:quasi-poly}
    Let $G \in \RGc^{\valu, *}_{g,n}$ be a non-bipartite ribbon graph, let $C$ be a cell in $\R^n$ and let $\ov C$ be its closure. 
    Then:
    \begin{enumerate}
        \item $|\operatorname{int} P_G(\u b) \cap \Z^{E(G)}|$ is a quasi-polynomial for $\u b \in C \cap \Z^n$ (more precisely, a polynomial on each coset of $2\Z^n \subset \Z^n$ intersected with $C$) of degree at most $|E(G)|-|V(G)|$;
        \item $\Vol P_G(\u b)$ is a polynomial for $\u b \in \ov C$, which is either of degree $|E(G)|-|V(G)|$ or identically zero.
    \end{enumerate}
\end{prop}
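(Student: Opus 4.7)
The plan is to realize $P_G(\u b)$ as a parametric polytope in $\u b \in \R^n$ and apply parametric Ehrhart theory. By Lemma~\ref{lem:properties-weight-funcs}, $\vp_G^{-1}(\u b)$ has constant dimension $|E(G)|-|V(G)|$, so fixing a complement to $\ker \vp_G$ in $\R^{E(G)}$ gives an affine identification $\vp_G^{-1}(\u b) \simeq \R^{|E(G)|-|V(G)|}$ whose basepoint depends linearly on $\u b$. In these coordinates $P_G(\u b)$ is cut out by the half-spaces $w(e) \ge 0$ for $e \in E(G) \setminus S(G)$, whose bounding hyperplanes translate linearly with $\u b$. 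The polytope is moreover bounded: at each vertex the constraint $b_i$ equals a fixed static-edge contribution plus a non-negative combination of non-static weights, so each non-static weight is bounded by $b_i$ minus the static part.

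The key structural claim is that within each cell $C$ of Definition~\ref{def:wall} the combinatorial type of $P_G(\u b)$ is constant, with vertex coordinates of the form $\tfrac{1}{2}\bigl(\sum_{i\in I}b_i - \sum_{j\in J}b_j\bigr)$ for disjoint $I,J \subset \{1,\ldots,n\}$. Each vertex of $P_G(\u b)$ is obtained by saturating $|E(G)|-|V(G)|$ of the non-static-edge inequalities to equality and solving the vertex-perimeter equations. By Cramer's rule the resulting coordinates take this form provided every invertible $|V(G)|\times|V(G)|$ submatrix of the vertex-edge incidence matrix $A$ has determinant $\pm 2^c$ with cofactors in $\{0,\pm 1\}$. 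This is the classical fact that an invertible such submatrix corresponds to a spanning subgraph of $G$ in which each of the $c$ connected components is unicyclic with an odd cycle, and the non-bipartite hypothesis on $G$ ensures such submatrices exist. Consequently the signs governing the face lattice of $P_G(\u b)$ are precisely those appearing in Definition~\ref{def:wall}, so the combinatorial type is constant on all of $\ov C$.

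Part~(2) then follows immediately: triangulate $P_G(\u b)$ into simplices whose vertices move affinely with $\u b$, so the total volume is a polynomial in $\u b$ of degree $\dim P_G(\u b) = |E(G)|-|V(G)|$ when $P_G(\u b)$ is full-dimensional on $\ov C$, or identically zero if $P_G(\u b)$ is everywhere lower-dimensional (the dichotomy holds because the dimension is constant on $\ov C$). Part~(1) follows from the parametric Ehrhart theorem (cf.\ \cite{Brion-Vergne, Sturmfels, Blakley}): on each cell, $|\operatorname{int} P_G(\u b) \cap \Z^{E(G)}|$ is a quasi-polynomial of degree at most $\dim P_G(\u b)$, with period dividing the LCM of the denominators of the vertex coordinates. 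The structural claim bounds this LCM by $2$, yielding polynomiality on each coset of $2\Z^n$ intersected with $C$; this is consistent with the parity condition of Lemma~\ref{lem:properties-weight-funcs}(4).

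The main obstacle is the structural claim on vertex-coordinate denominators and their explicit $\tfrac{1}{2}(\sum_I b_i - \sum_J b_j)$ form, which is where the non-bipartite hypothesis enters essentially through the determinant structure of $A$. Once this combinatorial analysis is in place, everything else reduces to a routine application of parametric Ehrhart theory.
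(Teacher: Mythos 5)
Your proposal is correct and follows essentially the same route as the paper: show that on each cell the polytopes $P_G(\u b)$ have constant combinatorial type with vertex coordinates given by half-integral linear forms $\tfrac12\bigl(\sum_{i\in I}b_i-\sum_{j\in J}b_j\bigr)$, then invoke a parametric Ehrhart/Barvinok theorem for the lattice count (the paper uses \cite[Theorem 18.4]{Barvinok}, restated as Theorem~\ref{thm:odd-barvinok}) together with its volume analogue. The paper derives the vertex structure graph-theoretically rather than via Cramer's rule --- vertices of $P_G(\u b)$ are exactly the weight functions supported on disjoint unions of trees and one-odd-cycle graphs (Lemmas~\ref{lem:odd-vertex_characterisation} and~\ref{lem:odd-polytope_dependence}), with the explicit formulas of Lemma~\ref{lem:odd-weight-static-edge} --- which is equivalent to your determinant argument; two small corrections: for $c\ge 2$ odd-unicyclic components the relevant cofactors are $\pm 2^{c-1}$ rather than $\pm 1$ (the ratio to the determinant $\pm 2^{c}$ is still $\pm\tfrac12$, so half-integrality survives), and the combinatorial type is constant only on $C$ and may degenerate on $\ov C\setminus C$, so the extension of the volume polynomial to $\ov C$ should be argued by continuity, as the paper does.
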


We give here the idea of the proof of Proposition \ref{prop:quasi-poly}, the formal proof is postponed to Appendix \ref{appendix:metrics}.

When $\u b$ changes inside $C$, the combinatorial structure of the polytope $P_G(\u b)$ remains the same, only the hyperplanes defining its faces change by parallel translations. Moreover, for $\u b \in C \cap \Z^n$, the vertices of $P_G(\u b)$ have rational (half-integer, in fact) coordinates. By a general result from the theory of integer points in polyhedra, the number of integers points in the interior of $P_G(\u b)$ is then a quasi-polynomial in the coordinates of the vertices of $P_G(\u b)$, which are in turn linear functions of $\u b$. Similarly, for $\u b \in C$ the volume of $P_G(\u b)$ is a polynomial in these coordinates. The polynomial expression for the volume is valid on the boundary $\ov C \setminus C$ as well, by continuity (the hyperplanes defining $P_G(\u b)$ depend continuously on $\u b$).

%When $\u b$ converges to a point in $\ov C \setminus C$, i.e.\ to an adjacent lower-dimensional cell, some of the vertices of $P_G(\u b)$ can merge, thus changing the combinatorial structure or even the dimension of $P_G(\u b)$. Nevertheless, the volume is still given by the same polynomial as before (note that this property does not hold for the count of integer points).

\begin{lemma}
\label{lem:top-is-volume}
    Let $G \in \RGc^{\valu, *}_{g,n}$ be a non-bipartite ribbon graph and let $C$ be a cell in $\R^n$. 
    Then the terms of degree $|E(G)|-|V(G)|$ of the polynomials giving $|\operatorname{int} P_G(\u b) \cap \Z^{E(G)}|$ on 
    \[
     (\Z^n \cap C) \cap \left \{ b_1+\ldots+b_n = 0 \pmod{2} \right \},
    \]
    are all equal to $2\cdot \Vol P_G(\u b)$.
\end{lemma}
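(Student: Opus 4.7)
The plan is to recognise Lemma~\ref{lem:top-is-volume} as an instance of the classical Ehrhart principle: the top-degree part of an Ehrhart quasi-polynomial of a rational polytope equals the volume of the polytope divided by the covolume of the ambient lattice. The entire argument therefore reduces to computing $\Covol(\Lambda_0)$, where $\Lambda_0 := \vp_G^{-1}(0) \cap \Z^{E(G)}$ is viewed inside the vector space $\vp_G^{-1}(0)$ equipped with the quotient Euclidean volume form coming from $\R^{E(G)} \to \R^n$; this is precisely the volume used in the definition of $\Vol(P_G(\u b))$. I expect to find $\Covol(\Lambda_0) = 1/2$, which immediately yields the factor $2$ in the statement.

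The covolume computation goes through the short exact sequence of abelian groups
\[
0 \longrightarrow \Lambda_0 \longrightarrow \Z^{E(G)} \xrightarrow{\vp_G} L \longrightarrow 0,
\]
where $L := \vp_G(\Z^{E(G)})$. Since every column of the incidence matrix $(a_{ve})$ sums to $2$ (a loop at $v$ contributes $2$ to one entry, a non-loop contributes $1$ to two entries), we have $\sum_v \vp_G(w)_v = 2\sum_e w(e)$, so $L$ is contained in the index-$2$ sublattice $\{\u b \in \Z^n : b_1+\ldots+b_n \equiv 0 \pmod{2}\}$ of $\Z^n$. The reverse inclusion is exactly Lemma~\ref{lem:properties-weight-funcs}(4), so $L$ coincides with this sublattice and $\Covol(L) = 2$. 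Multiplicativity of covolumes along short exact sequences (with the quotient Euclidean convention on $\vp_G^{-1}(0)$) then gives $\Covol(\Lambda_0) = \Covol(\Z^{E(G)})/\Covol(L) = 1/2$.

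For the Ehrhart step I would exploit that $\vp_G$ is linear and the constraints $w(e) \ge 0$ are positively homogeneous in $\u b$, so $P_G(N\u b) = N \cdot P_G(\u b)$ and $\vp_G^{-1}(N\u b) \cap \Z^{E(G)}$ is a translate of $\Lambda_0$ for every $N \in \Z_{>0}$ and every admissible $\u b$. The classical Ehrhart theorem for a rational polytope sitting in a translated lattice then asserts that $N \mapsto |\operatorname{int}(N P_G(\u b)) \cap (\vp_G^{-1}(N\u b) \cap \Z^{E(G)})|$ is a quasi-polynomial in $N$ of degree $\dim P_G(\u b)$ with leading coefficient $\Vol(P_G(\u b))/\Covol(\Lambda_0)$, independent of the residue class of $N$. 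Combined with Proposition~\ref{prop:quasi-poly} --- which says that the integer-point count is polynomial in $\u b$ on each mod-$2$ coset of $\Z^n \cap C$, and that $\Vol(P_G(\u b))$ is polynomial on $\ov C$ of the same degree or identically zero --- this forces the top-degree parts of both polynomials to coincide and to equal $2\Vol(P_G(\u b))$. The passage from the closed polytope to its interior only removes boundary contributions of strictly smaller degree in $\u b$ and so does not affect the top-degree term; the identically-zero volume case corresponds to a lower-dimensional polytope and is trivial. I do not anticipate any serious obstacle beyond the covolume identification, which is the real content of the lemma.
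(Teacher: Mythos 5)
Your proposal is correct and follows essentially the same route as the paper: both arguments dilate $\u b$ to $N\u b$, invoke the Ehrhart-type asymptotics $|\operatorname{int} P_G(N\u b)\cap\Z^{E(G)}|\sim N^{|E(G)|-|V(G)|}\cdot\Vol P_G(\u b)/\Covol$, and obtain the factor $2$ from the fact that $\vp_G(\Z^{E(G)})$ is the index-$2$ sublattice $\{b_1+\ldots+b_n\equiv 0\pmod 2\}$. Your short-exact-sequence computation of $\Covol(\Lambda_0)=1/2$ is just a more explicit phrasing of the paper's index-$2$ observation, so no substantive difference.
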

\begin{proof}
    If $b_1+\ldots+b_n = 0 \pmod{2}$, then $\vp^{-1}_G(\u b) \cap \Z^n$ is a lattice in $\vp^{-1}_G(\u b)$ by point (4) of Lemma \ref{lem:properties-weight-funcs}. Consider the asymptotics of $|\operatorname{int} P_G(N \cdot \u b) \cap \Z^{E(G)}|$ when $N \rightarrow \infty$. On the one hand, it grows as $N^{|E(G)|-|V(G)|}$ times the term of degree $|E(G)|-|V(G)|$ of the corresponding polynomial. On the other hand, it grows as $N^{|E(G)|-|V(G)|}$ times the volume of $P_G(\u b)$ with respect to the Lebesgue measure on $\vp_G^{-1}(\u b)$ normalized so that the covolume of the integer lattice is one. This volume is twice the quotient volume, since the integer lattice of $\R^{E(G)}$ is sent to the lattice $b_1+\ldots+b_n = 0 \pmod{2}$ in $\R^n$, which is of index 2.
\end{proof}

\begin{Corollary}
\label{cor:top-deg-term}
    Fix $g,n,\valu$ and a cell $C$ in $\R^n$. Then:
    \begin{enumerate}
        \item $F_{g,n}^{\valu}(\u b)$ is a quasi-polynomial for $\u b \in C \cap \Z^n$ (polynomial on each coset of $2\Z^n \subset \Z^n$ intersected with $C$) of degree at most $2g-2+\ell(\valu)$;
        \item $V_{g,n}^{\valu}(\u b)$ is a polynomial for $\u b \in C$;
        \item the terms of degree $2g-2+\ell(\valu)$ of the polynomials giving $F_{g,n}^{\valu}(\u b)$ on 
        \[
        (\Z^n \cap C) \cap \left \{ b_1+\ldots+b_n = 0 \pmod{2} \right \}
        \]
        are all equal to $2\cdot V^{\valu}_{g,n}(\u b)$.
    \end{enumerate}
\end{Corollary}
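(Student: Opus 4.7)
The plan is to deduce the Corollary by taking finite linear combinations of the per-graph statements Proposition~\ref{prop:quasi-poly} and Lemma~\ref{lem:top-is-volume}, using the definitions (\ref{eq:F-dual-def}) and (\ref{eq:V-def}). Before summing, I would establish two preliminary observations. The first is that the degree bound $|E(G)|-|V(G)|$ appearing in Proposition~\ref{prop:quasi-poly} is the same for every $G \in \RGc^{\valu,*}_{g,n}$, and coincides with $2g-2+\ell(\valu)$: indeed $|V(G)|=n$ and $2|E(G)|=|\valu|$ (sum of face degrees of $G^*$), so the Euler relation $|\valu|-2\ell(\valu)=4g-4+2n$ yields $|E(G)|-|V(G)|=|\valu|/2-n=2g-2+\ell(\valu)$.

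The second observation concerns the indicator factors in (\ref{eq:odd-NG-contribution-explicit}) and (\ref{eq:VG-def}). By Lemma~\ref{lem:odd-weight-static-edge}, for each static edge $e$ the linear function $f_e$ is, up to a factor $1/2$, of the form $\sum_{i \in I}b_i-\sum_{j \in J}b_j$ with $I \cap J=\emptyset$, so its zero set is contained in a wall in the sense of Definition~\ref{def:wall}. Consequently, on any open cell $C$ each indicator $\mathbf{1}_{f_e(\u b)>0}$ takes a constant value. Multiplying the quasi-polynomial/polynomial expressions from Proposition~\ref{prop:quasi-poly} by these constants, I conclude that on $C$ either $F^*_G$ (resp. $V^*_G$) vanishes identically, or it agrees with the quasi-polynomial (resp.\ polynomial) giving $|\operatorname{int} P_G(\u b) \cap \Z^{E(G)}|$ (resp.\ $\Vol P_G(\u b)$) of degree at most $2g-2+\ell(\valu)$.

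With these in hand, parts (1) and (2) follow by summing the finite linear combinations (\ref{eq:F-dual-def}) and (\ref{eq:V-def}): quasi-polynomiality (coset by coset on $C \cap \Z^n$) and polynomiality (on $\ov C$) are preserved by finite sums, as is the degree bound. For part (3), restrict to the sublattice $(C \cap \Z^n) \cap \{b_1+\ldots+b_n \equiv 0 \pmod 2\}$ and examine each $G$ separately: either some indicator vanishes on $C$, in which case the contribution to the top-degree term is $0=2V^*_G$, or all indicators are $1$ on $C$, in which case Lemma~\ref{lem:top-is-volume} identifies the top-degree term of the polynomial giving $F^*_G$ with $2\Vol P_G(\u b)=2V^*_G(\u b)$. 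Summing with weights $1/|\Aut(G)|$ yields $2V^{\valu}_{g,n}(\u b)$. Since every ingredient is already supplied, there is no serious obstacle; the only delicate point is verifying that the indicators are locally constant on open cells, which is precisely arranged by matching the vanishing loci of the $f_e$'s with the walls of Definition~\ref{def:wall}. Everything else is bookkeeping.
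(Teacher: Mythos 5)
Your proposal is correct and follows essentially the same route as the paper: the paper's proof likewise notes that the indicator functions $\mathbf{1}_{f_e(\u b)>0}$ are constant on each cell (since the $f_e$ are of the wall form~(\ref{eq:wall-eq})), invokes Euler's formula to get $|E(G)|-|V(G)|=2g-2+\ell(\valu)$, and then sums the per-graph statements of Proposition~\ref{prop:quasi-poly} and Lemma~\ref{lem:top-is-volume} over the finite weighted sums~(\ref{eq:F-dual-def}) and~(\ref{eq:V-def}). Your write-up merely spells out these steps in more detail; there is no gap.
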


% \begin{Corollary}
%     Let $G \in \RGc^{\valu, *}_{g,n}$ be a non-bipartite ribbon graph and let $C$ be a chamber in $\R^n$. Then:
%     \begin{itemize}
%         \item $F^*_G$ is a quasi-polynomial for $\u b \in C \cap \Z^n$;
%         \item $V^*_G$ is a polynomial for $\u b \in C$;
%         \item for $\u b \in \Z^n$ such that $b_1+\ldots+b_n = 0 \pmod{2}$, the top-degree term of $F^*_G(\u b)$ is $2 \cdot V^*_G(\u b)$.
%     \end{itemize}
% \end{Corollary}
\begin{proof}
    When $\u b \in C$, the sign ($+$, $-$ or $0$) of any linear function of the form (\ref{eq:wall-eq}) is constant; in particular, the product of indicator functions in (\ref{eq:odd-NG-contribution-explicit}) and (\ref{eq:VG-def}) is constant. Hence the statement follows directly from Proposition \ref{prop:quasi-poly} and  Lemma \ref{lem:top-is-volume}. Note that for any $G \in \RGc^{\valu,*}_{g,n}$, Euler's formula gives $|E(G)|-|V(G)| = 2g-2+\ell(\valu)$.
\end{proof}

\subsection{A result of Kontsevich}

\begin{prop}[ \S 3.3 of \cite{Kon} ] \label{prop:Kon}
For $\u b \in \Z_+^n$ such that $b_1+\ldots+b_n = 0 \pmod{2}$ and $\u b$ is outside of the walls, the top-degree term $2\cdot V^{\valu}_{g,n}(\u b)$ of $F_{g,n}^{\valu}(\u b)$ is the unlabelled Kontsevich \emph{polynomial} $N_{g,n}^{\valu, unlab}(\underline b)$.
\end{prop}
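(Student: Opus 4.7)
The plan is to derive Proposition~\ref{prop:Kon} from the Kontsevich integration formula of \S3.3 of \cite{Kon}, via a two-stage reduction: first pass from the count $F^\valu_{g,n}$ to a Lebesgue volume, then identify that volume as a $\psi$-class integral over the Witten--Kontsevich combinatorial cycle $W_{m_*,n}$.

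First I reduce to volumes. By Corollary~\ref{cor:top-deg-term}(3), for any $\u b$ inside a fixed cell and satisfying $b_1+\ldots+b_n \equiv 0 \pmod{2}$, the top-degree term of $F^\valu_{g,n}(\u b)$ equals $2\,V^\valu_{g,n}(\u b)$. It therefore suffices to prove the polynomial identity $2\,V^\valu_{g,n}(\u b) = N^{\valu, unlab}_{g,n}(\u b)$. Using the length-preserving duality $G \leftrightarrow G^*$, definition \eqref{eq:V-def} rewrites as $V^\valu_{g,n}(\u b) = \sum_{G \in \RGc^\valu_{g,n}} \Vol P_G(\u b)/|\Aut G|$, where $P_G(\u b)$ is now the polytope of metrics on the primal ribbon graph $G$ with prescribed face perimeters $\u b$. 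Geometrically, this is the Lebesgue volume of the cell $\cM_{m_*,n}(\u b) \subset \cM_{g,n}^{comb}(\u b)$ of metric ribbon graphs with valency profile $\valu$ and fixed face perimeters, which, under the Harer--Mumford--Thurston homeomorphism, represents $W_{m_*,n}$.

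I then invoke Kontsevich's key computation: there is a closed $2$-form $\omega$ on $\cM_{g,n}^{comb}(\u b)$ cohomologous to $\tfrac{1}{2}\sum_i b_i^2 \psi_i$, whose top wedge power agrees on each cell with an explicit rational multiple of the Lebesgue form on edge lengths. Integrating $\exp(\omega)$ over the $\valu$-cell and expanding yields
\[
\int_{W_{m_*,n}} \exp\Bigl(\tfrac{1}{2}\sum_i b_i^2 \psi_i\Bigr) = \sum_{|\u d| = 3g-3+n-M} \frac{\u b^{2\u d}}{2^{|\u d|}\,\u d!}\,\langle \tau_{\u d}\rangle_{m_*}.
\]
Comparing with Definition~\ref{def:Kont_poly}, both sides are polynomials with the same monomial support in $\u b^2$; the identity reduces to showing that the ratio of their coefficients is the global constant $2^{5g-6+2n-2M-1}$ dictated by the target equality $2\,V^\valu_{g,n}(\u b)=N^{\valu,unlab}_{g,n}(\u b)$.

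The main obstacle will be this normalization check. I have to combine (i) the factor $2$ from Corollary~\ref{cor:top-deg-term}(3) arising from the index-$2$ sublattice $\{b_1+\ldots+b_n \equiv 0 \bmod 2\} \subset \Z^n$; (ii) the factor $2^{|\u d|} = 2^{3g-3+n-M}$ from expanding $\exp(\tfrac{1}{2}\,\cdot)$; and (iii) Kontsevich's explicit conversion factor between $\omega^D/D!$ (with $D = \dim_\C W_{m_*,n} = 3g-3+n-M$) and the Lebesgue form on edge lengths, which depends on the vertex count $\sum_i m_i$. Using Euler's relation~\eqref{eq:euler-rel} together with $M = \sum_i m_i(i-1)$, these factors should assemble into the exponent $5g-6+2n-2M$ prescribed by Definition~\ref{def:Kont_poly}. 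The underlying geometric identity is entirely Kontsevich's; the only substantive verification in my plan is this bookkeeping of powers of $2$.
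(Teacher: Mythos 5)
Your proposal follows essentially the same route as the paper: reduce to the volume identity $2\,V^{\valu}_{g,n}=N^{\valu,unlab}_{g,n}$ via Corollary~\ref{cor:top-deg-term}(3), identify $V^{\valu}_{g,n}(\u b)$ with the (quotient) Lebesgue volume of the Kontsevich cell $\pi_{m_*}^{-1}(\u b)$, and invoke Kontsevich's symplectic volume formula $\sum_{|\u d|=3g-3+n-M}\langle\tau_{\u d}\rangle_{m_*}\,\u b^{2\u d}/\u d!$ together with the constancy of the Jacobian between the symplectic and Euclidean volume forms. The only step you leave unexecuted --- the power-of-$2$ bookkeeping --- is exactly what the paper closes by citing Kontsevich's explicit ratio $\rho=2^{5g-5+2n-2M}$ between $\Omega^{D}/D!$ and the Euclidean quotient form (\cite[Lemma 3.1 and Appendix C]{Kon}); note also that Kontsevich's $\Omega$ is $\sum_i b_i^2\omega_i$ rather than $\tfrac12\sum_i b_i^2\psi_i$, so your extra uniform factor $2^{-|\u d|}=2^{-(3g-3+n-M)}$ must be absorbed consistently when you carry out that check.
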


\begin{proof}
This proposition follows from \S 3.3 of \cite{Kon} (see also \cite{Loo} and \cite{Zvon} for the complete proof). Namely 
Kontsevich computes the volumes $\Vol_{symp}(\pi_{m_*}^{-1}(\u b))$ of the fibers of the map $\pi_{m_*}: \cM_{m_*,n}\to \R^n_+$ with respect to the natural symplectic volume form (induced by the 2-form $\Omega$ in the notations of \cite{Kon}), and gets the following formula:
\[\Vol_{symp}(\pi_{m_*}^{-1}(\u b))=\sum_{|\u d|=3g-3+n-M}\frac{\langle\tau_{\u d}\rangle_{m_*}}{\u d!}\u b^{2\u d}.\] 
He then shows that on cells, the ratio of the symplectic volume form $\Omega^d/d!$ and the euclidean quotient volume form $\frac{\prod|d\ell_j|}{\prod |db_i|}$ (where $\ell_j$ denote the lengths of edges of the ribbon graphs) is constant equal to  $\rho=2^{5g-5+2n-2M}$ (\cite[Lemma 3.1 and Appendix C]{Kon}). Note that, up to identifying lower-dimensional faces and factorizing by symmetries, $\pi_{m_*}^{-1}(\u b) = \sqcup_{G \in \RGc^{\valu,*}_{g,n}} P_G(\u b)$. Its euclidean quotient volume is then $V^{\valu}_{g,n}(\u b)$. Using Corollary \ref{cor:top-deg-term}, the top-degree term of $F_{g,n}^{\valu}(\u b)$ is then equal to $2 \cdot V^{\valu}_{g,n}(\u b) = 2 \cdot \frac{\Vol_{symp}(\pi_{m_*}^{-1}(\u b))}{\rho} = N_{g,n}^{\val, unlab}(\u b)$.

%\textcolor{red}{EXPLAIN WHY IT ONLY WORKS OUTSIDE OF THE WALLS.}
\end{proof}

\begin{Remark}Note that the proof of Kontsevich contains some gaps that are addressed in \cite{Loo} and \cite{Zvon}, namely a precise construction of the compactification of the moduli space $\cM_{m_*,n}$ and the possibility to extend some of the introduced cohomology classes on this compactification. \end{Remark}

\subsection{Face-bicolored metric ribbon graphs}

\begin{defi}
    A ribbon graph is \emph{face-bicolored} if its boundary components (faces) are colored in black and white in such a way that any two adjacent boundary components have different colours.
\end{defi}

Denote by $\RGc^{\valu}_{g,(n^\bl,n^\wh)}$ the set of isomorphism classes of face-bicolored ribbon graphs of genus $g$, with $n^\bl$ black and $n^\wh$ white labeled boundary components, and with the degrees of vertices given by the partition $\valu$.

\begin{defi}
For a ribbon graph $G$ in $\RGc^{\valu}_{g,(n^\bl,n^\wh)}$ and for $\u b^\bl \in \Z_+^{n_\bl}$, $\u b^\wh \in \Z_+^{n_\wh}$, we denote by 
\begin{equation*}
    F_G(\u b^\bl; \u b^\wh)
\end{equation*}
the number of integer metrics on $G$ with perimeters of the corresponding black and white boundary components equal to $b^\bl_1, \dots, b^\bl_{n^\bl}$ and $b^\wh_1, \dots, b^\wh_{n^\wh}$, respectively. We then define the following counting functions for all $g,n^\bl,n^\wh,\valu$: 
\begin{equation}
    F_{g,(n^\bl,n^\wh)}^{\valu}(\u b^\bl; \u b^\wh) = \sum_{G\in \RGc^{\valu}_{g,(n^\bl,n^\wh)}} \frac{F_G(\u b^\bl; \u b^\wh)}{|\Aut(G)|}.
\end{equation}
\label{def:counting_fct_ab}
\end{defi}

In this paper, we will only need the counting functions for one-vertex face-bicolored ribbon graphs, i.e. $\ell(\valu)=1$. By Euler's formula we get $\valu = [4g - 2 + 2n^\bl + 2n^\wh]$. These functions were studied in \cite{Yakovlev}. Some results can be recovered in earlier works \cite{OP}, since counting metrics on face-bicolored ribbon graphs is equivalent to counting covers of the sphere ramified over three points, which can be achieved via Hurwitz theory. We recall here a result of \cite{Yakovlev} %and \cite{OP}, 
using our notations.

Note that the dual graphs of face-bicolored ribbon graphs are \emph{vertex-bicolored} and, in particular, \emph{bipartite}. Similarly to (\ref{eq:F-dual-def}), one can define $F_{g,(n^\bl,n^\wh)}^{\valu}$ as the count of metrics on dual graphs in $\RGc^{\valu,*}_{g,(n^\bl,n^\wh)}$ with given vertex perimeters. For each $G \in \RGc^{\valu,*}_{g,(n^\bl,n^\wh)}$ one can also define the polytope $P_G(\u b^\bl; \u b^\wh)$ as in (\ref{eq:odd-one-graph-polytope}), except the set of static edges should be replaced by the set of bridges, see \cite[section 3.2]{Yakovlev}. One then defines $V^*_G(\u b^\bl, \u b^\wh)$ and $V_{g,(n^\bl,n^\wh)}^{\valu}(\u b^\bl, \u b^\wh)$ similarly to (\ref{eq:VG-def}) and (\ref{eq:V-def}).

Define the following hyperplane in $\R^{n_\bl} \times 
\R^{n_\wh}$:
\[
W_{n^\bl, n^\wh} = \left \{ \sum_i b_i^\bl = \sum_j b_j^\wh \right \}.
\]
In the context of face-bicolored ribbon graphs, we define the \emph{walls} in $W_{n^\bl, n^\wh}$ as its intersections with one or several hyperplanes of the form
\[
\sum_{i \in I} b^\bl_i - \sum_{j \in J} b^\wh_j = 0,
\]
where $I \subset \{1,\ldots,n^\bl\}$, $J \subset \{1,\ldots,n^\wh \}$.

\begin{prop}[\cite{Yakovlev}]
\label{prop:face-bicolored}
\begin{enumerate}
    \item $F_{g,(n^\bl,n^\wh)}^{[4g - 2 + 2n^\bl + 2n^\wh]}(\u b^\bl; \u b^\wh)$ is a piecewise polynomial of degree at most $2g$. It is identically zero outside of $W_{n^\bl, n^\wh}$.
    \item The term of degree $2g$ of $F_{g,(n^\bl,n^\wh)}^{[4g - 2 + 2n^\bl + 2n^\wh]}$ is equal to $V_{g,(n^\bl,n^\wh)}^{[4g - 2 + 2n^\bl + 2n^\wh]}$.
    \item $V_{g,(n^\bl,n^\wh)}^{[4g - 2 + 2n^\bl + 2n^\wh]}$ is \emph{polynomial} on $W_{n^\bl, n^\wh} \cap (\R_+^{n^\bl} \times \R_+^{n^\wh})$ and outside of the walls. Likewise, it is \emph{polynomial} on any wall intersected with $\R_+^{n^\bl} \times \R_+^{n^\wh}$ and outside of the lower-dimensional walls.
\end{enumerate}
% Its top-degree term on $W_{n_+, n_-} \cap (\R_+^{n_+} \times \R_+^{n_-})$ and outside of a finite number of hyperplanes is a \emph{polynomial} of degree $2g$ equal to 
% \[
% N^{[4g - 2 + 2n_+ + 2n_-]}_{g,(n_+,n_-)}(\u b) = (2g-2+n_+ + n_-)! \cdot [z^{2g}] \prod_i \cS( b_i^+ z)\prod_j \cS(b_j^-z),
% \]
% where $\cS(z)=\frac{\sinh(z/2)}{z/2}$.
\end{prop}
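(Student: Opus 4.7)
My proof would closely parallel the non-bipartite analysis carried out in Proposition \ref{prop:quasi-poly}, Lemma \ref{lem:top-is-volume} and Corollary \ref{cor:top-deg-term}, while exploiting the additional structure furnished by the bipartiteness of the dual graphs to sharpen the conclusions. The first step is to pass to the dual picture: every $G^*\in\RGc^{\valu,*}_{g,(n^\bl,n^\wh)}$ is vertex-bicolored with $n^\bl$ black and $n^\wh$ white labeled vertices, and each edge joins a black vertex to a white vertex. Summing vertex perimeters by color gives the same number in both colors (namely twice the total edge length), hence $\operatorname{Im}(\vp_{G^*})\subseteq W_{n^\bl,n^\wh}$ and in fact equality holds. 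This already yields the second half of part (1): $F^{\valu}_{g,(n^\bl,n^\wh)}$ vanishes off $W_{n^\bl,n^\wh}$.

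Next, I would establish the bipartite analogue of Lemma \ref{lem:odd-weight-static-edge}: for a bridge $e$ of $G^*$, removing $e$ decomposes $G^*$ into two bipartite components, and summing edge weights within one component gives $w(e)$ as an explicit linear function of the vertex perimeters. Non-bridge edges carry no such constraint. This justifies defining, in complete analogy with \eqref{eq:odd-one-graph-polytope}, the polytope $P_{G^*}(\u b^\bl;\u b^\wh)\subset\vp_{G^*}^{-1}(\u b^\bl,\u b^\wh)$ by imposing non-negativity only on non-bridge weights, and then $V^*_{G^*}$ and $V^{\valu}_{g,(n^\bl,n^\wh)}$ as in \eqref{eq:VG-def} and \eqref{eq:V-def}. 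The counting function $F^*_{G^*}$ equals the number of interior integer points of $P_{G^*}$ multiplied by indicators imposing positivity of the linear functions computing bridge weights, exactly as in \eqref{eq:odd-NG-contribution-explicit}.

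Standard Ehrhart theory now gives, on each cell $C$ of the hyperplane arrangement inside $W_{n^\bl,n^\wh}$, that $|\operatorname{int} P_{G^*}(\u b^\bl;\u b^\wh)\cap\Z^{E(G^*)}|$ is quasi-polynomial in $(\u b^\bl,\u b^\wh)\in C\cap\Z^{n^\bl+n^\wh}$ of degree at most $|E(G^*)|-|V(G^*)|=2g$ by Euler's formula, and that $\Vol P_{G^*}$ is a polynomial on $\overline C$, establishing (2) and the quantitative part of (1). The crucial observation, which accounts for the absence of the factor $2$ present in Corollary \ref{cor:top-deg-term}(3) and for the upgrade from quasi-polynomial to polynomial, is that in the bipartite case the only congruence obstruction to integer solvability is the single relation $\sum_i b_i^\bl=\sum_j b_j^\wh$ defining $W_{n^\bl,n^\wh}$ itself; the lattice $\vp_{G^*}^{-1}(\u b^\bl,\u b^\wh)\cap\Z^{E(G^*)}$ is then the full quotient lattice inside $\vp_{G^*}^{-1}(\u b^\bl,\u b^\wh)$ for every integer point of $W_{n^\bl,n^\wh}$, with no sublattice of index $2$ appearing. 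An asymptotic argument as in Lemma \ref{lem:top-is-volume} then identifies the leading term of $F^{\valu}_{g,(n^\bl,n^\wh)}$ directly with $V^{\valu}_{g,(n^\bl,n^\wh)}$, giving (2).

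The genuinely delicate point, and the main obstacle, is part (3): showing that $V^{\valu}_{g,(n^\bl,n^\wh)}$ is polynomial not only on an open cell but also on each wall (after removing lower-dimensional walls), rather than merely continuous and piecewise polynomial there. On an open cell the combinatorial type of every $P_{G^*}$ is locally constant so $\Vol P_{G^*}$ is polynomial. On a wall the polytope can degenerate: several facets can coincide, or a bridge weight computed by the linear functional $f_e$ can vanish identically, forcing some summand graphs to contribute a degenerate polytope. The strategy is to analyze, edge by edge, these degenerations and group the contributions of the graphs that participate: contractions of bridges with zero forced weight produce smaller face-bicolored ribbon graphs, and the global symmetry of the sum over all $G$ ensures that the contributions reassemble into a single polynomial on the wall. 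This matching of contributions across degenerations is the technical heart of \cite{Yakovlev}, and I expect this bookkeeping—identifying the right combinatorial cancellations on walls of arbitrary codimension—to be the chief difficulty, whereas parts (1) and (2) follow essentially from adapting the non-bipartite framework already developed in Section \ref{sec:metric}.
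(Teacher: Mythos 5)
First, note that the paper offers no proof of this statement: it is imported wholesale from \cite{Yakovlev}, so there is no internal argument to compare yours against. Judged on its own terms, your treatment of parts (1) and (2) is essentially sound and follows the natural route of transposing the non-bipartite machinery of Section~\ref{sec:metric}: the bipartiteness of the dual graph forces $\operatorname{Im}(\vp_{G^*}) \subseteq W_{n^\bl,n^\wh}$, which gives the vanishing statement; the absence of the index-$2$ sublattice (the only congruence constraint being the equation of $W_{n^\bl,n^\wh}$ itself) correctly explains why the leading term is $V$ rather than $2V$ and why one gets honest polynomials on cells rather than quasi-polynomials. One small slip: for a one-faced dual graph Euler's formula gives $|E(G^*)|-|V(G^*)|=2g-1$, not $2g$; the correct degree bound $2g$ is the dimension of the fiber $\vp_{G^*}^{-1}(\u b^\bl;\u b^\wh)$, namely $|E(G^*)|-|V(G^*)|+1$, because $\vp_{G^*}$ has corank $1$ (its image being the hyperplane $W_{n^\bl,n^\wh}$). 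You implicitly use this corank when identifying the image, but the formula as written is off by one.

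The genuine gap is part (3), which is the entire nontrivial content of the proposition. Your argument there reduces to the assertion that ``the global symmetry of the sum over all $G$ ensures that the contributions reassemble into a single polynomial'' on the complement of the walls and on each wall. This is not a proof, and the paper itself shows why such an appeal cannot be taken for granted: in the odd (non-bipartite) setting the exactly analogous sum does \emph{not} reassemble into one polynomial across walls --- the counting function jumps, and quantifying that jump is the whole point of Theorem~\ref{thm:counting-funcs-on-walls}. The statement that a \emph{single} polynomial governs all top-dimensional cells of $W_{n^\bl,n^\wh}\cap(\R_+^{n^\bl}\times\R_+^{n^\wh})$ is the bipartite analogue of Kontsevich's Proposition~\ref{prop:Kon}, and in the non-bipartite case that already requires an external input (the symplectic volume computation); in \cite{Yakovlev} the corresponding input comes from an explicit recursive/Hurwitz-theoretic description of these counting functions (compare Appendix~\ref{app:hurwitz}), not from a symmetry of the polytope decomposition. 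Likewise the claim of polynomiality on each individual wall requires an actual computation of how the degenerating bridge contributions recombine, parallel to the analysis in sections~\ref{sec:compl}--\ref{sec:count-degens}. As written, your proposal identifies where the difficulty lies but does not supply the argument, so part (3) remains unproven.
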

The corresponding polynomials can be recursively computed, see \cite{Yakovlev}. For our applications, however, we will not need the explicit expressions for these polynomials. For a formula in terms of characters of the symmetric group, see Appendix~\ref{app:hurwitz}.

%---------------------------------------------

\section{Counting square-tiled surfaces}\label{sec:counting}
In this section we make the first step towards Theorem~\ref{thm:coeffs} by expressing the Masur--Veech volumes of odd strata in terms of the counting functions for metric ribbon graphs. We will mimic the proof of \cite[Theorem 1.5]{DGZZ-vol}.  The additional contributions coming from the difference between the counting functions $F_{g,n}^{\valu}$ and the Kontsevich polynomials $N_{g,n}^{\valu}$ will be computed in section~\ref{sec:compl}.

\subsection{Volume normalization}\label{subsec:norm}

In this section we recall the canonical construction of the Masur--Veech measure on $\cQ(\dgu)$ and its link with the integral structure given by square-tiled surfaces, as well as the chosen normalization for the volumes in this paper (following \cite{AEZ}, \cite{Gou}, \cite{DGZZ-vol}).

Any pair $(X,q)\in\cQ(\dgu)$, where $X$ is a Riemann surface of genus $g$ and $q$ is a quadratic differential on $X$ with singularities of orders $\dg_i$ (the zeros of $q$ are the singularities of order $\dg_i>0$ and the poles are the singularities of order $-1$) defines a canonical ramified
double cover $\pi:\hat X\to X$ such that $\pi^\ast
q=\hat\omega^2$, where $\hat\omega$ is an Abelian
differential on the double cover $\widehat
X$. The ramification points of $\pi$ are exactly the zeros
and poles of $q$. The double cover $\widehat X$ is endowed
with the canonical involution $\iota$ interchanging the two
preimages of every regular point of the cover. The stratum
$\cQ(\dgu)$ of such differentials is modeled
on the subspace of the relative cohomology of the double
cover $\widehat X$, anti-invariant with respect to the
involution $\iota$, denoted
by $H^1_-(\widehat S,\{\widehat P_1,\dots,\widehat
P_{\zeroes}\};\C)$, where $\{\widehat P_1,\dots,\widehat
P_{\zeroes}\}$ are zeroes of the induced Abelian
differential $\widehat\omega$ and $\widehat S$ is the underlying topological surface. Note that if $p:H^1(\hat S,\{\hat P_1, \dots \hat P_l\}; \C)\to H^1(\hat S; \C)$ is the projection onto absolute periods, then for strata $\cQ(\u k)$ with only odd $k_i$ we have $ker(p)\cap H^1_-(\widehat S,\{\widehat P_1,\dots,\widehat
P_{\zeroes}\};\C)=\{0\}$. This explains the analogy between the odd strata of quadratic differentials and the minimal strata of Abelian differentials that we mentioned in section~\ref{subsec:context}.

Following previous conventions on the normalization of Masur--Veech measures, we define a lattice in $H^1_-(\widehat
S,\{\widehat{P}_1,\ldots,\widehat{P}_{\zeroes}\};\C)$ as
the subset of those linear forms which take values in
$\Z\oplus i\Z$ on $H^-_1(\widehat S,\{\widehat
P_1,\dots,\widehat P_{\zeroes}\};\Z)$. The integer points
in $\cQ(\dgu)$ are exactly those quadratic differentials
for which the associated flat surface with the metric $|q|$
can be tiled with $1/2 \times 1/2$ squares. In this way the
integer points in $\cQ(\dgu)$ are represented by
\textit{square-tiled surfaces}, obtained by gluing isometric euclidean squares of size $1/2\times 1/2$ along sides (vertical to vertical and horizontal to horizontal).

We define the Masur--Veech volume
element $d\!\Vol$ on $\cQ(\dgu)$ as the linear volume
element in the vector space
$H^1_-(\widehat S,\{\widehat{P}_1,\ldots,\widehat{P}_{\zeroes}\};\C{})$ normalized in such a
way that the fundamental domain of the above lattice has unit
volume. The Masur--Veech volume element $d\Vol$ in $\cQ(\dgu)$
induces a volume element on the level sets of the $\Area$ function. In particular
on the level hypersurface $\cQ^{\Area=\frac{1}{2}}(\dgu)$ we get
\begin{equation}
\label{eq:def:Vol}
\Vol \cQ(\dgu)
:= \Vol_1 \cQ^{\Area=\frac{1}{2}}(\dgu)
= 2 d \cdot \Vol \cQ^{\Area\le\frac{1}{2}}(\dgu)\,,
\end{equation}
where $d =  \dim_{\C}\cQ(\dgu)=2g-2+\ell(\dgu)=\frac{|\dgu|}{2}+\ell(\dgu)$.

We denote by $\cST(\cQ(\dgu), N)$ the set of square-tiled surfaces in the
stratum $\cQ(\dgu)$ made of at most $N$ squares. By construction:

\begin{equation}
\label{eq:def:Vol:sq}\Vol \cQ(\dgu)= 2 d \cdot
\lim_{N\to+\infty}
\frac{\card(\cST(\cQ(\dgu), 2N))}{N^{d}}\,.
\end{equation}

\subsection{Volume evaluation}\label{subsec:vol_eval}

\begin{prop}
For a decorated stable graph $\Gamma$ in $\cG_{g,l}^{\valu}$, fixing a labelling of the edges, we define the following piecewise quasi-polynomial in the variables $\u b=(b_1, \dots, b_{|E(\Gamma)|})$:
\begin{equation}\label{eq:FGamma}F_\Gamma(\u b):=\prod_{e\in E(\Gamma)}b_e\cdot
\prod_{v\in V(\Gamma)}
F^{\valu_v}_{g_v,n_v}(\u{b_v}),\end{equation}
where  $\u{b_v}$ is the $n_v$-tuple of variables $b_e$ for each $e$ such that the edge $e$ is incident to $v$ (with multiplicity 2 if it is a loop based at $v$), and $F_{g,n}^{\valu}$ is the counting function defined in Definition~\ref{def:counting_fct}.

Then, for an odd partition $\dgu= \valu-2$ of $4g-4$, the Masur--Veech volume $\Vol \cQ(\dgu)$ of the stratum $\cQ(\dgu)$ is given by the following formula:
\begin{equation}
\label{eq:square:tiled:volume}
\Vol \cQ(\dgu) =
\sum_{\Gamma \in \cG_{g,l}^{\valu}}
\Vol(\Gamma),
\end{equation}     
where $l = \mu_{-1}(\dgu) = \mu_1(\valu)$, 
\begin{equation}\label{eq:VolGamma}
\Vol(\Gamma):=c_d \cdot c_{\valu} \cdot \prod_{v \in V(\Gamma)} \mu_1(\valu_v)! \cdot \frac{1}{|\Aut(\Gamma)|}
\cdot \lim\limits_{N\to\infty}\frac{1}{N^d}\sum_{\substack{\u b\cdot \u h \leq N\\ \u b,\;  \u h\in \N^{|E_{\Gamma}|}}} F_\Gamma(\u b),
\end{equation}
$d=2g-2+\ell(\valu)$, the constant $c_d$ is defined in \eqref{eq:const} and $c_{\valu}$ is defined in Convention~\ref{convpart}.
\label{prop:volsq}
\end{prop}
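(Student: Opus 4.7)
The plan is to follow the proof strategy of Theorem~1.5 of \cite{DGZZ-vol} for principal strata, adapted to odd strata with general odd valencies. First I would use equation~\eqref{eq:def:Vol:sq} to reduce the computation of $\Vol\cQ(\dgu)$ to a count of square-tiled surfaces in $\cST(\cQ(\dgu),2N)$ as $N\to\infty$. The key observation is that every square-tiled surface admits a canonical decomposition into maximal horizontal cylinders, and cutting along the horizontal saddle connections decomposes it into connected components which organize into a decorated stable graph $\Gamma\in\cG_{g,l}^{\valu}$: vertices of $\Gamma$ correspond to the connected components, edges correspond to cylinders, legs correspond to labeled poles (valency $1$), and the decoration $\valu_v$ records the valencies (cone angles$/\pi$) of the singularities in each component.

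Next, for a fixed stable graph $\Gamma$, I would explicitly parameterize the square-tiled surfaces of combinatorial type $\Gamma$. Such a surface is determined by: for each vertex $v\in V(\Gamma)$, an integer-metric labeled ribbon graph with vertex valencies given by $\valu_v$ and with $n_v$ labeled boundary components (one per half-edge issuing from $v$) whose perimeters are the widths $b_e$ of the adjacent cylinders; and for each edge $e\in E(\Gamma)$, a positive integer height $h_e$ together with a twist $t_e\in\{0,\ldots,b_e-1\}$. The area constraint translates into $\u b\cdot\u h\leq 2N$. The sum over ribbon-graph metrics at $v$ is encoded precisely by $F_{g_v,n_v}^{\valu_v}(\u b_v)$ from Definition~\ref{def:counting_fct}, the twist choices produce $\prod_e b_e$, and the two pieces multiply to give $F_\Gamma(\u b)$ of \eqref{eq:FGamma}.

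The delicate bookkeeping step is matching the combinatorial factors in \eqref{eq:VolGamma}. The factor $1/|\Aut(\Gamma)|$ appears because the edges of $\Gamma$ are unlabelled, so fixing an edge-labeling over-counts each isomorphism class by $|\Aut(\Gamma)|$. The factor $\prod_v\mu_1(\valu_v)!$ promotes the count $F_{g_v,n_v}^{\valu_v}$ (in which ribbon-graph automorphisms may permute univalent vertices) to a count with labeled univalent vertices, which is needed since the poles carry specific labels via $L$. The factor $c_{\valu}=\prod_{i\geq 2}\mu_i(\valu)!$ accounts for the fact that the distribution of non-pole singularities among the vertices of $\Gamma$ is already recorded in the decoration $\boldsymbol{\valu}$, so one must multiply by $c_{\valu}$ to recover the count of surfaces whose singularities are freely labeled consistently with a given $\Gamma$. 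Finally, the constant $c_d=2^d\cdot 2d$ comes from equation~\eqref{eq:def:Vol:sq} combined with the rescaling between a bound on the number of half-unit squares and the $N\to\infty$ limit of the volume.

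The main obstacle I anticipate is precisely this triple-layered automorphism accounting: each square-tiled surface carries symmetries at the level of the stable graph, of the component ribbon graphs, and of the labeling of singularities and poles, and one must set up a bijection modulo the appropriate group actions so that the counting of tuples (integer ribbon graph metrics, heights, twists, pole labels) exactly matches $|\cST(\Gamma,2N)|$. Once this is handled vertex by vertex, summing over $\Gamma\in\cG_{g,l}^{\valu}$, dividing by $N^d$, and passing to the limit as in \cite[\S 3]{DGZZ-vol} yields \eqref{eq:square:tiled:volume}.
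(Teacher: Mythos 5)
Your proposal follows essentially the same route as the paper's own proof: reduce to counting square-tiled surfaces via \eqref{eq:def:Vol:sq}, stratify by the stable graph of the horizontal cylinder decomposition, parameterize by integer metric ribbon graphs at the vertices together with heights and twists of the cylinders, and track the labeling/automorphism factors $c_{\valu}$, $\prod_v \mu_1(\valu_v)!$, $1/|\Aut(\Gamma)|$ before passing to the limit. The only point you gloss over is the parity sublattice $\mathbb{L}_\Gamma$ of index $2^{|V(\Gamma)|-1}$ constraining the admissible widths, but since $F_\Gamma$ vanishes off that sublattice anyway, the paper's argument shows this does not affect the final formula.
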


\begin{proof}
The proof of this proposition is totally parallel to the proof of the main formula in \cite[Theorem 1.5]{DGZZ-vol}. We recap all the main steps here, for completeness and precision on some normalization factors.

%----
A square-tiled surface admits a decomposition into
maximal horizontal cylinders filled with isometric closed
regular flat geodesics. Every such maximal horizontal
cylinder has at least one conical singularity on each of
the two boundary components. 

Let $S$ be a square-tiled surface and let $S=\mathit{cyl}_1\cup \ldots\cup\mathit{cyl}_k$ be its decomposition into the set of maximal horizontal cylinders. To each cylinder $\mathit{cyl}_i$ we associate the corresponding
waist curve $\gamma_i$ considered up to free homotopy.
The curves $\gamma_i$ are non-peripheral (i.e. none of them
bound a disc containing a single pole) and pairwise
non-homotopic. We consider the reduced multicurve $\gamma=\cup \gamma_i$. To $\gamma\subset S$ we associate a decorated stable graph $\Gamma$ (see Definition \ref{def:stable:graph}), defined as the decorated dual
graph to $\gamma$. More precisely, $\Gamma$ is the decorated graph whose vertices represent the components of $S\setminus\gamma$ and are decorated with the degrees of singularities in this component of the square-tiled surface. The edges of $\Gamma$ represent the components $\gamma_i$ of $\gamma$, where the endpoints of the edge associated to $\gamma_i$ are the two vertices corresponding to the two components of $S\setminus\gamma$ adjacent to $\gamma_i$ (that might be the same one). Finally, $\Gamma$ is endowed with $l$ ``legs`` (or half-edges) labeled from $1$ to $l$. The leg with label $i$ is attached to the vertex that represents the component that contains the $i$-th pole in $S$. Note that $\Gamma$ thus constructed is indeed stable: the genus decoration at each vertex is clearly non-negative and the stability condition follows because $\valu$ (or $\dgu$) is an odd partition, see Remark~\ref{rmk:odd-implies-stable}.

Given  a stable graph $\Gamma$ in $\cG^{\valu}_{g,l}$, let us consider
the subset $\cST_{\Gamma}(\cQ(\dgu))$ of those square-tiled
surfaces for which the associated stable graph is $\Gamma$. Let us define 
$\Vol(\Gamma)$ to be the contribution to
$\Vol \cQ(\dgu)$ of square-tiled surfaces from the subset $\cST_\Gamma(\cQ(\dgu))$:
\begin{align}
\label{eq:Vol:gamma}
\Vol(\Gamma)
&:= 2d\cdot
\lim_{N\to+\infty}
\frac{\card(\cST_\Gamma(\cQ(\dgu), 2N)}{N^{d}}\,,
\end{align}
The results in~\cite{DGZZ-meanders}
imply that for any $\Gamma$ in $\cG^{\valu}_{g,l}$ the above limits
exist, are strictly positive, and that
\begin{equation}
\label{eq:Vol:Q:as:sum:of:Vol:gamma}
\Vol \cQ(\dgu)
= \sum_{\Gamma \in \cG^{\valu}_{g,l}} \Vol(\Gamma)\,.
\end{equation}

We now evaluate the contribution of each graph $\Gamma\in\cG^{\valu}_{g,l}$ separately. 

For such a graph $\Gamma$, let $k=|E(\Gamma)|$ be the number of
maximal cylinders filled with closed horizontal
trajectories and denote by $w_1, \dots, w_k$ the lengths of
the waist curves of these cylinders. Since every edge of
any singular layer $v$ is followed by the boundary of the
corresponding ribbon graph twice, the sum of the lengths of
all boundary components of each singular layer $V$ is
integral (and not only half-integral).

Let us consider the collection of linear
forms $f_v=\sum_{e\in E_v(\Gamma)}w_e$ in variables $w_1,\dots, w_k$, where $v$ runs over the vertices $V(\Gamma)$, and $E_v(\Gamma)$ is
the set of edges adjacent to the vertex $v$ (ignoring legs).
It is immediate to see that the $(\Z/2\Z)$-vector space
spanned by all such linear forms has dimension $|V(\Gamma)|-1$.

Let us make a change of variables passing from half-integer
to integer parameters
$b_i:=2w_i$ where $i=1,\dots,k$. Consider the integer sublattice
$\mathbb{L}_\Gamma\subset\Z^k$ defined by the linear relations
\begin{equation}
\label{eq:sublattice:L}
f_{v}(b_1,\dots,b_k)=\sum_{e\in E_v(\Gamma)}b_{e}=0\ (\operatorname{mod} 2)
\end{equation}
for all vertices $v\in V(\Gamma)$. By the above
remark, the sublattice $\mathbb{L}_\Gamma$ has index $2^{|V(\Gamma)|-1}$
in $\Z^k$. We summarize the observations of this section in the following
criterion, that will be useful later in the paper.

\begin{Corollary}
\label{cor:criterion}
A collection of positive numbers
$w_1,\dots,w_{k}$,
where $w_i\in\tfrac{1}{2}\N$ for $i=1,\dots,k$,
corresponds to a square-tiled surface
realized by a stable graph $\Gamma\in\cG_{g,n}$ if and only if $k=|E(\Gamma)|$ and the
corresponding vector $\boldsymbol{b}=2\boldsymbol{w}$ belongs to the sublattice
$\mathbb{L}_\Gamma$. This sublattice has index
$|\Z^{k}:\mathbb{L}_\Gamma|=2^{|V(\Gamma)|-1}$
in the integer lattice $\Z^k$.
\end{Corollary}

   %
   %

%------------------------

Let us now review the different parameters that describe a square-tiled surface of type $\Gamma$. These parameters can be sorted in three independent groups. The parameters in the first group are
responsible for the configurations and the lengths of horizontal saddle connections. In this
group we fix only the lengths $w_1, \dots, w_k$ of the waist curves
of the cylinders filled with closed horizontal trajectories, and we want to estimate the number of choices of the configurations and the lengths of all horizontal saddle connections. The criterion of admissibility of a given
collection $\u{w}=(w_1,\dots,w_k)$ is given by
Corollary~\ref{cor:criterion}. The key observation is that the union of the boundaries of the cylinders coincides with the union of all horizontal saddle connections of the square-tiled surface. These saddle connections form a collection of graphs embedded in the surface, i.e.\ ribbon graphs. Each connected ribbon graph corresponds exactly to a vertex of $\Gamma$. For each vertex $v$, the number of choices for the configuration and the lengths of the saddle connections corresponding to this vertex $v$ is exactly the number of half-integer metric ribbon graphs of genus $g_v$ with vertex degrees $\valu_v$ and with boundary components of perimeters $\u{w}_v$ equal to the perimeters (lengths of the waist curves) of the adjacent cylinders.
After the change of variables $\u b= 2\u w$, this number coincides with the number of integer metric ribbon graphs with perimeters $\u{b_v}$, that is $F^{\valu_v}_{g_v,n_v}(\u{b_v})$. Applying this count to each vertex $v$, we obtain that the total number of choices for the configurations and the lengths of horizontal saddle connections in square-tiled surfaces of type $\Gamma$ with fixed cylinder perimeters $\u w$ is 
\[\prod_{v\in V(\Gamma)}F^{\valu_v}_{g_v,n_v}(\u{b_v}).\]

There are no restrictions on the choice of positive integer
or half-integer heights $h_1,\dots, h_k$ of the cylinders.

Having chosen the widths $w_1, \dots, w_k$ of all maximal cylinders
and the heights $h_1, \dots, h_k$ of the cylinders, the flat area of the
entire surface is already uniquely determined as the sum
$\u{w}\cdot\u{h}=w_1 h_1+\dots +w_k h_k$ of flat areas of
individual cylinders.

However, when the configurations and the lengths of all horizontal saddle connections and
the heights $h_i$ of all cylinders are fixed, there is still a freedom
in the third independent group of parameters. Namely, we can twist
each cylinder by some twist $\phi_i\in\frac{1}{2}\N$ before attaching
it to the singular layer/ribbon graph. Applying, if necessary, an appropriate Dehn twist we
can assume that $0\le\phi_i<w_i$, where $w_i$ is the perimeter of the corresponding cylinder. Thus, having fixed the $w_i$, the choice of configurations and lengths of horizontal saddle connections and the $h_i$, the number of choices for the twists is equal to $(2w_1)\cdot\ldots\cdot(2w_k)=b_1\cdot \ldots b_k$.

We are ready to write a formula for the leading term in
the number of all square-tiled surfaces tiled with at most $2N$
squares represented by the stable graph $\Gamma$ when the integer
bound $N$ becomes sufficiently large:
\begin{multline*}
\card(\cST_\Gamma(\cQ(\dgu), 2N))
\sim
c_{\valu}\cdot \prod_{v \in V(\Gamma)} \mu_1(\valu_v)! \cdot 
\frac{1}{|\operatorname{Aut}(\Gamma)|}\cdot
\sum_{\substack{\u w\cdot\u h \le N/2\\w_i, h_i\in\frac{1}{2}\N\\ 2\u{w}\in\mathbb{L}_\Gamma}}
b_1\cdots b_k\cdot
\prod_{v\in V(\Gamma)}
F^{\valu_v}_{g_v,n_v}(\u{b_v})\,\\
=
c_{\valu}\cdot \prod_{v \in V(\Gamma)} \mu_1(\valu_v)! \cdot
\frac{1}{|\operatorname{Aut}(\Gamma)|}\cdot
\sum_{\substack{\u{b}\cdot\u{H}\le 2N\\b_i, H_i\in\N\\ \u{b}\in\mathbb{L}_\Gamma}}
F_{\Gamma}(\u b)\,,
\end{multline*}
where we have made the change of variables $b_i:=(2w_i)\in\N$ and
$H_i:=(2h_i)\in\N$ in the second line.
The factor $c_{\valu}$ represents the number of ways
to label the $\val_i$-valent vertices of the ribbon graphs for $\val_i \ge 3$. Note that by convention, for each ribbon graph we already know the labels of the univalent vertices (leaves) (corresponding to simple poles of $q$ and also to $n$ marked points): they are given by the labels of the legs incident to the corresponding vertex of $\Gamma$. The factor $\prod_{v \in V(\Gamma)} \mu_1(\valu_v)!$ represents the number of ways to distribute for each ribbon graph these given labels among its leaves.

Now, since the limit as $N\to\infty$ of $\frac{1}{N^d}\sum_{\substack{\u{b}\cdot\u{H}\le 2N\\b_i, H_i\in\N\\ \u{b}\in\mathbb{L}_\Gamma}}
F_{\Gamma}(\u b)$  exists and is positive, and since $F_\Gamma$ is 0 outside $\L_\Gamma$ by the results of section~\ref{sec:metric} and the definition \eqref{eq:FGamma}, we have \[\lim_{N\to\infty}\frac{1}{N^d}\sum_{\substack{\u{b}\cdot\u{H}\le 2N\\b_i, H_i\in\N\\ \u{b}\in\mathbb{L}_\Gamma}}
F_{\Gamma}(\u b)=2^d\lim_{N\to\infty}\frac{1}{N^d}\sum_{\substack{\u{b}\cdot\u{H}\le N\\b_i, H_i\in\N}}
F_{\Gamma}(\u b)\] which finishes the proof of Proposition~\ref{prop:volsq}.

\end{proof}

\subsection{Product of strata}\label{subsec:product}

To prove Theorem~\ref{thm:coeffs}, we need to adapt Proposition~\ref{prop:volsq} to products of strata, and specifically, products of type $\cQ(\dgu)\times\prod_i \cH(2g_i-2)$. To this end, we introduce a few new definitions. 

\begin{defi}A \emph{decorated abelian stable graph} for a stratum $\cH(2g-2)$ is a decorated stable graph $\Gamma$ in the sense of Definition~\ref{def:stable:graph} with a few modifications and additional constraints:
\begin{itemize}
\item $\Gamma$ has only one vertex;
\item $\Gamma$ has no legs (so $|\alpha^{-1}(v)|=n_v$);
\item the decoration at the unique vertex $v$ is $\valu_v=[4g-2]$ (thus not an odd partition anymore);
\item condition \eqref{eq:cond_vertex} is replaced by $2g=2g_v+n_v$.
\end{itemize}
Note that these conditions imply that $\Gamma$ is made of loops only, so $n_v$ is even, and that the genus of $\Gamma$ is $g(\Gamma)=g$.
\end{defi}
The set isomorphism classes of decorated abelian stable graphs associated to a stratum $\cH(2g-2)$ is still denoted by $\cG^{\valu}_g$: the difference with the set of  stable graphs in Definition~\ref{def:stable:graph} can be read of the parity of the decoration $\valu$ and the absence of leg count $l$. It encodes boundary divisors in the compactification $\overline{\cH(2g-2)}$.

\begin{Convention}\label{convAutAbelian} 
	For a decorated abelian stable graph $\Gamma$ with $n$ loops we define $|\Aut(\Gamma)|=n!$, instead of the expected $n!\cdot 2^n$ (by analogy to usual decorated stable graphs). This is explained in the proof of Proposition~\ref{prop:volsq:prod}.
\end{Convention}

For a decorated abelian stable graph $\Gamma$ in $\cG_{g}^{\valu}$, fixing a labeling of the edges, we define the following piecewise quasi-polynomial in the variables $\u b=(b_1, \dots, b_{|E(\Gamma)|})$ analogously to~\eqref{eq:FGamma}:
\begin{equation}\label{eq:FGamma_ab}F_\Gamma(\u b)=\prod_{e\in E(\Gamma)}b_e \cdot F^{\valu_v}_{g_v,(n_v/2, n_v/2)}(\u{b};\u{b}),
\end{equation}
where $v$ is the unique vertex of $\Gamma$, and $F_{g_v,(n_v/2, n_v/2)}^{\valu_v}$ is the counting function defined in Definition~\ref{def:counting_fct_ab}.

The definition and normalization of the Masur--Veech volume can be extended to products of strata as follows: the Masur--Veech measure on the product is the product of the Masur--Veech measures of the strata and we keep the normalization of \eqref{eq:def:Vol}, $d$ being replaced by the dimension of the product, and the stratum $\cQ(\dgu)$ being replaced by the product.  Let  $\cST\big(\cQ(\dgu)\times\prod_i \cH(2g_i-2), N\big)$ be the set of disconnected square-tiled surfaces made of at most $N$ squares in total, such that the connected components belong to the strata $\cQ(\dgu)$ and $\cH(2g_i-2)$ respectively. Equivalently, the Masur--Veech volume of the product is given analogously to~\eqref{eq:def:Vol:sq} by:

\begin{equation}
\label{eq:def:Vol:sq:prod}\Vol \Big(\cQ(\dgu)\times\prod_i \cH(2g_i-2)\Big)= 2 d \cdot
\lim_{N\to+\infty}
\frac{\card\big(\cST\big(\cQ(\dgu)\times\prod_i \cH(2g_i-2), 2N\big)\big)}{N^{d}}\,,
\end{equation}
where $d=\dim_\C\big(\cQ(\dgu)\times\prod_{i=1}^r \cH(2g_i-2)\big)$.

Note that this normalization implies (see \cite[\S 6.2]{EMZ}, \cite[\S 4.4]{AEZ_right} and \cite[\S 3.4]{Gou_SV})

\begin{equation} \label{eq:Vol-prod-as-prod-Vol}
	\Vol \Big(\cQ(\dgu)\times\prod_{i=1}^r \cH(2g_i-2)\Big)=\frac{1}{2^r}\frac{(d'-1)!\Vol\cQ(\dgu)\prod_{i=1}^r 2^{2g_i}(2g_i-1)!\Vol\cH(2g_i-2)}{(d-1)!},
\end{equation}
where $d'=\dim\cQ(\dgu)$, $\dim\cH(2g_i-2)=2g_i$ so $d=d'+2\sum_{i=1}^r g_i$, and $\Vol\cH(2g_i-2)$ is computed in the standard normalization of Masur--Veech volume for Abelian strata, that is: \[\Vol \left(\cH(2g_i-2)\right)= 2g_i \cdot
\lim_{N\to+\infty}
\frac{\card(\cST(\cH(2g_i-2), N))}{N^{2g_i}}\,.\]
Now we are ready to state the analogue of Proposition~\ref{prop:volsq} for products of strata.

\begin{prop}Let $g\geq 0$, $r \ge 1$ and $g_1,\ldots,g_r \geq 1$ be integers, let $\dgu= \valu-2$ be an odd partition of $4g-4$ and let $l = \mu_{-1}(\dgu) = \mu_1(\valu)$.

For a disconnected decorated stable graph $\Gamma=(\tilde\Gamma, \Gamma_1, \dots, \Gamma_r)$ in $\cG_{g,l}^{\valu}\times \prod_i \cG_{g_i}^{[4g_i-2]}$, fixing a labeling of all the edges, we define the following piecewise quasi-polynomial in the variables $\u b=(b_1, \dots, b_{|E(\Gamma)|})$ where $E(\Gamma)=E(\tilde\Gamma)\sqcup\bigsqcup_{i=1}^r E(\Gamma_i)$:
\begin{equation}\label{eq:FGamma:prod}F_\Gamma(\u b):=F_{\tilde\Gamma}(\u{b_{\tilde\Gamma}})\cdot\prod_{i=1}^r
F_{\Gamma_i}(\u{b_{\Gamma_i}}),
\end{equation}
where  $\u{b_{\tilde\Gamma}}$ (resp.  $\u{b_{\Gamma_i}}$) is the set of edge variables associated to $\tilde\Gamma$ (resp. $\Gamma_i$), and the counting functions $F$ for the connected components of $\Gamma$ are defined in~\eqref{eq:FGamma} and~\eqref{eq:FGamma_ab}.

Then the Masur--Veech volume of the product $\cQ(\dgu)\times\prod_{i=1}^r\cH(2g_i-2)$ is given by the following formula:
\begin{equation}
\label{eq:square:tiled:volume:prod}
\Vol \Big(\cQ(\dgu)\times\prod_{i=1}^r\cH(2g_i-2)\Big) =
\sum_{\Gamma \in \cG_{g,l}^{\valu}\times \prod_i \cG_{g_i}^{[4g_i-2]}}
\Vol(\Gamma),
\end{equation}     
where
\begin{equation}\label{eq:VolGamma:prod}
\Vol(\Gamma):=c_d \cdot c_{\valu} \cdot \prod_{v \in V(\tilde\Gamma)} \mu_1(\valu_v)! \cdot 
\frac{1}{|\Aut(\Gamma)|}
 \cdot \lim\limits_{N\to\infty}\frac{1}{N^d}\sum_{\substack{\u b\cdot \u h \leq N\\ \u b,\;  \u h\in \N^{|E_{\Gamma}|}}} F_\Gamma(\u b),
\end{equation}
$d=\dim\big(\cQ(\dgu)\times\prod_{i=1}^r\cH(2g_i-2)\big)$, $c_d$ is as in \eqref{eq:const}, $c_{\valu}$ is defined in Convention~\ref{convpart}, $|\Aut(\Gamma)|=|\Aut(\tilde\Gamma)|\cdot \prod_{i=1}^r |\Aut(\Gamma_i)|$, and $|\Aut(\Gamma_i)|$ are defined in Convention \ref{convAutAbelian}.
\label{prop:volsq:prod}
\end{prop}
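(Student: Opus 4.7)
The plan is to extend the proof of Proposition~\ref{prop:volsq} to the disconnected setting by treating each connected component independently. A square-tiled surface in $\cQ(\dgu) \times \prod_i \cH(2g_i-2)$ is a disjoint union of connected square-tiled surfaces, one in each factor, and the decomposition into maximal horizontal cylinders is performed independently on each component. This yields a disjoint stable graph $\Gamma = (\tilde\Gamma, \Gamma_1, \ldots, \Gamma_r)$ with $\tilde\Gamma \in \cG^{\valu}_{g,l}$ for the quadratic factor and $\Gamma_i \in \cG^{[4g_i-2]}_{g_i}$ for each abelian factor. First I would establish that the count of square-tiled surfaces of a given type $\Gamma$ factors as a product over the components, with $\tilde\Gamma$ handled exactly as in Proposition~\ref{prop:volsq}: the parameters split into the configurations and lengths of horizontal saddle connections, the integer heights of cylinders, and the twists with their Dehn-twist constraint $0\le\phi_e<w_e$.

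For each abelian factor $\cH(2g_i-2)$, the key observation is that the differential is globally a square of an abelian differential, so the horizontal foliation is orientable and every horizontal cylinder has a distinguished top and bottom. Consequently, the union of horizontal saddle connections forms a \emph{face-bicolored} ribbon graph whose bicoloring assigns each boundary component the color (black or white) of the side of the adjacent cylinder it bounds. The stability condition on $\Gamma_i$ together with the uniqueness of the zero force the stable graph to have a single vertex with all edges being loops, consistent with the definition of decorated abelian stable graphs. The number of configurations and lengths of horizontal saddle connections with fixed cylinder perimeters $\u b = 2\u w$ is thus counted by $F^{[4g_i-2]}_{g_i,(n_i/2,n_i/2)}(\u b; \u b)$ from Definition~\ref{def:counting_fct_ab}, which explains the appearance of this counting function in~\eqref{eq:FGamma_ab}. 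The twists contribute $\prod_e b_e$ as before, and the heights are free, producing precisely the sum in~\eqref{eq:VolGamma:prod}.

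The subtle point that I expect to be the main obstacle is the bookkeeping of the normalizations, in particular the choice $|\Aut(\Gamma_i)| = n_i!$ fixed by Convention~\ref{convAutAbelian}. A priori, an abelian stable graph with $n_i$ loops at a single vertex has automorphism group of order $n_i! \cdot 2^{n_i}$ (permutations of loops together with flippings of each loop). However, the face-bicoloring distinguishes the two sides of each loop, so loop-flippings are not symmetries of the combinatorial data counted by $F^{[4g_i-2]}_{g_i,(n_i/2,n_i/2)}$; declaring $|\Aut(\Gamma_i)| = n_i!$ absorbs the missing factor $2^{n_i}$. Once this convention is in place, summing over all stable graphs and using the index-of-sublattice argument from the proof of Proposition~\ref{prop:volsq} reproduces the prefactor $c_d$ with the rescaling $\u b = 2\u w$, $\u H = 2\u h$; the final step is to check consistency with~\eqref{eq:Vol-prod-as-prod-Vol}, taking into account the different normalization $\Vol\cH(2g_i-2) = 2g_i \cdot \lim_N \card(\cST(\cH(2g_i-2),N))/N^{2g_i}$ for abelian strata (where the counting uses integer $1\times 1$ squares rather than half-integer squares). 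This reconciliation of the quadratic and abelian volume normalizations is the technical heart of the adaptation; the remainder of the argument is a direct transcription of the proof of Proposition~\ref{prop:volsq}.
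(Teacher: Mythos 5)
Your proposal is correct and follows essentially the same route as the paper's proof: factorize the count over connected components, observe that the orientability of the abelian factors makes the horizontal saddle connections form a face-bicolored ribbon graph counted by $F^{[4g_i-2]}_{g_i,(n_i/2,n_i/2)}(\u b;\u b)$, and note that the bicoloring destroys the loop-flip symmetries, which is precisely the paper's justification of Convention~\ref{convAutAbelian}. The only caveat is that the paper does not need to ``reconcile'' the quadratic and abelian normalizations via \eqref{eq:Vol-prod-as-prod-Vol}: the product volume is defined directly by \eqref{eq:def:Vol:sq:prod} as a count of $1/2\times 1/2$-square tilings of the whole disconnected surface, so \eqref{eq:Vol-prod-as-prod-Vol} is a separately cited consequence of the normalization rather than an ingredient of the proof.
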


\begin{proof}The proof follows exactly the lines of the proof of Proposition~\ref{prop:volsq}. We emphasize only the key points that differ from the first proof. For a disconnected stable graph $\Gamma=(\tilde\Gamma, \Gamma_1, \dots, \Gamma_r)$ in $\cG^\dgu_{g,l}\times \prod_{i=1}^r \cG_{g_i}^{[4g_i-2]}$, as before we denote by $\cST_\Gamma\big(\cQ(\dgu)\times\prod_i \cH(2g_i-2), 2N\big)$ the set of disconnected square-tiled surfaces of type $\Gamma$, i.e. square-tiled surfaces having the decomposition into horizontal cylinders of their connected components encoded by the graphs $\tilde\Gamma$ and $\Gamma_i$ respectively. Then the existence and positivity of the limit $\lim_{N\to+\infty}
\frac{\card\big(\cST_\Gamma\big(\cQ(\dgu)\times\prod_i \cH(2g_i-2), 2N\big)\big)}{N^{d}}$ is derived from the disintegration of Masur--Veech measure along the different area functions (preformed in \cite[\S 6.2]{EMZ} and \cite[\S 4.4]{AEZ_right}) and the existence and positivity of the corresponding limits for the individual connected components  (see \cite{DGZZ-meanders}). We get then the analogue of \eqref{eq:Vol:Q:as:sum:of:Vol:gamma} where $\Vol(\Gamma)$ is defined as $2d$ times this limit. 

Now to count square-tiled surfaces of type $\Gamma$ we introduce as before the linear forms $f_v$ and the related integer sublattice $\L_{\Gamma}\subset\Z^k$ where $k=|E(\Gamma)|$ as in \eqref{eq:sublattice:L}. Note that here, since each graph $\Gamma_i$ has only one vertex, the corresponding linear forms $f_v$ are trivial modulo 2, so  the $(\Z/2\Z)$-vector space spanned by all $f_v$ has dimension $|V(\Gamma')|-1$, and the lattice $\L_{\Gamma}$ has index $2^{|V(\Gamma')|-1}$ in $\Z^k$ and we get a new version of Corollary~\ref{cor:criterion} by replacing the index $2^{|V(\Gamma)|-1}$ by $2^{|V(\Gamma')|-1}$.

The parameters that describe the square-tiled surfaces of type $\Gamma$ are exactly the same as before. The only difference here is that to get a connected component of the square-tiled surface in the stratum $\cH(2g_i-2)$ of Abelian differentials, we need to glue cylinders on a face-bicolored metric ribbon graph: the ``bottom'' and ``top'' boundary components of the cylinders should be glued to the black and white boundary components of the ribbon graph, respectively. Thus the total number of choices for the configurations and the lengths of horizontal saddle connections in square-tiled surfaces of type $\Gamma$ times the number of choices for the twist parameters is now given by $F_\Gamma(\u b)$ defined in \eqref{eq:FGamma:prod}. As before we get 
\[\card(\cST_\Gamma(\cQ(\dgu), 2N))
\sim
c_{\valu} \cdot \prod_{v \in V(\tilde\Gamma)} \mu_1(\valu_v)! \cdot 
\frac{1}{|\operatorname{Aut}(\Gamma)|}\cdot
\sum_{\substack{\u{b}\cdot\u{H}\le 2N\\b_i, H_i\in\N\\ \u{b}\in\mathbb{L}_\Gamma}}
F_{\Gamma}(\u b)\,,\]
where the factor $c_{\valu} \cdot \prod_{v \in V(\tilde\Gamma)} \mu_1(\valu_v)!$ is the number of ways to label the vertices of the ribbon graphs in $\tilde\Gamma$. Note that the term $1/|\Aut(\Gamma)|$ arises because the counting functions $F_{\tilde\Gamma}$, $F_{\Gamma_i}$ count metric ribbon graphs with labeled faces, and we have to forget this labeling. In particular, note that in face-bicolored ribbon graphs, we can only exchange labels of faces of the same color. This means that for each abelian stable graph $\Gamma_i$, we overcount each corresponding metric ribbon graph $|E(\Gamma_i)|!$ times, and not $|E(\Gamma_i)|! \cdot 2^{|E(\Gamma_i)|}$ times. This explains Convention~\ref{convAutAbelian}.

The rest of the proof follows as before: we still get a factor $2^d$ passing from the summation over $\u b\cdot\u H\leq 2N$ to the summation over $\u b\cdot \u H \leq N$.
\end{proof}

%-----------------------------------------

\section{Counting functions on the walls}\label{sec:compl}

\subsection{Top-degree terms of $F_{g,n}^{\valu}$ on the walls}

We now present our extension of Kontsevich's Proposition \ref{prop:Kon} to certain walls. We will see later in section~\ref{sec:loc:glob} that in our context we actually only need this computation on very specific walls.

This computation is of independent interest, see Appendix~\ref{app:hurwitz} for the interpretation in terms of characters of the symmetric group.

\begin{Theorem}
\label{thm:counting-funcs-on-walls}
    Fix $g, n, \valu$, with $\valu = (\val_1,\ldots,\val_{\ell(\valu)})$ an odd composition. Let $p \ge 1$ and let $\Pi = (I_0, I^0_1, I^1_1, \ldots, I^0_p, I^1_p)$ be a sequence of $2p+1$ non-empty sets forming a partition of the set $\{1,\ldots,n\}$. If $\ell(\valu) \ge 3$, we also allow $I_0 = \emptyset$. Let $W_{\Pi}$ be the wall in $\R^n$ defined by the $p$ independent equations
    \begin{equation}
    \label{eq:odd-wpi-equations}
        \sum_{i \in I^0_s} b_i = \sum_{j \in I^1_s} b_j,\ s=1,\ldots,p.
    \end{equation}
    Then the top-degree term $2\cdot V_{g,n}^{\valu}$ of $F_{g,n}^{\valu}$ is \emph{polynomial} on $W_{\Pi} \cap \R_+^n$ and outside of the lower-dimensional walls. Moreover, the following relation holds for all $\u b \in W_{\Pi} \cap \R_+^n$ outside of the lower-dimensional walls:
    % \begin{multline}
    % \label{eq:V-recursion}
    %     N^{\valu, unlab}_{g,n}(\u b) = 2\cdot V_{g,n}^{\valu}(\u b) + 
    %     \sum_{m \ge 1} \ \sum_{\substack{g_0+\ldots+g_m=g \\ g_i\ge 0}} \ \sum_{\substack{A_0 \sqcup \ldots \sqcup A_m = \{1,\ldots,p\} \\ A_i \neq \varnothing}} \ \sum_{a : \{1,\ldots,m\} \rightarrow \{1,\ldots,\ell(\valu)\}} \\
    %     \frac{1}{m!}\cdot \frac{|\Aut(\valu_0)|}{|\Aut(\valu)|} \cdot C_{\u g, \u A, a} \cdot 2 \cdot V^{\valu_0}_{g_0,n_0}(\u b_0)\cdot \prod_{i=1}^m V^{[4g_i-2+2n_+^i + 2n_-^i]}_{g_i, (n_+^i,n_-^i)}(\u b^+_i, \u b^-_i),
    % \end{multline}
    
    \begin{multline}
    \label{eq:V-recursion}
        N^{\valu, unlab}_{g,n}(\u b) = 2\cdot V_{g,n}^{\valu}(\u b) +
        \sum_{m \ge 1} \ \sum_{\substack{g_0+g_1+\ldots+g_m=g \\ g_i\ge 0}} \  %\\ g_1 \le \ldots \le g_m}} \
        \sum_{\substack{A_0 \sqcup \ldots \sqcup A_m = \{1,\ldots,p\} \\ A_i \neq \emptyset}} \ \sum_{\epsilon_1,\ldots,\epsilon_p \in \{0,1\}} \ \sum_{a : \{1,\ldots,m\} \rightarrow \{1,\ldots,\ell(\valu)\}} \\
        %\frac{1}{|\Aut(g_1,\ldots,g_m)|} 
        \frac{1}{m! \cdot 2^{m+|A_0|}} \cdot \frac{|\Aut(\valu^0)|}{|\Aut(\valu)|} \cdot C_{\u g, \u A, \u \epsilon, a} \cdot 2 \cdot V^{\valu^0}_{g_0,n_0}(\u b_0) \cdot \prod_{i=1}^m V^{[4g_i-2+2n^\bl_i + 2n^\wh_i]}_{g_i, (n^\bl_i,n^\wh_i)}(\u b^\bl_i, \u b^\wh_i),
    \end{multline}
    where:
    \begin{itemize}
        \item $\u b_0 = \left( b_i, i \in I_0 \cup \bigcup_{s \in A_0} (I^0_s \cup I^1_s) \right)$, $\u b^\bl_i = \left( b_j, j \in \bigcup_{s \in A_i} I^{\epsilon_s}_s\right)$, $\u b^\wh_i = \left( b_j, j \in \bigcup_{s \in A_i} I^{1-\epsilon_s}_s\right)$;
        \item $n_0 = |I_0| + \sum_{s \in A_0} (|I^0_s|+|I^1_s|)$, $n^\bl_i = \sum_{s \in A_i} |I^{\epsilon_s}_s|$, $n^\wh_i = \sum_{s \in A_i} |I^{1-\epsilon_s}_s|$;
        \item $\valu^0$ is a composition of length $\ell(\valu)$ with parts $\val^0_i = \val_i - \sum_{j \in a^{-1}(i)} (4g_j + 2n^\bl_j + 2n^\wh_j)$, which should be positive (otherwise the term is considered to be zero),
    \end{itemize}
    and the coefficients $C_{\u g, \u A, \u \epsilon, a}$ are given by:
    % \begin{equation}
    % \label{eq:odd-degen-coeff}
    %     C_{\u g, \u A, a} = \prod_{i=1}^{\ell(\valu_0)} \left( \val^0_i \cdot \prod_{j\in a^{-1}(i)} (2g_j-1+n_+^j+n_-^j) \cdot \frac{(2\sigma_i + \val^0_i + 2(|a^{-1}(i)|-1))!!}{(2\sigma_i + \val^0_i)!!}\right),
    % \end{equation}
    % with $\sigma_i = \sum_{j\in a^{-1}(i)} (2g_j-1+n_+^j+n_-^j)$.
        \begin{equation}
    \label{eq:odd-degen-coeff}
        C_{\u g, \u A, \u \epsilon, a} = \prod_{i=1}^{\ell(\valu^0)} \left( \val^0_i \cdot \prod_{j\in a^{-1}(i)} (2g_j-1+n^\bl_j+n^\wh_j) \cdot \frac{(\val_i - 2)!!}{(\val_i - 2|a^{-1}(i)|)!!}\right).
    \end{equation}
\end{Theorem}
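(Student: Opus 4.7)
The plan is to compute $V_{g,n}^{\valu}$ on $W_\Pi$ by working directly from the sum (\ref{eq:V-def}) over dual ribbon graphs, tracking which graphs contribute and which are killed by the static-edge indicator on the wall. On a generic chamber this sum equals $N_{g,n}^{\valu, unlab}/2$ by Kontsevich (Proposition~\ref{prop:Kon}); on $W_\Pi$, a graph $G$ contributes zero to $V^{\valu}_{g,n}$ as soon as at least one of its static edges $e$ satisfies $f_e|_{W_\Pi}=0$, i.e.\ whenever the linear form $f_e$ lies in the span of the $p$ equations defining $W_\Pi$. The identity (\ref{eq:V-recursion}) will therefore follow once the correction $N_{g,n}^{\valu, unlab} - 2V_{g,n}^{\valu}$ on $W_\Pi$ is identified with the claimed sum over ``degenerated'' ribbon graphs.

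First I would invoke the structural lemma (to be proved in the upcoming Lemma~\ref{lem:static-edges-bridges}) that, under the hypothesis $\ell(\valu)\ge 3$ together with the condition on $I_0$, every static edge $e$ with $f_e|_{W_\Pi}=0$ is a \emph{bridge} whose removal separates off a face-bipartite component. Given a ribbon graph $G$ with $m$ such zero-weight static bridges, deleting them decomposes $G^*$ into one non-bipartite core and $m$ face-bipartite leaves; each leaf has a single vertex on the face-bicolored (primal) side and therefore falls into the setting at the end of Section~\ref{sec:metric}, its polytope-volume being computed by the polynomial $V^{[4g_j-2+2n^\bl_j+2n^\wh_j]}_{g_j,(n^\bl_j,n^\wh_j)}$ of Proposition~\ref{prop:face-bicolored}. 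A direct volume-factorization argument shows that when all $m$ static-bridge weights vanish the polytope $P_G$ decomposes as a Cartesian product of the polytopes of the core and of the leaves, with vertex perimeters inherited from $\u b$, so $\Vol P_G|_{W_\Pi}$ factors accordingly.

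Next I would reorganize the sum over graphs $G$ according to the discrete data $(m, \u g, \u A, \u\epsilon, a)$ specifying the decomposition: the ordered partition $A_0 \sqcup A_1 \sqcup \dots \sqcup A_m = \{1,\dots,p\}$ indicates which wall-equations are realized as leaf-attachments (the $A_i$ for $i\ge 1$) versus remain inside the core ($A_0$); the coloring $\epsilon_s$ chooses which of $I^0_s, I^1_s$ is black in the corresponding leaf; the map $a$ specifies to which face of the core the $j$-th leaf's bridge attaches; and the $g_i$ record the genus distribution. Summing over all $G$ with a fixed profile then collapses, via the volume factorization above and Proposition~\ref{prop:face-bicolored}, to the product $V^{\valu^0}_{g_0,n_0} \cdot \prod_j V^{[\cdot]}_{g_j,(n^\bl_j,n^\wh_j)}$ appearing in (\ref{eq:V-recursion}).

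The main obstacle will be the extraction of the combinatorial coefficient $C_{\u g, \u A, \u\epsilon, a}$ of (\ref{eq:odd-degen-coeff}) together with the prefactor $\tfrac{1}{m!\,2^{m+|A_0|}}\cdot\tfrac{|\Aut(\valu^0)|}{|\Aut(\valu)|}$. Reconstructing a graph $G$ from a decomposition amounts to attaching each leaf to the core through a bridge, by inserting the bridge's half-edge into a cyclic corner of the core-face labeled $a(j)$ (producing the $\val^0_i$ factor, plus the double factorial $(\val_i-2)!!/(\val_i-2|a^{-1}(i)|)!!$ that enumerates the ways of distributing the $|a^{-1}(i)|$ bridges among the corners of that face while respecting the cyclic structure) and into the cyclic ordering around the unique vertex of the leaf (producing the $(2g_j-1+n^\bl_j+n^\wh_j)$ factor, which is half the total vertex degree $4g_j-2+2n^\bl_j+2n^\wh_j$). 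The prefactor $\frac{1}{m!\,2^{m+|A_0|}}$ then absorbs the $m!$ relabelings of the leaves, the $\Z/2$ color-swap symmetry of each face-bicolored leaf, and the $2^{|A_0|}$ redundancy coming from the fact that for each $s\in A_0$ the variable $\epsilon_s$ plays no role but is summed over anyway. Carrying out this bookkeeping so that the automorphism factors of the assembled $G$ exactly match the product of automorphism factors of the pieces is the delicate step, and is relegated to Section~\ref{sec:count-degens}.
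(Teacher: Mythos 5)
Your proposal follows essentially the same route as the paper's proof: Kontsevich's Proposition~\ref{prop:Kon} off the walls, the observation that the defect $N^{\valu,unlab}_{g,n}-2V^{\valu}_{g,n}$ on $W_\Pi$ comes from graphs whose static-edge indicators vanish there, the bridge lemma (Lemma~\ref{lem:static-edges-bridges}), the decomposition into a non-bipartite core plus one-faced bipartite leaves with the resulting product factorization of $P_G(\u b)$, the reorganization by the data $(m,\u g,\u A,\u\epsilon,a)$, and the deferral of the attachment count to Section~\ref{sec:count-degens}, exactly as the paper defers Proposition~\ref{prop:odd-degen-coeff}. The one point your phrasing glosses over --- and which is the actual content of that deferred count --- is that the defect is \emph{not} the sum over all graphs killed on the wall, but only over those whose zero-weight bridges have positive weight on one specific adjacent chamber (the dyadic choice \eqref{eq:C0-def}); it is this admissibility condition, not the leaf colour-swap (which is separately absorbed by the $2^m$ in the prefactor), that cuts the insertion choices in each leaf from all $4g_j-2+2n^\bl_j+2n^\wh_j$ corners down to the $2g_j-1+n^\bl_j+n^\wh_j$ you quote.
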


Note that the polynomiality property of Theorem \ref{thm:counting-funcs-on-walls} actually follows from (\ref{eq:V-recursion}) by induction. Indeed, $N^{\valu, unlab}_{g,n}$ is the Kontsevich polynomial, $V^{[4g_i-2+2n^\bl_i + 2n^\wh_i]}_{g_i, (n^\bl_i,n^\wh_i)}$ are polynomial by Proposition \ref{prop:face-bicolored}, and $V^{\valu^0}_{g_0,n_0}$ can be assumed to be polynomial by the inductive hypothesis. The base cases of this induction are the functions $V^{\valu}_{g,n}(\u b)$ outside of the walls or on the walls of the form $\sum_{i \in I} b_i = \sum_{i \in I^c} b_i$, where $I^c$ is the complement of $I$ in $\{1,\ldots,n\}$. The first ones are polynomial by Proposition \ref{prop:Kon}. For the second ones, recursion (\ref{eq:V-recursion}) simplifies to $N^{\valu, unlab}_{g,n}(\u b) = 2\cdot V_{g,n}^{\valu}(\u b)$, because the third sum is over an empty set ($p=1$).

\begin{Remark}\label{rk:unimaps} 
The polynomial $2 \cdot V^{\valu}_{g,1}(b) = N^{\valu, unlab}_{g,1}(b)$ can be computed explicitly as follows. It is the top-degree term of the counting function $F^{\valu}_{g,1}(b)$, which is, for even $b$, a product of $|\RGc^{\valu}_{g,1}|$ with the polynomial 
\[
\binom{b/2 +E-1}{E-1} = \left | \left \{ (l_1,\ldots,l_E) \in \Z_+^E : 2(l_1+\ldots+l_E) = b \right \} \right |,
\]
where $E=\ell(\valu)-1+2g$ is the number of edges in any graph from $\RGc^{\valu}_{g,1}$.

The enumeration of unicellular maps (ribbon graphs with only one face) with prescribed vertex degrees and unlabelled vertices is performed in \cite[Proposition 12]{Chapuy-Feray-Fusy}. For completeness, we give here the content of \cite[Proposition 12]{Chapuy-Feray-Fusy} in terms of the corresponding Kontsevich polynomials:
\[\sum_{\val \vdash 2E}N_{g,1}^{\val, unlab}(b)\cdot p_{\val}(\u x) = 
\sum_{\rho \vdash 2E}\frac{(2E-\ell(\rho))!}{(E-\ell(\rho)+1)!(E-1)!}2^{\ell(\rho)-2E}m_{\rho}(\u x)\cdot b^{E-1},\]
where $m_\rho(\u x)$ is the monomial symmetric polynomial, and $p_{\val}(\u x)$ is the power sum symmetric  polynomial.
This formula was used at the early stage of this project to clean up normalization issues.
\end{Remark}

The following Proposition can be obtained by adapting the proof of Theorem \ref{thm:counting-funcs-on-walls} to the case of the codimension 1 walls of the form $b_i=0$. This simple proof is given in section \ref{subsec:proof-walls-bi-equal-0}.

\begin{prop}
\label{prop:walls-bi-equal-0}
The labelled Kontsevich polynomials $N_{g,n}^{\valu}$  satisfy the following recursion:
\[N_{g,n+1}^{[\kappa_1, \dots \kappa_r]}(b_1, \dots, b_{n}, 0)=\sum_{\substack{1 \le i \le r \\ \kappa_i\geq 3}}  (\kappa_i-2) N_{g,n}^{[\kappa_1, \dots, \kappa_i-2, \dots, \kappa_r]}(b_1, \dots, b_n).\]
\end{prop}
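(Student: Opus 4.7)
The plan is to read off $N^{\valu, unlab}_{g,n+1}(\u b, 0)$ from the polytope description of $V^{\valu}_{g,n+1}$ on the open cell $\{b_{n+1}>0\}$, in the spirit of Theorem~\ref{thm:counting-funcs-on-walls} but for the codimension-$1$ wall $b_{n+1}=0$. Since $N^{\valu, unlab}_{g,n+1}$ is a single polynomial and agrees with $2 V^{\valu}_{g,n+1}$ on any cell off the walls by Proposition~\ref{prop:Kon}, polynomial continuity yields
\[
N^{\valu, unlab}_{g,n+1}(\u b, 0) \;=\; \lim_{b_{n+1}\to 0^+} 2 V^{\valu}_{g,n+1}(\u b, b_{n+1}) \;=\; \sum_{G\in\RGc^{\valu,*}_{g,n+1}} \frac{2}{|\Aut(G)|}\, \lim_{b_{n+1}\to 0^+} V^*_G(\u b, b_{n+1}).
\]
Note that $V^{\valu}_{g,n+1}$ itself is typically discontinuous at $b_{n+1}=0$, because for the relevant graphs this wall coincides with a wall $f_e=0$ where $e$ is a static bridge and the indicator $\mathbf{1}_{f_e>0}$ jumps; passing through the polynomial $N$ sidesteps this.

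First I would identify which $G$ actually contribute. If vertex $(n+1)$ of the dual graph $G$ has valency $k\ge 2$, its $k$ incident edges are non-static with lengths summing to $b_{n+1}$, so $\Vol P_G(\u b, b_{n+1})$ scales as $b_{n+1}^{k-1}$ and the limit vanishes. The survivors have $k=1$: vertex $(n+1)$ is a leaf. In the primal this means face $(n+1)$ has degree one, i.e.\ is a \emph{petal} — a self-loop whose two half-edges are adjacent in the cyclic order at some primal vertex of odd valency $\kappa \geq 3$. Pruning the petal produces a primal ribbon graph $G''' \in \RGc^{\valu'_\kappa}_{g,n}$ with $\valu'_\kappa := \valu - [\kappa] + [\kappa-2]$, together with a distinguished \emph{corner} at a valency-$(\kappa-2)$ vertex of $G'''$; conversely, inserting a petal at any such corner recovers $G$. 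In the dual, the petal edge has its length forced to $b_{n+1}$ and its unique neighbor's vertex perimeter is shifted by $b_{n+1}$, so letting $b_{n+1} \to 0^+$ collapses $P_G(\u b, b_{n+1})$ continuously onto $P_{G'''}(\u b)$ and gives $\lim_{b_{n+1}\to 0^+} V^*_G(\u b, b_{n+1}) = \Vol P_{G'''}(\u b)$ for $\u b$ generic in $\R_+^n$.

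An orbit-stabilizer argument over corners then gives, for each odd $\kappa\ge 3$,
\[
\sum_{\substack{G\in\RGc^{\valu,*}_{g,n+1} \\ \text{petal at valency }\kappa}} \frac{2\, \Vol P_{G'''}(\u b)}{|\Aut(G)|} \;=\; (\kappa-2)(\mu_{\kappa-2}(\valu)+1)\cdot 2 V^{\valu'_\kappa}_{g,n}(\u b),
\]
the factor $(\kappa-2)(\mu_{\kappa-2}(\valu)+1)$ being the total number of corners at valency-$(\kappa-2)$ vertices of any $G'''\in\RGc^{\valu'_\kappa,*}_{g,n}$, using that the automorphisms of $G$ are exactly the automorphisms of $G'''$ fixing the chosen corner. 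Applying Proposition~\ref{prop:Kon} to $G'''$ at generic $\u b$ and extending by polynomial identity gives $2V^{\valu'_\kappa}_{g,n} = N^{\valu'_\kappa, unlab}_{g,n}$, hence
\[
N^{\valu, unlab}_{g,n+1}(\u b, 0) \;=\; \sum_{\kappa\ge 3} (\kappa-2)(\mu_{\kappa-2}(\valu)+1)\, N^{\valu'_\kappa, unlab}_{g,n}(\u b).
\]
Converting to labeled Kontsevich polynomials via $|\Aut(\valu'_\kappa)|/|\Aut(\valu)| = (\mu_{\kappa-2}(\valu)+1)/\mu_\kappa(\valu)$ absorbs the factor $(\mu_{\kappa-2}(\valu)+1)$, and rewriting the sum over odd $\kappa$ as a sum over indices $i$ (each valency $\kappa$ contributing $\mu_\kappa(\valu)$ times) yields the claimed recursion. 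The one delicate step is the automorphism bookkeeping in the bijection $G \leftrightarrow (G''', c)$; everything else — identifying the petal-type graphs, computing the polytope limit, and the algebraic translation between unlabeled and labeled polynomials — is a direct computation.
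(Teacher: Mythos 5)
Your argument is essentially the paper's: both identify the contributing dual graphs as those in which vertex $n+1$ is a leaf joined by a static bridge of weight $b_{n+1}$, evaluate their contribution as $V^*_{G'}$ of the pruned graph, and count the $\kappa_i-2$ corners where the leaf/petal can be reattached; your orbit--stabilizer bookkeeping over corners is equivalent to the paper's device of labeling the faces in $|\Aut(\valu)|$ ways. The only structural difference is the framing: the paper computes the jump of $V^{\valu}_{g,n+1}$ across the wall $b_{n+1}=0$, using that $F^{\valu}_{g,n+1}$ and $V^{\valu}_{g,n+1}$ vanish identically there, while you take a direct limit $b_{n+1}\to 0^{+}$ of polytope volumes. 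One small lacuna in your version: when $\deg(n+1)=k\ge 2$ it is not true in general that all $k$ incident edges are non-static (an edge at $n+1$ can be a static bridge whose weight $f_e$ does not involve $b_{n+1}$), so the scaling $\Vol P_G\sim b_{n+1}^{k-1}$ does not cover every case; the conclusion still holds because any such static edge satisfies $0<f_e(\u b)\le b_{n+1}$ on the support of $V^*_G$ by Lemma~\ref{lem:odd-weight-static-edge}, forcing $V^*_G\to 0$ for generic $\u b$, but this sub-case should be addressed — the paper's jump formulation absorbs it automatically, since a non-degenerating graph contributes equally on both sides of the wall and the wall value is zero.
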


\begin{Corollary}
Intersection numbers for higher valencies satisfy the following ``string'' equation:
\[\langle \tau_0\tau_{d_1}\dots \tau_{d_n}\rangle_{m_*}=\sum_{i>0}  (m_{i-1}+1)(2i-1)\langle\tau_{d_1}\dots\tau_{d_n}\rangle_{m_*^{(j)}}\] where $m_*^{(i)}=(\dots  m_{i-1}+1, m_i-1, \dots)$.
\end{Corollary}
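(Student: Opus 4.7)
The plan is to unpack both sides of Proposition~\ref{prop:walls-bi-equal-0} using the definition of the Kontsevich polynomial and then match coefficients of the monomials $\u b^{2\u d}$. Setting $b_{n+1}=0$ in $N_{g,n+1}^{\valu, unlab}$ kills every term with $d_{n+1}>0$, so
\[
N_{g,n+1}^{\valu, unlab}(b_1,\ldots,b_n,0) = \frac{1}{2^{5g-4+2n-2M}}\sum_{|\u d|=3g-2+n-M}\frac{1}{\u d!}\langle \tau_0\tau_{d_1}\cdots \tau_{d_n}\rangle_{m_*}\, \u b^{2\u d},
\]
where $M$ is computed from $m_*$ of $\valu$, and on the right I have identified the surviving $d_{n+1}=0$ factor with $\tau_0$.

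Next I will identify the effect on the right-hand side of replacing $\kappa_i$ by $\kappa_i-2$. Writing $\kappa_i=2j_i+1$, this replaces the sequence $m_*$ by $m_*^{(j_i)}$, so $M$ decreases by $1$. In particular the prefactor $2^{-(5g-6+2n-2M')}$ for the modified partition equals exactly the prefactor $2^{-(5g-4+2n-2M)}$ appearing on the left, and the summation range $|\u d|=3g-3+n-M' = 3g-2+n-M$ matches as well. Converting the labeled polynomials in Proposition~\ref{prop:walls-bi-equal-0} to unlabeled ones requires the automorphism ratio
\[
\frac{|\Aut(\valu^{(i)})|}{|\Aut(\valu)|} = \frac{m_{j_i-1}+1}{m_{j_i}},
\]
since decrementing $\kappa_i$ from $2j_i+1$ to $2j_i-1$ changes the multiplicities $m_{j_i}$ and $m_{j_i-1}$ by $-1$ and $+1$ respectively.

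Dividing the identity of Proposition~\ref{prop:walls-bi-equal-0} by $|\Aut(\valu)|$, substituting the polynomial expansions, and reading off the coefficient of a fixed monomial $\u b^{2\u d}$ yields
\[
\langle \tau_0\tau_{d_1}\cdots\tau_{d_n}\rangle_{m_*} = \sum_{i:\,\kappa_i\ge 3}(\kappa_i-2)\cdot\frac{m_{j_i-1}+1}{m_{j_i}}\cdot \langle \tau_{d_1}\cdots\tau_{d_n}\rangle_{m_*^{(j_i)}}.
\]
Grouping the outer sum by the common value $j$ of $j_i$, noting that there are exactly $m_j$ indices $i$ with $\kappa_i=2j+1$ and that each contributes the same term, the factors of $m_j$ cancel, yielding
\[
\langle \tau_0\tau_{d_1}\cdots\tau_{d_n}\rangle_{m_*} = \sum_{j\ge 1}(m_{j-1}+1)(2j-1)\,\langle \tau_{d_1}\cdots\tau_{d_n}\rangle_{m_*^{(j)}},
\]
which is the claimed string equation. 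The regrouping also handles the $m_j=0$ cases automatically: no index $i$ has $\kappa_i=2j+1$, so the term is absent, consistent with the convention that $\langle\cdot\rangle_{m_*^{(j)}}$ vanishes whenever $m_*^{(j)}$ has a negative entry. No step here is a serious obstacle; the main care needed is in bookkeeping the parity shift $M\to M-1$ and the automorphism factor, both of which conspire precisely to make the polynomial identity reduce to the announced coefficient relation.
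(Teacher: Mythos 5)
Your proof is correct and is exactly the derivation the paper intends: the Corollary is stated without proof as an immediate consequence of Proposition~\ref{prop:walls-bi-equal-0}, obtained by expanding both sides via Definition~\ref{def:Kont_poly} and matching coefficients of $\u b^{2\u d}$. Your bookkeeping of the shift $M\mapsto M-1$, the resulting match of the powers of $2$ and of the summation ranges, the automorphism ratio $(m_{j-1}+1)/m_j$, and the final regrouping over equal parts $\kappa_i=2j+1$ are all accurate.
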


%{\color{teal}
%\begin{Theorem*}[Version for two zeros]
%	Suppose in the previous theorem $\ell(\valu)=2$ and $I_0 = \varnothing$. Then the top-degree term $2\cdot V_{g,n}^{\valu}$ of $F_{g,n}^{\valu}$ is \emph{polynomial} on $W_{\Pi} \cap \R_+^n$ and outside of the lower-dimensional walls. Moreover, the following relation holds for all $\u b \in W_{\Pi} \cap \R_+^n$ outside of the lower-dimensional walls:
%	    \begin{multline}
%		\label{eq:V-recursion}
%		N^{\valu, unlab}_{g,n}(\u b) = 2\cdot V_{g,n}^{\valu}(\u b) + 
%		\sum_{m \ge 0} \ \sum_{\substack{g_0+g_1+\ldots+g_m=g \\ g_i\ge 0}} \  %\\ g_1 \le \ldots \le g_m}} \
%\sum_{\substack{A_0 \sqcup \ldots \sqcup A_m = \{1,\ldots,p\} \\ A_i \neq \varnothing}} \ \sum_{\epsilon_1,\ldots,\epsilon_p \in \{0,1\}} \ \sum_{a : \{1,\ldots,m\} \rightarrow \{1,\ldots,\ell(\valu)\}} \\
%\frac{1}{m! \cdot 2^{m+|A_0|}} \cdot \frac{|\Aut(\valu^0)|}{|\Aut(\valu)|} \cdot C_{\u g, \u A, \u \epsilon, a} \cdot 2 \cdot V^{\valu^0}_{g_0,n_0}(\u b_0) \cdot \prod_{i=1}^m V^{[4g_i-2+2n^\bl_i + 2n^\wh_i]}_{g_i, (n^\bl_i,n^\wh_i)}(\u b^\bl_i, \u b^\wh_i),
%\end{multline}
%\end{Theorem*}
%}

\subsection{Proof of Theorem \ref{thm:counting-funcs-on-walls}}

This section is devoted to the proof of the recursion (\ref{eq:V-recursion}).

Let $C$ be any cell contained in $W_{\Pi}$ of the same dimension as $W_{\Pi}$ (equivalently, it is a connected component of $W_{\Pi}$ minus the lower-dimensional walls). Let also $C_0$ be any highest-dimensional cell in $\R^n$ adjacent to $C$ (for now the cell $C_0$ is arbitrary, but later we will choose a particular one).

By Corollary \ref{cor:top-deg-term}, $V_{g,n}^{\valu}$ is polynomial on both $C$ and $C_0$. However, these polynomials are (in general) different. To prove the recursion (\ref{eq:V-recursion}), we will compute in two ways the jump 
\[ \left( \lim_{\u b' \rightarrow \u b, \ \u b' \in C_0} 2\cdot V_{g,n}^{\valu}(\u b') \right) - 2\cdot V_{g,n}^{\valu}(\u b),\  \u b \in C.\]

On the one hand, by Proposition \ref{prop:Kon}, for $\u b' \in C_0$ we have $2\cdot V_{g,n}^{\valu}(\u b') = N^{\valu, unlab}_{g,n}(\u b')$, and so this jump is simply 
\begin{equation}
\label{eq:jump-1}
    N^{\valu, unlab}_{g,n}(\u b) - 2 \cdot V_{g,n}^{\valu}(\u b).
\end{equation}

On the other hand, recall from (\ref{eq:VG-def}) and (\ref{eq:V-def}) that for any $\u b'$
\[
V_{g,n}^{\valu}(\u b') = \sum_{G \in \RGc^{\valu, *}_{g,n}} \frac{1}{|\Aut(G)|} \cdot \left( \prod_{e \in S(G)} \mathbf{1}_{f_e(\u b')>0} \right) \cdot \Vol P_G(\u b').
\]
By claim 2 of Proposition \ref{prop:quasi-poly}, $\Vol P_G(\u b')$ is continuous on the closure $\ov C_0$ of $C_0$. Hence the jump comes from the discontinuity of the indicator functions. More precisely, the jump is equal to the sum of contributions of graphs $G \in \RGc^{\valu, *}_{g,n}$ for which the weight of at least one static edge goes from positive to zero when $\u b' \rightarrow \u b$ (i.e.\ $f_e(\u b') \rightarrow 0$ from above). We say that these graphs \emph{degenerate} when $\u b' \rightarrow \u b$. Let us now describe the structure of degenerating ribbon graphs $G$, explaining along the way the combinatorial meaning of the parameters $m, g_i, A_i, \epsilon_i, a$ of the sums in (\ref{eq:V-recursion}).

\begin{lemma}
\label{lem:static-edges-bridges}
    All of the static edges whose weights become zero are bridges of $G$.
\end{lemma}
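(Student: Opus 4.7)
The plan is to argue by contradiction: suppose $e$ is a non-bridge static edge of $G$ whose weight $f_e$ vanishes on the cell $C \subset W_\Pi$. By Lemma~\ref{lem:odd-weight-static-edge}, we have the explicit formula
\[
f_e(\u b) = \tfrac{1}{2}\Big(\sum_{i \in I} b_i - \sum_{j \in J} b_j\Big), \qquad I \sqcup J = \{1,\ldots,n\},
\]
where $I, J$ are the labels of the two parts of the bipartition of the connected graph $G - e$. Since $f_e$ vanishes identically on the affine span of $C$ (which is $W_\Pi$), the linear form $f_e$ must lie in the linear span of the $p$ defining equations \eqref{eq:odd-wpi-equations} of $W_\Pi$. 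The argument then splits into two cases according to whether $I_0$ is empty.

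In the first case, if $I_0 \ne \emptyset$, I would pick any $i_0 \in I_0$: the coefficient of $b_{i_0}$ in each defining equation \eqref{eq:odd-wpi-equations} is zero, hence also in every linear combination of them; but the coefficient of $b_{i_0}$ in $f_e$ is $\pm\tfrac{1}{2}$, a contradiction. In the second case, $I_0 = \emptyset$, and the hypothesis of Theorem~\ref{thm:counting-funcs-on-walls} then forces $\ell(\valu) \ge 3$. Here I would exploit the ribbon graph structure as follows. Since $e$ is not a bridge, $G - e$ is connected and bipartite, so every closed walk in $G - e$ has even length. Any face of $G$ not incident to $e$ remains a face of $G - e$ with exactly the same face walk, and this walk has odd length because the face degrees of $G$ are given by the odd composition $\valu$. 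Hence $G$ cannot have any face avoiding $e$, i.e.\ $e$ must be incident to every face of $G$. But each edge of a ribbon graph has only two sides and is therefore incident to at most two distinct faces, so $\ell(\valu) \le 2$, contradicting $\ell(\valu) \ge 3$.

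The main subtlety I expect is in the second case, where one must carefully verify that face walks of a ribbon graph not traversing a given edge $e$ are preserved under deletion of $e$ (clear for non-loop edges but requiring a brief check for loops and for edges whose two sides belong to the same face), and that such face walks constitute closed walks in the graph-theoretic sense, so that their odd length genuinely obstructs bipartiteness of $G - e$. Once these elementary ribbon-graph facts are recorded, the two cases together exclude non-bridge static edges whose weight vanishes on $W_\Pi$, which is precisely the statement of the lemma.
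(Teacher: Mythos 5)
Your proof is correct and follows essentially the same route as the paper: apply Lemma~\ref{lem:odd-weight-static-edge} to see that a non-bridge static edge of vanishing weight forces the relation $\sum_{i \in I} b_i = \sum_{j \in J} b_j$ with $I \sqcup J = \{1,\ldots,n\}$ on $W_\Pi$, deduce that this requires $I_0=\emptyset$ and hence $\ell(\valu)\ge 3$, and then derive a contradiction from the existence of a face of odd degree avoiding $e$, which survives as an odd cycle in the connected bipartite graph $G-e$. Your added detail (the coefficient argument for $I_0 \ne \emptyset$ and the explicit check that an edge meets at most two faces) only fleshes out steps the paper leaves implicit.
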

\begin{proof}
    If one of these edges $e$ was not a bridge, by Lemma \ref{lem:odd-weight-static-edge} its length would be given by a linear function of the form $\frac{1}{2}(\sum_{i \in I} b_i - \sum_{i \in I^c} b_i)$ for some $I \subset \{1,\ldots,n\}$, where $I^c$ is the complement. Thus we must have $\sum_{i \in I} b_i = \sum_{i \in I^c} b_i$ on $C$, and so on $W_{\Pi}$. This equation must be a linear combination of the defining equations (\ref{eq:odd-wpi-equations}) of $W_{\Pi}$. This is only possible if $I_0 = \emptyset$ and the equation is the sum of all defining equations of $W_{\Pi}$. By the assumptions of Theorem \ref{thm:counting-funcs-on-walls}, $I_0 = \emptyset$ implies $\ell(\valu) \ge 3$. This last condition implies that there is a face $f$ of $G$ which is not incident to $e$. This face will also be a face of the graph $G-e$, which is connected and bipartite (because $e$ is static and not a bridge). However, $f$ is of odd degree, and so $G-e$ contains an odd cycle, a contradiction.
\end{proof}

Let $m \ge 1$ be the number of static bridges of $G$ which become zero-weight when $\u b' \rightarrow \u b$. It follows from Lemma \ref{lem:static-edges-bridges} that $G$ consists of several ribbon graphs $G_0,G_1, \ldots, G_m$ connected together into a tree-like structure with these bridges. %We say that \emph{$G$ degenerates into a disconnected ribbon graph $G_0 \sqcup G_1 \sqcup \ldots \sqcup G_m$ when $\u b' \rightarrow \u b$}. 
We now describe the properties of the pieces $G_i$, see Figure \ref{fig:odd-degeneration-example} for an illustration.

\begin{figure}
    \centering
    \def\svgwidth{0.95\textwidth}
    %% Creator: Inkscape 1.3.2 (1:1.3.2+202311252150+091e20ef0f), www.inkscape.org
%% PDF/EPS/PS + LaTeX output extension by Johan Engelen, 2010
%% Accompanies image file '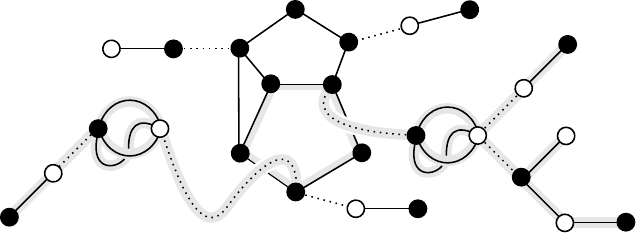' (pdf, eps, ps)
%%
%% To include the image in your LaTeX document, write
%%   \input{<filename>.pdf_tex}
%%  instead of
%%   \includegraphics{<filename>.pdf}
%% To scale the image, write
%%   \def\svgwidth{<desired width>}
%%   \input{<filename>.pdf_tex}
%%  instead of
%%   \includegraphics[width=<desired width>]{<filename>.pdf}
%%
%% Images with a different path to the parent latex file can
%% be accessed with the `import' package (which may need to be
%% installed) using
%%   \usepackage{import}
%% in the preamble, and then including the image with
%%   \import{<path to file>}{<filename>.pdf_tex}
%% Alternatively, one can specify
%%   \graphicspath{{<path to file>/}}
%% 
%% For more information, please see info/svg-inkscape on CTAN:
%%   http://tug.ctan.org/tex-archive/info/svg-inkscape
%%
\begingroup%
  \makeatletter%
  \providecommand\color[2][]{%
    \errmessage{(Inkscape) Color is used for the text in Inkscape, but the package 'color.sty' is not loaded}%
    \renewcommand\color[2][]{}%
  }%
  \providecommand\transparent[1]{%
    \errmessage{(Inkscape) Transparency is used (non-zero) for the text in Inkscape, but the package 'transparent.sty' is not loaded}%
    \renewcommand\transparent[1]{}%
  }%
  \providecommand\rotatebox[2]{#2}%
  \newcommand*\fsize{\dimexpr\f@size pt\relax}%
  \newcommand*\lineheight[1]{\fontsize{\fsize}{#1\fsize}\selectfont}%
  \ifx\svgwidth\undefined%
    \setlength{\unitlength}{306.61795068bp}%
    \ifx\svgscale\undefined%
      \relax%
    \else%
      \setlength{\unitlength}{\unitlength * \real{\svgscale}}%
    \fi%
  \else%
    \setlength{\unitlength}{\svgwidth}%
  \fi%
  \global\let\svgwidth\undefined%
  \global\let\svgscale\undefined%
  \makeatother%
  \begin{picture}(1,0.36365148)%
    \lineheight{1}%
    \setlength\tabcolsep{0pt}%
    \put(0,0){\includegraphics[width=\unitlength,page=1]{odd-degeneration-example.pdf}}%
    \put(0.4314083,0.14663674){\makebox(0,0)[lt]{\lineheight{1.25}\smash{\begin{tabular}[t]{l}$G_0$\end{tabular}}}}%
  \end{picture}%
\endgroup%

    \caption{A degeneration of a ribbon graph $G$ in $\RGc^{\valu,*}_{g,n}$ into $9$ components. The zero-weight static bridges are dotted. The unique non-bipartite component $G_0$ has 4 faces of degrees $7,5,5,3$. Two ``branches'' are glued to the corners of one of the faces of degree $5$. Their boundaries (in grey) form together a face of the initial graph $G$.}
    \label{fig:odd-degeneration-example}
\end{figure}

\begin{lemma}
\label{lem:odd-Gi-properties}
    There is exactly one non-bipartite graph among the $G_i$. The bipartite graphs $G_i$ each have one face.
    
    % Up to relabeling of $G_i$, we have the following:
    % \begin{itemize}
    %     \item $G_0$ is non-bipartite and $G_1,\ldots,G_m$ are bipartite;
    %     \item graphs $G_1,\ldots,G_m$ each have one face;
    %     \item there is a partition $A_0 \sqcup A_1 \sqcup \ldots \sqcup A_m$ of $\{1,\ldots,p\}$ such that:
    %     \begin{itemize}
    %         \item the labels of vertices in the bipartite graphs $G_i,\ i=1,\ldots,m$ are $\bigcup_{s \in A_i} I_s$ for one part (which we color in black) and $\bigcup_{s \in A_i} J_s$  for another part (which we color in white);
    %         \item the labels of vertices in $G_0$ are $I_0 \cup \bigcup_{s \in A_0} I_s \cup \bigcup_{s \in A_0} J_s$.
    %     \end{itemize} 
    %     %\item each $G_i$ is positive at $(L_i)_{i\in V(G_i)}$.
    % \end{itemize}
\end{lemma}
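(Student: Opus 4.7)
My plan is to prove the two claims of the lemma in turn, exploiting that $G$ has only odd face cycles (since $\valu$ is an odd composition) and that the decomposition $G_0,\ldots,G_m$ is obtained by cutting $G$ along the $m$ zero-weight static bridges, which are bridges by Lemma~\ref{lem:static-edges-bridges}.

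For the first claim (exactly one non-bipartite $G_i$), I would split it into existence and uniqueness. For existence, assume every $G_i$ is bipartite: since the $m$ bridges form a tree that connects the $G_i$'s, one can propagate a global 2-coloring — start from any $G_i$ with its own bipartition and, moving along the tree, flip the bipartition of each next component so that the connecting bridge joins vertices of opposite color. This yields a 2-coloring of $G$, contradicting the fact that $G$ has an odd face cycle. For uniqueness, suppose $G_i$ and $G_{i'}$ were both non-bipartite, and pick any bridge $e$ on the tree-path from $G_i$ to $G_{i'}$; the two components of $G-e$ then both contain an odd cycle (inherited from $G_i$ and $G_{i'}$ respectively), so neither is bipartite, contradicting that $e$ is static.

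For the second claim (each bipartite $G_i$ has one face), I would first observe that since every face cycle in a bipartite ribbon graph has even length while all faces of $G$ have odd degree (from $\valu$), no face of $G$ can lie entirely inside a bipartite $G_i$. Hence every face of $G_i$ must arise as a ``split piece'' of a face of $G$: when a bridge $e_j$ is cut, the unique face $F_{e_j}$ of $G$ incident to $e_j$ (traversed twice on its boundary) splits into one piece on each side of $e_j$, and after cutting all $m$ bridges each affected face of $G$ splits into several pieces, distributed among the $G_i$. The number of pieces landing in a given bipartite $G_i$ equals the number of removed bridges incident to it, i.e., its degree in the tree decomposition. It therefore suffices to show that every bipartite $G_i$ is a leaf of the tree.

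To establish the leaf property I would argue by contradiction: if a bipartite $G_i$ were incident to two distinct zero-weight static bridges $e_a$ and $e_b$, both linear functionals $f_{e_a}(\u b)$ and $f_{e_b}(\u b)$ would have to lie in the span of the wall-defining equations \eqref{eq:odd-wpi-equations}. Using the inclusion of bipartite sides along the tree and the compatibility (up to global swap) of the induced bipartitions of the combined bipartite subgraph, one can rewrite $f_{e_a} - f_{e_b}$ as a functional supported on the complementary bipartite side; tracking how the coefficients $\pm 1$ on each $I^0_s, I^1_s$ must agree with the bipartite coloring, and combining this with the odd-parity constraint on the face of $G$ at which the bridges attach to $G_i$, will force the configuration to collapse — concretely, the only way both bridges can simultaneously degenerate on $W_\Pi$ while preserving the odd-face parity of $G$ is that $G_i$ is attached by a single bridge to the unique non-bipartite $G_0$. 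The hard part is precisely this rigidity argument ruling out chains of bipartite components; once it is in place, the one-face conclusion is immediate from the split-piece count.
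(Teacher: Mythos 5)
Your treatment of the first claim is correct and essentially identical to the paper's: non-bipartiteness of $G$ rules out all $G_i$ being bipartite, and a bridge on the path between two non-bipartite $G_i$'s would have both components of $G-e$ non-bipartite, contradicting staticness.

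The proof of the second claim, however, has a fatal gap: the statement you reduce it to --- that every bipartite $G_i$ is a leaf of the tree --- is simply false. The degenerations in question genuinely include chains of bipartite components: see Figure~\ref{fig:odd-degeneration-example}, where ``branches'' consisting of several bipartite pieces in a row are glued to a corner of a face of $G_0$, and all of section~\ref{sec:count-degens}, which counts exactly such tree-like joinings with bipartite components at arbitrary depth. No rigidity argument can rule these out. Your preliminary counting claim is also wrong: the number of faces of a bipartite $G_i$ is \emph{not} its degree in the tree. For a concrete counterexample, let $G_i$ be a single edge $uv$ (one face of degree $2$) sitting in the interior of a chain, with one bridge attached at $u$ and one at $v$: its tree-degree is $2$ but it has one face. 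The error is in assuming that distinct bridges incident to $G_i$ produce distinct split pieces inside $G_i$; when two bridges meet the same face of $G$, the arcs they cut off on the $G_i$ side can close up into a single face of $G_i$.

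The paper's argument avoids all of this by peeling leaves inductively. A bipartite leaf $G_i$ meets exactly one zero-weight bridge, attached at a corner of some face of $G_i$; any \emph{other} face of $G_i$ would be untouched by the cutting, hence a face of $G$ itself, hence of odd degree --- impossible in a bipartite graph. So the leaf has one face. Removing it together with its bridge lowers the degree of the adjacent face of $G$ by the even number $2|E(G_i)|+2$, so that face stays odd and the argument iterates on the smaller graph, eventually reaching every bipartite component whether or not it was a leaf originally. If you want to salvage your approach, you would need to replace the leaf reduction by this induction (or by a correct global count of how the $F(G)+m$ faces of $\bigsqcup_i G_i$ distribute, which is considerably more delicate than degree-in-tree).
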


\begin{proof}%[Proof of Lemma \ref{lem:odd-Gi-properties}]
Recall that for any static bridge $e$ one component of $G-e$ is bipartite and the other is not. If among the $G_i$ there were two non-bipartite ribbon graphs, any static bridge separating them would violate the above property. If all the $G_i$ were bipartite, then the initial graph $G$ would also be bipartite, a contradiction. Hence, there is exactly one non-bipartite graph among the $G_i$. %We assume that $G_0$ is non-bipartite and $G_1,\ldots,G_m$ are bipartite. 

We now prove that each bipartite graph $G_i$ has one face. Consider a bipartite graph $G_i$ which is a ``leaf'' of the tree-like structure. The unique zero-weight static edge incident to $G_i$ is incident to a corner of some face of $G_i$. If $G_i$ had another face, it would also be a face of $G$, thus of odd degree, which is impossible since $G_i$ is bipartite. Hence $G_i$ has one face and we can remove it from $G$ with the adjacent zero-weight static edge. This decreases the degree of some face of $G$ by an even number and so this face will still be of odd degree. We can continue removing the bipartite graphs $G_i$ in the same manner.
%Finally, the fourth property is necessary for the contribution of the corresponding graph to be non-zero on $C$ (see section \ref{subsec:prelim-jump}).
\end{proof}

Label the $G_i$ in such a way that $G_0$ is the non-bipartite graph.
% and the genera of the bipartite graphs satisfy $g_1 \le \ldots \le g_m$. 
Clearly there are $m!$ different labelings satisfying this condition. Label also the faces of $G$ in such a way that the degree of face $i$ is equal to $\val_i$ for $i=1,\ldots,\ell(\valu)$. This can be done in $|\Aut(\valu)|$ different ways. We say that a degenerating ribbon graph is \emph{well-labeled} if the choices of these two labelings are made.

Let $g_i$ denote the genus of $G_i$.

\begin{lemma}
    There is a partition $A_0 \sqcup A_1 \sqcup \ldots \sqcup A_m$ of $\{1,\ldots,p\}$ and a vector $(\epsilon_1,\ldots,\epsilon_p) \in \{0,1\}^p$ such that:
        \begin{itemize}
            \item the labels of vertices in the bipartite graphs $G_i,\ i=1,\ldots,m$ are $\bigcup_{s \in A_i} I^{\epsilon_s}_s$ for one part and $\bigcup_{s \in A_i} I^{1-\epsilon_s}_s$  for another part;
            \item the labels of vertices in $G_0$ are $I_0 \cup \bigcup_{s \in A_0} I^0_s \cup \bigcup_{s \in A_0} I^1_s$.
        \end{itemize} 
    The partition is defined uniquely, while the vector $(\epsilon_1,\ldots,\epsilon_p)$ is unique up to the $2^{m+|A_0|}$ possible compositions of $m$ involutions $(\epsilon_s, s \in A_i) \leftrightarrow (1-\epsilon_s, s \in A_i), \ i=1,\ldots,m$ and $|A_0|$ involutions $(\epsilon_s) \leftrightarrow (1-\epsilon_s), \ s \in A_0$.
\end{lemma}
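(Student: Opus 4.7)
The plan is to derive, for each bipartite piece $G_i$ ($i \ge 1$), a single linear identity among the $b_j$ indexed by its vertex labels, and then decompose that identity in the basis of the $p$ defining equations \eqref{eq:odd-wpi-equations} of $W_\Pi$.

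\textbf{Step 1: Identity per bipartite piece.} Since $G_i$ is connected and bipartite, it admits a unique bipartition of its vertex set, inducing through the face labelling of $G$ a partition $X_i \sqcup Y_i$ of the subset of $\{1, \dots, n\}$ carried by $G_i$. Summing vertex perimeters separately over $X_i$ and over $Y_i$, each internal edge of $G_i$ has one endpoint in each class and so its weight cancels in the difference, while only bridges attaching $G_i$ to the rest of the tree contribute. By Lemma \ref{lem:static-edges-bridges} these bridges are precisely the zero-weight static bridges, whose weights vanish in the limit $\u b' \to \u b$. Hence the linear identity
\[\sum_{j \in X_i} b_j = \sum_{j \in Y_i} b_j\]
holds on $C$, and by relative openness of $C$ in $W_\Pi$, on all of $W_\Pi$.

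\textbf{Step 2: Coefficient analysis.} The $p$ defining forms of $W_\Pi$ being $\Q$-linearly independent, there is a unique decomposition
\[\sum_{j \in X_i} b_j - \sum_{j \in Y_i} b_j = \sum_{s=1}^p c_s \Bigl(\sum_{j \in I^0_s} b_j - \sum_{j \in I^1_s} b_j\Bigr).\]
Since $\Pi$ partitions $\{1,\dots,n\}$, each $b_j$ occurs in at most one defining form and with coefficient $\pm 1$; the coefficient of $b_j$ on the left is in $\{-1, 0, 1\}$. Termwise matching therefore forces $c_s \in \{-1, 0, 1\}$ for every $s$. Set $A_i := \{s : c_s \neq 0\}$; then $X_i \cup Y_i = \bigcup_{s \in A_i}(I^0_s \cup I^1_s)$, and for each $s \in A_i$ either $I^0_s \subset X_i$ and $I^1_s \subset Y_i$ (set $\epsilon_s := 0$), or the reverse (set $\epsilon_s := 1$). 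In particular no label from $I_0$ lies in any bipartite piece.

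\textbf{Step 3: Assembly and uniqueness.} The vertex sets of $G_0, G_1, \dots, G_m$ partition $\{1,\dots,n\}$, so the $A_i$ for $i \ge 1$ are pairwise disjoint; defining $A_0 := \{1,\dots,p\} \setminus \bigsqcup_{i \ge 1} A_i$ and reading off the complement gives that $G_0$ carries exactly the labels $I_0 \cup \bigcup_{s \in A_0}(I^0_s \cup I^1_s)$. The partition $(A_0, \dots, A_m)$ is thus canonical. The residual freedom in $\epsilon$ splits into independent involutions of two kinds: for each $i \ge 1$ the bipartition of $G_i$ is defined only up to swap of $X_i$ and $Y_i$, which simultaneously flips $\epsilon_s \mapsto 1 - \epsilon_s$ on all $s \in A_i$ ($m$ factors of $2$); and for each $s \in A_0$ the value of $\epsilon_s$ is unconstrained ($|A_0|$ factors of $2$). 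The total ambiguity is $2^{m+|A_0|}$. The main technical point is the coefficient analysis of Step 2: ruling out non-unit or fractional $c_s$ uses crucially both that $\Pi$ is a partition of $\{1,\dots,n\}$ and that the forms \eqref{eq:odd-wpi-equations} are linearly independent.
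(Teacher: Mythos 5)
Your proposal is correct, and it reaches the same structural conclusion by the same underlying mechanism — a linear relation that holds on the cell $C$ must be a $\{-1,0,1\}$-combination of the defining forms \eqref{eq:odd-wpi-equations} because the sets of $\Pi$ are disjoint — but it organizes the first step differently from the paper. The paper applies Lemma~\ref{lem:odd-weight-static-edge} to each zero-weight static bridge $e$ individually: the vanishing of $f_e(\u b)$ on $C$ gives a relation whose index sets are the two parts of the \emph{entire} bipartite component of $G-e$ (a union of several pieces $G_j$ plus connecting bridges), and the paper then needs an inductive peeling argument, working inward from the leaves of the tree-like structure, to isolate the label sets of the individual $G_i$. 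You instead produce one relation per piece $G_i$ directly, by taking the difference of vertex perimeters over the two parts of $G_i$ and observing that internal edges cancel while only the attaching zero-weight bridges survive; this bypasses the leaf-inward induction entirely. Your Step 2 also spells out the coefficient analysis ($c_s\in\{-1,0,1\}$ forced by termwise matching, $I_0$ excluded) that the paper compresses into the assertion that ``the only possible relations of this form on $C$ are the defining equations of $W_\Pi$ and the sums thereof.'' The uniqueness count $2^{m+|A_0|}$ is handled identically in both arguments. Net effect: your route is a mild streamlining that trades the paper's per-bridge bookkeeping and induction for a per-piece telescoping identity; both are complete.
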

\begin{proof}
    Let $e$ be any zero-weight static bridge of $G$. Recall the formula for its weight (Lemma \ref{lem:odd-weight-static-edge}). On $C$ this weight is zero, which implies a linear relation on the $b_i$ of the form $\sum_{i \in I} b_i = \sum_{j \in J} b_j$, where $I,J$ are the labels of vertices in the two parts of the bipartite component of $G-e$. But the only possible relations of this form on $C$ are the defining equations (\ref{eq:odd-wpi-equations}) of $W_{\Pi}$ and the sums thereof. This implies that $I = I^{j_1}_{i_1} \cup \ldots \cup I^{j_s}_{i_s}$ and $J=I^{1-j_1}_{i_1} \cup \ldots \cup I^{1-j_s}_{i_s}$ for some $i_1,\ldots,i_s \in \{1,\ldots,p\}$, $j_1,\ldots,j_s \in \{0,1\}$. Since the choice of a zero-weight static bridge $e$ was arbitrary, the same property is satisfied by the vertex labels in each bipartite $G_i$. Indeed, first apply the observation to the bridges $e$ separating a single bipartite graph $G_i$ (which is at a ``leaf'' of the tree-like structure), then to the ones separating several bipartite graphs $G_i$ whose distance from the ``leaves'' is at most 2, and so on.
    %This implies that for each bipartite $G_i$ the labels of vertices in the two parts must be of the form $I_{i_1} \cup \ldots \cup I_{i_s}$ and $J_{i_1} \cup \ldots \cup J_{i_s}$ with $i_j \ge 1$, which implies the statement of the lemma.
\end{proof}

%Let $n_0 = |I_0| + \sum_{s \in A_0} (|I^0_s|+|I^1_s|)$ be the number of vertices in $G_0$. Let also $n_+^i = \sum_{s \in A_i} |I^{\epsilon_s}_s|$ and $n_-^i = \sum_{s \in A_i} |I^{1-\epsilon_s}_s|$

Since for each $i=1,\ldots,m$, the graph $G_i$ has one face, each ``branch'' of the tree-like structure emanating from $G_0$ is also a graph with one face. Take now a face of $G_0$ and all the branches of the tree-like structure that are glued to some corner of this face via a zero-weight static edge. Together all these faces form one face of the initial graph $G$ (see Figure \ref{fig:odd-degeneration-example}). For $i=1,\ldots,m$, let the unique face of $G_i$ be part of the the face of $G$ with label $a(i)$. We call $a:\{1,\ldots,m\} \rightarrow \{1,\ldots,\ell(\valu)\}$ the \emph{attachment map}. 

For $i=1,\ldots, \ell(\valu)$, label the unique face of $G_0$ which is part of the face of $G$ with label $i$ by the same label $i$. By the argument in the previous paragraph, this face of $G_0$ has (odd) degree 
\begin{equation}
\label{eq:odd-G0-face-degrees}
    \val^0_i = \val_i - \sum_{j \in a^{-1}(i)} (2|E(G_j)|+2).
\end{equation}
Note that by Euler's formula 
\[
\val^0_i = \val_i - \sum_{j \in a^{-1}(i)} \left( 4g_j + 2\sum_{s \in A_i} |I^{\epsilon_s}_s|+ 2\sum_{s \in A_i}|I^{1-\epsilon_s}_s| \right).
\]

To sum up, starting from a \emph{well-labeled} degenerating ribbon graph $G$, we obtained the following \emph{degeneration data}:
\begin{itemize}
    \item $m \ge 1$;
    \item $g_0,\ldots,g_m \ge 0$ such that $g_0+\ldots+g_m=g$;
    \item a partition $A_0 \sqcup A_1 \sqcup \ldots \sqcup A_m$ of $\{1,\ldots,p\}$ with $A_i \neq \emptyset$;
    \item a (non-unique) vector $(\epsilon_1,\ldots,\epsilon_p) \in \{0,1\}^p$;
    \item an attachment function $a:\{1,\ldots,m\} \rightarrow \{1,\ldots,\ell(\valu)\}$,
    \item ribbon graphs $G_0, G_1,\ldots,G_m$,
\end{itemize}
satisfying the following conditions (with $\valu^0$ and $n_0$ defined as in Theorem \ref{thm:counting-funcs-on-walls}):
\begin{itemize}
    \item $\val^0_i >0$ for $i=1,\ldots,\ell(\valu)$;
    \item $G_0 \in \RGc^{\valu^0,*}_{g_0,n_0}$, with vertex labels in $I_0 \cup \bigcup_{s \in A_0} I^0_s \cup \bigcup_{s \in A_0} I^1_s$ and, additionally, with a labeling of faces so that the degree of face $i$ is $\val^0_i$, $i=1,\ldots,\ell(\valu)$;
    \item for $i=1,\ldots, m$, $G_i$ is a bipartite ribbon graph of genus $g_i$, with one face, with vertex labels in $\bigcup_{s \in A_i} I^{\epsilon_s}_s$ for one part and $\bigcup_{s \in A_i} I^{1-\epsilon_s}_s$  for another part.
\end{itemize}

%In the previous section we (partially) described the structure of the ribbon graphs $G$ which degenerate when $\u b' \rightarrow \u b$. We now present the reverse construction.

%Define also $n_0$, $n^i_+$, $n^i_-$, $\u b_0$, $\u b^+_i$, $\u b^-_i$ as in Theorem \ref{thm:counting-funcs-on-walls}.

The following proposition is the key to the proof of Theorem \ref{thm:counting-funcs-on-walls}.

\begin{prop}
\label{prop:odd-degen-coeff}
    Choose $C_0$ to be a highest-dimensional cell adjacent to $C$ which contains a path $\u b'(\varepsilon),\ \varepsilon \in (0,1]$ such that $\lim_{\varepsilon \rightarrow 0} \u b'(\varepsilon) = \u b$ and
    \begin{equation}
    \label{eq:C0-def}
        \sum_{i \in I^0_j} b'_i(\varepsilon) - \sum_{i \in I^1_j} b'_i(\varepsilon) = 2^j \varepsilon,\ j=1,\ldots,p.
    \end{equation}
    %Fix a degeneration data $m,\u g, \u A, \u \epsilon, a, G_0, G_1,\ldots,G_m$ as above.
    
    The number of ribbon graphs $G$ which degenerate when $\u b'(\varepsilon) \rightarrow \u b$, are well-labeled and have a fixed degeneration data (as above), is equal to $C_{\u g, \u A, \u \epsilon, a} \cdot \left( |\Aut_F(G_0)| \cdot \prod_{i=1}^m |\Aut(G_i)| \right)^{-1}$, where $C_{\u g, \u A, \u \epsilon, a}$ is defined in (\ref{eq:odd-degen-coeff}) and $\Aut_F(G_0)$ is the subgroup of automorphisms of $G_0$ fixing every face.
\end{prop}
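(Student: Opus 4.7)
The plan is to count the attachment configurations of fixed labeled representatives of $G_0, G_1, \ldots, G_m$ that produce the prescribed degeneration data, and then to divide by $|\Aut_F(G_0)| \cdot \prod_i |\Aut(G_i)|$ via an orbit-stabilizer argument. By Lemmas \ref{lem:static-edges-bridges} and \ref{lem:odd-Gi-properties}, the $m$ degenerating bridges assemble the pieces into a tree whose unique non-bipartite node is $G_0$, and the attachment map $a$ records the face of $G_0$ in which each $G_j$'s unique face ultimately lies.

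The first ingredient is a sign analysis for the bridge weights. By Lemma \ref{lem:odd-weight-static-edge}, each bridge weight $f_e$ admits a unique decomposition $f_e = \sum_s c_s \cdot \bigl(\sum_{i \in I^0_s} b_i - \sum_{i \in I^1_s} b_i\bigr)$ modulo the relations defining $W_\Pi$. Evaluated along the path $\u b'(\varepsilon)$ it becomes $\varepsilon \sum_s c_s 2^s$, so the dyadic scaling $2^s \varepsilon$ is essential: it ensures that the positivity condition $f_e(\u b'(\varepsilon)) > 0$ selects a unique bipartite orientation of the sub-tree beneath $e$, matching exactly the prescribed $(\u A, \u \epsilon, a)$. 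Consequently, only the configurations compatible with the given degeneration data produce bridges that actually degenerate as $\varepsilon \to 0^+$.

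The main ingredient is the face-by-face enumeration in $G_0$. For each face $i$, the $k_i = |a^{-1}(i)|$ pieces $G_j$ with $a(j) = i$ are inserted into face $i$ of $G$ as nested excursions, and I claim the contribution of face $i$ to the raw count is
\[
\val^0_i \cdot \prod_{j \in a^{-1}(i)} |E(G_j)| \cdot \frac{(\val_i - 2)!!}{(\val_i - 2 k_i)!!}.
\]
The factor $\val^0_i$ picks a distinguished root corner on the $G_0$-part of face $i$; each factor $|E(G_j)|$ counts the entry corners of $G_j$'s face lying in the color class consistent with the global bipartite orientation (of the $2|E(G_j)|$ face corners the $\epsilon$-orientation admits exactly half). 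The double factorial ratio $(\val_i - 2)(\val_i - 4)\cdots(\val_i - 2 k_i + 2)$ would be established by a combinatorial identity, most likely an induction on $k_i$ in which each new insertion of a piece contributes a factor of the form $(\val_i - 2s) \cdot |E(G_{j_s})|$, with the $2s$ corners forbidden by the combined constraints of the tree structure and of the $C_0$-positivity; order-independence and the correct telescoping to the stated product are part of this verification.

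Finally, an orbit-stabilizer argument converts the raw count into the count of iso classes: the group $\Aut_F(G_0) \times \prod_i \Aut(G_i)$ acts freely on the set of attachment configurations (any stabilizer must fix every bridge endpoint and hence reduce to the identity on each piece), and the orbits biject with iso classes of $G$, yielding $C_{\u g, \u A, \u \epsilon, a} / (|\Aut_F(G_0)| \prod_i |\Aut(G_i)|)$. The hardest step will be the inductive face-wise count, where the combinatorics of the nested-excursion tree, the $C_0$-positivity, and the bipartite $\epsilon$-orientation interact most delicately.
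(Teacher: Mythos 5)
Your overall architecture matches the paper's: reduce to counting attachment configurations of the pieces, use the independence of the faces of $G_0$ to get the product over $i$, analyze bridge signs via the dyadic scaling $2^j\varepsilon$, and divide by $|\Aut_F(G_0)|\cdot\prod_i|\Aut(G_i)|$ at the end. The target per-face count $\val^0_i\cdot\prod_{j\in a^{-1}(i)}|E(G_j)|\cdot(\val_i-2)!!/(\val_i-2k_i)!!$ is also correct. But the proposal has a genuine gap exactly where you flag it: the double-factorial identity is asserted, not proved, and the sketch you give of how it "would be established" glosses over the two points that make this the technical core of the argument.

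First, the admissibility condition on corners is not simply that each entry corner lies "in the color class consistent with the global bipartite orientation." The correct characterization (which requires the inequality $D_i>\sum_{j<i}D_j$ coming from the dyadic path, proved separately) is \emph{relative}: a piece all of whose descendants in the attachment tree have smaller labels must attach to its parent through a black corner, whereas any other piece must have its parent-bridge and its bridge toward the maximal-label descendant in corners of \emph{different} colors. In the second case the color of the parent-bridge corner is not determined by any global orientation, so your justification of the factor $|E(G_j)|$ is incorrect as stated, even though the numerical value happens to survive. Second, the telescoping product $(\val_i-2)(\val_i-4)\cdots(\val_i-2k_i+2)$ cannot be obtained by a naive product rule: the number of choices available at each insertion step depends on whether the previous insertion landed in an already-attached component or a new one, so the branching degrees of the decision tree vary from branch to branch. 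One needs either an explicit linearization of the count (the paper does this via a bijection with sequences of markers, constructed by an algorithm traversing the attachment tree) together with a lemma showing that a rooted tree whose root-to-leaf paths all realize the same multiset of branching degrees $\{d_1,\ldots,d_n\}$ has exactly $d_1\cdots d_n$ leaves, or some equivalent order-independence argument. Your phrase "order-independence and the correct telescoping to the stated product are part of this verification" names precisely the content that is missing; without it the proposal establishes only the shape of the answer, not the answer itself.
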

Proposition \ref{prop:odd-degen-coeff} is proved in section \ref{sec:count-degens}. We are now ready to finish the proof of Theorem \ref{thm:counting-funcs-on-walls}.
\begin{proof}[End of proof of Theorem \ref{thm:counting-funcs-on-walls} assuming Proposition~\ref{prop:odd-degen-coeff}]
    Choose the highest-dimensional cell $C_0$ adjacent to $C$ as in Proposition \ref{prop:odd-degen-coeff}.

    Suppose $G$ degenerates when $\u b'(\epsilon) \rightarrow \u b$ with a given degeneration data as above. For $i=1,\ldots,m$, color the vertices of $G_i$ with labels in $\bigcup_{s \in A_i} I^{\epsilon_s}_s$ into black, and the ones with labels in $\bigcup_{s \in A_i} I^{1-\epsilon_s}_s$ into white. This makes each $G_i$ an element of $\RGc_{g_i,(n^\bl_i,n^\wh_i)}^{[4g_i - 2 + 2n^\bl_i + 2n^\wh_i],*}$.
    
    For $\u b \in C$ the connecting bridges have zero weights, and so the polytope $P_G(\u b)$ is the product 
    \[P_{G_0}(\u b_0) \times \prod_{i=1}^m P_{G_i}(\u b^\bl_i;\u b^\wh_i),\]
    and the contribution of $G$ to the jump is equal to
    \[
        2 \cdot V^*_{G_0}(\u b_0) \cdot \prod_{i=1}^m V^*_{G_i}(\u b^\bl_i;\u b^\wh_i).
    \]
	Note that there is no factor $1/|\Aut(G)|$, because $|\Aut(G)|=1$. Indeed, if $G$ degenerates, then, as we have seen, it has at least one bridge. Since the vertices of $G$ are labeled, any automorphism of $G$ must fix this edge and its endpoints, and so it is trivial.
	
    By Proposition \ref{prop:odd-degen-coeff}, there are $C_{\u g, \u A, \u \epsilon, a} \cdot \left( |\Aut_F(G_0)| \cdot \prod_{i=1}^m |\Aut(G_i)| \right)^{-1}$ well-labeled degenerating ribbon graphs $G$ with the same degeneration data. %Note that the term $C_{\u g, \u A, \u \epsilon, a} \cdot |\Aut(\valu^0)|$ does not depend on the particular choice of the ribbon graphs $G_i$. It only depends on the parameters $g_i, A_i, \epsilon_i, a$.
    Thus, summing the contributions of all $G_i$ compatible with the parameters $g_i, A_i, \epsilon_i, a$, we obtain
    \[
         |\Aut(\valu^0)| \cdot C_{\u g, \u A, \u \epsilon, a} \cdot 2 \cdot V^{\valu^0}_{g_0,n_0}(\u b_0) \cdot \prod_{i=1}^m V^{[4g_i-2+2n^\bl_i + 2n^\wh_i]}_{g_i, (n^\bl_i,n^\wh_i)}(\u b^\bl_i, \u b^\wh_i),
    \]
    where we have used the fact that each $G_0$ is counted $|\Aut(\valu^0)| / |\Aut(G_0) : \Aut_F(G_0)|$ times (recall that $G_0$ is equipped with the additional labeling of faces such that the face with label $i$ has degree $\val^0_i$).
    
    Because of the well-labeling, the contribution of each (non-well-labeled) degenerating ribbon graph is counted $m! \cdot |\Aut(\valu)|$ times in this sum. So the actual contribution to the jump is
    \[
    \frac{1}{m!} \cdot \frac{|\Aut(\valu^0)|}{|\Aut(\valu)|} \cdot C_{\u g, \u A, \u \epsilon, a} \cdot 2 \cdot V^{\valu^0}_{g_0,n_0}(\u b_0) \cdot \prod_{i=1}^m V^{[4g_i-2+2n^\bl_i + 2n^\wh_i]}_{g_i, (n^\bl_i,n^\wh_i)}(\u b^\bl_i, \u b^\wh_i).
    \]

    Summing this over all possible parameters $g_i, \epsilon_i, a$, we overcount by $2^{m+|A_0|}$ because of the non-uniqueness of the vector $\epsilon_i$ in degeneration data. Hence, multiplying by $\frac{1}{2^{m+|A_0|}}$ and summing over all $A_i$ and all $m\ge 1$, we get the total jump. Equating with (\ref{eq:jump-1}) we obtain the desired recursion (\ref{eq:V-recursion}).
\end{proof}

\subsection{Proof of Proposition \ref{prop:walls-bi-equal-0}}
\label{subsec:proof-walls-bi-equal-0}

\begin{proof}[Proof of Proposition \ref{prop:walls-bi-equal-0}]
    Let $C$ be a cell contained in the wall $b_{n+1}=0$ and of the same dimension. Let also $C_0$ be the (unique) highest-dimensional cell adjacent to $C$ and contained in the half space $b_{n+1}>0$.
    
    As in the proof of Theorem \ref{thm:counting-funcs-on-walls}, consider the jump of $V^{\valu}_{g,n}$ when passing from $C_0$ to $C$. Since $F^{\valu}_{g,n}$ and $V^{\valu}_{g,n}$ are both identically zero on $C$ (by definition), this jump is equal to $N^{\valu, unlab}_{g,n}(b_1,\ldots,b_n,0)$. On the other hand, this jump is equal to the sum of contributions of all graphs that degenerate when passing from $C_0$ to $C$. As before, each degenerating graph $G$ has at least one static edge whose weight becomes zero on $C$. However, using Lemma \ref{lem:odd-weight-static-edge}, we see that the only possibility for such static edge is to be an edge incident to the leaf with label $n+1$. Clearly, any graph $G$ with such a leaf does degenerate when passing from $C_0$ to $C$. 

    Label the faces of $G$ in such a way that the degree of face $i$ is equal to $\val_i$. This can be done in $|\Aut(\valu)|$ different ways. The contribution of $G$ to the jump is clearly equal to $V^*_{G'}(b_1,\ldots,b_n)$, where $G'$ is obtained from $G$ by removing the leaf $n+1$. Note that the faces of $G'$ are still labeled. If the leaf was incident to the face $i$ of $G$, then degree of face $i$ of $G'$ is equal to $\val_i-2$. Conversely, starting from $G'$, there are $\val_i-2$ corners of the face $i$ where we can glue the leaf to reconstruct $G$. Hence
    \begin{multline*}
        |\Aut(\valu)|\cdot N_{g,n+1}^{[\kappa_1, \dots \kappa_r], unlab}(b_1, \dots, b_{n}, 0)= \\ 
        \sum_{\substack{1 \le i \le r \\ \kappa_i\geq 3}}  (\kappa_i-2) \cdot |\Aut(\kappa_1, \dots, \kappa_i-2, \dots, \kappa_r)| \cdot N_{g,n}^{[\kappa_1, \dots, \kappa_i-2, \dots, \kappa_r], unlab}(b_1, \dots, b_n),
    \end{multline*}
    which is equivalent to the desired equality.
\end{proof}

\section{Counting degenerations of ribbon graphs}
\label{sec:count-degens}

In this section we prove Proposition \ref{prop:odd-degen-coeff}.

Suppose the degeneration data $m, g_i, A_i, \epsilon_i, a, G_i$ is fixed. We will count the number of ways to join together into a tree-like structure the graphs $G_i$ with $m$ bridges, in such a way as to produce a ribbon graph in $\RGc^{\valu,*}_{g,n}$ which degenerates when $\u b'(\epsilon) \rightarrow \u b$, with the given degeneration data. Equivalently, the weights of these $m$ bridges in the constructed graph should be positive for $\epsilon>0$ and zero for $\epsilon=0$. We call such joinings \emph{admissible}.

Note that, because of the possible symmetries of the pieces $G_i$, the count of admissible joinings is $ |\Aut_F(G_0)| \cdot \prod_{i=1}^m |\Aut(G_i)|$ times the actual count of well-labeled degenerating ribbon graphs (the graph $G_0$ has labeled faces, so we only allow symmetries fixing the faces). Hence we need to show that the number of admissible joinings is equal to $C_{\u g, \u A, \u \epsilon, a}$.

Note that since the attachment map $a$ is fixed, we know which of the graphs $G_i$ are in the branches of the tree-like structure which are joined to the given face of $G_0$. Moreover, by Lemma \ref{lem:odd-weight-static-edge}, the weights of the joining bridges (as functions of $\u b'(\epsilon)$) in any such branch only depend on the branch itself. It means that we can count separately for each $i=1,\ldots,\ell(\valu_0)$ the number of admissible joinings of the face of $G_0$ with label $i$ with the corresponding graphs, and than take the product over $i$. This explains the product structure of the coefficient (\ref{eq:odd-degen-coeff}).

Hence, it is sufficient to consider the case $i=1$. Without loss of generality, assume that $a^{-1}(1)=\{1,2,\ldots,m'\}$, for some $m'\ge 1$. Denote $e_i=|E(G_i)|$. We will prove that the number of admissible joinings of $G_0, G_1, \ldots, G_{m'}$, where the joining bridges incident to $G_0$ are only incident to its face with label $1$, is equal to
\[\val^0_1 \cdot \prod_{i=1}^{m'} e_i\cdot \frac{(2\sigma_1 + \val^0_1 + 2(m'-1))!!}{(2\sigma_1 + \val^0_1)!!},\]
where $\sigma_1 = \sum_{i=1}^{m'} e_i$. This will imply Proposition \ref{prop:odd-degen-coeff}, because by Euler's formula $e_i = 2g_i + |V(G_i)|-1 = 2g_i + \sum_{s \in A_i} |I^{\epsilon_s}_s|+ \sum_{s \in A_i}|I^{1-\epsilon_s}_s| -1 = 2g_i + n^\bl_i + n^\wh_i - 1$, and by (\ref{eq:odd-G0-face-degrees}) we have $2\sigma_1 + \val^0_1 + 2(m'-1) = \val_1 - 2$. %CHANGED \val_1 to \val_1 - 2 !!!!

Up to relabeling the $G_i$, we can assume that $\max A_1 < \max A_2 < \ldots < \max A_{m'}$.

For $i=1,\ldots,m'$, color the vertices of $G_i$ into black and white in such a way that $G_i$ becomes vertex-bicolored and the vertices with labels in $I^0_{\max A_i}$ are colored black (the ones with labels in $I^1_{\max A_i}$ are then automatically white). Denote by $D_i(\u b')$ the linear function of $\u b'$ which is the difference between the sums of black and white vertex perimeters of the graph $G_i$. 
% Then
% \begin{equation*}
%     D_i(\u b'(\varepsilon)) = \sum_{p \in A_i} \left( \sum_{j \in I_p} b'_j(\varepsilon) - \sum_{j \in J_p} b'_j(\varepsilon) \right) = \sum_{p \in A_i} 2^p \varepsilon. 
% \end{equation*}

\begin{lemma}
\label{lem:odd-Di-ineq}
    For each $i=2,\ldots,m'$ we have for $\varepsilon>0$
    \begin{equation*}
        D_i(\u b'(\varepsilon)) > D_{i-1}(\u b'(\varepsilon)) + \ldots + D_1(\u b'(\varepsilon)).
    \end{equation*}
\end{lemma}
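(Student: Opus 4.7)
The plan is to compute $D_i(\u b'(\varepsilon))$ explicitly using the defining path \eqref{eq:C0-def} and then conclude by the familiar ``binary expansion'' argument that the highest power of $2$ strictly dominates all the lower ones.

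First I would rewrite each $D_i$ in terms of the basic linear forms that are prescribed by the path. By definition, the black part of $G_i$ consists of the vertices with labels in $I^{\delta^i_s}_s$ for $s \in A_i$, where the exponents $\delta^i_s \in \{0,1\}$ are determined up to a global flip by the original $\epsilon_s$, and the convention $\delta^i_{\max A_i}=0$ is forced by the chosen coloring. Hence
\[
D_i(\u b') \;=\; \sum_{s \in A_i} (-1)^{\delta^i_s}\!\left( \sum_{j \in I^0_s} b'_j \;-\; \sum_{j \in I^1_s} b'_j\right).
\]
Substituting the path \eqref{eq:C0-def} gives
\[
D_i(\u b'(\varepsilon)) \;=\; 2^{\max A_i}\,\varepsilon \;+\!\!\!\sum_{s \in A_i \setminus \{\max A_i\}} (-1)^{\delta^i_s}\, 2^s\,\varepsilon,
\]
where the leading term has coefficient $+1$ thanks to $\delta^i_{\max A_i}=0$.

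Next I would apply the same formula to each $D_k$ with $k<i$ and subtract. Since the sets $A_1,\ldots,A_{m'}$ are pairwise disjoint and since, by the ordering assumption $\max A_1<\max A_2<\ldots<\max A_{m'}$, every element of $A_1\cup\cdots\cup A_{i-1}$ is at most $\max A_i - 1$, we also have $A_i\setminus\{\max A_i\}\subset\{1,\ldots,\max A_i-1\}$, and these two index sets are disjoint. Thus
\[
D_i(\u b'(\varepsilon))-\bigl(D_1+\cdots+D_{i-1}\bigr)(\u b'(\varepsilon))
\;=\; 2^{\max A_i}\varepsilon \;+ \sum_{s \in T} \pm\, 2^s\,\varepsilon,
\]
for some $T\subset\{1,\ldots,\max A_i-1\}$ (and some signs depending on the $\delta^i_s$ and on which $A_k$ each $s$ lies in).

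The main step is then a geometric-series estimate: the absolute value of the tail sum is bounded by
\[
\sum_{s=1}^{\max A_i-1} 2^s\,\varepsilon \;=\; (2^{\max A_i}-2)\,\varepsilon,
\]
so
\[
D_i(\u b'(\varepsilon))-\bigl(D_1+\cdots+D_{i-1}\bigr)(\u b'(\varepsilon)) \;\ge\; 2^{\max A_i}\,\varepsilon - (2^{\max A_i}-2)\,\varepsilon \;=\; 2\varepsilon \;>\;0,
\]
which is exactly the desired inequality. The only real obstacle is bookkeeping the sign conventions correctly so that the leading $2^{\max A_i}\varepsilon$ term comes out positive; this is guaranteed by the preliminary renaming of the parts of $G_i$ that makes the labels in $I^0_{\max A_i}$ black. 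No essential difficulty remains beyond that; everything else is a routine triangle-inequality-plus-geometric-series argument.
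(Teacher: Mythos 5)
Your proof is correct and follows essentially the same route as the paper's: express each $D_k$ as a signed sum of the forms $\sum_{j \in I^0_s} b'_j - \sum_{j \in I^1_s} b'_j$, substitute the values $2^s\varepsilon$ from \eqref{eq:C0-def}, use the coloring convention to get leading coefficient $+1$ on $2^{\max A_i}\varepsilon$, and conclude by the strict dominance of $2^{\max A_i}$ over the sum of all lower powers of $2$ (the paper bounds $D_i$ below and $D_1+\cdots+D_{i-1}$ above separately, while you combine them into one geometric-series estimate, but this is the same argument).
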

\begin{proof}
    For any $i=1,\ldots,m'$ we have
    \[
        D_i(\u b'(\varepsilon)) =  \sum_{p \in A_i} \pm \left( \sum_{j \in I^0_p} b'_j(\varepsilon) - \sum_{j \in I^1_p} b'_j(\varepsilon) \right).
    \]
    Using (\ref{eq:C0-def}) and the choice of the vertex-bicoloring of $G_i$, we see that 
    \[
         D_i(\u b'(\varepsilon)) \ge ( 2^{\max A_i} - \sum_{\substack{p \in A_i \\ p \neq \max A_i}} 2^p ) \cdot \epsilon,
    \]
    and 
    \[
        D_{i-1}(\u b'(\varepsilon)) + \ldots + D_1(\u b'(\varepsilon)) \le (\sum_{p \in \bigsqcup_{j=1}^{i-1} A_j} 2^p ) \cdot \epsilon.
    \]
    We conclude by noticing that 
    \[
        2^{\max A_i} > \sum_{p=1}^{\max A_i -1} 2^p \ge \sum_{p \in \bigsqcup_{j=1}^{i-1} A_j} 2^p + \sum_{\substack{p \in A_i \\ p \neq \max A_i}} 2^p.
    \]
\end{proof}

To every possible joining of the $G_i$ we associate a tree $T$ on $m'+1$ vertices labeled from $0$ to $m'$ (the vertices correspond to the $G_i$ and the edges correspond to the joining bridges).

Root the tree $T$ at the vertex $0$. Then every vertex $i\ge 1$ has a well-defined parent vertex and descendant vertices (those for which the unique path to the root passes through this vertex). We will also say that $G_i$ is a parent/descendant of $G_j$, if $i$ is a parent/descendant of $j$ in $T$.

\begin{lemma}
\label{lem:structureAdmissibleBranches}
A joining is admissible if and only if for every $i=1,\ldots, m$ the following holds:
\begin{itemize}
    \item if all of the descendants of $G_i$ have labels smaller then $i$, then the bridge joining $G_i$ to its parent is in the black corner of $G_i$;
    \item otherwise, the bridge joining $G_i$ to its parent and the bridge joining $G_i$ to the subtree containing the descendant of $G_i$ of maximal label are in the corners of $G_i$ of different colors.
\end{itemize}
\end{lemma}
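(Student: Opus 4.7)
The plan is to translate admissibility of a joining into positivity of certain signed sums of the linear forms $D_k$, and then apply the strict dominance inequality of Lemma~\ref{lem:odd-Di-ineq} to extract the claimed combinatorial conditions.

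First I would unpack what it means for the resulting graph $G$ to belong to $\RGc^{\valu,*}_{g,n}$. For each bridge $e$ corresponding to an edge of $T$, removing $e$ splits $G$ into a component containing $G_0$, which is necessarily non-bipartite, and a component that must be bipartite in order for $e$ to be a static edge. This latter component is the union of the $G_k$ lying in the subtree below $e$, glued together by the remaining bridges. Since each $G_k$ carries a chosen bipartition, bipartiteness of the union translates into a consistency condition: one can attach a sign $\epsilon_k\in\{+1,-1\}$ to each $G_k$ in the subtree, recording whether its chosen coloring agrees with the ``overall'' bipartition of the union, and these signs are determined (up to a global flip) by the chosen colors of the endpoints of the internal bridges.

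Next, using Lemma~\ref{lem:odd-weight-static-edge}, I would compute the weight of a bridge $e$ connecting the subtree rooted at $G_j$ (on its bipartite side) to the rest of $G$: after orienting so that the endpoint of $e$ lying in $G_j$ is declared overall black, the weight equals $\sum_{k\in\mathrm{subtree}} \epsilon_k D_k(\u b'(\varepsilon))$. By Lemma~\ref{lem:odd-Di-ineq}, the term with largest index dominates the sum of absolute values of the others, so this weight is strictly positive for $\varepsilon>0$ if and only if $\epsilon_{M}=+1$, where $M$ is the maximal label appearing in the subtree. Thus admissibility of the joining is equivalent to the collection of equalities $\epsilon_{M_i}=+1$, one for each bridge, where $M_i$ is the maximal label in the subtree rooted at the child $i$ and the convention is that the endpoint of that bridge in $G_i$ is overall black.

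Finally I would turn this algebraic condition into the geometric statement of the lemma, by induction on the depth of the subtree. In the first case $M_i=i$, so the condition $\epsilon_{M_i}=+1$ reduces to $\epsilon_i=+1$, which, with the chosen convention, exactly says that the bridge joining $G_i$ to its parent is attached at a chosen-black vertex of $G_i$. In the second case, let $G_k$ be the unique child of $G_i$ whose subtree contains the descendant realizing $M_i$, so that $M_i=M_k$. By the inductive hypothesis for $G_k$ we may assume that $\epsilon_{M_k}=+1$ in the overall coloring of the subtree at $k$, i.e.\ with the endpoint of the bridge $(i,k)$ in $G_k$ overall black. Comparing this coloring with the one fixed at $G_i$ (bridge to the parent has its $G_i$-endpoint overall black), and using that the two endpoints of the bridge $(i,k)$ must receive opposite overall colors, I would check that $\epsilon_{M_i}=+1$ at $G_i$ is equivalent to the endpoints in $G_i$ of the bridge to the parent and of the bridge to $G_k$ carrying opposite chosen colors, which is exactly the second bullet.

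The main obstacle I anticipate is the bookkeeping of how the two overall bipartite colorings (the one on the subtree at $G_i$ and the one on the subtree at $G_k$) compare along the bridge $(i,k)$, together with how a global flip of either overall coloring propagates through the $\epsilon_k$; once this is organized cleanly, both implications collapse to a single case analysis on the chosen colors of the two distinguished bridge endpoints in $G_i$.
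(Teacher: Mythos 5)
Your proposal is correct and follows essentially the same route as the paper's proof: both express the weight of each joining bridge via Lemma~\ref{lem:odd-weight-static-edge} as a signed sum of the $D_k$ over the subtree below it, use Lemma~\ref{lem:odd-Di-ineq} (together with the positivity of each $D_k$) to see that the sign of that sum is governed by the maximal-label term, and then translate this sign condition into the colors of the bridge endpoints at $G_i$. The paper carries out this last translation as an explicit product of signs along the path from the maximal-label descendant up to $G_i$, whereas you phrase it as an induction on the depth of the subtree; this is a presentational difference, not a different argument.
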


%In the proof of Lemma~\ref{lem:structureAdmissibleBranches} we will implicitly use the following simple observation. Suppose a separating loop divides a ribbon graph $G$ into two components $G'$ and $G''$ (we include the loop into both), with $G'$ orientable (i.e. its boundary components can be declared positive or negative in such a way that $G'$ becomes oriented). Declare as positive the boundary component of $G'$ which was a boundary component of $G$ incident to the loop. This uniquely determines the signs of all other boundary components of $G'$. Then for any metric on $G$, the length of the loop is equal to $L^+ - L^-$, where $L^+$ and $L^-$ are the sums of perimeters of positive and negative boundary components of $G'$, respectively. This follows by computing in two ways the total sum of lengths of edges of $G'$: as the sum of perimeters of positive or negative boundary components.

\begin{proof}%[Proof of Lemma~\ref{lem:structureAdmissibleBranches}]
%Let $G$ be the ribbon graph obtained by gluing the $G_i$ according to the given degeneration data. Fix $i \in \{1,\ldots, m\}$. We first prove the necessity.

In the first case, let $s_p$ be $1$ if the bridge joining $G_i$ to its parent is in the black corner of $G_i$, and $-1$ otherwise. Then, by Lemma \ref{lem:odd-weight-static-edge}, the weight of this bridge at $\u b'(\varepsilon)$ is equal to 
\[s_p D_i(\u b'(\varepsilon)) + \sum_j \pm D_j(\u b'(\varepsilon)),\]
where the sum is over the labels $j$ of all descendants of $G_i$. %By (\ref{eq:particularDegeneration}) this is equal to
%\[s_p \cdot 2^i \varepsilon + \sum_j \pm 2^j \varepsilon.\]

Note that $j<i$ for all descendants $G_j$ of $G_i$. Hence, if $s_p=1$, this is at least $D_i(\u b'(\varepsilon)) - \sum_j D_j(\u b'(\varepsilon)) >0$ when $\varepsilon>0$, by Lemma \ref{lem:odd-Di-ineq}. On the contrary, if $s_p=-1$, this is at most $-D_i(\u b'(\varepsilon)) + \sum_j D_j(\u b'(\varepsilon)) <0$ when $\varepsilon>0$, again by Lemma \ref{lem:odd-Di-ineq}. Hence the weight of this bridge is positive for $\varepsilon>0$ if and only if $s_p=1$, as desired. %Since we want $G$ to contribute to $N_{g,n}^{\underline{k}}(\underline{b}(\varepsilon))$, the length of the loop must be positive for $\varepsilon>0$, so we can conclude that $s_p=1$.

In the second case, let $i'$ be the descendant of $i$ of maximal label and let $i'=i_0, i_1, \ldots, i_r=i$ be the vertices on the unique path from $i'$ to $i$. Let $s_p(i_j)$ be $1$ if the bridge joining $G_{i_j}$ to $G_{i_{j+1}}$ is in the black corner of $G_{i_j}$, and $-1$ otherwise. Similarly, let $s_d(i_j)$ be $1$ if the bridge joining $G_{i_j}$ to $G_{i_{j-1}}$ is in the black corner of $G_{i_j}$, and $-1$ otherwise.%respectively the signs of sectors of $G_{i_j}$ containing the loops joining it to $G_{i_{j+1}}$ and $G_{i_{j-1}}$, and by $[i_j,i_{j+1}]$ the loop joining $G_{i_j}$ and $G_{i_{j+1}}$.

Note that the label $i'=i_0$ is bigger than the labels of all of its descendants, so by the first case $s_p(i_0)=1$. Then for each $1 \leq j \leq r$ the length at $\u b'(\varepsilon)$ of the bridge joining $G_{i_j}$ and $G_{i_{j+1}}$ is equal to 
\begin{equation}
    \label{eq:loop_length}
    \left(\prod_{k=1}^j (-1)\cdot s_p(i_k)\cdot s_d(i_k) \right) \cdot D_{i'}(\u b'(\varepsilon)) + \sum_k \pm D_k(\u b'(\varepsilon)),
\end{equation}
where the sum is over the labels $k$ equal to $i_j$ and to all of descendants of $i_j$ except $i_0=i'$.

Again, $k<i'$ for all $k$, so by the same reasoning as above, the weight of this bridge is positive for $\varepsilon>0$ if and only if the product in (\ref{eq:loop_length}) is equal to $1$. We conclude that $\prod_{k=1}^j (-1)\cdot s_p(i_k)\cdot s_d(i_k)=1$ for all $1 \leq j \leq r$, and so $s_p(i_j) s_d(i_j)=-1$ for all $1 \leq j \leq r$. In particular, $s_p(i_r) s_d(i_r)=s_p(i) s_d(i)=-1$, as desired.

%We now explain why these condition are also sufficient for the degeneration data to be admissible. We claim that for small positive $\varepsilon$ there exist a metric on $G$ giving the boundaries the perimeters $\underline{b}(\varepsilon)$. Indeed, the lengths of the loops in $G$ joining the $G_i$ are uniquely determined by $\underline{b}(\varepsilon)$ and are positive by the same reasoning as above. Removing the loops from $G$, we now need to find a metric on $G_0 \sqcup G_1 \sqcup \ldots \sqcup G_m$ with boundary perimeters very close to $\underline{b'}$, since the lengths of the loops are arbitrarily small when $\varepsilon \rightarrow 0$. By definition of $G_i$, the ribbon graph $G_0 \sqcup G_1 \sqcup \ldots \sqcup G_m$ admits a metric with boundary perimeters equal to $\underline{b'}$, which means that for each $i$, the polytope of metrics $P_{G_i}(\underline{b'})$ is full-dimensional. Since these polytopes depend continuously on $\underline{b}$, they are full-dimensional for $\underline{b}$ close to $\underline{b'}$, and so the required metric exists.
\end{proof}

We now establish a bijection between admissible joinings and certain sequences of markers in the corners of the ribbon graphs $G_i$. Each marker will correspond to a place were one of the joining bridges is glued. Thus each joining bridge produces two markers. If there are several markers in the same corner of $G_i$, their relative order around the vertex is part of the data.

For a marker $a$, denote by $l(a)$ the label of the component in which $a$ is contained. If $l(a)>0$, let $s(a)$ be $1$ if $a$ is contained in a black corner, and $-1$ otherwise.

\begin{lemma}
\label{lem:branchesToArrangements}
There is a bijection between admissible joinings and sequences $(a_1, b_1, \ldots, a_{m'}, b_{m'})$ of $2m'$ markers in the corners of $G_i$ satisfying the following conditions:
\begin{itemize}
    \item $l(a_1)=0$;
    \item for all $1 \leq i \leq m'$, either
    \begin{equation}
    \label{eq:conditionLoops1}
    \begin{cases}
    l(a_i) \in \{0, l(a_1),l(b_1),\ldots,l(a_{i-1}),l(b_{i-1})\}, \\
    l(b_i) = \max{\{0, l(a_1),l(b_1),\ldots,l(a_{i-1}),l(b_{i-1})\}^c}, \\
    s(b_i)=1,
    \end{cases}    
    \end{equation}
    or
    \begin{equation}
    \label{eq:conditionLoops2}
    \begin{cases}
    l(a_i)=l(b_i) \notin \{0, l(a_1),l(b_1),\ldots,l(a_{i-1}),l(b_{i-1})\}, \\
    s(a_i) \neq s(b_i),
    \end{cases}    
    \end{equation}
    where the superscript $c$ stands for the complement in $\{0,1,\ldots,m'\}$.
\end{itemize}
\end{lemma}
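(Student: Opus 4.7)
The plan is to prove the bijection by constructing explicit encoding and decoding algorithms and verifying that they are mutual inverses.

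\textbf{Encoding.} Given an admissible joining realized by a rooted tree $T$ with root $G_0$, identify the \emph{spine}---the unique path in $T$ from $G_0$ to the maximum-label component $G_{m'}$. Record the first pair $(a_1,b_1)$ with $a_1$ the endpoint in $G_0$ of the first spine bridge and $b_1$ the endpoint in $G_{m'}$ of the last spine bridge. By Case~A of Lemma~\ref{lem:structureAdmissibleBranches} applied to $G_{m'}$, we have $s(b_1)=1$, so this pair satisfies condition~\eqref{eq:conditionLoops1}. Next, process the intermediate components of the spine in \emph{decreasing label order}: for each such $G_j$, record a pair $(a_i,b_i)$ whose markers are the two endpoints in $G_j$ of the two spine bridges incident to $G_j$, with $a_i$ on the side toward $G_0$ and $b_i$ on the side toward $G_{m'}$. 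By Case~B of Lemma~\ref{lem:structureAdmissibleBranches}, these two endpoints lie in corners of different colors, so the pair satisfies condition~\eqref{eq:conditionLoops2}. Finally, recurse on each subtree branching off the spine, in decreasing order of the branch's maximum label, with the branch's root vertex playing the role of $G_0$ in the sub-recursion.

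\textbf{Decoding.} Reading the sequence left to right, each condition~\eqref{eq:conditionLoops1} pair $(a_i,b_i)$ opens a new sub-spine starting at the corner $a_i$ (in an already-placed component) and ending at $b_i$ in the freshly introduced maximum-unused-label component $G_{l(b_i)}$; consecutive condition~\eqref{eq:conditionLoops2} pairs then fill in the intermediate components of this sub-spine in decreasing label order. The corners identified by the markers unambiguously prescribe where each bridge of $T$ is glued, so $T$ is reconstructed bridge by bridge. The requirement $l(b_i) = \max\{\text{unused labels}\}^c$ in condition~\eqref{eq:conditionLoops1} forces sub-spines to be introduced in decreasing order of their terminal label, while the decreasing-label ordering of condition~\eqref{eq:conditionLoops2} pairs within a single sub-spine is forced by consistency with the next condition~\eqref{eq:conditionLoops1} pair and by the positivity of bridge weights guaranteed by Lemma~\ref{lem:odd-Di-ineq}.

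\textbf{Main obstacle.} The delicate step is to show that the correspondence is truly one-to-one. Condition~\eqref{eq:conditionLoops2} carries no explicit max-label constraint on $l(a_i)=l(b_i)$, so uniqueness of the encoding must be recovered from the joint structure of the sequence, the orientation convention ($a_i$ on the parent side, $b_i$ on the max-descendant side), and the admissibility constraints from Lemma~\ref{lem:structureAdmissibleBranches}. I plan to proceed by induction on the number of processed pairs, showing that at each step either the next intermediate component of the current sub-spine must be recorded in the forced decreasing-label order, or a new sub-spine must open at a specific available parent corner identified by the only remaining marker consistent with the already-built structure. Coupled with the verification that admissibility corresponds exactly to the stated sequence conditions, this yields the bijection.
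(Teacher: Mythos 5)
Your overall architecture (traverse the tree branch by branch in decreasing order of the branches' maximal labels, record two markers per joining bridge, let the branch-opening pairs satisfy \eqref{eq:conditionLoops1} and the intermediate pairs satisfy \eqref{eq:conditionLoops2}) is the same as the paper's Algorithm~\ref{alg:branchToSequence}, but there is a concrete error in your encoding that breaks the bijection: you record the intermediate components of a branch \emph{in decreasing label order}, whereas they must be recorded \emph{in the order in which they occur along the path} from the maximal-label component back to the already-visited component. The linear order of the components along a branch is genuine data of the joining: a spine visiting $G_0, G_1, G_3, G_5$ in this order and a spine visiting $G_0, G_3, G_1, G_5$ in this order are distinct admissible joinings, since Lemma~\ref{lem:structureAdmissibleBranches} only constrains the \emph{colours} of the corners carrying the bridges, not the order of the labels along a path. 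Once you sort the intermediate pairs by label, this order can no longer be recovered: a pair of markers in $G_3$, one corner ``toward $G_0$'' and one ``toward $G_{m'}$'', does not tell you whether $G_3$ is glued to $G_0$ or to $G_1$. Consequently your encoding is not surjective onto the set of sequences in the statement (conditions \eqref{eq:conditionLoops2} deliberately impose \emph{no} ordering on the new labels, precisely because every path order must be realized), and your decoding step ``consecutive \eqref{eq:conditionLoops2} pairs fill in the intermediate components in decreasing label order'' is ill-defined, since it never specifies which marker gets glued to which.

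Your proposed rescue --- that the decreasing-label order is ``forced by the positivity of bridge weights guaranteed by Lemma~\ref{lem:odd-Di-ineq}'' --- does not hold: that lemma, via Lemma~\ref{lem:structureAdmissibleBranches}, forces $s(a_i)\neq s(b_i)$ for an intermediate component and nothing more. The repair is the paper's convention: within each branch, list the pairs in path order starting from the maximal-label component, so that the decoding rule ``glue $b_i$ to $a_{i+1}$, $b_{i+1}$ to $a_{i+2}$, \dots, and finally $b_{i+r}$ back to $a_i$'' reconstructs the adjacencies unambiguously; then every sequence satisfying \eqref{eq:conditionLoops1}--\eqref{eq:conditionLoops2}, with the intermediate labels appearing in an arbitrary order, arises from exactly one admissible joining. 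This freedom over orderings is also what produces the factors $(\val^0_1+2\sigma_1+2)(\val^0_1+2\sigma_1+4)\cdots$ in the final count of Proposition~\ref{prop:odd-degen-coeff}, so it cannot be suppressed without changing the answer.
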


\begin{proof}
The sequence of markers corresponding to an admissible joining consists of the $2m'$ places where the joining bridges are glued to the $G_i$, written down in a particular order, which we describe in Algorithm \ref{alg:branchToSequence}. In this algorithm $S$ is interpreted as the set of \emph{non-visited} components. Note that $0 \notin S$ from the start, so component $0$ is considered to be visited from the start. When the algorithm terminates, $res$ contains the corresponding sequence of markers. See Figure \ref{fig:odd-joining-to-markers} for an example computation.

\begin{figure}
    \centering
    \def\svgwidth{0.95\textwidth}
    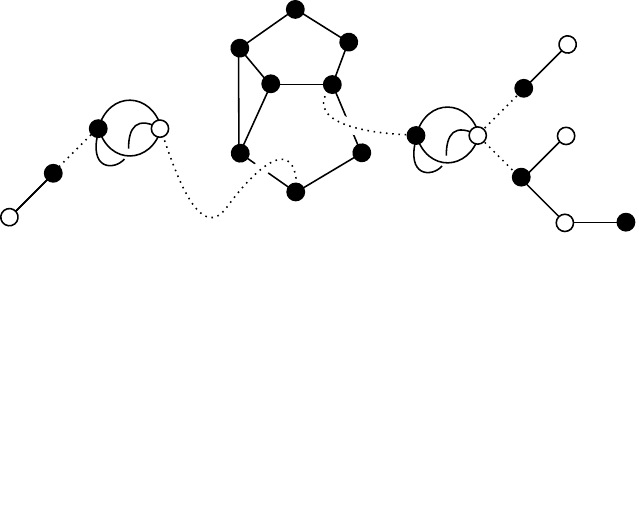
    \caption{An admissible joining (top) and the corresponding sequence of markers (bottom) produced by Algorithm \ref{alg:branchToSequence}.}
    \label{fig:odd-joining-to-markers}
\end{figure}

%While $C\neq \varnothing$, choose the component $c$ in $C$ with maximal label and write down the loop in $c$ joining it with its parent (if $c$ is the root, then by parent we mean the quadratic component). Then consider the path from $c$ to the root of the branch. Write down the loops encountered along this path (thus two loops for each component other than $c$), until reaching an already visited component (or the quadratic component). Remove $c$ and the encountered components from $C$. If $C\neq \varnothing$, write down the loop that joins one of the already visited components to the subbranch containing the component in $C$ of maximal label.

\begin{algorithm}

\caption{From an admissible joining to a sequence of markers}
\label{alg:branchToSequence}
\begin{algorithmic}[1]
\State $S:=\{1,\ldots,m'\}$
\State $res:=()$
\While{$S\neq \emptyset$}
\State $\gamma = (i_0=\max{S},i_1,\ldots,i_r=0) :=$ path from $\max{S}$ to 0 in $T$ %(recall that each edge of $p$ corresponds to two loops)
\State $i_{r'} :=$ first component in $\gamma$ with label not in $S$ (i.e. already visited)
\State append to $res$ a marker at a place where the bridge joining \newline\hspace*{\algorithmicindent}$G_{i_{r'-1}}$ and $G_{i_{r'}}$ is glued to $G_{i_{r'}}$ %the first loop encountered along $p$ that is in a component with label not in $S$ (i.e. an already visited component) 
\label{alg:step-append-first}
\For{$j=0,\ldots,r'-1$} \label{alg:step-for}
\State append to $res$ a marker at a place where the bridge joining \newline\hspace*{\algorithmicindent}\hspace*{\algorithmicindent}$G_{i_{j}}$ and $G_{i_{j+1}}$ is glued to $G_{i_{j}}$
\State append to $res$ a marker at a place where the bridge joining \newline\hspace*{\algorithmicindent}\hspace*{\algorithmicindent}$G_{i_{j}}$ and $G_{i_{j+1}}$ is glued to $G_{i_{j+1}}$
%\State successively append to $res$ the loops encountered along $p$ until reaching the loop from the previous step (we thus append one loop in $\max{S}$ and two loops for every other encountered component) \label{alg:step-append-other}
\EndFor
\State append to $res$ a marker at a place where the bridge joining \newline\hspace*{\algorithmicindent}$G_{i_{r'-1}}$ and $G_{i_{r'}}$ is glued to $G_{i_{r'-1}}$ \label{alg:step-append-last}
\State remove from $S$ the components $i_0, \ldots,i_{r'-1}$
%\If{$S\neq \varnothing$}
%\EndIf
\EndWhile
\end{algorithmic}
\end{algorithm}

The algorithm terminates since at each step of the while-loop we remove from $S$ at least one element ($\max{S}$).

In  words, the Algorithm~\ref{alg:branchToSequence} traverses the tree $T$ corresponding to the joining by starting with the $0$-component, then at each iteration of the while-loop it appends to the already traversed subtree the path joining it to the not yet visited component of maximal label. It first writes in $res$ the marker at a place where this path is glued (with a joining bridge) to the already traversed subtree. Then, for each edge along this path, starting from the not yet visited component of maximal label, it appends to $res$ the markers of the joining bridge corresponding to this edge. %(see the labels of loops in Figure~\ref{fig:degen-tree}). 
In particular, it is clear that all of the edges of $T$ will be traversed, and so the resulting sequence $res$ will contain all of the $2m'$ markers corresponding to the $m'$ joining bridges.

We now show that the sequence of markers thus constructed satisfies the conditions of the Lemma.

Clearly $l(a_1)=0$. Let $a_i,b_i,\ldots,a_{i+r},b_{i+r}$ be a sequence of markers added to $res$ during one iteration of the while-loop. The marker $a_i$ is added in step \ref{alg:step-append-first} of the Algorithm. In particular, it is in one of the already visited components, so the first condition of (\ref{eq:conditionLoops1}) is satisfied for $a_i$. The marker $b_i$ is the first one added in the for-loop \ref{alg:step-for}. By design, it is in the not yet visited component of maximal label, so the second condition of (\ref{eq:conditionLoops1}) is satisfied for $b_i$. The label of the component of $b_i$ is in particular bigger than the labels of all of its descendants, so by the first point of Lemma~\ref{lem:structureAdmissibleBranches}, $s(b_i)=1$, which is the third point of (\ref{eq:conditionLoops1}).

The markers $a_{i+1},b_{i+1},\ldots,a_{i+r},b_{i+r}$ are added in steps \ref{alg:step-for} to \ref{alg:step-append-last} of the Algorithm. For each $j=1,\ldots,r$, $a_{i+j}$ and $b_{i+j}$ are markers in the corners of $G_{i_{j}}$, and so $l(a_{i+j})=l(b_{i+j})$. Moreover, all of the components $G_{i_j}$ are not yet visited, so the first condition in (\ref{eq:conditionLoops2}) is satisfied for all $a_{i+j}, b_{i+j}$. The second condition of (\ref{eq:conditionLoops2}) follows from the second point of Lemma~\ref{lem:structureAdmissibleBranches}, since for all $j=1,\ldots,r$, the component $\max{S}$ is the descendant of maximal label of the component with label $l(a_{i+j})=l(b_{i+j})$ (because all of its descendants are in $S$ at this stage of the Algorithm).

Finally, we have to show that Algorithm~\ref{alg:branchToSequence} establishes a bijection between admissible joinings and sequences of markers satisfying conditions of the Lemma. We do this by describing the inverse algorithm.

To reconstruct the joining from the sequence of markers $(a_1, b_1, \ldots, a_{m'}, b_{m'})$ it is enough to find which pairs of markers should be joined by a joining bridge. %reconstruct the tree $T$ and the correspondence between the edges of $T$ and the joining. 
Subdivide the sequence of markers into intervals of the form $(a_i,b_i,\ldots,a_{i+r},b_{i+r})$ with the pair $(a_i,b_i)$ satisfying condition (\ref{eq:conditionLoops1}) and the pairs $(a_{i+1},b_{i+1}), \ldots, (a_{i+r},b_{i+r})$ satisfying condition (\ref{eq:conditionLoops2}) (such subdivision is clearly unique). Then treat the intervals from left to right by gluing (for each interval) $b_i$ with $a_{i+1}$, $b_{i+1}$ with $a_{i+2}$, $...$, $b_{i+r-1}$ with $a_{i+r}$, and finally $b_{i+r}$ with $a_{i}$ (which joins this branch with what has been constructed from the previous intervals). %In this way one obtains the tree $T$ by putting an edge into correspondence with every pair of glued loops. 
It is straightforward to check that the two algorithms are inverses of each other.
\end{proof}

Before we conclude the proof of Proposition~\ref{prop:odd-degen-coeff}, we need the following elementary counting lemma.

\begin{lemma}
    \label{lem:leaf_counting}
    Let $n \ge 1$ and $d_1,\ldots, d_n \ge 1$. Suppose that in a rooted tree $T$ any path from the root to a leaf passes successively through $n$ vertices having $d_{\pi(1)},\ldots,d_{\pi(n)}$ children respectively, for some permutation $\pi$ of the set $\{1,\ldots,n\}$ depending on the leaf. Then $T$ has $d_1 d_2 \cdots d_n$ leaves.
\end{lemma}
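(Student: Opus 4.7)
The plan is to proceed by induction on $n$, exploiting the natural decomposition of the tree at the root and the fact that the claimed count $d_1 d_2 \cdots d_n$ is symmetric in the $d_i$.

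For the base case $n=1$, the hypothesis forces the root to have $d_1$ children, each of which must be a leaf: any non-leaf child would extend some root-to-leaf path through a second vertex with a positive children count, contradicting $n=1$. This gives $d_1$ leaves, as required.

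For the inductive step, the crucial observation is that the first vertex on every root-to-leaf path is the root itself, so $d_{\pi(1)}$ must equal the number $c$ of children of the root, independently of the leaf chosen. Hence $c$ appears as one of the $d_i$. Although the index $\pi(1)$ itself may vary with the leaf when the value $c$ has multiplicity greater than one among $d_1, \ldots, d_n$, the multiset $M$ obtained from $\{d_1, \ldots, d_n\}$ by removing one copy of $c$ is well-defined and common to every leaf. Each of the $c$ subtrees $T'$ rooted at a child of the root then satisfies the hypothesis of the lemma with parameter $n-1$ and multiset $M$, because the remaining portion of each root-to-leaf path visits $n-1$ vertices whose children counts form a permutation of $M$. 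Applying the inductive hypothesis to each such $T'$ yields $\prod_{d \in M} d = (\prod_i d_i)/c$ leaves, so summing over the $c$ subtrees gives a total of $c \cdot (\prod_i d_i)/c = \prod_i d_i$ leaves in $T$.

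The only subtle point, and really the only substantive content beyond the induction skeleton, is verifying that the multiset $M$ is well-defined regardless of the leaf-dependent choice of $\pi(1)$. This is immediate: every admissible choice satisfies $d_{\pi(1)} = c$, and deleting a single occurrence of the common value $c$ from $\{d_1, \ldots, d_n\}$ produces the same multiset no matter which occurrence is chosen. I do not anticipate any other obstacle; the proof should be short and entirely elementary.
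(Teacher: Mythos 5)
Your proof is correct and follows essentially the same route as the paper's: induction on $n$, peeling off the root (whose number of children must equal one of the $d_i$) and applying the inductive hypothesis to each of its subtrees. The extra remark about the multiset being well-defined when the value $c$ occurs with multiplicity is a harmless refinement of a point the paper leaves implicit.
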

\begin{proof}
    The proof is by induction on $n$. For $n=1$ the statement is trivial. For $n\ge 2$, suppose the root of $T$ has $d_k$ children for some $k \in \{1,\ldots,n\}$. Each of the root subtrees satisfies the conditions of the lemma with parameters $\{d_i, i \neq k\}$ and so, by induction hypothesis, has $\prod_{1 \leq i \leq n, i \neq k} d_i$ leaves. Since there are $d_k$ such subtrees, $T$ has $d_1 d_2\cdots d_n$ leaves in total, as desired.
\end{proof}

\begin{proof}[Conclusion of the proof of Proposition~\ref{prop:odd-degen-coeff}]
It remains to count the sequences of markers satisfying the conditions of Lemma~\ref{lem:branchesToArrangements}. We do this by successively choosing the markers $a_i, b_i$ and keeping track of how many choices we are left with after each step. However, the number of choices we have at each step \emph{depends on the choices we made before}, so we cannot directly apply the combinatorial product rule. Instead, we will represent all possible choices as a (rooted) decision tree (with leaves corresponding to the final sequences of markers) and then apply Lemma~\ref{lem:leaf_counting} to this tree.

Clearly, there are $\val^0_1$ choices for the location of $a_1$ (since our branches are only glued to the face of $G_0$ of degree $\val^0_1$). By (\ref{eq:conditionLoops1}) we necessarily have $l(b_1)=m'$ and $s(b_1)=1$, for which there are $e_{m'}$ choices. At each next step $i=2,\ldots,m'$, we can freely choose the location of the marker $a_i$ among all of the $\val^0_1+2e_1+\ldots+2e_{m'}+2(i-1)$ places available at this stage (before stage $i$ we have already chosen the places for $a_1,b_1,\ldots,a_{i-1},b_{i-1}$, which creates $2(i-1)$ additional places). 

\emph{Depending on the choice of $a_i$}, there are two possibilities: 
\begin{itemize}
    \item if $l(a_i) \in \{0, l(a_1),l(b_1),\ldots,l(a_{i-1}),l(b_{i-1})\}$, the $l(b_i)$ and $s(b_i)$ are now fixed by (\ref{eq:conditionLoops1}), and there are $e_{l(b_i)}$ choices for the place of $b_i$, since at this stage none of the markers has been placed in the component with label $l(b_i)$;
    \item if $l(a_i) \notin \{0, l(a_1),l(b_1),\ldots,l(a_{i-1}),l(b_{i-1})\}$, the $l(b_i)$ and $s(b_i)$ are now fixed by (\ref{eq:conditionLoops2}), and there are $e_{l(b_i)}$ choices for the place of $b_i$, since at this stage only one marker (namely, $a_i$) has been placed in the component with label $l(b_i)$, but it was placed in a corner of color different to that of $b_i$, so this does not affect the number of choices.
\end{itemize}

When we have finally chosen the positions of all of the markers $a_1,b_1,\ldots,a_m,b_m$, the numbers of choices we had along the way were equal to 
\begin{equation}
    \label{eq:decision_tree_degrees}
    \val^0_1, e_{l(b_1)}, \val^0_1+2\sigma_1+2, e_{l(b_2)}, \val^0_1+2\sigma_1+4, e_{l(b_3)}, \ldots, \val^0_1+2\sigma_1+2(m'-1), e_{l(b_{m'})}.
\end{equation}

Note that $l(b_1),\ldots,l(b_{m'})$ is a permutation of the set $\{1,\ldots,m'\}$ (this follows from (\ref{eq:conditionLoops1}) and (\ref{eq:conditionLoops2}); $l(b_i)$ is always among the labels we have not used before). It means that our decision tree satisfies the conditions of Lemma~\ref{lem:leaf_counting} with parameters (\ref{eq:decision_tree_degrees}), and so it has
\[\val^0_1 e_1 \cdots e_{m'} (\val^0_1+2\sigma_1+2) (\val^0_1+2\sigma_1+4) \ldots (\val^0_1+2\sigma_1+2(m'-1))\]
leaves, which completes the proof.
\end{proof}

%----------------------------------

\section{Local to global}\label{sec:loc:glob}

In this section, we finish the proof of Theorem \ref{thm:coeffs} using Theorem~\ref{thm:counting-funcs-on-walls}. That is, we study which local contributions matter in the global computation of volumes, and how these contributions arrange together to give the formula of Theorem \ref{thm:coeffs}.

Note that in Section \ref{sec:compl} we computed the higher order completion terms on all walls (in the sense of Definition \ref{def:wall}), outside of the walls of higher codimension. We will see that we actually only need this computation on very specific walls. 

We will need the following lemma that extends Lemma 2.3 of \cite{DGZZ-vol} to piecewise quasi-polynomials.
\begin{lemma}
\label{lm:evaluation:for:monomial}
Let $\mathbb{L}$ be a sublattice of finite index
$|\Z^k:\mathbb{L}|$ in the integer lattice $\Z^k$
and let $m_1,\dots,m_k\in\N$ be any positive integers.
The following formula holds
\begin{eqnarray}
\label{eq:3.7}
\lim_{N\to+\infty}
\frac{1}{N^{|m|+k}}
\sum_{\substack{\u{b}\cdot\u{h}\le N\\b_i, h_i\in\N\\ \u{b}\in\mathbb{L}}}
b_1^{m_1}\cdots b_k^{m_k}& =\frac{1}{|\Z^k:\mathbb{L}|}\cdot
\lim_{N\to+\infty}
\frac{1}{N^{|m|+k}}\sum_{\substack{\u{b}\cdot\u{h}\le N\\b_i, h_i\in\N}}
b_1^{m_1}\cdots b_k^{m_k}.
\\ & =
\frac{1}{|\Z^k:\mathbb{L}|} \cdot \cZ(b_1^{m_1}\cdots b_k^{m_k}) \,.\notag
\end{eqnarray}
Furthermore, if $W$ is a wall of codimension $r$ in $\R^k$, not contained in any of the walls $\{b_i=0\}$, and $C$ is an open cell of $W\cap \R_+^k$ (i.e. a polyhedral cone open in $W\cap \R_+^k$, see Definition~\ref{def:wall}),  then
\begin{equation}\label{eq:asympt_wall}
\exists c\geq 0 :\quad 
\sum_{\substack{\u{b}\cdot\u{h}\le N\\b_i, h_i\in\N\\ \u b\in C\cap \L}}b_1^{m_1}\dots b_k^{m_k} \sim  c\, N^{|m|+k-r}\quad \mbox{ as }N\to\infty,
\end{equation}
with $c>0$ if and only if $W\cap \L$ is a lattice in $W$.
\end{lemma}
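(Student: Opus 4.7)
The plan is to prove the two equalities in \eqref{eq:3.7} and the asymptotic \eqref{eq:asympt_wall} by standard Riemann-sum / lattice-point counting arguments, reducing where possible to the classical case $\L = \Z^k$ established in Lemma 3.7 of \cite{AEZ}.

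For the second equality in \eqref{eq:3.7}, the identity
$$\lim_{N\to\infty} \frac{1}{N^{|m|+k}} \sum_{\substack{\u b \cdot \u h \leq N \\ \u b, \u h \in \N^k}} b_1^{m_1} \cdots b_k^{m_k} = \cZ(b_1^{m_1}\cdots b_k^{m_k})$$
is exactly Lemma 3.7 of \cite{AEZ}, so we may cite it directly.

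For the first equality, I would decompose $\Z^k = \bigsqcup_{\u c} (\u c + \L)$ into $I := |\Z^k : \L|$ cosets with representatives $\u c$ chosen in a bounded fundamental domain. For each coset, the translation $\u b \mapsto \u b - \u c$ shows that the partial sum
$$\sum_{\substack{\u b \in (\u c + \L) \cap \N^k \\ \u b \cdot \u h \leq N}} b^m$$
has the same leading-order asymptotics as the sum over $\L \cap \N^k$: the polynomial $b^m$ differs from $(\u b - \u c)^m$ only by lower-degree terms, and shifting the domain $\{\u b \cdot \u h \leq N\}$ by a bounded vector introduces only lower-order corrections. Summing the $I$ equal contributions recovers the full sum over $\N^k$, so each coset contributes $1/I$ of the leading term, yielding the first equality.

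For \eqref{eq:asympt_wall}, the guiding principle is that restricting $\u b$ to the wall $W$ of codimension $r$ reduces the effective dimension of the $\u b$-summation by $r$, hence the $N^{|m|+k-r}$ growth. I would distinguish two cases. If $W \cap \L$ is a full-rank lattice in $W$ (of rank $k-r$), pick a $\Z$-basis of $W \cap \L$ to identify $C \cap \L$ with a lattice inside the $(k-r)$-dimensional cone $C$, and adapt the proof of Lemma 3.7 of \cite{AEZ} to this setting. The resulting Riemann-sum asymptotics give $c\, N^{|m|+k-r}$ with $c > 0$: $C$ is open in $W \cap \R_+^k$ (hence of positive $(k-r)$-dimensional volume), $b^m > 0$ on $C$ using the hypothesis $W \not\subset \{b_i = 0\}$, and the integration over $\u h \in \N^k$ contributes a positive zeta-type factor. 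If instead $W \cap \L$ has rank strictly less than $k-r$, it is contained in some proper linear subspace $W' \subsetneq W$ of codimension $r' > r$ in $\R^k$; the analogous argument inside $W'$ then bounds the sum by $O(N^{|m|+k-r'}) = o(N^{|m|+k-r})$, forcing $c = 0$.

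The main technical point is the Riemann-sum asymptotic on the sublattice $W \cap \L$ inside the cone $C$: one needs uniformity in the $\u h$-variable to justify the exchange of limit and summation. This is routine and follows the same pattern as in the classical proof in \cite{AEZ}, but the coset and subspace reductions need to be formulated carefully so that the boundary contributions of the truncated regions are genuinely of lower order in $N$.
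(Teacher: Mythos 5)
Your overall strategy coincides with the paper's. The second equality of \eqref{eq:3.7} is quoted from \cite[Lemma 3.7]{AEZ}; the first equality is exactly \cite[Lemma 2.3]{DGZZ-vol}, which the paper simply cites while you re-derive it via a coset decomposition (a legitimate sketch of how that lemma is proved, modulo the boundary-term care you yourself flag). For \eqref{eq:asympt_wall} both you and the paper rescale by $N$, treat the sum over $C\cap\L$ at fixed $\u h$ as a Riemann sum of the rescaled lattice $W\cap\L$ over the $(k-r)$-dimensional polytope $\{\u y\in C:\sum_i y_ih_i\le 1\}$, and dispose of the degenerate case by noting that $W\cap\L$ then lives in a subspace of strictly larger codimension, giving $c=0$.

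There is, however, one genuine gap, and it is precisely the step you defer as ``routine\dots same pattern as in the classical proof'': the uniform-in-$\u h$ domination needed to exchange $\lim_{N\to\infty}$ with $\sum_{\u h}$. In the classical (no-wall) case the inner sum factors and is $O\bigl(N^{|m|+k}\prod_i h_i^{-(m_i+1)}\bigr)$, so summability over $\u h$ is immediate from $m_i\ge 1$. On a wall this product structure is lost: the pointwise bound $y_i\le 1/h_i$ on the slice only yields $\prod_i h_i^{-m_i}$ for the integrand, and $\sum_{\u h\in\N^k}\prod_i h_i^{-m_i}$ diverges as soon as some $m_i=1$. The paper supplies the missing decay by estimating the $(k-r)$-dimensional volume of the slice $\{\u y\in W\cap\R_+^k:\sum_i y_ih_i\le 1\}$: writing it as a simplex spanned by $v^{(i)}/\ell_{\u h}(v^{(i)})$ for generators $v^{(i)}$ of the cone $W\cap\{y_i\ge 0\}$, one gets a bound of the form $c_W''/\sum_j h_j$, and it is exactly the hypothesis that $W$ is not contained in any $\{b_i=0\}$ which guarantees that every $h_j$ occurs in some $\ell_{\u h}(v^{(i)})$ and hence in the denominator. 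The resulting majorant $\bigl(\bigl(\sum_j h_j\bigr)\prod_i h_i^{m_i}\bigr)^{-1}$ is then summable by a multiple-zeta-value computation. In your proposal that hypothesis is invoked only to get positivity of $b^m$ on $C$; its essential role is this convergence estimate, without which the dominated-convergence step (and hence the existence of the limit $c$) is not justified. This estimate should be made explicit rather than deferred to the classical pattern, which does not transfer.
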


\begin{proof}The first statement is exactly Lemma 2.3 of \cite{DGZZ-vol} (see also \cite[Lemma 3.7]{AEZ} for the proof). The second statement follows from similar arguments that we detail here for completeness.

 We first assume that $W\cap \L$ is a lattice in $W$. Consider the simplex $\Delta^k=\{(x_1, \dots, x_k)\in\R^k_+, x_1+\dots + x_k\leq 1\}$ and introduce the variables $y_i:=\frac{b_i}{N}$. For a fixed $\u h\in\N^k$ and $N>0$, we consider the maps $f_{N}:(b_i)_i\mapsto (y_i)_i$ and $f_{\u h}:(x_i)_i\mapsto (x_ih_i)_i$. By hypothesis the set $\Delta_{\u h, C}:= f_{\u h}^{-1}(\Delta^k)\cap C\subset W\cap \R_+^k$ is non-empty and it is a polyhedron in $W$ of dimension $k-r$.
 We write \[\sum_{\substack{\u{b}\cdot\u{h}\le N\\b_i, \u h\in\N^k\\ \u b\in C\cap \L}}b_1^{m_1}\dots b_k^{m_k}=N^{|m|}\sum_{h_i\in \N}\sum_{\u y\in \Delta_{\u h, C}\cap f_N(W\cap\L)}y_1^{m_1}\dots y_k^{m_k} \] and use the equidistribution of the lattice $f_N(W\cap\L)$ in $W$ for large $N$  to approximate the last sum by the integral \[\int_{\u y\in \Delta_{\u h,C}} y_1^{m_1}\dots y_k^{m_k} \frac{d\lambda_W}{\Covol(f_N(W\cap \L),W)}\] where $\lambda_W$ is a normalization of the Lebesgue measure on $W$ and $\Covol(f_N(W\cap \L),W)$ is the covolume of the lattice $f_N(W\cap \L)$ in $W$ for that normalization, namely \[\Covol(f_N(W\cap \L))=\det(\restriction{f_N}{W})\Covol(W\cap\L,W)=\frac{\Covol(W\cap\L,W)}{N^{k-r}}.\] Now, for all $\u h \in \N^k$ we have \[0<\int_{\u y\in \Delta_{\u h,C}} y_1^{m_1}\dots y_k^{m_k} d\lambda_W\leq \int_{\u y\in f_{\u h}^{-1}(\Delta^k)\cap W} y_1^{m_1}\dots y_k^{m_k} d\lambda_W\leq \prod_{i=1}^k \frac{1}{h_i^{m_i}}\int_{\u y\in f_{\u h}^{-1}(\Delta^k)\cap W} d\lambda_W.\]
 To estimate the last volume, we let $v^{(1)}, \dots v^{(r)}$ be the generators of the cone $W\cap\{y_i\geq 0\}$. Without loss of generality we assume that they are linearly independent, otherwise we decompose the cone into simplicial ones. Denote $\ell_{\u h}(\u y)=\sum_i y_i h_i$. Then $f_{\u h}^{-1}(\Delta^k)\cap W$ is the simplex with vertices $0$ and $\frac{v^{(i)}}{\ell_{\u h}(v^{(i)})}$ for $i=1, \ldots, r$. Its volume is then bounded by 
 \[\prod_{i=1}^r\left\Vert\frac{v^{(i)}}{\ell_{\u h}(v^{(i)})}\right\Vert=\frac{c_W}{\prod_{i=1}^r \ell_{\u h}(v^{(i)})}\leq \frac{c_W'}{\prod_i\sum_{v^{(i)}_j\neq 0}h_j}\leq \frac{c_W''}{\sum_i\sum_{v^{(i)}_j\neq 0}h_j}\] for some constants $c_W$, $c'_W$ and $c''_W$, where we use the inequality $ab\geq \frac{1}{2}(a+b)$ which holds for $a,b\geq 1$ in the last estimate. Since $W$ is not contained in any of the hyperplanes $\{y_i=0\}$, every $h_j$ for $j=1, \dots k$ appears in the denominator at least once. So the last expression is bounded by $\frac{c_W''}{\sum_{j=1}^k h_j}$. In the end, we have 
 \begin{align*}
\sum_{\u h\in\N^k} \frac{1}{(\sum h_i)h_1^{m_1}\dots h_k^{m_k}}\leq\sum_{\u h\in\N^k} \frac{1}{(\sum h_i)h_1\dots h_k}=\sum_{\u h\in\N^k}\frac{\sum h_i}{(\sum h_i)^2h_1\dots h_k}\\=\sum_i\sum_{\u h}\frac{1}{(\sum h_i)^2h_1\dots h_{i-1}h_{i+1}\dots h_k}<\infty\end{align*}
 because each sum over $\u h$ is bounded by some linear combination of multiple zeta values (see \cite[Proof of Proposition 2.1]{Kaneko_MZV}). We use the dominated convergence theorem to justify the inversion of the sum on $\u h$ and the limit in $N\to\infty$ and the existence of the limit $c$: indeed the sum approximates the integral from below.
 
Finally, if $W\cap\L$ is not a lattice in $W$, it is a lattice in a subspace of higher codimension and the arguments above apply verbatim to give a polynomial growth in $N$ with smaller exponent, hence $c=0$.
\end{proof}

The proof of Theorem~\ref{thm:coeffs} below is conceptually straightforward, however accounting for all of the symmetry/automorphism factors is delicate. Before passing to the proof, we introduce the following simple modification of decorated stable graphs, which will be useful for dealing with symmetries.

\begin{defi}
	For a composition $\valu$, an \emph{ordered decorated stable graph with total decoration $\valu$} is a decorated stable graph for a stratum $\cQ(\dgu)$ as in Definition~\ref{def:stable:graph} ($\valu = \dgu + 2$), with the following additional data:
	\begin{itemize}
		\item for each $v \in V(\Gamma)$, an ordering of $\valu_v$, which we still denote by $\valu_v$ (i.e.\ $\valu_v$ is now a composition);
		\item a bijection $\phi: \{(i,v) : v \in V(\Gamma), 1 \le i \le \ell(\valu_v)\} \rightarrow \{1, \ldots, \ell(\valu)\}$ satisfying the following two properties:
			\begin{itemize}
				\item the $i$-th element of $\valu_v$ is equal to $\val_{\phi(i,v)}$;
				\item suppose a vertex $v \in V(\Gamma)$ has $l_v$ legs with labels $L_1, \ldots, L_{l_v}$; let also $i_1, \ldots, i_{l_v}$ be the indices of elements in $\valu_v$ which are equal to $1$; then we require that $\phi(i_1,v), \ldots, \phi(i_{l_v},v)$ are the indices (in some order) of the $L_1$-th, ..., $L_{l_v}$-th elements of $\valu$ which are equal to $1$.
			\end{itemize}
	\end{itemize}
\end{defi}

Two ordered decorated stable graphs with the same total decoration $\valu$ are isomorphic if there exists an isomorphism of the underlying unordered decorated stable graphs which preserve the \emph{ordered} versions of $\valu_v$ and the values of $\phi$, that is, if $f: \Gamma \rightarrow \Gamma'$ is such an isomorphism and $\phi, \phi'$ are the corresponding bijections, we require $\phi'(i,f(v)) = \phi(i,v)$ for all $v \in V(\Gamma)$. Note that the automorphisms of an ordered decorated stable graph are exactly the automorphisms of the corresponding unordered stable graph which fix every vertex.

\begin{lemma}
	Let $\valu$ be a composition and let $\Gamma \in \cG^{\valu}_{g,l}$ be a decorated stable graph. Then the number of distinct \emph{ordered} decorated stable graphs with total decoration $\valu$ whose underlying unordered graph is $\Gamma$ is equal to 
	\begin{equation} \label{eq:num-orderings}
		\left( \prod_{v \in V(\Gamma)} \frac{\ell(\valu_v)!}{|\Aut(\valu_v)|} \right) \cdot \left( \frac{|\Aut(\valu)|}{\mu_1(\valu)!} \cdot \prod_{v \in V(\Gamma)} \mu_1(\valu_v)! \right) \cdot \frac{|\Aut_V(\Gamma)|}{|\Aut(\Gamma)|},
	\end{equation}
	where $\Aut_V(\Gamma)$ is the subgroup of automorphisms of $\Gamma$ fixing every vertex.
\end{lemma}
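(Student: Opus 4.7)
The plan is to parameterize all ordered enhancements of the fixed unordered graph $\Gamma$ by a set $X$ on which $\Aut(\Gamma)$ acts, compute $|X|$ directly, determine the stabilizers of this action, and conclude via the orbit-stabilizer theorem.

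First I would define $X$ as the set of pairs consisting of a choice, for each $v \in V(\Gamma)$, of a composition refining the multiset $\valu_v$, together with a bijection $\phi$ satisfying the two compatibility constraints of the definition. The size of $|X|$ factors into two independent contributions. The number of distinct compositions associated to the partition $\valu_v$ is $\ell(\valu_v)!/|\Aut(\valu_v)|$, producing the first product $\prod_{v}\ell(\valu_v)!/|\Aut(\valu_v)|$. For the bijection $\phi$, once compositions are fixed, I would decompose by part value: for every $k \neq 1$ there are $\mu_k(\valu) = \sum_v \mu_k(\valu_v)$ positions equal to $k$ in the concatenation of the vertex compositions and the same number in $\valu$, and $\phi$ restricts freely to any bijection between them, contributing $\mu_k(\valu)!$; multiplying over all $k \neq 1$ yields $|\Aut(\valu)|/\mu_1(\valu)!$. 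For $k=1$ the leg constraint rigidifies the picture: at each $v$ the $\mu_1(\valu_v)$ positions equal to $1$ must map into the specific subset of positions in $\valu$ singled out by the leg labels $L_1,\ldots,L_{l_v}$, and within that subset any bijection is allowed, producing $\prod_v \mu_1(\valu_v)!$. Multiplying the three pieces gives the claimed expression for $|X|$.

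Next, I would verify that $\Aut(\Gamma)$ acts on $X$ by transporting compositions along the induced vertex map and relabeling $\phi$ accordingly, and that the orbits of this action are in bijection with the isomorphism classes of ordered decorated stable graphs whose underlying unordered graph is $\Gamma$. The key claim is then that the stabilizer of every element $x \in X$ equals $\Aut_V(\Gamma)$. The inclusion $\Aut_V(\Gamma) \subseteq \Stab_x$ is automatic since neither the compositions nor $\phi$ reference individual half-edges, so any automorphism fixing every vertex preserves the ordered data. For the reverse inclusion, if $\sigma \in \Aut(\Gamma)$ stabilizes $x$, preservation of $\phi$ translates to $\phi(i,\sigma(v)) = \phi(i,v)$ for all $v$ and $i$, which by injectivity of $\phi$ forces $\sigma(v) = v$ for every $v$.

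Finally, applying the orbit-stabilizer theorem, every orbit has size $|\Aut(\Gamma)|/|\Aut_V(\Gamma)|$, and hence
\[
\#\{\text{orbits}\} \;=\; |X| \cdot \frac{|\Aut_V(\Gamma)|}{|\Aut(\Gamma)|},
\]
which is precisely formula~(\ref{eq:num-orderings}). The delicate point in the argument will be the bookkeeping for the value $k=1$: properly separating the rigid leg-prescribed matching from the free permutation of like parts within a vertex is what accounts for the apparently asymmetric factor $\tfrac{|\Aut(\valu)|}{\mu_1(\valu)!} \cdot \prod_v \mu_1(\valu_v)!$ in the final formula. The uniformity of the stabilizer across $X$ is what allows the clean single product rather than a sum over orbits.
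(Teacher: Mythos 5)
Your proposal is correct and follows essentially the same route as the paper: the first two factors count the orderings of the $\valu_v$ and the compatible bijections $\phi$ (with the same split between free matching of parts $k\neq 1$ and the leg-rigidified matching of parts equal to $1$), and the third factor is the overcounting by $|\Aut(\Gamma):\Aut_V(\Gamma)|$. The paper simply asserts this overcounting factor, whereas you justify it via the orbit–stabilizer theorem after checking that every stabilizer is exactly $\Aut_V(\Gamma)$; this is a welcome elaboration of the same argument, not a different one.
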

\begin{proof}
	The first term represents the number of distinct orderings of $\valu_v$. The second term represents the number of distinct bijections $\phi$. Because of the symmetries of $\Gamma$ each ordered graph is overcounted $|\Aut(\Gamma):\Aut_V(\Gamma)|$ times, which gives the third term.
\end{proof}

\begin{proof}[Proof of Theorem~\ref{thm:coeffs}]
Recall the expression \eqref{eq:compl:volume} for $\cVol(\cQ(\dgu))$ and fix a decorated stable graph $\Gamma\in\cG^{\valu}_{g,l}$. Recall also the lattice $\L_{\Gamma}$ defined by the linear relations \eqref{eq:sublattice:L}, that is
\[\L_{\Gamma}=\bigcap_{v\in V(\Gamma)} \L_v, \mbox{ with } \L_v=\{|\u{b_v}|=0 \mbox{ mod }2\},\]
where $\u{b_v}$ are the variables corresponding to the edges adjacent to $v$. By Corollary~\ref{cor:criterion}, it has index $2^{|V(\Gamma)|-1}$. Applying \eqref{eq:3.7} to the definition \eqref{eq:VolGammac}, we see that $\cVol(\Gamma)$ is equal to 
\begin{equation} \label{eq:limitN}
c_d \cdot \frac{c_{\valu}}{\prod_{v \in V(\Gamma)} c_{\valu_v}} \cdot \frac{1}{|\Aut(\Gamma)|} \cdot \lim\limits_{N\to\infty}\frac{1}{N^d}\sum_{\substack{\u{b}\cdot\u{h}\le N\\b_i, h_i\in\N}}P_\Gamma(\u b)\chi_{\L_\Gamma}(\u b),
\end{equation} 
where $\chi_{\L_\Gamma}$ is the characteristic function of $\L_{\Gamma}$. Writing $P_{\Gamma}$ as in \eqref{eq:PGamma} along $\L_{\Gamma}$ we get 
\begin{align} \label{eq:prodV}
	P_\Gamma(\u b)\chi_{\L_\Gamma}(\u b) &= \prod_{e\in E(\Gamma)}b_e \cdot \prod_{v\in V(\Gamma)} N^{\valu_v}_{g_v, n_v}(\u{b_v})\chi_{\L_v(\u{b_v})} \nonumber \\
	&= \prod_{v \in V(\Gamma)} |\Aut(\valu_v)| \cdot \prod_{e\in E(\Gamma)}b_e \cdot \prod_{v\in V(\Gamma)} N^{\valu_v, unlab}_{g_v, n_v}(\u{b_v})\chi_{\L_v(\u{b_v})}. 
\end{align}

Recall that in Theorem~\ref{thm:counting-funcs-on-walls} $\valu$ is a composition, while the $\valu_v$ coming from the stable graph $\Gamma$ are only partitions. To be able to apply Theorem~\ref{thm:counting-funcs-on-walls} (and to keep track of the symmetries), choose any \emph{ordered} decorated stable graph $(\Gamma_{ord},\phi)$ with total decoration $\valu$ whose underlying unordered graph is $\Gamma$ (we will later account for this arbitrary choice by dividing the count by the number of such orderings). Hence the $\valu_v$ now refer to compositions.

Note that in the term $N^{\valu_v, unlab}_{g_v, n_v}(\u{b_v})$ we are evaluating $N^{\valu_v, unlab}_{g_v, n_v}$ on the wall (which we denote by $W_v$) defined by the equations of the form $b_i=b_j$ for each edge forming a loop at $v$ in $\Gamma$. Observe that either there is at least one non-loop edge of $\Gamma$ incident to $v$, or $v$ is the unique vertex of $\Gamma$, in which case $\ell(\valu_v) = \ell(\valu) \ge 3$. In both cases we can apply Theorem~\ref{thm:counting-funcs-on-walls} to replace each $N^{\valu_v, unlab}_{g_v, n_v}(\u{b_v})$ by the right-hand side of \eqref{eq:V-recursion}, which is equal to $2\cdot V^{\valu_v}_{g_v, n_v}(\u{b_v})$ plus the additional completion terms.

By Corollary~\ref{cor:top-deg-term}, the terms $2\cdot V^{\valu_v}_{g_v, n_v}(\u{b_v})\chi_{\L_v}$ are the top-degree terms of the counting functions $F^{\valu_v}_{g_v, n_v}(\u{b_v})$ on the walls $W_v$ and outside of lower-dimensional walls (where they are given by piecewise polynomials of at most the same degree). Lemma \ref{lm:evaluation:for:monomial} implies then that the large $N$ limit above is not affected by the replacement of $2\cdot V^{\valu_v}_{g_v, n_v}(\u{b_v})\chi_{\L_v}$ by $F^{\valu_v}_{g_v, n_v}(\u{b_v})$. Performing this replacement, we get
\begin{align*}
	& c_d \cdot \frac{c_{\valu}}{\prod_{v \in V(\Gamma)} c_{\valu_v}} \cdot \frac{1}{|\Aut(\Gamma)|} \cdot \prod_{v \in V(\Gamma)} |\Aut(\valu_v)| \cdot \lim\limits_{N\to\infty}\frac{1}{N^d}\sum_{\substack{\u{b}\cdot\u{h}\le N\\b_i, h_i\in\N}}F_\Gamma(\u b) \\
	& = c_d \cdot c_{\valu} \cdot \frac{1}{|\Aut(\Gamma)|} \cdot \prod_{v \in V(\Gamma)} \mu_1(\valu_v)! \cdot \lim\limits_{N\to\infty}\frac{1}{N^d}\sum_{\substack{\u{b}\cdot\u{h}\le N\\b_i, h_i\in\N}}F_\Gamma(\u b)
\end{align*}
which is equal to $\Vol(\Gamma)$ by definition. Summing over all $\Gamma_{ord}$ and dividing by the number of such orderings we obviously still get $\Vol(\Gamma)$. Summing over all $\Gamma$, we get $\Vol(\cQ(\valu))$, which is the first term in the desired formula of Theorem~\ref{thm:coeffs}. It is now left to show that the additional completion terms of \eqref{eq:V-recursion} arrange together to give the rest of the terms in Theorem~\ref{thm:coeffs}. 

To this end, replace in \eqref{eq:prodV} each term  $N^{\valu_v, unlab}_{g_v, n_v}(\u{b_v})$ by the sum of corresponding completion coefficients of \eqref{eq:V-recursion}. Multiplying out over $v \in V(\Gamma)$, we obtain a sum of terms of the form
\[
\operatorname{const} \cdot \prod_{e \in E(\Gamma)} b_e \cdot \prod_{v \in V(\Gamma)} \chi_{\L_v}(\u b_v) \left( 2\cdot V^{\valu^0_v}_{g_{0,v}, n_{0,v}}(\u b_{0,v}) \cdot \prod_{i=1}^{m_v} V^{[4g_{i,v}-2+2n^\bl_{i,v} + 2n^\wh_{i,v}]}_{g_{i,v}, (n^\bl_{i,v},n^\wh_{i,v})}(\u b^\bl_{i,v}, \u b^\wh_{i,v}) \right),
\]
where, for each $v \in V(\Gamma)$, we have adjoined a label $v$ to the indices of the corresponding variables in \eqref{eq:V-recursion}. Note that each such term corresponds to a certain choice of the parameters $m_v, g_{i,v}, A_{i,v}, \varepsilon_{i,v}, a_v$ appearing in \eqref{eq:V-recursion}. Fix a choice of these parameters.

Now, for each $v \in V(\Gamma)$, we have the following in \eqref{eq:V-recursion}:
\begin{itemize}
    \item $p_v$ is the number of loops of $\Gamma$ based at $v$;
    \item $|I^0_{i,v}|=|I^1_{i,v}|=1$ for all $i=1,\ldots,p_v$;
    \item $\u b^\bl_{i,v} = \u b^\wh_{i,v}$ and $n^\bl_{i,v} = n^\wh_{i,v} = |A_{i,v}|$ for all $i=1,\ldots,m_v$.
\end{itemize}
In particular, if we denote $\L'_v = \{| \u b_{0,v}| = 0 \pmod{2}\}$, then $\chi_{\L_v}(\u b_v) = \chi_{\L'_v}(\u b_{0,v})$, and we can rewrite the last expression as
\[
\operatorname{const} \cdot \prod_{e \in E(\Gamma)} b_e \cdot \left( \prod_{v \in V(\Gamma)} 2\cdot V^{\valu^0_v}_{g_{0,v}, n_{0,v}}(\u b_{0,v}) \cdot \chi_{\L'_v}(\u b_{0,v}) \right) \cdot \prod_{\substack{v \in V(\Gamma) \\ 1 \le i \le m_v}} V^{[4(g_{i,v}+n^\bl_{i,v})-2]}_{g_{i,v}, (n^\bl_{i,v},n^\bl_{i,v})}(\u b^\bl_{i,v}, \u b^\bl_{i,v}).
\]
By Corollary~\ref{cor:top-deg-term} and Proposition~\ref{prop:face-bicolored}, this is the top-degree term (on the corresponding wall) of
\begin{equation}\label{eq:prod_count_funcs}
\operatorname{const} \cdot \prod_{e \in E(\Gamma)} b_e \cdot \left( \prod_{v \in V(\Gamma)} F^{\valu^0_v}_{g_{0,v}, n_{0,v}}(\u b_{0,v}) \right) \cdot \prod_{\substack{v \in V(\Gamma) \\ 1 \le i \le m_v}} F^{[4(g_{i,v}+n^\bl_{i,v})-2]}_{g_{i,v}, (n^\bl_{i,v},n^\bl_{i,v})}(\u b^\bl_{i,v}, \u b^\bl_{i,v}).
\end{equation}
Let $s_{i,v} = |a_v^{-1}(i)|$ and define
\[
\Gamma'_{ord} = \Gamma'_{0,ord} \sqcup \bigsqcup_{a=1}^{\ell(\valu)} \bigsqcup_{b=1}^{s_{\phi^{-1}(a)}} \Gamma'_{a,b},
\] 
where:
\begin{itemize}
	\item $\Gamma'_{0,ord}$ is an \emph{ordered} decorated stable graph obtained from $(\Gamma_{ord},\phi)$ by:
	\begin{itemize}
		\item removing from each vertex $v$ the loops corresponding to variables $\u b^\bl_{i,v}$, $1 \le i \le m_v$;
		\item changing the decoration of $v$ from $\valu_v$ to $\valu^0_v$;
		\item keeping $\phi$ the same (this uniquely determines the total decoration $\valu^0$ of $\Gamma'_{0, ord}$);
		\item adding $\mu_1(\valu^0_v) - \mu_1(\valu_v)$ new legs to $v$;
		\item relabeling all the legs in such a way that, if $i_1, \ldots, i_{l_v}$ are the indices in $\valu^0_v$ of elements equal to $1$, and if $\phi(i_1,v), \ldots, \phi(i_{l_v},v)$ are the indices of the $L_1$-th, ..., $L_{l_v}$-th elements of $\valu^0$ which are equal to $1$, then legs at $v$ have labels $L_1,\ldots,L_{l_v}$;
	\end{itemize}   
	\item for each $a = 1,\ldots,\ell(\valu)$, let $\phi^{-1}(a) = (i,v)$ and let $a_v^{-1}(i) = \{j_1,\ldots,j_{s_{i,v}}\}$ with $j_1 < \ldots < j_{s_{i,v}}$; then for each $b = 1,\ldots,s_{i,v}$, $\Gamma'_{a,b}$ is a decorated abelian stable graph for a stratum ${\cH(2(g_{j_b,v}+n^\bl_{j_b,v})-2)}$ with a single vertex and $n^\bl_{j_b,v}$ loops corresponding to the variables $\u b^\bl_{j_b,v}$.
\end{itemize}
Note that $\Gamma'_{0,ord}$ is indeed stable: the genus decoration at each vertex $v$ is equal to $g_{0,v} \ge 0$ by construction and the stability condition follows because $\valu^0$ is an odd partition, see Remark~\ref{rmk:odd-implies-stable}.

Let $\Gamma'$ be the disconnected decorated stable graph obtained from $\Gamma'_{ord}$ by forgetting the ordering of $\Gamma'_{0,ord}$. Then \eqref{eq:prod_count_funcs} is clearly proportional to $F_{\Gamma'}(\u b)$.

Note that by construction, the total decoration $\valu^0$ of $\Gamma'_{0, ord}$ satisfies $\ell(\valu^0) = \ell(\valu)$ and 
\begin{equation} \label{eq:k0-condition}
	\valu^0_a + 4 \cdot \sum_b g(\Gamma'_{a,b}) = \valu_a 
\end{equation}
for each $a=1,\ldots,\ell(\valu^0)$.

Conversely, fix an arbitrary $\Gamma'_{ord} = \Gamma'_{0,ord} \sqcup \bigsqcup_{a=1}^{\ell(\valu)} \bigsqcup_{b=1}^{s_a} \Gamma'_{a,b}$ with $s_a \ge 0$, $\Gamma'_{0,ord}$ an ordered stable graph, $\Gamma'_{a,b}$ abelian stable graphs, the total decoration $\valu^0$ of $\Gamma'_{0,ord}$ satisfying \eqref{eq:k0-condition}. We now identify how many distinct choices of the parameters $(\Gamma_{ord}, \phi)$ and $m_v, g_{i,v}, A_{i,v}, \varepsilon_{i,v}, a_v$ produce the given $\Gamma'_{ord}$ (note that we consider the indexation of abelian components of $\Gamma'_{ord}$ to be part of the data).

Given $\Gamma'_{ord}$ we can recover $(\Gamma_{ord}, \phi)$ (and so $\Gamma$ as well) as follows. Denote by $\phi'$ the $\phi$-bijection of $\Gamma'_{0,ord}$. Then add at each vertex $v$ of $\Gamma'_{0,ord}$ as many loops as there are in total in the graphs $\Gamma'_{a,b}$ with $a = \phi'(i, v)$ for some $i \in \{1, \ldots, \ell(\valu^0_v)\}$. Change the decoration of each vertex from $\valu^0_v = \left(\valu^0_{\phi'(i,v)}, i=1,\ldots, \ell(\valu^0_v)\right)$ to $\valu_v = \left( \valu_{\phi'(i,v)}, i=1,\ldots, \ell(\valu^0_v) \right)$. Keep the same $\phi'$. Remove from each vertex $\mu_1(\valu^0_v) - \mu_1(\valu_v)$ legs. Relabel all the legs in such a way that, if $i_1, \ldots, i_{l_v}$ are the indices in $\valu_v$ of elements equal to $1$, and if $\phi'(i_1,v), \ldots, \phi'(i_{l_v},v)$ are the indices of the $L_1$-th, ..., $L_{l_v}$-th elements of $\valu$ which are equal to $1$, then legs at $v$ have labels $L_1,\ldots,L_{l_v}$. Condition~\eqref{eq:k0-condition} ensures that the constructed $\Gamma_{ord}$ is indeed an ordered decorated stable graph with total decoration $\valu$.

Now for each $v \in V(\Gamma)$ and each $i=1,\ldots,\ell(\valu_v)$, the sizes of the sets $a_v^{-1}(i)$ are fixed by $|a_v^{-1}(i)| = s_{\phi'(i,v)}$. In particular, this fixes $m_v = \sum_{i=1}^{\ell(\valu_v)} |a_v^{-1}(i)| = \sum_{i=1}^{\ell(\valu_v)} s_{\phi'(i,v)}$. Choose the $a_v$ in one of 
\[\prod_{v\in V(\Gamma)} \binom{m_v}{s_{\phi'(1,v)}, \ldots, s_{\phi'(\ell(\valu_v),v)}}\] 
possible ways. Once $a_v$ are fixed, the values of $g_{i,v}$ and $n^\bl_{i,v}$ are uniquely determined: if $a_v^{-1}(i) = \{j_1,\ldots,j_{s_{\phi'(i,v)}}\}$ with $j_1 < \ldots < j_{s_{\phi'(i,v)}}$, then
\begin{align*}
	 \left( (g_{j_1,v}+n^\bl_{j_1,v}), \ldots, (g_{j_{s_{\phi'(i,v)}},v}+n^\bl_{j_{s_{\phi'(i,v)}},v}) \right) &= \left( g(\Gamma'_{\phi'(i,v),1}), \ldots, g(\Gamma'_{\phi'(i,v),s_{\phi'(i,v)}}) \right), \\
	 \left( n^\bl_{j_1,v}, \ldots, n^\bl_{j_{s_{\phi'(i,v)}},v} \right) &= \left( |E(\Gamma'_{\phi'(i,v),1})|, \ldots, |E(\Gamma'_{\phi'(i,v),s_{\phi'(i,v)}})| \right). 
\end{align*}
The sizes of the sets $A_{i,v}$ are now uniquely determined by $|A_{i,v}| = n^\bl_{i,v}$.
Thus there are 
\[\prod_{v \in V(\Gamma)} \binom{p_v}{n^\bl_{1,v}, \ldots, n^\bl_{m_v,v}, n_{0,v} }\]
 choices for the $A_{i,v}$, where $p_v$ and $n_{0,v}$ are the total numbers of loops at the vertex $v$ in $\Gamma_{ord}$ and $\Gamma'_{0,ord}$ respectively. Finally, note that the values of $n^\bl_{i,v}, n^\wh_{i,v}, \u b^\bl_{i,v}, \u b^\wh_{i,v}$ do not actually depend on the choice of $\varepsilon_{i,v}$. Thus there are $\prod_{v \in V(\Gamma)} 2^{p_v}$ choices for the $\varepsilon_{i,v}$. It is easy to check that any such choice of parameters indeed produces $\Gamma'_{ord}$.

We can now rewrite $\cVol(\cQ(\dgu)) - \Vol(\cQ(\dgu))$ as
\begin{align*}
	& \sum_{\Gamma} c_d \cdot \frac{c_{\valu}}{\prod_{v \in V(\Gamma)} c_{\valu_v}} \cdot \frac{1}{|\Aut(\Gamma)|} \cdot \prod_{v \in V(\Gamma)} |\Aut(\valu_v)| \cdot \\
	& \left( \prod_{v \in V(\Gamma)} \frac{\ell(\valu_v)!}{|\Aut(\valu_v)|} \cdot \frac{|\Aut(\valu)|}{\mu_1(\valu)!} \cdot \prod_{v \in V(\Gamma)} \mu_1(\valu_v)! \cdot  \frac{|\Aut_V(\Gamma)|}{|\Aut(\Gamma)|} \right)^{-1} \cdot \\
	& \Bigg[ \sum_{\Gamma'_{ord}} \prod_{v\in V(\Gamma)} \binom{m_v}{s_{\phi'(1,v)}, \ldots, s_{\phi'(\ell(\valu_v),v)}} \cdot \prod_{v \in V(\Gamma)} \binom{p_v}{n^\bl_{1,v}, \ldots, n^\bl_{m_v,v}, n_{0,v} } \cdot \prod_{v \in V(\Gamma)} 2^{p_v} \cdot  \\
	& \prod_{v \in V(\Gamma)} \frac{1}{m_v! \cdot 2^{m_v+|A_{0,v}|}} \cdot \frac{|\Aut(\valu^0_v)|}{|\Aut(\valu_v)|} \cdot \\
	& \prod_{i=1}^{\ell(\valu)} 
	\left(
	\val^0_i \cdot \prod_{\substack{ (j,v) :\\ \phi'(a_v(j),v) = i }} (2(g_{j,v} + n^\bl_{j,v})-1) \cdot \frac{(\val_i - 2)!!}{(\val_i - 2 \cdot |\{ (j,v) : \phi'(a_v(j),v) = i \}|)!!}
	\right) \cdot \\
	& \lim\limits_{N\to\infty}\frac{1}{N^d}\sum_{\substack{\u{b}\cdot\u{h}\le N\\b_i, h_i\in\N}} F_{\Gamma'}(\u b) \Bigg],
\end{align*}
where in the first line the sum is over all $\Gamma\in\cG^{\valu}_{g,l}$ and the terms come from \eqref{eq:limitN} and \eqref{eq:prodV}; the second line accounts for the number of choices of orderings of $\Gamma$ from \eqref{eq:num-orderings}; in the third line the sum is over all $\Gamma'_{ord}$ obtained from all orderings of $\Gamma$ by the procedure described above, and the terms account for the number of times each $\Gamma'_{ord}$ is obtained; the forth and the fifth lines contain the multiplicative terms from \eqref{eq:V-recursion}; in the sixth line, as before, $\Gamma'$ is obtained from $\Gamma'_{ord}$ by forgetting the ordering of $\Gamma'_{0,ord}$.

Using \eqref{eq:VolGamma:prod}, we can rewrite the last line as
\[
c_{d'}^{-1} \cdot c_{\valu^0}^{-1} \cdot \left( \prod_{v \in V(\Gamma'_0)} \mu_1(\valu^0_v)! \right)^{-1} \cdot |\Aut(\Gamma')| \cdot \Vol(\Gamma'),
\]
where $d'$ is the dimension of the product of strata corresponding to $\Gamma'$ and 
\[
|\Aut(\Gamma')| = |\Aut(\Gamma'_0)| \cdot \prod_{a,b} |\Aut(\Gamma'_{a,b})| = |\Aut(\Gamma'_0)| \cdot \prod_{i,v} n^\bl_{i,v}!
\]
(see Convention~\ref{convAutAbelian}, the numbers of loops of $\Gamma'_{a,b}$ are given by $n^\bl_{i,v}$).

The double sum over $\Gamma$ and $\Gamma'_{ord}$ can be replaced by a double sum over all possible compositions $\valu^0$ and all possible disconnected stable graphs $\Gamma' = \Gamma'_0 \sqcup \bigsqcup_{a=1}^{\ell(\valu)} \bigsqcup_{b=1}^{s_a} \Gamma'_{a,b}$, where $\Gamma'_0$ has total decoration $\valu^0$ and $\Gamma'_{a,b}$ are abelian, all together satisfying condition~\eqref{eq:k0-condition}. Note that an additional factor 
\[
\left( \prod_{v \in V(\Gamma)} \frac{\ell(\valu^0_v)!}{|\Aut(\valu^0_v)|} \right) \cdot \left( \frac{|\Aut(\valu^0)|}{\mu_1(\valu^0)!} \cdot \prod_{v \in V(\Gamma)} \mu_1(\valu^0_v)! \right) \cdot \frac{|\Aut_V(\Gamma'_0)|}{|\Aut(\Gamma'_0)|}
\]
must be introduced, accounting for the forgetting of the ordering when passing from $\Gamma'_{ord}$ to $\Gamma'$.

Note the following relations:
\begin{itemize}
	\item $c_d = c_{d'}$ since $d=d'$ (because $\dim \cQ(\dg^0) = 2(h_1(\Gamma'_0) + \sum_v g_{0,v})-2+\ell(\valu^0)$, $h_1(\Gamma'_0) = h_1(\Gamma')-\sum_{i,v} n^\bl_{i,v}$ and $\dim \cH(2(g_{i,v}+n^\bl_{i,v})-2) =  2(g_{i,v}+n^\bl_{i,v})$, hence $d' = 2(h_1(\Gamma') + \sum_{i,v} g_{i,v}) - 2 + \ell(\valu) = 2(h_1(\Gamma') + \sum_{v} g_{v}) - 2 + \ell(\valu) = d$);
	\item $|\Aut(\valu)| = c_{\valu} \cdot \mu_1(\valu)!$ and similarly for $|\Aut(\valu^0)|$ and $|\Aut(\valu_v)|$;
	\item $\ell(\valu_v) = \ell(\valu^0_v)$ for all $v$;
	\item $|A_{0,v}|=n_{0,v}$;
	\item $|\Aut_V(\Gamma)| / \prod_v 2^{p_v} p_v! = |\Aut_V(\Gamma'_0)| / \prod_v 2^{n_{0,v}} n_{0,v}!$, because both are equal to the number of automorphisms fixing each vertex of the graph obtained from $\Gamma$ (or $\Gamma'_0$) be removing all loops;
	\item $\prod_{j,v} (2(g_{j,v} + n^\bl_{j,v})-1) = \prod_{a,b} (2 g(\Gamma'_{a,b})-1)$;
	\item $|\{ (j,v) : \phi'(a_v(j),v) = i \}| = s_i$;
	\item $\prod_v 2^{m_v} = \prod_i 2^{s_i}$.
\end{itemize}

Taking all of the above into account and carefully canceling out the terms, we obtain
\begin{equation*}
	\sum_{\valu^0} \sum_{\Gamma'}  \frac{1}{\prod_{i=1}^{\ell(\valu)} s_i! \cdot 2^{s_i}} \cdot \prod_{a,b} (2 g(\Gamma'_{a,b})-1)\cdot \prod_{i=1}^{\ell(\valu)} \val^0_i \cdot \frac{(\val_i - 2)!!}{(\val_i - 2s_i)!!} \cdot \Vol(\Gamma').
\end{equation*}
Denote $\u g_i = (g(\Gamma'_{i,1}),\ldots,g(\Gamma'_{i,s_i}))$ and $\u g = (|\u g_1|, \ldots,|\u g_{\ell(\valu)}|)$. Then $\valu^0 = \valu - 4\u g$. Grouping $\Gamma'$ with the same values of $s_a$ and $g(\Gamma'_{a,b})$, we obtain
\begin{multline*}
	\sum_{\u g :\ \u g < \valu/4} \quad  \sum_{\substack{i\;\textrm{s.t.}\\g_i>0}} \quad \sum_{\substack{\u g_{i} = \left(g_{i}^{(1)}, g_{i}^{(2)} \ldots\right) \\ | \u g_{i}| = g_{i},\ g_{i}^{(j)} > 0}}
	\frac{1}{\prod_{i=1}^{\ell(\valu)} \ell(\u g_i)! \cdot 2^{\ell(\u g_i)} } \cdot \prod_{i,j} (2 g_i^{(j)}-1)\cdot \\ \prod_{i=1}^{\ell(\valu)} \val^0_i \cdot \frac{(\val_i - 2)!!}{(\val_i - 2\ell(\u g_i))!!} \cdot \Vol \left(\cQ(\valu^0) \times \prod_{i,j} \cH(2g_i^{(j)}-2) \right),
\end{multline*}
which is equivalent to the formula in the statement of the theorem.
\end{proof}

%-------------------------------------------------
\section{Applications and conjectures} \label{sec:appl-conj}

\subsection{Distribution of cylinders}
One main motivation to obtain a formula for the volumes of odd strata as a sum over stable graphs is to prove Conjecture 2 of \cite{DGZZlarge} for these strata in the large genus asymptotics, by analogy with the methods developed in \cite{DGZZ-vol} and \cite{DGZZlarge} in the case of principal strata of differentials. We recall this conjecture here with some additions coming from analogies with Theorem 1.4 of \cite{DGZZlarge}. We first begin be precising what we call a random square-tiled surface in $\cQ(\dgu)$. This is very similar to the notion of a random integer (in the prime number theorem) where we consider the asymptotics of the uniform distribution on intervals $[1, N] \subset \Z_+$ for $N\to\infty$. Here equation~\eqref{eq:def:Vol:sq} states that the number of square-tiled surfaces in $\cQ(\dgu)$ made of at most $N$ squares is asymptotically $c_{\dgu}N^d$ as $N\to\infty$. By the results of \cite{DGZZ-meanders}, these asymptotics extend to the subsets of square-tiled surfaces having a decomposition into horizontal cylinders encoded by a given stable graph $\Gamma$ (see~\eqref{eq:Vol:gamma}): the number of such surfaces with at most $N$ squares grows as $c_{\Gamma}N^d$ as $N\to\infty$. This allows us to interpret the frequency $\tfrac{c_{\Gamma}}{c_{\dgu}}$ as the probability that a random square-tiled surface has type $\Gamma$.

The odd strata of quadratic differentials are not all connected~\cite{Lanneau}~\cite{CM_quad}: the strata $\cQ(2(g-k)-3 , 2(g -k)-3, 2k+1 , 2k+1)$ for $-1 \leq  k \leq g-2$ and $g\geq 3$ as well as $\cQ(3^2, -1^2)$ have two connected components, one hyperelliptic, the other not; the only exception is $\cQ(3^4)$ that has precisely two non-hyperelliptic connected components, as for the other exceptional strata in genus 3 and 4 :  $\cQ(9,-1)$, $\cQ(3^3, -1)$ and $\cQ(9,3)$. The strata $\cQ(1,-1)$ and $\cQ(3,1)$ are empty. All other odd strata are nonempty and connected. 

Finally, we consider the regime where $g\to\infty$ for strata $\cQ(\dgu)$ with a bounded number poles $k_i=-1$. In this regime, $g\to\infty$ is equivalent to $d\to\infty$ where $d$ is the dimension of the stratum. 
\begin{conj}[Enhanced Conjecture 2 of \cite{DGZZlarge}]\label{conj:cyl}Let $M\in\Z_+$ be a fixed integer and denote by $\tilde{\dgu}$ the partition $\tilde{\dgu}=(\dgu, -1^p)$ where $p\leq M$ and $\dgu$ is a positive partition of $4g-4+p$. Let $K_{\tilde\dgu}$ be the random variable that gives the number of horizontal cylinders of a random square-tiled surface in $\cQ(\tilde\dgu)$.
\begin{itemize}
\item The probability that all singularities of a random square-tiled surface in $\cQ(\tilde{\dgu})$ are located at the same leaf of the horizontal foliation and at the same leaf of the vertical foliation tends to 1 as $g\to\infty$.
\item There exists a constant $R > 1$ such that the distribution of $K_{\tilde\dgu}$ converges mod-Poisson with parameter $\lambda_d=\log(d)/2$, limiting function $\frac{\sqrt\pi}{\Gamma(t/2)}$ and radius $R$ uniformly for all non-hyperelliptic components of $\cQ(\tilde{\dgu})$ of dimension $d$.
More precisely, let $\cC$ be such a component. Let $p_{\cC}(k)$ denote the probability that a random square-tiled surface in $\cC$ has k cylinders. Then 
\[\sum_{k\geq 1}p_{\cC}(k)t^k= (\dim_{\cC})^{\frac{t-1}{2}}\cdot \frac{\sqrt\pi }{\Gamma(t/2)}\left(1+O\left(\frac{1}{\dim_\C(\cC)}\right)\right),\]
where the error term is uniform over all non-hyperelliptic components of all strata of type $\cQ(\tilde{\dgu})$ and uniform over all $t$ varying in compact subsets of the complex disk $|t|<R$. 
\end{itemize}
\end{conj}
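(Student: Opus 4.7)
My plan is to adapt the strategy of \cite{DGZZ-vol, DGZZlarge}, which proved analogous results for principal strata, using Theorem~\ref{thm:coeffs} as the main tool. The first step is to translate the probabilistic statements into ratios of volumes. By \eqref{eq:Vol:Q:as:sum:of:Vol:gamma}, the probability that a random square-tiled surface in $\cQ(\tilde\dgu)$ has cylinder decomposition of type $\Gamma$ equals $\Vol(\Gamma)/\Vol(\cQ(\tilde\dgu))$. Hence the probability generating function of $K_{\tilde\dgu}$ restricted to a component $\cC$ reads
\[
\sum_{k\ge 1} p_{\cC}(k)\, t^k \;=\; \frac{1}{\Vol(\cC)} \sum_{\Gamma} \Vol(\Gamma)\, t^{|E(\Gamma)|},
\]
the sum being over stable graphs whose square-tiled realizations lie in $\cC$. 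By Theorem~\ref{thm:coeffs}, each $\Vol(\Gamma)$ is expressible via $\cVol$ modulo correction terms, and every $\cVol$ decomposes through the algorithm of Appendix~\ref{app:DFIZ} into an explicit polynomial in the standard intersection numbers $\langle \tau_{\u d}\rangle$ on $\overline{\cM}_{g',n'}$, $g'\le g$.

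For the first assertion, I would show that $\Vol(\Gamma)/\Vol(\cQ(\tilde\dgu)) \to 0$ whenever $|V(\Gamma)|\ge 2$. Since a one-vertex $\Gamma$ is precisely the configuration in which all singularities lie on a single leaf of the horizontal foliation, the horizontal version of the statement follows; applying the same argument to the vertical cylinder decomposition (using the $SO(2)$-invariance of the Masur--Veech measure) yields the two-sided claim. Comparing one-vertex and multi-vertex contributions relies on the large-genus asymptotics of $\langle \tau_{\u d}\rangle$ established in \cite{Agg} and sharpened in \cite{YZZ, ABC, Kaz}, combined with uniform bounds on the coefficients produced by the \cite{DFIZ, Bini} algorithm, in the spirit of the corresponding step in \cite{DGZZlarge}.

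For the mod-Poisson convergence, once attention is restricted to one-vertex graphs, one is reduced to summing over graphs consisting of $k$ loops at a single vertex decorated by $\valu$. The corresponding $\cVol(\Gamma)$ involves the single Kontsevich polynomial $N_{g,1}^{\valu,unlab}$ with each of the $k$ pairs of variables identified, multiplied by $\prod_e b_e$, and passed through $\cZ$. Extracting the leading behavior in $g$ and summing over $k$ with the automorphism weight $1/(k!\,2^k)$, one should recover an expression of the form $d^{(t-1)/2} \cdot \tfrac{\sqrt\pi}{\Gamma(t/2)} \cdot \bigl(1+O(d^{-1})\bigr)$. The heart of this computation is a combinatorial identity expressing the action of $\cZ$ on the relevant monomials in a form matching the expansion of $\Gamma(t/2)^{-1}$, paralleling the trivalent treatment in \cite{DGZZlarge}; holomorphicity in $t$ on a disk $|t|<R$ should follow from the growth rate of the top coefficients of the Kontsevich polynomials in $g$.

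The principal obstacle, in my view, is obtaining sufficiently sharp and \emph{uniform} large-genus asymptotics for the combinatorial intersection numbers $\langle \tau_{\u d}\rangle_{m_*}$, which are required both for the dominance of one-vertex graphs and for the error term in mod-Poisson convergence; this means controlling the $m_i$-dependent combinatorics of \cite{DFIZ, Bini} across all odd strata of a given dimension. A secondary difficulty is handling the completion terms in Theorem~\ref{thm:coeffs}: they couple $\cQ(\dgu-4\u g)$ with products of minimal abelian strata $\cH(2g_i^{(j)}-2)$, whose volumes have known asymptotics \cite{Sau_min, CMSZ, Sau_large}, but the double sum of the coefficients $C_{\u g,\u g_i}$ weighted by these volumes must be uniformly bounded in $g$. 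Finally, isolating non-hyperelliptic from hyperelliptic components will likely require separate input on the volumes of hyperelliptic loci, since our formula only produces the Masur--Veech volume of the full stratum.
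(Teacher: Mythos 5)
The statement you are asked about is a \emph{conjecture}: the paper does not prove it, and explicitly says so in section~\ref{sec:appl-conj} (``We hope to prove this conjecture following the strategy of \cite{DGZZlarge}\dots As far as we know, this conjecture is not proven yet''). Your text is therefore not a proof but a research plan, and it is essentially the same plan the authors themselves sketch: reduce the probabilities to ratios $\Vol(\Gamma)/\Vol(\cQ(\tilde\dgu))$ via \eqref{eq:Vol:Q:as:sum:of:Vol:gamma}, show that one-vertex stable graphs dominate, and extract the mod-Poisson expansion from the sum over $k$-loop one-vertex graphs, in analogy with the trivalent case.

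The genuine gap is the one you yourself flag as ``the principal obstacle'' and then do not resolve: the large-genus asymptotics of the combinatorial intersection numbers $\langle\tau_{\u d}\rangle_{m_*}$, uniformly over the $m_*$ arising from all odd strata of a given dimension. For the principal strata the analogous input is Aggarwal's asymptotics for the classical $\langle\tau_{\u d}\rangle$ \cite{Agg}, but no such result exists for the $W_{m_*,n}$-intersection numbers; the paper points out that even Kontsevich's Conjecture~3.1 (that their generating series is a KdV $\tau$-function) is open, with only the partial BKP result of \cite{BW} available. Without this input, neither the dominance of one-vertex graphs (first bullet) nor the $O(1/d)$ error term in the mod-Poisson expansion (second bullet) can be established, and the uniformity over strata of equal dimension would additionally require recursions between the $\langle\tau_{\u d}\rangle_{m_*}$ for different $m_*$ that are likewise unavailable. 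Two further unaddressed points: the completion terms of Theorem~\ref{thm:coeffs} must be shown to be asymptotically negligible graph by graph (the paper only observes that $\overline\Vol(\Gamma)=\Vol(\Gamma)$ for one-edge graphs), and the restriction to non-hyperelliptic components needs separate control of hyperelliptic loci, which the volume formula does not isolate. So your proposal correctly reproduces the intended strategy but does not close any of the gaps that make the statement a conjecture rather than a theorem.
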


We hope to prove this conjecture following the strategy of ~\cite{DGZZlarge} for principal strata. For that, we first need precise estimates for the coefficients $\langle\tau_{\u d}\rangle_{m_*}$ as $g$ tends to $\infty$. In the case of classical intersection numbers $\langle\tau_{\u d}\rangle$, this was performed by Aggarwal in ~\cite{Agg} by a combinatorial analysis of the recursive relations (Virasoro constraints) characterizing these intersection numbers. Conjecture 3.1 of~\cite{Kon} states that the exponential generating series of the numbers $\langle\tau_{\u d}\rangle_{m_*}$ is a $\tau$-function for the KdV hierarchy when fixing the variables encoding the $m_*$. As far as we know, this conjecture is not proven yet, but recent results of Borot-Wulkenhaar~\cite{BW} show that the generating function for the $\langle\tau_{\u d}\rangle_{m_*}$ for a fixed $m_*\setminus \{m_1\}$ is a $\tau$-function of the BKP hierarchy. One can hope to use these recursions to derive precise asymptotics of the intersection numbers $\langle\tau_{\u d}\rangle_{m_*}$ as $g\to \infty$ for fixed $m_*\setminus \{m_1\}$. This would allow to study the contribution of each graph $\Gamma$ for strata $\cQ(\dgu)$ where we only fix the degree of singularities different from 1, and then get partial proof of the Conjecture in this regime. Note that already here compared to the case of principal strata, there would be an additional difficulty coming from the analysis of the completed terms appearing in the contribution $\Vol(\Gamma)$. The question of uniformity among the strata $\cQ(\dgu)$ of same dimension seems even more delicate, and would require some recursions between the $\langle\tau_{\u d}\rangle_{m_*}$ for different $m_*$, in the spirit of Proposition~\ref{prop:walls-bi-equal-0} or \cite[Conjecture 3.2]{Kon}. We plan to address these question in the future.

A first observation for the comparison of the contributions $\overline\Vol(\Gamma)$ and $\Vol\Gamma$ that is part of the strategy of the proof is the following: for stable graphs $\Gamma$ with only one edge (square-tiled surfaces with one cylinder), we have $\overline\Vol(\Gamma)=\Vol(\Gamma)$ since by Theorem~\ref{thm:counting-funcs-on-walls}, $N_{g,n}^{\valu}(b,b)=F_{g,n}^{\valu}(b,b)$. Then for stable graphs $\Gamma$ with more than one edge, the difference between $\overline\Vol(\Gamma)$ and $\Vol\Gamma$ is only expressed in terms of contributions of stable graphs with less edges of smaller odd strata and contributions of stable graphs for minimal strata $\cH(2g_i-2)$. The latter are completely determined in~\cite{Yakovlev}. Hence, it seems tractable to follow the contributions of each stable graph to the completed volumes versus the usual volumes in large genus, if we know the asymptotics of the corresponding intersection numbers.

\subsection{Volume asymptotics}
The second motivation for this work is to prove the conjecture about the Masur--Veech volume asymptotics in large genus for odd strata, as stated in \cite[Conjecture 1]{ADGZZ}. We reproduce this conjecture here for completeness, adapted to our setting. 

\begin{conj}[Adapted Conjecture 1 of \cite{ADGZZ}]Let $M\in\Z_+$ be a fixed integer. For any $0\leq p\leq M$ and for any odd positive partition $\dgu$ of $4g-4+p$ we have 
\[\Vol \cQ(\dgu, -1^p)=\frac{4}{\pi}\prod_{i=1}^n \frac{2^{\val_i}}{\val_i}((1+\varepsilon(p,\dgu))\]
with \[\lim\limits_{g\to\infty}\max\limits_{\substack{p\leq M\\ \dgu\vdash 4g-4+p}}|\varepsilon(p,\dgu)|=0.\]
\end{conj}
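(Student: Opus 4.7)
The plan is to mirror the strategy used by Aggarwal~\cite{Agg} and by \cite{ADGZZ} for principal strata, now leveraging the new formula of Theorem~\ref{thm:coeffs} as the analytic input. First, I would apply Theorem~\ref{thm:coeffs} to write
\[
\Vol \cQ(\dgu,-1^p) = \cVol \cQ(\dgu,-1^p) - \sum_{\u g \neq 0} \sum_{i,\u g_i} C_{\u g,\u g_i} \cdot \Vol\!\left(\cQ(\dgu - 4\u g) \times \prod_{i,j} \cH(2g_i^{(j)}-2)\right),
\]
so that the asymptotic analysis splits into two parts: extracting the leading term from the completed volume, and controlling the correction terms.

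Second, I would expand the completed volume via Definition~\ref{def:vol} as a sum over decorated stable graphs $\Gamma \in \cG^{\valu}_{g,l}$. Each local contribution $\cVol(\Gamma)$ is an explicit rational combination of intersection numbers $\langle \tau_{\u d}\rangle_{m_*}$, weighted by multiple zeta values coming from the operator $\cZ$. Following the pattern observed for principal strata, I expect the dominant contribution in the regime $g \to \infty$, $p \le M$, to come from a distinguished family of graphs (the ``one-vertex'' graphs with loops, and in particular the one-cylinder graphs) where Remark~\ref{rk:unimaps} applies, reducing the problem to controlling $N^{\valu}_{g,1}$ asymptotically. Combinatorial rearrangement of the remaining graph contributions should produce the product $\prod_i 2^{\val_i}/\val_i$, while the constant $4/\pi$ emerges from the limit of the $\zeta$-values in a manner analogous to~\cite[Section 1]{ADGZZ}.

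Third, the correction terms must be shown subdominant. Volumes of the Abelian factors $\cH(2g_i^{(j)}-2)$ have precise large-genus asymptotics known from~\cite{Sau_min, CMSZ}; for the quadratic factors $\cQ(\dgu - 4\u g)$ one proceeds by induction on $g$ using the conjectured asymptotics themselves (noting that the coefficients $C_{\u g,\u g_i}$ carry double-factorial quotients $\val!!/(\val - 2m+2)!!$ whose size must be weighed against the combinatorial gain). The induction should close because $|\u g| \ge 1$ strictly decreases the relevant $\val_i$ in the correction strata.

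The central obstacle, however, is the required asymptotic estimate
\[
\langle \tau_{\u d}\rangle_{m_*} = \text{(leading term)} \cdot \left(1 + O\!\left(\tfrac{1}{g}\right)\right),
\]
uniform in the partition data for the relevant regime. Aggarwal's proof of the corresponding estimate for $\langle \tau_{\u d}\rangle$ relies crucially on the Virasoro constraints; the analog here must exploit the BKP tau-function structure of~\cite{BW} (or a direct proof of~\cite[Conjecture 3.1]{Kon}), which is more intricate and currently lacks the sharp estimates needed. A secondary obstacle, specific to the odd-strata setting, is to establish uniformity of error terms across the different values of $m_*$, for instance by leveraging recursions in the spirit of Proposition~\ref{prop:walls-bi-equal-0} that relate intersection numbers with different $m_*$; without such uniformity one controls only each stratum individually, not the family.
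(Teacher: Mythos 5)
This statement is a \emph{conjecture} in the paper (Adapted Conjecture 1 of \cite{ADGZZ}); the authors do not prove it, and in fact the surrounding text of section~\ref{sec:appl-conj} lays out essentially the same strategy you describe --- use Theorem~\ref{thm:coeffs} to reduce to the completed volumes, identify the dominant stable graphs (expected to be the one-vertex ones), feed in the known asymptotics of $\Vol\cH(2g_i-2)$ from \cite{Sau_min, CMSZ}, and close an induction over the correction terms --- while explicitly flagging that the whole program hinges on unproven uniform large-genus asymptotics for the intersection numbers $\langle\tau_{\u d}\rangle_{m_*}$.

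Your proposal is therefore not a proof, and you correctly diagnose why: the analogue of Aggarwal's estimate for $\langle\tau_{\u d}\rangle$, uniform over the relevant $m_*$ and partition data, is the missing input, and neither the BKP structure of \cite{BW} nor Conjecture 3.1 of \cite{Kon} currently delivers it. Two further points in your sketch are weaker than they may appear. First, the claim that the correction terms are subdominant is not merely a matter of plugging in known Abelian asymptotics and inducting on $|\u g|$: the induction you propose runs on the \emph{conjectured} asymptotics of $\Vol\cQ(\dgu-4\u g)$, so without an unconditional base and a quantitative bound on the coefficients $C_{\u g,\u g_i}$ (whose double-factorial quotients grow with $\val_i$) the argument is circular rather than closing. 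Second, even granting pointwise asymptotics of $\langle\tau_{\u d}\rangle_{m_*}$ for each fixed $m_*$, the conjecture demands uniformity of $\varepsilon(p,\dgu)$ over \emph{all} odd partitions of $4g-4+p$ simultaneously; as the paper notes, this would require recursions relating intersection numbers across different $m_*$ (in the spirit of Proposition~\ref{prop:walls-bi-equal-0}) that are not yet established. So what you have written is a plausible and well-aligned research plan, matching the authors' own stated roadmap, but it cannot be assessed as a correct proof because the statement remains open.
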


As for the previous conjecture, the strategy to prove the volume asymptotics would rely on an asymptotic formula for the intersection numbers $\langle \tau_{\u d}\rangle_{m_*}$ as $g$ goes to infinity. If such a formula is known, the rest of the strategy would be to compute the asymptotics of the completed volumes $\overline \Vol(\cQ(\dgu))$ following the lines of \cite{Agg} and identifying the stable graphs that contribute the most to the counting (according to Conjecture~\ref{conj:cyl} these should be the stable graphs with only one vertex). Then, Theorem~\ref{thm:coeffs} coupled with the known asymptotics for the volumes of minimal strata $\cH(2g_i-2)$ would allow to derive the asymptotics of the Masur--Veech volumes $\Vol(\cQ(\dgu))$ by induction. There is certainly a lot of additional technicalities  compared to the case of principal strata, but the result, at least in some particular regimes such as taking one fixed singularity of degree $k\geq 3$ and all other of degree one for instance, does not seem out of reach. We plan to work on this question in the future. 

\begin{Remark}
The two conjectures stated above were initially formulated for strata of quadratic differentials with no poles. Here we restrict to the case of odd singularities but we allow a bounded number of poles. By analogy to Theorem 1.7. of \cite{Agg}, this hypothesis could probably be relaxed even more to the case of a number of poles $p$ growing not faster than $\log (g)$ as $g$ goes to infinity. Of course each regime ($p=0$, $p\leq M$, $p\leq c\log(g)$) has an increasing amount of technicalities to overcome to prove the conjectures.
\end{Remark}

\begin{Remark} We focused here on the regime $g\to\infty$, another interesting regime to study is for the number of poles $p$ going to infinity (for a fixed genus or a moderately growing genus). However, even in the case of principal strata with poles, only the volume asymptotics is proven (see \cite[Theorem 1.4]{CMS}).
\end{Remark}

%-------------------------------------------------
\appendix
\section{Metrics on ribbon graphs}
\label{appendix:metrics}

The proofs of this appendix are analogous to those in \cite[section 3.2]{Yakovlev} for the case of bipartite ribbon graphs.

\subsection{Proof of Lemma \ref{lem:properties-weight-funcs}}

The claims of Lemma \ref{lem:properties-weight-funcs} are proven below in Lemmas \ref{lem:odd-vpG-dimension}, \ref{lem:odd-non-const-lin-funcs}, \ref{lem:odd-vpg-lattice}.

\begin{defi}
    A \emph{one odd cycle (OOC) graph} is a connected graph with exactly one simple cycle of odd length.
\end{defi}

An OOC graph is simply an odd cycle with trees attached to its vertices. Thus an OOC graph on $n$ vertices has exactly $n$ edges. Any OOC graph is clearly non-bipartite. %Also note that OOC \emph{ribbon} graphs are exactly the ribbon graphs in $\RGc^{\kk,*}_{0,n}$ with $\kk$ of length 2 (i.e.\ they are planar and have two faces of odd degrees).

The following lemma states that for any choice of vertex perimeters, there exists a unique weight function on an OOC ribbon graph with these vertex perimeters.

\begin{lemma}%[Weight functions on OOC ribbon graphs]
\label{lem:odd-weight-func-ooc}
    Let $G$ be an OOC ribbon graph with $n$ labeled vertices. Then the linear map $\operatorname{vp}_G : \mathbb{R}^{E(G)} \rightarrow \mathbb{R}^n$ is an isomorphism. Moreover, $w \in \mathbb{Z}^{E(G)}$ if and only if $\operatorname{vp}_G(w)$ is in the sublattice of $\mathbb{Z}^n$ defined by $b_1+\ldots+b_n = 0 \pmod{2}$.
\end{lemma}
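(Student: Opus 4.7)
My approach is induction on the number of vertices $n$ of $G$, using the fact that an OOC graph is an odd cycle with trees attached, so it can be reduced to a pure odd cycle by successively pruning leaves. Since $G$ has $n$ edges (the odd cycle contributes as many edges as vertices, and each attached tree contributes one edge per additional vertex), the source and target of $\vp_G$ both have dimension $n$, so it suffices to establish injectivity or surjectivity.

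For the base case, I would take $G$ to be a pure odd cycle of length $2k+1$, labeling its vertices $v_1,\ldots,v_{2k+1}$ cyclically and writing $x_i = w(e_i)$ for the edge $e_i$ joining $v_i$ to $v_{i+1 \bmod (2k+1)}$. The system $b_i = x_{i-1} + x_i$ admits a unique solution because the telescoping alternating sum gives
\[
b_1 - b_2 + b_3 - \cdots + b_{2k+1} \;=\; 2\, x_{2k+1},
\]
and then all other $x_i$ are determined by back-substitution $x_{i-1} = b_i - x_i$. This shows $\vp_G$ is a linear isomorphism. For integrality, I observe that the alternating sum above has the same parity as $b_1 + \cdots + b_{2k+1}$, so $x_{2k+1} \in \Z$ whenever that total is even; the back-substitution preserves integrality, yielding $w \in \Z^{E(G)}$.

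For the inductive step, if $G$ is not a pure odd cycle, then the attached trees are nonempty and $G$ has at least one leaf $v$, adjacent to a unique edge $e$ with other endpoint $u$. The equation $\vp_G(w)_v = b_v$ forces $w(e) = b_v$. Removing $v$ and $e$ produces a smaller OOC graph $G'$ on $n-1$ vertices with prescribed vertex perimeters $b_i$ for $i \neq v$ and the perimeter at $u$ replaced by $b_u - b_v$. The parity condition $\sum_i b_i \equiv 0 \pmod 2$ is preserved under this modification, since the total sum decreases by $2b_v$. Applying the inductive hypothesis to $G'$ yields a unique weight function, integer precisely when the parities match, which extends to a unique weight function on $G$ by setting $w(e) = b_v$.

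The main point where care is needed is the odd-cycle base case: the telescoping identity genuinely uses the odd length of the cycle, since for an even cycle the analogous alternating sum collapses to $0$ instead of $2 x_{2k+1}$, and $\vp_G$ would fail to be an isomorphism. Everything else is a routine induction.
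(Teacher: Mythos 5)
Your proof is correct, but it takes a different route from the paper's. The paper observes that \emph{every} edge of an OOC graph is static (deleting any edge leaves either a tree, or an OOC graph plus a tree, and trees are bipartite), and then simply invokes Lemma~\ref{lem:odd-weight-static-edge}: each edge weight is the explicit linear function $f_e(\u b)$ of the perimeters, which gives injectivity, surjectivity, and the integrality claim all at once (the only half-integer coefficients occur for the unique non-bridge edge, and that value is integral exactly when $b_1+\cdots+b_n$ is even). Your argument instead proceeds by induction, pruning leaves down to the bare odd cycle and solving the cycle case by the telescoping alternating sum; in effect your identity $2x_{2k+1}=b_1-b_2+\cdots+b_{2k+1}$ is formula~(\ref{eq:prelim-static-edge-weight-2}) specialized to the cycle, and the leaf-pruning step $w(e)=b_v$ recovers~(\ref{eq:prelim-static-edge-weight-1}) for bridges, so the two proofs compute the same inverse map organized differently. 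What your version buys is self-containedness (it does not rely on the static-edge machinery) at the cost of being longer; what the paper's buys is brevity, since Lemma~\ref{lem:odd-weight-static-edge} is already available. Two minor points worth keeping in mind: your back-substitution argument is complete because, as you note, equal dimensions of source and target make injectivity alone sufficient (so you need not verify that the remaining equation at $v_1$ is satisfied, though it is); and since the paper's ribbon graphs allow loops, the odd cycle can degenerate to a single loop, where your relation reads $b_1=2x_1$ (the loop counts twice in the vertex perimeter) and the argument still goes through.
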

\begin{proof}
    Note that every edge of $G$ is static. Indeed, deletion of any edge produces either a tree, or an OOC graph and a tree. Trees are bipartite.

    Hence, by Lemma \ref{lem:odd-weight-static-edge}, given the vertex perimeters $\u b$, the weight of each edge is uniquely determined. So $\operatorname{vp}_G$ is injective. For the surjectivity, note that assigning to each edge the weight given by Lemma \ref{lem:odd-weight-static-edge} produces the necessary weight function.
    
    $b_1+\ldots+b_n$ is twice the sum of weights of all edges. So if $w$ is integral, this sum is necessarily even. Conversely, if $b_1+\ldots+b_n = 0 \pmod{2}$, the weights of all edges are integral by Lemma \ref{lem:odd-weight-static-edge}.
\end{proof}

\begin{lemma}
\label{lem:odd-vpG-dimension}
    Let $G \in \RGc^{\valu, *}_{g,n}$ be a non-bipartite ribbon graph. Then $\operatorname{Im}(\operatorname{vp}_G) = \mathbb{R}^n$. Moreover, for every $\u b \in \mathbb{R}^n$, $\operatorname{vp}_G^{-1}(\u b)$ is an affine subspace of $\mathbb{R}^{E(G)}$ of dimension $|E(G)| - |V(G)|$.
\end{lemma}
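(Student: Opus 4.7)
The plan is to reduce the claim to the already-established OOC case of Lemma~\ref{lem:odd-weight-func-ooc} by extracting an OOC spanning subgraph of $G$.

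First, I would prove a small combinatorial observation: since $G$ is connected and non-bipartite, it admits a spanning subgraph $T \subset G$ which is an OOC graph, i.e.\ a connected spanning subgraph with exactly one cycle, and that cycle is odd. To construct $T$, pick any odd cycle $C$ in $G$ (which exists since $G$ is non-bipartite) and grow a spanning structure from $C$ by greedily adding edges that connect already-reached vertices to new vertices, exactly as one builds a spanning tree from a fixed subgraph. The result is connected, has exactly $|V(G)|$ edges, and contains the single odd cycle $C$ (any other cycle would force a second independent cycle in $T$, contradicting $|E(T)|=|V(T)|$).

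Next, for surjectivity of $\mathrm{vp}_G$, fix $\u b \in \R^n$. By Lemma~\ref{lem:odd-weight-func-ooc} applied to $T$ (forgetting for a moment the integrality statement), there exists $w_T \in \R^{E(T)}$ with $\mathrm{vp}_T(w_T) = \u b$. Extend $w_T$ to a weight function $w \in \R^{E(G)}$ by assigning weight $0$ to every edge in $E(G) \setminus E(T)$. Since edges of weight zero contribute nothing to vertex perimeters, $\mathrm{vp}_G(w) = \mathrm{vp}_T(w_T) = \u b$, so $\mathrm{Im}(\mathrm{vp}_G) = \R^n$.

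Finally, since $\mathrm{vp}_G : \R^{E(G)} \to \R^n$ is a surjective linear map, each non-empty fiber $\mathrm{vp}_G^{-1}(\u b)$ is an affine translate of $\ker(\mathrm{vp}_G)$, hence an affine subspace of dimension $|E(G)| - n = |E(G)| - |V(G)|$ by rank-nullity. The only non-routine step is the OOC spanning subgraph construction, but it is elementary; everything else is linear algebra combined with Lemma~\ref{lem:odd-weight-func-ooc}.
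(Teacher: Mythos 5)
Your proof is correct and follows essentially the same route as the paper: extract a spanning OOC subgraph, apply Lemma~\ref{lem:odd-weight-func-ooc} to it, extend by zero for surjectivity, and finish with rank--nullity. The only (immaterial) difference is that the paper builds the OOC subgraph by taking a spanning tree and adding one odd-cycle-creating edge, whereas you start from an odd cycle and grow outward.
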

\begin{proof}
    Choose a spanning tree $T$ of $G$. Complete it to an OOC graph $T'$ by adding one edge (such an edge exists since $G$ is non-bipartite). By Lemma \ref{lem:odd-weight-func-ooc} there is a weight function $w'$ on $T'$ such that $\vp_{T'}(w')=\u b$. Extend $w'$ to a weight function $w$ on $G$ by setting $w(e)=0$ if $e \notin E(T')$. Then $\vp_G(w) = \u b$, and so $\operatorname{vp}_G$ is surjective. Consequently, for any $\u b$, the dimension of $\vp_G^{-1}(\u b)$ is $\dim \ker \vp_G = |E(G)|-n = |E(G)| - |V(G)|$.
\end{proof}

\begin{lemma}
\label{lem:odd-non-const-lin-funcs}
    Let $G \in \RGc^{\valu, *}_{g,n}$ be a non-bipartite ribbon graph and let $\u b \in \mathbb{R}^n$. Regard the coordinates $w(e)$ of $\R^{E(G)}$ as linear functions on $\vp_G^{-1}(\u b)$. Then for all $e\in S(G)$, $w(e)$ is constant with value $f_e(\u b)$. All other functions $w(e), e\in E(G) \setminus S(G)$ are non-constant.
\end{lemma}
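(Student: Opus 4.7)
The first claim, that $w(e) = f_e(\u b)$ is constant on $\vp_G^{-1}(\u b)$ for static $e$, is immediate from Lemma~\ref{lem:odd-weight-static-edge}, which already computes this constant from $\u b$ alone. So the content lies in the second claim.

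For the second claim, the plan is to show that for any non-static edge $e$, there exists $w_0 \in \ker(\vp_G)$ with $w_0(e) \neq 0$; then $w(e)$ is non-constant on the fiber $\vp_G^{-1}(\u b)$, since $w + tw_0$ remains in the fiber while $w(e) + tw_0(e)$ varies linearly in $t$. The construction of $w_0$ uses the following principle: given any closed walk $W = e_1 e_2 \cdots e_{2m}$ of \emph{even} length in $G$, the alternating-sign weight $w_0(f) := \sum_{i : e_i = f}(-1)^i$ lies in $\ker(\vp_G)$. To verify this I would group the contributions to $\vp_G(w_0)(v)$ by visits of the walk to $v$: at each time $j$ with $v_j = v$, the incident half-edges of $e_j$ and $e_{j+1}$ contribute $(-1)^j + (-1)^{j+1} = 0$, and a loop at $v$ traversed at time $i$ is accounted for as two consecutive visits (at times $i-1$ and $i$), correctly producing the factor $2$ in the incidence coefficient.

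It then remains to construct such a closed walk through $e$ with $w_0(e) \neq 0$, splitting into two cases. If $e$ is not a bridge, then $G - e$ is connected and, since $e$ is non-static, non-bipartite; hence walks of both parities exist between the endpoints $u,v$ of $e$ in $G-e$, and in particular there is an odd-length walk $P$ from $u$ to $v$ in $G-e$. The concatenation $W = P \cdot e$ then has even length and traverses $e$ exactly once (at even position $|P|+1$), yielding $w_0(e) = \pm 1$. If $e$ is a bridge, then $G - e$ has two components $C_u \ni u$ and $C_v \ni v$, both non-bipartite by non-staticness; picking odd closed walks $W_u$ in $C_u$ based at $u$ and $W_v$ in $C_v$ based at $v$, the concatenation $W = W_u \cdot e \cdot W_v \cdot e$ has even length and traverses $e$ at the two even positions $|W_u|+1$ and $|W_u|+|W_v|+2$, giving $w_0(e) = 2$.

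The main technical point is the kernel property of the alternating-sign construction, and specifically the careful bookkeeping for loops (which contribute with the factor $2$ prescribed by the incidence matrix). Once this is in place, the case analysis is elementary, relying only on the standard fact that in a connected non-bipartite graph, walks of both parities connect any two vertices.
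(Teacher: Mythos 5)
Your proof is correct and follows essentially the same route as the paper's: both perturb the weight function by alternating signs along an even-length closed walk through $e$ assembled from odd cycles/walks in the non-bipartite components of $G-e$, the evenness guaranteeing that the perturbation lies in $\ker(\vp_G)$. The only cosmetic difference is that the paper uses a single uniform walk $\gamma_1\, C_1\, \gamma_1^{-1}\, e\, \gamma_2\, C_2\, \gamma_2^{-1}\, e$ covering the bridge and non-bridge cases at once, whereas you split into cases and use a shorter walk (traversing $e$ only once) when $e$ is not a bridge.
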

\begin{proof}
    The first claim follows from Lemma \ref{lem:odd-weight-static-edge}. Let now $e \in E(G) \setminus S(G)$. 
    
    \begin{figure}
        \centering
        \includegraphics[width=0.7\textwidth]{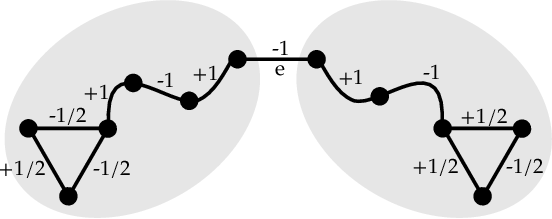}
        \caption{A way to change the weight of a non-static edge $e \in E(G)\setminus S(G)$ without changing the vertex perimeters.}
        \label{fig:odd-proof-non-static}
    \end{figure}

    Let $v_1$ and $v_2$ be the endpoints of $e$ (these might coincide). Since all connected components of $G-e$ are non-bipartite, there are odd cycles $C_1$ and $C_2$ in the connected components of $G-e$ containing $v_1$ and $v_2$, respectively. Let $\gamma_1$ and $\gamma_2$ be paths in $G-e$ connecting $v_1$ to $C_1$ and $v_2$ to $C_2$. Let $\gamma$ be the (non-simple) path in $G$ which is a concatenation (in this order) of $\gamma_1$, $C_1$, $\gamma_1$ in reverse, $e$, $\gamma_2$, $C_2$, $\gamma_2$ in reverse, $e$.

    The path $\gamma$ is a closed path of even length. Change the weights of successively visited edges of $\gamma$ by $+1/2$ and $-1/2$ alternately. This changes the weight of $e$ by $-1$, while preserving all vertex perimeters (see Figure \ref{fig:odd-proof-non-static} for an example). Thus $w(e)$ is not constant on $\vp_G^{-1}(\u b)$.
\end{proof}

\begin{lemma}
\label{lem:odd-vpg-lattice}
    Let $G \in \RGc^{\valu, *}_{g,n}$ be a non-bipartite ribbon graph and $\u b \in \Z^n$. If $b_1+\ldots+b_n = 0 \pmod{2}$, then $\vp_G^{-1}(\u b) \cap \Z^{E(G)}$ is a lattice in $\vp_G^{-1}(\u b)$. Otherwise, $\vp_G^{-1}(\u b) \cap \Z^{E(G)}$ is empty.
\end{lemma}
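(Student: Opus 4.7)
The plan is to split into the two cases of the statement. First, if $b_1+\ldots+b_n$ is odd, the emptiness is immediate: for any integer weight function $w$, the sum of vertex perimeters $b_1+\ldots+b_n = \sum_{v} \sum_{e} a_{ve} w(e) = 2 \sum_e w(e)$ is always even (each edge, loop or not, contributes $2w(e)$ to the total vertex perimeter sum). So no integer preimage can exist.

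For the remaining case, assume $b_1+\ldots+b_n \equiv 0 \pmod 2$. I would first exhibit an integer point $w_0 \in \vp_G^{-1}(\u b) \cap \Z^{E(G)}$. To do this, pick a spanning tree $T$ of $G$, and since $G$ is non-bipartite, there is at least one edge $e \in E(G) \setminus E(T)$ whose addition to $T$ produces a graph $T'$ containing an odd cycle; in particular $T'$ is an OOC ribbon graph on all $n$ vertices. By Lemma \ref{lem:odd-weight-func-ooc}, because $b_1+\ldots+b_n$ is even, there is an integer weight function $w'$ on $T'$ with $\vp_{T'}(w') = \u b$. Extend $w'$ to $w_0 \in \Z^{E(G)}$ by setting $w_0(e')=0$ for all $e' \in E(G) \setminus E(T')$; clearly $\vp_G(w_0) = \u b$.

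Next, I would show that $\ker(\vp_G) \cap \Z^{E(G)}$ is a full-rank lattice in $\ker(\vp_G)$. This is a standard fact for kernels of linear maps given by integer matrices: the map $\vp_G$ is represented by the incidence matrix $(a_{ve})$ whose entries are integers, so $\ker(\vp_G)$ is a rational subspace of $\R^{E(G)}$ and therefore admits a basis consisting of integer vectors. Equivalently, the Smith normal form of the incidence matrix exhibits an integer basis of the kernel. It follows that $\ker(\vp_G) \cap \Z^{E(G)}$ is a lattice of rank $\dim \ker(\vp_G) = |E(G)|-|V(G)|$ (the dimension being given by Lemma \ref{lem:odd-vpG-dimension}), hence a full-rank lattice in $\ker(\vp_G)$.

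Finally, I would conclude by translation: since $\vp_G$ is linear,
\[
\vp_G^{-1}(\u b) \cap \Z^{E(G)} = w_0 + \bigl( \ker(\vp_G) \cap \Z^{E(G)} \bigr),
\]
which is an affine copy of a full-rank lattice in $\ker(\vp_G)$, hence a lattice in the affine subspace $\vp_G^{-1}(\u b) = w_0 + \ker(\vp_G)$. I expect no real obstacle: the only slightly delicate point is recognizing that the incidence-matrix kernel is automatically spanned over $\Z$, which is classical.
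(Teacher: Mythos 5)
Your proof is correct. The emptiness case and the construction of an integer base point $w_0$ (spanning tree completed to an OOC graph, then Lemma \ref{lem:odd-weight-func-ooc}) coincide exactly with the paper's argument. Where you diverge is in showing that $\ker(\vp_G)\cap\Z^{E(G)}$ is a full-rank lattice: you invoke the general fact that the kernel of an integer matrix is a rational subspace, so that (e.g.\ via Smith normal form) $\ker(\vp_G)\cap\Z^{E(G)}$ is automatically a lattice of rank $\dim\ker(\vp_G)=|E(G)|-|V(G)|$, the dimension being supplied by Lemma \ref{lem:odd-vpG-dimension}. The paper instead constructs an explicit integral basis: for each non-tree edge $e\notin E(T')$ it builds an even closed path $\gamma_e$ passing twice through the odd cycle of $T'$, and alternately adding $\pm 1/2$ along $\gamma_e$ yields an integer vector $w^e\in\ker(\vp_G)$ with $w^e(e)=1$ and $w^e(e')=0$ for all other non-tree edges; these $w^e$ form a basis. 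Your route is shorter and more conceptual, and it cleanly separates the combinatorics (finding one integer point) from the linear algebra (the kernel lattice). The paper's route buys an explicit description of the generators, in the same spirit as the path-modification argument already used in Lemma \ref{lem:odd-non-const-lin-funcs}, and makes the self-contained combinatorial flavour of the appendix uniform. Both are complete proofs; no gap in yours.
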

\begin{proof}
    If $b_1+\ldots+b_n$ is odd, there is clearly no integer weight function on $G$ with vertex perimeters $\u b$, and so $\vp_G^{-1}(\u b) \cap \Z^{E(G)}$ is empty.

    Suppose $b_1+\ldots+b_n$ is even. As in the proof of Lemma \ref{lem:odd-vpG-dimension}, choose a spanning tree $T$ of $G$ and complete it to an OOC graph $T'$ by adding one edge. There is a weight function $w'$ on $T'$ such that $\vp_{T'}(w')=\u b$. By Lemma \ref{lem:odd-weight-func-ooc} it is integral. Extend $w'$ to a weight function $w$ on $G$ by setting $w(e)=0$ for $e \in E(G)\setminus E(T')$. We thus have $\vp_G^{-1}(\u b) \cap \Z^{E(G)} = w + \vp_G^{-1}(0) \cap \Z^{E(G)}$. We now prove that $\vp_G^{-1}(0) \cap \Z^{E(G)}$ is a lattice by providing a basis of size $|E(G)|-n$. 
    
    For each $e \in E(G) \setminus E(T')$ construct a closed path $\gamma_e$ as follows. Let $v_1, v_2$ be the endpoints of $e$, let $\gamma_1$ and $\gamma_2$ be the paths in $T'$ connecting $v_1$ and $v_2$ to the unique odd cycle $C$ of $T'$. Then $\gamma_e$ is a concatenation of $\gamma_1$, $C$, $\gamma_1$ in reverse, $e$, $\gamma_2$, $C$, $\gamma_2$ in reverse, $e$. $\gamma_e$ is a path of even length. Starting from a zero weight function, modify the weight of each edge along the path $\gamma_e$ by $+1/2$ and $-1/2$ alternately. This gives an integral weight function $w^e$ such that $\vp_G(w^e)=0$, $w^e(e)=1$, $w^e(e')=0$ for all $e' \in E(G)\setminus E(T'),\ e' \neq e$.

    The weight functions $w^e,\ e \in E(G) \setminus E(T')$ form a basis of $\vp_G^{-1}(0) \cap \Z^{E(G)}$.
\end{proof}

\subsection{Proof of Proposition \ref{prop:quasi-poly}}

In what follows we will use some terminology coming from the theory of polyhedra. We refer the reader to \cite{Barvinok} for details.

For a polyhedron $P$ and a point $p$ in $\mathbb{R}^d$ the \emph{cone of feasible directions to $P$ at $p$} is defined as 
    \[\operatorname{fcone}(P,p) = \{v \in \mathbb{R}^d: p+\varepsilon v \in P \text{ for some } \varepsilon>0\}.\]
For example, if $P \subset \R^2$ is a two-dimensional convex polygon and $p \in \R^2$, the $\operatorname{fcone}(P,p)$ is: $\R^2$ if $p$ is an interior point of $P$; half-space if $p$ lies in the interior of a side of $P$; an acute cone if $p$ is a vertex of $P$; empty set if $p$ does not belong to $P$.

\begin{Theorem}[Theorem 18.4 in \cite{Barvinok}]
    \label{thm:odd-barvinok}
    Let $\{P_{\alpha} : \alpha \in A\}$ be a family of $d$-dimensional polytopes in $\mathbb{R}^d$ with vertices $v_1(\alpha),\ldots, v_n(\alpha)$ such that $v_i(\alpha) \in \mathbb{Q}^d$ and the cones of feasible directions at $v_i(\alpha)$ do not depend on $\alpha$:
    \[\operatorname{fcone}(P_{\alpha},v_i(\alpha)) = \operatorname{const}_i,\ i=1,\ldots, n. \]
    Suppose also that there are vectors $u_1,\ldots,u_n \in \Q^d$ such that
    \[v_i(\alpha) - u_i \in \Z^d,\ \alpha \in A,\ i=1,\ldots, n.\]
    Then there exists a polynomial $p: (\mathbb{R}^d)^n \rightarrow \mathbb{R}$ such that 
    \[|\operatorname{int} P_{\alpha} \cap \mathbb{Z}^d| = p(v_1(\alpha),\ldots, v_n(\alpha)).\]
\end{Theorem}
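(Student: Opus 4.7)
The natural approach is through Brion's theorem on integer-point generating functions, which decomposes the counting into local contributions at each vertex. Namely, for any rational polytope $Q$ with vertices $w_i$ and cones of feasible directions $K_i$, one has the identity of rational functions
\begin{equation*}
\sum_{x \in Q \cap \Z^d} t^x \;=\; \sum_i \sigma(w_i + K_i, t),
\end{equation*}
where $\sigma(w_i + K_i, t)$ is the generating series of $\Z^d$-points in the shifted cone. The analogous identity for interior lattice points is obtained by replacing each $K_i$ by its relative interior (with appropriate sign conventions coming from Ehrhart--Macdonald reciprocity).

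In our situation, the cones $C_i$ do not depend on $\alpha$, so I would triangulate each $C_i$ once and for all into simple rational cones. For a simple cone $\operatorname{cone}(w_1,\ldots,w_d)$ translated by $v_i(\alpha)$, the generating series takes the form
\begin{equation*}
\frac{\sum_{p} t^{v_i(\alpha) + p}}{\prod_{k=1}^d (1-t^{w_k})},
\end{equation*}
with $p$ ranging over the lattice points in a half-open fundamental parallelepiped of $v_i(\alpha) + \operatorname{cone}(w_1,\ldots,w_d)$. Crucially, the congruence hypothesis $v_i(\alpha) - u_i \in \Z^d$ guarantees that, after translating by $u_i - v_i(\alpha)$, this finite set of lattice points is a fixed set independent of $\alpha$; only the common shift $v_i(\alpha)$ varies. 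To extract the integer count, I would specialize via $t_k = \exp(s\xi_k)$ for generic $\xi\in\R^d$, expand in Laurent series in $s$, and read off the $s^0$-coefficient; the pole cancellations guaranteed by Brion's identity ensure a finite answer. Each factor $\exp(s\,\xi \cdot v_i(\alpha))$ expands polynomially in the coordinates of $v_i(\alpha)$, so the final expression is a polynomial in $v_1(\alpha),\ldots,v_n(\alpha)$, as claimed.

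\textbf{Main obstacle.} The technically delicate step is the regularization at $t=1$: each vertex term is singular there, and one must track the pole cancellations carefully to isolate the polynomial contribution in $\alpha$. This is precisely the content of the Brion--Vergne theory of generating functions on polyhedra modulo polynomial ideals, or equivalently of the Euler--Maclaurin-type formulas of Khovanskii--Pukhlikov and Berline--Vergne. The conceptual reason polynomiality holds (rather than only quasi-polynomiality) is the hypothesis $v_i(\alpha) - u_i \in \Z^d$: it fixes the fractional parts of all vertices, eliminating the residue-dependent corrections that would otherwise appear in an Ehrhart quasi-polynomial.
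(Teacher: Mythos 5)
The paper does not prove this statement at all: it is quoted verbatim as Theorem 18.4 of Barvinok's book \emph{Integer Points in Polyhedra} and used as a black box, so there is no internal proof to compare against. Your outline is, in substance, the standard proof of that reference result (Brion's theorem for tangent cones, triangulation of the fixed cones into simplicial rational cones, the observation that the congruence hypothesis $v_i(\alpha)-u_i\in\Z^d$ freezes the lattice points of the half-open fundamental parallelepipeds up to the common shift $v_i(\alpha)$, and extraction of the constant term after the substitution $t=\exp(s\xi)$ for generic $\xi$), and it is correct; the treatment of interior points via Stanley/Ehrhart--Macdonald reciprocity is also the standard device.
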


\begin{Theorem}[Chapter 9 in \cite{Barvinok}]
    \label{thm:odd-barvinok-volumes}
    Let $\{P_{\alpha} : \alpha \in A\}$ be a family of $d$-dimensional polytopes in $\mathbb{R}^d$ with vertices $v_1(\alpha),\ldots, v_n(\alpha)$ such that the cones of feasible directions at $v_i(\alpha)$ do not depend on $\alpha$:
    \[\operatorname{fcone}(P_{\alpha},v_i(\alpha)) = \operatorname{const}_i,\ i=1,\ldots, n. \]
    Then there exists a polynomial $p: (\mathbb{R}^d)^n \rightarrow \mathbb{R}$ such that 
    \[\Vol P_{\alpha} = p(v_1(\alpha),\ldots, v_n(\alpha)).\]
\end{Theorem}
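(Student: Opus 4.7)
The plan is to exploit the fact that the combinatorial type of $P_\alpha$ is constant across the family: the face lattice of $P_\alpha$ is completely determined by the cones of feasible directions at its vertices, which are assumed to be independent of $\alpha$. As $\alpha$ varies, the polytope deforms by moving its vertices along affine trajectories, but no faces appear or disappear and no incidences change. The natural strategy is then to triangulate each $P_\alpha$ in a combinatorially uniform way and to compute the volume as a sum of simplex volumes, each of which is a polynomial function of the vertex coordinates.

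Concretely, I would fix a pulling triangulation $\mathcal{T}$ of a reference polytope $P_{\alpha_0}$ associated to some linear ordering of its vertices. The resulting top-dimensional simplices have all their vertices among the $v_i(\alpha_0)$, and since the face lattice of $P_\alpha$ agrees with that of $P_{\alpha_0}$ for every $\alpha \in A$, the same combinatorial data define a valid triangulation of each $P_\alpha$. It follows that
\[
\Vol(P_\alpha) = \sum_{\{i_0,\ldots,i_d\} \in \mathcal{T}} \frac{1}{d!}\, \bigl| \det\bigl( v_{i_1}(\alpha) - v_{i_0}(\alpha),\ \ldots,\ v_{i_d}(\alpha) - v_{i_0}(\alpha) \bigr) \bigr|.
\]
Each determinant appearing inside the sum is manifestly a polynomial in the coordinates of $v_1(\alpha),\ldots,v_n(\alpha)$.

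The remaining issue is to remove the absolute values uniformly. Each simplex of the triangulation is genuinely top-dimensional for every $\alpha$, so its vertices are affinely independent and the corresponding determinant never vanishes on $A$. Its sign is therefore locally constant in $\alpha$, and after restricting to a connected component of $A$ one may replace each $|\det|$ by $\pm \det$ with a fixed sign and absorb these signs into the coefficients. What remains is a polynomial $p$ in the entries of $v_1(\alpha),\ldots,v_n(\alpha)$, as required.

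The main potential obstacle is verifying that a uniform triangulation of the whole family really exists, i.e.\ that the fixed face lattice is rigid enough to transport a combinatorial subdivision of $P_{\alpha_0}$ to every $P_\alpha$. This follows because the cones of feasible directions at the vertices determine the full face lattice, so a pulling triangulation based on a fixed vertex ordering gives the desired uniform choice. It is worth noting that this triangulation-plus-sign argument works cleanly for the continuous volume but fails directly for the lattice-point count of Theorem \ref{thm:odd-barvinok}: the number of integer points in a simplex is only quasi-polynomial in the vertex coordinates, and promoting this to a polynomial on each coset of $\Z^d$ (after translation by the $u_i$) requires the sharper Brion--Lawrence--Varchenko generating-function machinery that underlies Barvinok's proof in the integer case.
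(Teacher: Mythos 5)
The paper offers no proof of this statement: it is imported verbatim from Barvinok's book, where it is deduced from the exponential valuation, i.e.\ from the identity expressing $\int_{P}e^{\langle c,x\rangle}\,dx$ as a sum over the vertices of contributions depending only on $v_i(\alpha)$ and on the fixed cones $\operatorname{fcone}(P_\alpha,v_i(\alpha))$, the volume being extracted as the degree-$d$ term in $c$. Your triangulation argument is a genuinely different and more elementary route, and its supporting combinatorial claims are correct even though you only assert them: for a fixed direction $u$, the set of indices $i$ such that $u$ lies in the polar of $\operatorname{fcone}(P_\alpha,v_i(\alpha))$ is exactly the set of vertices maximizing $u$ on $P_\alpha$, independently of $\alpha$; hence the normal fan, the face lattice \emph{together with the face dimensions}, the combinatorics of a pulling triangulation, and the non-degeneracy of its top-dimensional simplices are all constant in $\alpha$. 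What Barvinok's route buys is a single uniform mechanism that also handles the lattice-point count (your closing remark about Theorem~\ref{thm:odd-barvinok} is accurate); what your route buys is elementarity.

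The one step that does not work as written is the removal of the absolute values. The index set $A$ carries no topology, so ``the sign is locally constant in $\alpha$'' and ``restrict to a connected component of $A$'' are not meaningful, and in any case the theorem asserts one polynomial valid on all of $A$. The gap is real but easily closed, because the sign of each determinant is globally determined by the fixed cone data. Each pulling simplex comes with a flag $P\supset F_1\supset\cdots\supset F_d=\{v_{i_d}\}$, where $F_k$ is the face of $F_{k-1}$ maximizing a functional $u_k$ that can be chosen once and for all from the (fixed) normal fan. Setting $e_j=v_{i_j}(\alpha)-v_{i_{j-1}}(\alpha)$, the matrix $\bigl(\langle u_k,e_j\rangle\bigr)_{k,j}$ is lower triangular with positive diagonal for every $\alpha$, so $\det(u_1,\ldots,u_d)\cdot\det(e_1,\ldots,e_d)>0$ and $\det(e_1,\ldots,e_d)$ has a sign independent of $\alpha$. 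With this replacement for your connectedness argument, the proof is complete.
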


Let us now identify the vertices and the corresponding cones of feasible directions of the polytopes $P_G(\u b)$.

\begin{lemma}
\label{lem:odd-vertex_characterisation}
    Let $G \in \RGc^{\valu,*}_{g,n}$ be a non-bipartite ribbon graph and let $\u b \in \R^n$. Then $w \in \vp_G^{-1}(\u b)$ is a vertex of $P_G(\u b)$ if and only if $w(e) \ge 0$ for all $e \in E(G)\setminus S(G)$ and the edges in $F = \{e \in E(G)\setminus S(G) : w(e) > 0\}$ form a disjoint union of trees and/or OOC graphs.
    
    The cone of feasible directions to $P_G(\u b)$ at such vertex $w$ is given by the system
        \[
        \begin{cases}
        v \in \operatorname{vp}_G^{-1}(0),\\
        v(e) \ge 0,\ e \in E(G) \setminus (S(G) \cup F).
        \end{cases}
        \]
    In particular, it only depends on $F$ and not on $\u b$.
\end{lemma}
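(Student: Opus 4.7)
The plan is to apply the standard polyhedral characterization: a point $w \in P_G(\u b)$ is a vertex iff the system of tight inequalities at $w$, together with the equations defining the ambient affine subspace $\vp_G^{-1}(\u b)$, pins down $w$ uniquely. Given $w \in P_G(\u b)$ and setting $F = \{e \in E(G)\setminus S(G) : w(e) > 0\}$, the tight inequalities are exactly $w'(e) = 0$ for $e \in E(G) \setminus (S(G) \cup F)$. Translating by $w$ into the linear subspace $\vp_G^{-1}(0)$ and noting that $v \in \vp_G^{-1}(0)$ automatically satisfies $v(e) = 0$ on $e \in S(G)$ (apply Lemma~\ref{lem:properties-weight-funcs}(3) with $\u b = 0$, so that $f_e(0)=0$), the vertex condition becomes: the kernel of the vertex-perimeter map $\vp_H : \R^{E(H)} \to \R^{V(G)}$ of the subgraph $H := (V(G), F)$ is trivial.

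The core combinatorial claim is then that $\ker \vp_H = 0$ iff every connected component of $H$ is either a tree or an OOC graph. Since $\vp_H$ is block-diagonal with respect to connected components, it suffices to study each component $H_i$ separately. For a tree, injectivity of $\vp_{H_i}$ follows by iterated leaf-stripping: the unique edge at a leaf is forced to equal that leaf's perimeter. For an OOC graph, injectivity is Lemma~\ref{lem:odd-weight-func-ooc}. Conversely, for a connected $H_i$ that is neither a tree nor an OOC graph, one exhibits a nonzero element of $\ker \vp_{H_i}$: if $H_i$ is bipartite with at least one cycle, alternating $+1/-1$ weights around an even cycle (all other edge weights zero) give such an element; if $H_i$ is non-bipartite with first Betti number at least $2$, then $|E(H_i)| \ge |V(H_i)| + 1$, and since Lemma~\ref{lem:odd-vpG-dimension} applied to $H_i$ gives $\dim \vp_{H_i}^{-1}(0) = |E(H_i)| - |V(H_i)| \ge 1$, the kernel is nontrivial.

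Once the vertex characterization is established, the cone of feasible directions follows immediately: $v \in \vp_G^{-1}(0)$ lies in $\operatorname{fcone}(P_G(\u b), w)$ iff $w(e) + \varepsilon v(e) \ge 0$ for all $e \in E(G) \setminus S(G)$ and sufficiently small $\varepsilon > 0$. For $e \in F$ the inequality is strict at $w$ and thus automatic for small $\varepsilon$; for $e \in E(G) \setminus (S(G) \cup F)$ we have $w(e) = 0$ and the condition becomes $v(e) \ge 0$. This yields exactly the cone in the statement, which visibly depends only on $F$ (and on $G$), not on $\u b$ or $w$.

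The main obstacle is the case analysis in the second paragraph, specifically showing that $\ker \vp_{H_i}$ is nontrivial whenever $H_i$ is neither a tree nor an OOC graph. The bipartite case requires the alternating-sign cycle construction, while the non-bipartite case reduces to a dimension count via Lemma~\ref{lem:odd-vpG-dimension} once one observes that non-bipartiteness of a subgraph is preserved when computing its own vertex-perimeter kernel. The rest of the argument is a routine polyhedral manipulation.
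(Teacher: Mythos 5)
Your proof is correct and takes essentially the same route as the paper: both reduce the vertex condition to the nonexistence of a nonzero perimeter-balanced deformation supported on $F$, and both identify trees and OOC components as exactly the subgraphs admitting no such deformation (the paper packages this as an extreme-point/midpoint argument with an even-cycle perturbation, you as triviality of $\ker\vp_H$), with identical treatment of the feasible cone. The only noteworthy variation is that in the converse direction you dispose of non-bipartite components with first Betti number $\ge 2$ by a dimension count via Lemma~\ref{lem:odd-vpG-dimension} rather than by exhibiting an even cycle, which cleanly covers the configurations (e.g.\ two odd cycles sharing a vertex) where $F$ contains an even closed walk but no even \emph{simple} cycle.
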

\begin{proof}
    Recall that the vertices of a polytope are exactly its extreme points. We claim that a point $w \in \vp_G^{-1}(\u b)$ is an extreme point of $P_G(\u b)$ if and only if the corresponding set $F$ does not contain an even cycle, which is equivalent to being a disjoint union of trees and/or OOC graphs. Indeed, if there is an even cycle in $F$, then one can modify the weights in this cycle by alternately adding and subtracting $\varepsilon$ or $-\varepsilon$ to/from the weights of consecutive edges of this cycle, for some small $\varepsilon>0$. Then $w$ is a midpoint of these two modifications, both of which still belong to $P_G(\u b)$. Hence $w$ is not an extreme point. Conversely, suppose $F$ is disjoint union of trees and/or OOC graphs and $w=(w' + w'')/2$ with $w', w'' \in P_G(\u b)$. Since $w(e)=0, w'(e) \ge 0, w''(e) \ge 0$ for $e \in E(G) \setminus (S(G) \cup F)$, necessarily $w'(e) = w''(e) = 0$ for $e \in E(G) \setminus (S(G) \cup F)$. But then, by Lemma~\ref{lem:odd-weight-static-edge}, the weights $w'(e), w''(e), e \in S(G) \cup F$ of $w'$ and $w''$ are uniquely determined and are equal to the corresponding weights of $w$. Hence $w'=w''=w$ and $w$ is an extreme point of $P_G(\u b)$, hence a vertex. 

    The second claim follows directly from the definitions of $P_G(\u b)$ and of the cone of feasible directions.
    %The second claim follows from the definition of the cone of feasible directions and the defining system for $M_G(L,L')$ (the weights of edges in $E(G) \setminus (B(G) \cup F)$ can only be perturbed in the positive direction, while the weights of edges in $F$ can be perturbed arbitrarily).
\end{proof}

For a vertex $w$ of $P_G(\u b)$ we call the set $F \subset E(G) \setminus S(G)$ as in Lemma~\ref{lem:odd-vertex_characterisation} the \emph{support} of $w$.
    
\begin{lemma}
    \label{lem:odd-polytope_dependence}
    Fix a non-bipartite ribbon graph $G \in \RGc^{\valu,*}_{g,n}$ and a cell $C$.
    
    There exist subsets $F_1,\ldots, F_m \subset E(G) \setminus S(G)$ each forming a disjoint union of trees and/or OOC graphs, such that each polytope $P_G(\u b)$ with $\u b \in C$ has $m$ vertices $v_1(\u b), \ldots, v_m(\u b)$ with supports $F_1,\ldots, F_m$ respectively. 
    
    For each $i$ the coordinates of $v_i(\u b)$ are either identically zero or are linear functions (of $\u b$) of the form (\ref{eq:prelim-static-edge-weight-1}) or (\ref{eq:prelim-static-edge-weight-2}).
    
    For each $i$ the cone of feasible directions $\operatorname{fcone}(P_G(\u b), v_i(\u b))$ is constant (does not depend on $\u b$).
\end{lemma}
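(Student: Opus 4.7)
The plan is to rely on Lemma~\ref{lem:odd-vertex_characterisation}, which classifies the vertices of $P_G(\u b)$ by their supports $F \subset E(G) \setminus S(G)$ (each a disjoint union of trees and/or OOC graphs) and states that the cone of feasible directions at a vertex depends only on $F$. With this in hand, the assertions of the present lemma reduce to showing (i) that the set of supports realizing genuine vertices of $P_G(\u b)$ is constant as $\u b$ varies in $C$, and (ii) that the coordinates of the corresponding vertices are either identically zero or linear functions of $\u b$ of the form~\eqref{eq:prelim-static-edge-weight-1} or~\eqref{eq:prelim-static-edge-weight-2}. The final claim on cones of feasible directions is then immediate from Lemma~\ref{lem:odd-vertex_characterisation}.

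Then I would fix a candidate support $F$ and analyse the unique $w \in \vp_G^{-1}(\u b)$, when it exists, with $w \equiv 0$ off $S(G) \cup F$. Lemma~\ref{lem:odd-weight-static-edge} forces $w(e) = f_e(\u b)$ for $e \in S(G)$, and so the reduced vertex perimeters
\[
b'_v(\u b) = b_v - \sum_{e \in S(G),\, e \ni v} a_{ve}\, f_e(\u b)
\]
are linear functions of $\u b$ of wall type~\eqref{eq:wall-eq}. On an OOC component of $F$, Lemma~\ref{lem:odd-weight-func-ooc} combined with the counting argument of Lemma~\ref{lem:odd-weight-static-edge} applied inside the component produces a unique expression for each edge weight as a linear function of $\u b$ of the required form. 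On a tree component $T$ of $F$, the bipartite structure of $T$ yields exactly one linear consistency relation $R_T(\u b) = 0$ of wall form which is necessary and sufficient for existence; when satisfied, the edge weights on $T$ are again uniquely determined linear functions of $\u b$ of the same form. By the defining property of cells (Definition~\ref{def:wall}), on the relatively open cell $C$ every wall-type linear function has constant sign---either identically zero, or strictly of one sign---so the validity of each $R_T = 0$ and the strict positivity of each $w(e)$, $e \in F$, are uniform properties on $C$. Hence the set of supports $\{F_1, \ldots, F_m\}$ producing vertices is the same for all $\u b \in C$, and the resulting vertex coordinates are the linear functions constructed above, settling (i) and (ii).

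The main obstacle is the precise bookkeeping for the tree components: one must verify that the bipartite consistency relation $R_T(\u b) = 0$ is genuinely of wall form~\eqref{eq:wall-eq}, which requires tracking the contributions of adjacent static edges through the $f_e(\u b)$, and then check that on $\{R_T = 0\}$ the resulting edge weights on $T$ admit the prescribed forms~\eqref{eq:prelim-static-edge-weight-1}/\eqref{eq:prelim-static-edge-weight-2}. This is a careful but essentially routine linear-algebra exercise modelled on the proof of Lemma~\ref{lem:odd-weight-static-edge}; once it is in place, everything else follows mechanically from the cell structure.
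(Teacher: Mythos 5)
Your proposal is correct and follows essentially the same route as the paper's proof: reduce to Lemma~\ref{lem:odd-vertex_characterisation}, express the candidate vertex coordinates as wall-type linear forms via the static-edge weight formula applied componentwise on the support, and conclude from the constancy of signs of such forms on the cell $C$. The only difference is that you spell out the existence/consistency condition for tree components of the support explicitly, whereas the paper subsumes it into the blanket statement that the signs ($+$, $-$ or $0$) of all the relevant wall-type linear functions remain constant on $C$.
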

\begin{proof}    
    By Lemma \ref{lem:odd-vertex_characterisation}, a subset $F \subset E(G)\setminus S(G)$ forming a disjoint union of trees and/or OOC graphs is a support of some vertex of $P_G(\u b)$ if and only if for the unique weight function $w_F$ on $F \cup S(G)$ with vertex perimeters $\u b$ we have $w_F(e)>0$ for all $e\in F$. By Lemma \ref{lem:odd-weight-static-edge}, the weights of edges in $w_F$ are given by linear forms in $\u b$ of the form (\ref{eq:prelim-static-edge-weight-1}) or (\ref{eq:prelim-static-edge-weight-2}). When $\u b$ stays inside $C$, the signs ($+,-$ or $0$) of all these linear functions remain constant. Hence an $F$ corresponds to a vertex of $P_G(\u b)$ either for all $\u b \in C$ or for none.
\end{proof}

\begin{proof}[Proof of Proposition \ref{prop:quasi-poly}]
    Fix a non-bipartite ribbon graph $G \in \RGc^{\valu,*}_{g,n}$, a cell $C$ and a coset $\Lambda$ of $2\Z^n$ in $\Z^n$.

    If for $\u b \in \Lambda$ we have $b_1+\ldots+b_n = 1 \pmod{2}$, then $|\operatorname{int} P_G(\u b) \cap \Z^{E(G)}|$ is zero identically. 
    
    Otherwise, by Lemma \ref{lem:odd-vpg-lattice}, $\vp_G^{-1}(\u b) \cap \Z^{E(G)}$ is a lattice in $\vp_G^{-1}(\u b)$. Consider the family of polytopes $\{P_G(\u b), \u b \in C \cap \Lambda\}$. By Lemma \ref{lem:odd-polytope_dependence}, all these polytopes have the same cones of feasible directions at corresponding vertices. Moreover, the corresponding coordinates of these vertices are either integer or half-integer for all $\u b \in C \cap \Lambda$ (because, for $\u b$ in a fixed coset of $2\Z^n$ in $\Z^n$, the value of any linear function of the form (\ref{eq:prelim-static-edge-weight-1}) or (\ref{eq:prelim-static-edge-weight-2}) is either always integer or always half-integer). One can thus apply Theorem \ref{thm:odd-barvinok} to this family of polytopes.
\end{proof}

\section{Di Francesco-Itzykson-Zuber formula}\label{app:DFIZ}
We reproduce here the formula of \cite{DFIZ} relating the intersection numbers $\langle  \tau_{\u d}\rangle_{m_*}$ with the classical ones $\langle  \tau_{\u d}\rangle$ via their generating functions. Writing $t_*=(t_0, t_1, \dots)$, $s_*=(s_0, s_1, \dots)$ for the infinite sequences of variables and $m_*=(m_0, m_1, \dots)$ and $n_*=(n_0, n_1, \dots)$ for the infinite sequences of non-negative integers, almost all zero, we define 
\[F(t_*,s_*)=\sum_{n_*, m_*}\langle  \tau_*^{n_*}\rangle_{m_*}\frac{t_*^{n_*}}{n_*!}s_*^{m_*},\] where $n_*!=\prod_{i=0}^\infty n_i!$ and $t_*^{n_*}=\prod_{i=0}^\infty t_i^{n_i}$ and similarly for $s_*$ and $\tau_*$. Fixing non-negative integers $m_0, m_2, m_3, \dots$ almost all equal to zero, and setting $\hat s_*=(0,1,0,0,\dots)$ we get
\[\prod_{i\neq 1}\frac{1}{m_i!}\left(\frac{\partial }{\partial s_i}\right)^{m_i}\restriction{F(t_*,s_*)}{s_*=\hat s_*}=\sum_{n_*, m_1}\langle\tau_*^{n_*}\rangle_{m_*}\frac{t_*^{n_*}}{n_*!}\] and fixing non-negative integers $\mu_0, \mu_1, \mu_2, \dots$ almost all equal to zero, \[\prod_{i}\left(\frac{\partial }{\partial t_i}\right)^{\mu_i}\restriction{F(t_*,s_*)}{s_*=\hat s_*}=\sum_{n_*}\langle\prod\tau_i^{\mu_i}\tau_*^{n_*}\rangle\frac{t_*^{n_*}}{n_*!}.\] The formula of \cite{DFIZ} relates the $t$-derivatives of the exponential generating series $Z=\exp(F)$ with its $s$-derivatives at $\hat s_*$, giving implicitly a relation between the intersection numbers $\langle\tau_{\u d}\rangle_{m_*}$ and $\langle\tau_{\u d}\rangle$. An algorithm to get this formula is described in \cite{Bini}. We give here a simple way to reproduce the formulas of \cite[Appendix]{AC} that we programmed to compute more intersection numbers.

Let us set up some notations, that are mostly borrowed from \cite{Bini}. 
\begin{itemize}
    \item Let  $S(n)$ be the set of strict partitions $\lambda$ (with $\lambda_1>\lambda_2>\dots > \lambda_k$) of $n$.
    \item Let $O(n)$ be the set of odd partitions $\nu$ of $n$ (all parts of $\nu$ are odd). These partitions are usually denoted by $\nu=[(2k+1)^{\nu_k}]$.
    \item If $\sigma\in S(n)$ and $\nu\in O(n)$, $\langle \sigma \rangle (\nu)$ denotes the value of the spin character associated to $\sigma$ on the element of type $\nu$ (see \cite{Morris}). These characters can be efficiently computed using an equivalent of Murnaghan-Nakayama rule developed in \cite{MO}. They also correspond to the coefficients of Schur Q-functions.
    \item For a partition $\lambda$ we denote by $\ell(\lambda)$ its length (number of parts), $|\lambda|=\lambda_1+\dots +\lambda_k$ its weight, and $\varepsilon(\lambda)=\begin{cases} 1 \mbox{ if }  |\lambda|-\ell(\lambda) \mbox{ is odd},\\ 0 \mbox{ else}. \end{cases}$
    \item For a fixed integer $n$, we consider the matrix $B$ with lines indexed by strict partitions $\sigma\in S(n)$ and columns indexed by odd partitions $\nu\in O(n)$ (note that $|S(n)|=|O(n)|$). Its coefficients are defined by \[b_{\sigma \nu}=2^{\frac{\ell(\sigma)-\ell(\nu)+\varepsilon(\sigma)}{2}}\langle\sigma\rangle(\nu)\] (note that this notation differs from Bini's one). The coefficients of $B^{-1}$ are denoted by $b^{\nu \sigma}$.
    \item If $\sigma\in S(n)$, a reductive sequence $s=(s_1, \dots, s_{\ell(\sigma)})$ is a $\ell(\sigma)$-tuple of non-negative integers such that for all $1\leq i\leq \ell( \sigma),\ 3s_i\leq \sum_{i\leq k\leq \ell(\sigma)}\sigma_k$, and such that $\sigma-3s$ can be permuted into a sequence of the form $\lambda, -r_1, r_1, \dots, -r_k, r_k, 0, \dots, 0$, with $\lambda$ a strict partition.
    \item We define a function $L$ on tuples of integers recursively as follows:
     If $v=(v_1, \dots, v_k)\in \mathbb Z^k$ is a $k$-tuple of integers, then
    \begin{itemize}
        \item $L(v)=1$ if $\forall i, v_i>0$
        \item $L(v_1, \dots, v_{k-1},0)=L(v_1, \dots, v_{k-1})$
        \item $L(v_1, \dots, v_{k-1},a)=0$ if $a<0$
        \item $L(v_1,\dots, v_{j+1}, v_j, \dots, v_k)=-L(v_1, \dots, v_j, v_{j+1}, \dots, v_k)+2(-1)^{v_j}\delta_{v_j, -v_{j+1}}L(v_1\dots v_{j-1}, v_{v+2}, \dots, v_k)$
    \end{itemize}
    \item For $s\in\mathbb N$ and $m\in\mathbb N^*$ define $c_{s,m}=\sum_{i=0}^{2s}\frac{1}{2^i} {m-3s+i-1\choose  i}\frac{(6s-2i-1)!!}{6^{2s-i}(2s-i)!}.$
\end{itemize}

The formula of \cite[Proposition W, Table IV]{DFIZ} (see also \cite[Appendix]{AC}) reads as follows :

\begin{multline*}\prod_k \frac{1}{\nu_k}\left(-(2k-1)!!\frac{\partial }{\partial t_k}\right)^{\nu_k}\restriction{Z}{|s=s^*}\\=\sum_{\sigma\in S(|\nu|)}b^{\nu\sigma}\sum_{\substack{\textrm{reductions}\\s\textrm{ of }\sigma}}L(\sigma-3s)\prod_{i}(-1)^{s_i}c_{s_i, \sigma_i}\sum_{\mu\in O(|\lambda|)}b_{\lambda\mu}\prod_l\frac{1}{\mu_l!}\left(-\frac{1}{2^l}\frac{\partial}{\partial s_l}\right)^{\mu_l}\restriction{Z}{|s=s^*}.\end{multline*}

The $s$-derivatives of $Z$ are then obtained in terms of the $t$-derivatives by inverting these linear equations.
\section{Relation to Hurwitz counting, shifted symmetric functions and completed cycles}\label{app:hurwitz} 

In this section we explain what Theorem~\ref{thm:counting-funcs-on-walls} implies in terms of characters of the symmetric group, by reviewing some of the results of \cite{GM}. Indeed, one other way to count metrics on ribbon graphs is to count ramified covers of the sphere $\Proj^1$ with specific ramification profile at the three points $0,1$ and $\infty$. This construction is standard (see for instance \cite[Construction 1.3.22]{LandoZvonkin}) and we specify it to our case in the following lemma.

\begin{lemma}\label{lem:cor} For $(b_1, \dots b_n)\in \Z_+^n$ such that $\sum_i b_i=0\mod 2$, the set of integer metric ribbon graphs in $\cM_{m_*,n}$ with face lengths $(b_1, \dots b_n)$  is in one-to-one correspondence with the set $\Cov(\valu, \u b)$ of ramified covers of the sphere  $\Proj_1$ with ramification profile $\u b=(b_1, \dots, b_n)$ over $\infty$, $\tilde\valu=(\valu, 2, \dots, 2)$ over 0 and $\u t=(2,\dots 2)$ over 1 (we should have $|\u b|=|\tilde\valu|=|\u t|$). More precisely, disconnected covers correspond to disconnected ribbon graphs with circles allowed, disconnected covers without unramified components correspond to disconnected ribbon graphs, and connected covers correspond to connected ribbon graphs. 
\end{lemma}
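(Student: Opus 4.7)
The plan is to exhibit the bijection as a direct instance of the classical Grothendieck dessin d'enfant correspondence, after transforming the metric ribbon graph into a bipartite ribbon graph by an explicit subdivision procedure.

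First I would construct the forward map. Given an integer metric ribbon graph $G \in \cM_{m_*, n}$ with edge lengths $(\ell_e)_{e \in E(G)}$ and face perimeters $\u b$, I build an auxiliary bipartite ribbon graph (dessin) $D_G$ by subdividing each edge $e$ into $\ell_e$ unit segments: place a new white vertex at the midpoint of each unit segment and a new bivalent black vertex at each subdivision point between consecutive unit segments, while the original vertices of $G$ remain black. By inspection the black vertex degrees of $D_G$ form the sequence $\tilde\valu = (\valu, 2, 2, \dots)$, the white vertex degrees are all equal to $2$, and the number of white corners around each face of $D_G$ equals the corresponding face perimeter $b_i$ by the very definition of perimeter as a sum of edge lengths counted with incidence multiplicity. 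The standard Grothendieck correspondence then assigns to $D_G$ a unique (up to isomorphism) ramified cover of $\Proj^1$ with ramification profiles $\tilde\valu$, $\u t = (2,\dots,2)$, $\u b$ over $0, 1, \infty$ respectively.

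Conversely, given a cover $f : C \to \Proj^1$ in $\Cov(\valu, \u b)$, I form the dessin $D := f^{-1}([0,1])$. All white vertices of $D$ are bivalent by the profile $\u t$, and its black vertices split into an ``essential'' set of valencies $\valu$ and a ``subdivision'' set of bivalent ones. Contracting each maximal chain of dessin edges joining two essential black vertices (through bivalent black and white vertices) into a single edge produces a ribbon graph $G$, and declaring the length of each such edge to be the number of white vertices it contained endows $G$ with an integer metric and places it in $\cM_{m_*, n}$. The two constructions are mutually inverse essentially tautologically: subdivision followed by contraction restores $G$, and contraction followed by subdivision restores $D$.

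It then remains to verify the numerical and combinatorial consistency of the correspondence. One checks that the degree of the cover equals $|\u b| = 2\sum_e \ell_e$, so that the parity hypothesis $\sum_i b_i \equiv 0 \pmod 2$ is automatically satisfied on the ribbon graph side and corresponds to the existence of the all-2 profile $\u t$ on the cover side. The refinement to connectedness is immediate since connected components of $C$, $D$ and $G$ are in obvious bijection, with trivial unramified components of $C$ (forced to be isomorphic copies of $\Proj^1$) corresponding to the ``circle'' components of $G$ (those without essential vertices). The main obstacle, and the only place requiring genuine care, is the precise identification of face perimeters of $G$ with the ramification indices of $f$ over $\infty$: one must check locally, at each corner on the boundary of each face, that a unit segment of a $G$-edge contributes exactly one white corner to the corresponding face of $D_G$, so that the bipartite face degree of $D_G$ coincides with the ramification index. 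One must also confirm that automorphism groups are preserved by subdivision/contraction, which follows because the subdivision data is entirely encoded in the integer edge length and because the cyclic orderings at bivalent vertices are forced by the ambient ribbon structure.
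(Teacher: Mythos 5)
Your proof is correct and is essentially the paper's argument run in the opposite direction: the paper lifts the segment $[0,1]$ through the cover, assigns each lifted edge length $1/2$ and erases the bivalent vertices, while you subdivide the metric ribbon graph into the corresponding dessin and invoke the same correspondence between bipartite graphs on surfaces and covers of $\Proj^1$ branched over $\{0,1,\infty\}$. The only small imprecision is your parenthetical describing the unramified components as isomorphic copies of $\Proj^1$ — such a component is a degree-$2m$ cover with profile $(2^m)$ over both $0$ and $1$ and $(m,m)$ over $\infty$, whose dessin is a circle of bivalent vertices, not a degree-one component — but your identification of these with the circle components of $G$ is the correct one and the rest of the argument matches the paper's.
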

\begin{proof} 
Starting from a ramified covering of $\Proj_1$, consider the lift of the ribbon graph on $\Proj_1$ consisting  of one segment joining $0$ and $1$ (see Figure~\ref{fig:revet}): it is a graph embedded in the covering, so a ribbon graph, with vertices corresponding to preimages of $0$ and $1$ and valencies given by the ramification profiles at these points. Declaring that the length of the edge $[0,1]$ is $1/2$ and forgetting about all vertices of valency 2 (i.e. all preimages of $1$ and some preimages of $0$), we obtain an integer metric on a ribbon graph with vertices of valency given by $\valu$. Note that the genus of the ribbon graph is the same as the genus of the covering by Euler-Poincaré formula, and that the lengths of the faces are exactly $b_1, \dots, b_n$. The reverse construction is direct after embedding the ribbon graph in a corresponding topological surface (i.e. by gluing topological disks on the faces).

Note that another way to visualize this construction, which is relevant to our count of square-tiled surfaces, is to add the flat structure of a ``pocket" on the sphere, i.e. to consider a quadratic differential with simple poles at $0$ and $1$, and a double pole at $\infty$. One can further ask that the relative period $[0,1]$ is real equal to $1/2$. The lift of this differential gives a Jenkins-Strebel differential in the stratum $\cQ(\dgu, -2^n)$ where $\dgu=\valu-2$ (see Convention~\ref{convmk}) after forgetting all marked points, such that the residues at the double poles are $-1/4\pi^2(b_1^2, \dots b_n^2)$. The critical graph of this Jenkins-Strebel differential is the ribbon graph that we have described above. Cutting the infinite cylinders along their waist curves, we obtain a piece of a square-tiled surface that we are counting.
\end{proof}

\begin{figure}[h]
    \centering
    \def\svgwidth{0.4\textwidth}
    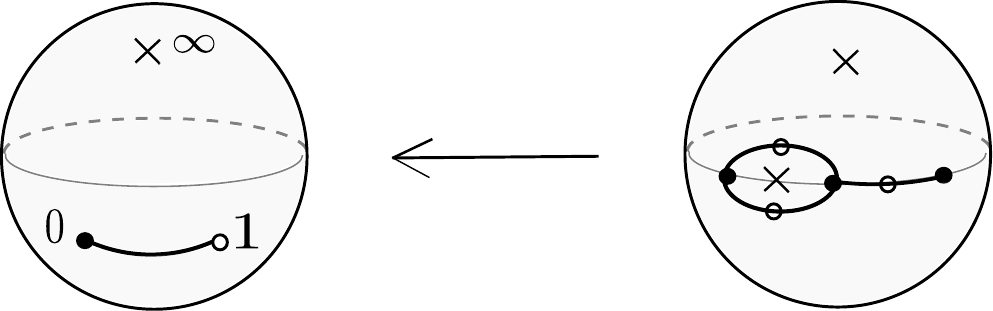       \hspace{1cm} \def\svgwidth{0.5\textwidth}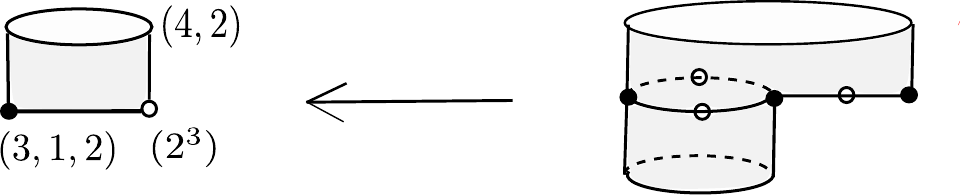
    \caption{\emph{Left}: Ribbon graph obtained by lifting the segment $[0,1]\in\Proj_1$ in the ramified cover. In this example the covering is ramified over $0$ with profile $(3,1,2)$, over $1$ with profile $(2,2,2)$, and over $\infty$ with profile $(4,2)$. The graph has two vertices of valency $3$ and $1$, after forgetting the marked points of valency 2.\\
    \emph{Right}: Same example with an additional flat structure: the base is a ``pocket" (flat surface in $\cQ(-1^2, -2)$ with a half-infinite cylinder), and the lift gives a surface in $\cQ(-1,1,-2^2)$ (after forgetting the marked points) with 2 half-infinite cylinders.}
    \label{fig:revet}
\end{figure}

With this correspondence, we can now derive explicit formulas for the count of metrics on ribbon graphs in terms of characters of the symmetric group, using Hurwitz theory. The next lemma is a version of Frobenius formula (see for instance \cite[Appendix]{LandoZvonkin}) adapted to our context (see \cite[Section 4]{GM}). We first introduce some notations and background on the space of shifted symmetric functions. 

For $\u \mu$ and $\u\lambda$ two partitions such that
 $|\u\mu|\leq |\u\lambda|$, 
 we denote by $f_{\u\mu}(\u\lambda) = |\u\mu|!\,\chi^{\u\lambda}(\u\mu)/(\mathfrak z({\u\mu})\dim \chi^{\u\lambda})$
  the normalized character, where    $\mathfrak z(\u\mu) =  \prod_{m=1}^{\infty} m^{r_m(\u\mu)} \prod_{m=1}^\infty r_m(\u\mu)! =  \prod_{i=1}^{\ell(\u\mu)} \u\mu_i \prod_{m=1}^\infty r_m(\u\mu)!$ denotes the order of the centralizer of the partition  $\u\mu = (1^{r_1},2^{r_2},3^{r_3},\cdots) $.
We also write $f_\ell$ for the special case that $\u\mu$ is
a $\ell$-cycle. The {\em algebra of shifted symmetric polynomials} is
defined as  $\Lambda^* = \varprojlim \Lambda^*(n)$, where $\Lambda^*(n)$
is the algebra of symmetric polynomials in the $n$ variables
\hbox{$\lambda_1-1$,\,}$\ldots,\lambda_n-n$. The functions 
\[
p_\ell(\u\lambda) = \sum_{i=1}^\infty
\left( (\lambda_i -i +\tfrac{1}{2})^\ell - 
(-i + \tfrac{1}{2})^\ell \right)  + (1-2^{-\ell})\,\zeta(-\ell) \quad\text{and}\quad
{p}_{\u\mu} =\prod_i {p}_{\mu_i}.
\] belong to $\Lambda^*$.
Here $\ell! \beta_{\ell+1} = (1-2^{-\ell})\zeta(-\ell)$ with~$\beta_k$
defined by $B(z) \,:=\, \frac{z/2}{\sinh(z/2)} = \sum_{k=0}^\infty \beta_k\,z^k$.
By a result of Kerov-Olshanski, the algebra $\Lambda^*$ is freely generated by all the $p_\ell$, and as $\u\mu$ ranges over all partitions, the functions $f_{\u\mu}$ form
a basis of~$\Lambda^*$.

The algebra $\Lambda^*$ is enlarged to the algebra 
\[
\overline{\Lambda} = \Q[p_\ell, \overline p_k (k,\ell > 0)]
\]
of {\em shifted symmetric quasi-polynomials}, where 
\[
\overline p_k(\lambda) = \sum_{i \geq 0} \Bigl((-1)^{\lambda_i-i+1}
(\lambda_i -i + \tfrac12)^\ell
- (-1)^{-i+1}(-i + \tfrac12)^\ell \Bigr) \,+\, \gamma_k\,,
\]
and where the constants $\gamma_i$ are zero for $i$~odd, $\gamma_0 = 1/2$, 
$\gamma_2 = -1/8$, $\gamma_4 = 5/32$ 
and in general defined by the expansion
$ C(z) = 1/(e^{z/2} + e^{-z/2}) = \sum_{k \geq 0}  \gamma_k z^k/k!$.
We provide the algebra $\overline{\Lambda}$ with a grading by defining 
the generators to have weight given by
\[\we(p_k) = k + 1 \quad \text{and} \quad \we(\overline p_k) = k\,. \]
\par
By \cite[Theorem~2]{EO_pillow}, for an odd partition $\valu$ there exists a function $g_{\valu}\in\overline{\Lambda}$ such that
\[
g_\valu(\u\lambda) = \frac{f_{(\valu,2,2,\ldots)}(\u\lambda)}
{f_{(2,2,\ldots)}(\u\lambda)}\, \quad \text{for $\u\lambda$ such that }\overline p_0(\u\lambda)=\frac{1}{2},
\] and by \cite[Theorem 3.3]{GM}, the elements $g_\valu$ generate $\overline{\Lambda}$ as a graded $\Lambda^*$-module, taking $\we(g_{\valu})=|\valu|/2$. Note however that the elements $g_\valu$ do not form a basis as
there are many more $g_\valu$ than products of $\overline p_k$ for a given weight. 
We define also 
\[\sqrt{w(\u\lambda)}=\frac{\dim(\u\lambda)}{|\u\lambda|!}f_{(2,2,\dots)}(\u\lambda),\]
the square root being here to ensure consistency with \cite{EO_pillow} and \cite{GM}.

\begin{lemma}[Frobenius formula]\label{lem:Hurw} The number $C(\valu, \u b)$ of possibly disconnected covers of $\Proj_1$ with ramification profile $\u b$ over $\infty$, $\tilde\valu=(\valu, 2, \dots, 2)$ over 0 and $\u t=(2,\dots 2)$ over 1 is given by:
\[C(\valu, \u b):=\sum_{\pi\in\Cov(\valu, \u b)}\frac{1}{|\Aut(\pi)|}= A_2(\u b, g_{\valu})\mbox{ with } A_2(\u b, F)= \frac{1}{\prod b_i}\sum_{|\lambda|=|\u b|}\sqrt{w(\lambda)}F(\lambda).\]
\end{lemma}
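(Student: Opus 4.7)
The plan is to express $C(\valu,\u b)$ as a character sum via the classical Frobenius formula, and then to identify this sum with $A_2(\u b, g_\valu)$ using the definitions of $\sqrt{w(\lambda)}$, $g_\valu(\lambda)$ and the structure of the shifted-symmetric algebra $\overline\Lambda$.

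First I would use the Riemann existence theorem (together with Lemma \ref{lem:cor}) to identify $\Cov(\valu,\u b)$ with the set of equivalence classes of monodromy triples $(\sigma_0,\sigma_1,\sigma_\infty)$ in $S_n$, with $n=|\u b|$, where $\sigma_0\in C_{\tilde\valu}$, $\sigma_1\in C_{\u t}$, $\sigma_\infty$ is a permutation of cycle type $\u b$ with \emph{labeled} cycles of lengths $b_1,\dots,b_n$, and $\sigma_0\sigma_1\sigma_\infty=e$, modulo simultaneous conjugation by the label-preserving centralizer of $\sigma_\infty$ (a subgroup of order $\prod_i b_i$). Orbit-stabilizer then gives
\[
C(\valu,\u b)\;=\;\frac{1}{\prod_i b_i}\cdot\#\bigl\{(\sigma_0,\sigma_1)\in C_{\tilde\valu}\times C_{\u t}\,:\,\sigma_0\sigma_1=\sigma_\infty^{-1}\bigr\}
\]
for any fixed labeled $\sigma_\infty$. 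The Frobenius-Burnside formula for pair factorizations of a given element,
\[
\#\{(\sigma_0,\sigma_1)\,:\,\sigma_0\sigma_1=\sigma_\infty^{-1}\}\;=\;\frac{|C_{\tilde\valu}|\,|C_{\u t}|}{n!}\sum_{\lambda\vdash n}\frac{\chi^\lambda(\tilde\valu)\,\chi^\lambda(\u t)\,\chi^\lambda(\u b)}{\dim\lambda},
\]
combined with $|C_\mu|=n!/\mathfrak z(\mu)$ and $\chi^\lambda(\mu)=\mathfrak z(\mu)\dim\lambda\cdot f_\mu(\lambda)/n!$, yields an explicit expression for $C(\valu,\u b)$ in terms of the normalized characters $f_{\tilde\valu},f_{\u t}$ and $\chi^\lambda(\u b)$. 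At this point the elementary identity $\sqrt{w(\lambda)}\,g_\valu(\lambda)=(\dim\lambda/n!)\,f_{\tilde\valu}(\lambda)$, immediate from the definitions of $\sqrt{w}$ and $g_\valu$, rearranges the summand into a form where $\sqrt{w(\lambda)}\,g_\valu(\lambda)$ appears as a natural factor.

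The hard part will be the final identification with the right-hand side $A_2(\u b,g_\valu)$: the three-character Frobenius sum manifestly contains $\chi^\lambda(\u b)$, while $A_2(\u b,g_\valu)$ involves only the weight $\sqrt{w(\lambda)}$ and the shifted-symmetric quasi-polynomial $g_\valu(\lambda)$. Reconciling these requires exploiting the defining property of $g_\valu$ as the unique element of $\overline\Lambda$ agreeing with $f_{\tilde\valu}/f_{\u t}$ on partitions with $\overline p_0(\lambda)=1/2$ (\cite[Theorem~2]{EO_pillow}), so that the $\u b$-dependence that naively sits in $\chi^\lambda(\u b)$ gets absorbed into the shifted-symmetric framework via the orthogonality and character identities worked out in \cite[Section~4]{GM}; this is the step where the whole point of passing from classical to shifted-symmetric functions is used. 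I would invoke the resulting identification directly from \cite{GM} to conclude, and verify the normalization on a low-dimensional example (e.g.\ $\valu=[\,]$, $n=4$) to pin down the precise combinatorial constants.
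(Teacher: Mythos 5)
Your first two paragraphs are sound and, in fact, already contain essentially the whole proof: the paper itself gives no argument for this lemma (it only points to the Appendix of \cite{LandoZvonkin} and to Section 4 of \cite{GM}), and your route --- Riemann existence plus Lemma~\ref{lem:cor}, the orbit--stabilizer count with the label-preserving centralizer of $\sigma_\infty$ of order $\prod_i b_i$, the Frobenius pair-factorization formula, and the identity $\sqrt{w(\lambda)}\,g_{\valu}(\lambda)=\frac{\dim\lambda}{|\u b|!}f_{(\valu,2,\ldots,2)}(\lambda)$ --- is exactly the standard one. Carried to the end, your computation yields
\[
C(\valu,\u b)\;=\;\frac{1}{\prod_i b_i}\sum_{|\lambda|=|\u b|}\frac{\dim\lambda}{|\u b|!}\,f_{(\valu,2,\ldots,2)}(\lambda)\,f_{(2,\ldots,2)}(\lambda)\,\chi^\lambda(\u b),
\]
and you should stop there.

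The gap is the final step. The factor $\chi^\lambda(\u b)$ cannot be ``absorbed into the shifted-symmetric framework'': $\sqrt{w}$ and $g_\valu$ are functions of $\lambda$ alone (they encode the ramification over $0$ and $1$, not over $\infty$), so the right-hand side as printed depends on $\u b$ only through $|\u b|$ and $\prod_i b_i$. That cannot equal $C(\valu,\u b)$: by \eqref{eq:FC} its top-degree part is the counting function $F^{\valu}_{g,n}(\u b)$, a nonconstant piecewise quasi-polynomial in the individual $b_i$ (it distinguishes, say, $\u b=(1,1,6)$ from $\u b=(1,2,3)$), and Theorem~\ref{thm:pbarpoly} likewise asserts that $A_2'(\u b,F)$ is a quasi-polynomial of positive degree in $\u b$ --- impossible for a constant divided by $\prod b_i$. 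Neither the defining property of $g_\valu$ from \cite{EO_pillow} nor the orthogonality relations of \cite{GM} remove a character at a third branch point. The resolution is that the displayed definition of $A_2$ omits the character at the labeled branch point: as in the analogous bracket \eqref{eq:hurwitzA}, one should read $A_2(\u b,F)=\frac{1}{\prod b_i}\sum_{|\lambda|=|\u b|}\sqrt{w(\lambda)}\,\chi^\lambda(\u b)\,F(\lambda)$, with $\sqrt{w}$ normalized so that $\sqrt{w(\lambda)}\,g_\valu(\lambda)\,\chi^\lambda(\u b)$ matches the Frobenius summand above. With that reading your paragraphs 1--2 constitute a complete proof; as written, the proposed ``hard part'' is not hard but impossible, and the argument would fail there.
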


Note that the number $C(\valu, \u b)$ is called {\em simple Hurwitz numbers with 2-stabilization} in \cite{GM}, and we use here the notation $A_2$ from the same paper. By a classical inclusion-exclusion procedure, one can define similarly the count of covers without ramified components $C'(\valu, \u b)=A_2'(\u b, g_{\valu})$, and the count of connected covers $C^0(\valu, \u b)=A_2^0(\u b, g_{\valu})$ (see \cite{GM}). All these numbers are known to be piecewise quasi-polynomials in the variables $\u b$ for a fixed $\valu$. Lemma~\ref{lem:cor} implies 

\begin{equation}\label{eq:FC}
F^{\valu}_{g,n}(\u b)=C^0(\valu, \u b)+ \text{lower order terms.}
\end{equation}

Now we are ready to recall the main result of \cite{GM} stating the polynomiality of these Hurwitz numbers.

\begin{Theorem}[Theorem 7.2. of \cite{GM}]\label{thm:pbarpoly}
The simple Hurwitz number with $2$-stabilization $A_2'(\u b,F)$ without unramified components is a quasi-polynomial if~$F$  is a product of $\overline p_k$,
i.e.\ for each coset $\u m = (m_1,\ldots,m_n) \in \{0,1\}^n$ with $\sum m_i$
even there exists a polynomial $R_{F,\u m} \in \Q[b_1,\ldots,b_n]$ of degree $\we(F)-n$ such that
\[
A_2'(\u b,F) = R_{F,\u m}(\u b) \quad \text{for all}
\quad \u b \in 2\N^n + \u m\,.
\]
\end{Theorem}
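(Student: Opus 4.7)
The plan is to follow the strategy of \cite{GM} by transporting the character sum arising from the Frobenius formula of Lemma~\ref{lem:Hurw} into the semi-infinite wedge / free-fermion formalism of Okounkov--Pandharipande \cite{OP}, where the grading on $\overline{\Lambda}$ controls the $\u b$-dependence explicitly.

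First I would use the vanishing of $\sqrt{w(\lambda)}$ off partitions with empty $2$-core to replace the sum $\sum_{|\lambda|=|\u b|}$ by a sum over pairs $(\lambda^{(0)}, \lambda^{(1)})$ with $|\lambda^{(0)}| + |\lambda^{(1)}| = |\u b|/2$ via the $2$-quotient bijection. Under this bijection $\sqrt{w(\lambda)}$ becomes a product $\dim(\lambda^{(0)}) \dim(\lambda^{(1)})$ up to combinatorial normalization, and each shifted symmetric function $\overline{p}_k$ expands as an element of $\Lambda^* \otimes \Lambda^*$ of bounded weight $k$ in the $p_\ell$-generators of the two factors; likewise each $p_{b_i}$ expands depending on the parity of $b_i$, which is the mechanism by which the coset $\u m$ enters the picture.

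Second, I would rewrite the resulting sum as a vacuum expectation value in the tensor product of two semi-infinite wedge spaces: the $p_{b_i}$'s become creation operators $\alpha_{-b_i}$ (acting on one factor if $b_i$ is even, mixing both factors if $b_i$ is odd), while each $\overline{p}_k$-insertion becomes an operator of weight $k$ built from the bosons $\alpha_n$. Applying Wick's theorem reduces this matrix element to a polynomial in $b_1, \ldots, b_n$; a grading count shows that after division by $\prod b_i$ the resulting polynomial has degree $\we(F) - n$, independent of the coset. The parity condition $\sum m_i \equiv 0 \pmod{2}$ simply encodes $|\u b|$ even, which is necessary for the set of partitions being summed over to be non-empty.

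Third, passing from $A_2$ to $A_2'$ is a standard inclusion-exclusion removing covers with trivial unramified components: this subtracts contributions of the same polynomial form indexed by proper subsets of $\{1,\ldots,n\}$, preserving the polynomial structure and the degree bound on each coset. The main obstacle will be the operator-algebraic bookkeeping of the previous step: verifying that the expansion of $\overline{p}_k$ in the $2$-quotient coordinates really produces coefficients of the correct form, and identifying the exact degree $\we(F)-n$ rather than a loose upper bound. This requires precise control over the generating series $B(z)$ and $C(z)$ encoding the constants $\beta_k$ and $\gamma_k$ attached respectively to $p_\ell$ and $\overline{p}_k$, which is the technical heart of the argument in \cite[Sect.~7]{GM}.
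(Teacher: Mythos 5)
First, a point of reference: the paper does not prove this statement at all — it is imported verbatim as Theorem 7.2 of \cite{GM}, and the only indication of its proof given here is the subsequent Remark, which says it rests on ``a careful analysis of an explicit formula derived from a vertex operator expression for $A_2$.'' So there is no in-paper argument to measure you against; what can be said is whether your outline is a credible reconstruction of the argument in \cite{GM}. At the level of strategy it is: the reduction of the character sum to partitions with empty $2$-core via the $2$-quotient (forced by the vanishing of $\sqrt{w(\lambda)}$ off such partitions), the passage to an operator/vacuum-expectation formalism in the style of \cite{OP} and \cite{EO_pillow}, and the control of the $\u b$-dependence through the weight grading on $\overline{\Lambda}$ are indeed the ingredients behind the quoted result.

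That said, as written your proposal is a plan rather than a proof, and the gap sits exactly where the theorem lives. The two assertions ``applying Wick's theorem reduces this matrix element to a polynomial in $b_1,\ldots,b_n$'' and ``a grading count shows that \dots the resulting polynomial has degree $\we(F)-n$'' \emph{are} the statement being proved; nothing in the proposal explains why the resulting expression is a polynomial only on each coset of $2\Z^n$ (this is where the quasi- enters, via the parity-dependent expansion of the insertions attached to the $b_i$ in the two tensor factors), nor why the grading $\we(\overline p_k)=k$ versus $\we(p_k)=k+1$ produces the exact degree $\we(F)-n$ rather than an upper bound. You also assert in passing that the degree (and implicitly the top-degree part) is ``independent of the coset,'' which is strictly more than the theorem claims — the paper's own Remark treats that as an additional statement one would still have to extract from the vertex operator formula. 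Two smaller inaccuracies: the condition $\sum m_i$ even does not make the index set of partitions empty when violated, it makes the summand vanish (the character on $(2,2,\ldots)$ is zero unless the $2$-core is empty); and the inclusion–exclusion relating $A_2$ to $A_2'$ is not indexed by proper subsets of $\{1,\ldots,n\}$ — the unramified components carry none of the parts $b_i$, so the correction is a convolution over the degree of the removed trivial components, not over subsets of the marked ramification data.
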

Note that by inclusion-exclusion the same statement remains true for $A^0_2(\u b, F)$.

\begin{Remark}The proof of this theorem is based on a careful analysis of an explicit formula derived from a vertex operator expression for $A_2$. By analyzing this expression further, one can certainly show that the higher degree terms of $A_2'(\u b, F)$ for $F$ a product of $\overline p_k$ constitute a polynomial on the admissible cosets, and not only a quasipolynomial, meaning that the higher degree terms of $R_{F,\u m}(\u b)$ do not depend on $\u m$. However, we don't need such a result here, as we will see in the following.
\end{Remark}

Now, for each $\valu$, writing $g_{\valu}$ as a polynomial in $p_l$ and $\overline p_k$, regrouping the terms containing the variables $\overline p_k$ only, and using that $(f_{\u\mu})$ form a basis of $\Lambda^*$, we obtain a unique decomposition of $g_{\valu}$ as 
\begin{equation}\label{eq:compg}g_{\valu}=\overline g_{\valu} + \sum_{\mu\neq \emptyset}g_{\valu}^{deg, \u\mu}f_{\u\mu}, 
\end{equation}
where $\overline g_{\valu}$ and the $g_{\valu}^{deg, \u\mu}$ are polynomials in $\overline p_k$, and for each $\u\mu$, $\we(g_{\valu}^{deg, \u\mu})<\we(\overline g_{\valu})$. %$=\we(g_{\valu})$.
 Note that in the sum, some $g_{\valu}^{deg, \u\mu}$ might be equal to zero, and in particular all the $g_{\valu}^{deg, \u\mu}$ such that $\we(f_{\u\mu})=|\u\mu|+\ell(\u\mu)>\we(g_{\valu})$. This implies for the counting functions the following:
\begin{equation}\label{eq:compH}A_2'(\u b, g_{\valu})=A_2'(\u b, \overline g_{\valu}) + \sum_{\u\mu\neq \emptyset}A_2'(\u b, g_{\valu}^{deg, \u\mu}f_{\u\mu}).
 \end{equation}
By Theorem~\ref{thm:pbarpoly} the term $A_2'(\u b, \overline g_{\valu})$ is a quasipolynomial in the $b_i$'s. Furthermore, we have:
\begin{lemma} For each $F$ equal to a product of $\overline p_k$, and for each partition $\u\mu\neq \emptyset$, $A_2'(\u b, F\cdot f_{\u\mu})$ is zero outside an hyperplane. \end{lemma}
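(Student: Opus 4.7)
The plan is to use the vertex-operator / Fock-space formalism of \cite[\S 4]{GM} that underlies the proof of Theorem~\ref{thm:pbarpoly}. In that formalism, the generating function
\[
\sum_{\u b \in \N^n} A_2(\u b, G)\, \prod_i q_i^{b_i}
\]
is expressed as a trace on the fermionic Fock space $\mathcal F$ of the form $\mathrm{Tr}_{\mathcal F}\!\bigl(\widehat G \cdot \mathcal V(\u q) \cdot \Theta \bigr)$, where $\widehat G$ is the operator realization of $G \in \overline{\Lambda}$, $\mathcal V(\u q)$ is a product of vertex operators encoding the ramification $\u b$ at $\infty$, and $\Theta$ implements the $(2,2,\ldots)$-insertion at $1$.

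Each generator $\overline p_k$ is realized as an operator which is diagonal in the occupation-number basis of $\mathcal F$ and in particular preserves the fermionic charge; hence for every product $F$ of $\overline p_k$'s the operator $\widehat F$ is charge-preserving, and the corresponding trace yields a generating function whose $\u b$-Fourier coefficients are polynomial on admissible cosets. This is precisely the content of Theorem~\ref{thm:pbarpoly}. In contrast, for a non-empty partition $\u\mu$ (with parts $\ge 2$), $f_{\u\mu}$ is a non-trivial central element of the symmetric group algebra $\C[S_d]$, and under the boson-fermion correspondence it is realized as a sum of monomials in the bosonic creation/annihilation operators $\alpha_{\pm k}$ whose net degree is nonzero. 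Inserting $\widehat{f_{\u\mu}}$ into the trace therefore forces, for the Fourier coefficient to be nonzero, a linear constraint of the form
\[
\sum_{i \in I} b_i - \sum_{j \in J} b_j = c_{\u\mu}
\]
with subsets $I,J \subset \{1,\ldots,n\}$ and integer $c_{\u\mu}$ determined by $\u\mu$. Consequently $A_2(\u b, F\cdot f_{\u\mu})$, and hence also $A_2'(\u b, F\cdot f_{\u\mu})$ (via the standard inclusion--exclusion relating the two), vanish off this affine hyperplane.

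The main obstacle will be verifying that all the monomials appearing in the Nakajima-basis expansion of $\widehat{f_{\u\mu}}$ concentrate on a \emph{single} hyperplane, rather than on a union of them. This should follow from the centrality of $f_{\u\mu}$ in $\C[S_d]$, which fixes a well-defined total charge shift equal to $|\u\mu|$ across all summands. An alternative, more geometric argument would proceed via Riemann--Hurwitz: the insertion of $f_{\u\mu}$ adds a branch point with profile $(\u\mu, 1^{d-|\u\mu|})$, which decreases by one the complex dimension of the corresponding Hurwitz moduli space; this codimension-one constraint on the ramification data translates combinatorially into a single linear condition on the perimeters $\u b$, whose solution locus is the asserted hyperplane.
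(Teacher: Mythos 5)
Your central mechanism does not work, and the acknowledged ``main obstacle'' in your plan is in fact fatal rather than technical. In the trace/vacuum-expectation formalism you invoke, \emph{every} element $G\in\overline\Lambda$ — being a function of the partition $\u\lambda$ — is realized as a \emph{diagonal} operator on the Fock space basis $\{v_{\u\lambda}\}$, acting by the scalar $G(\u\lambda)$. This applies to $f_{\u\mu}$ exactly as it applies to the $\overline p_k$: the operator $\widehat{f_{\u\mu}}$ preserves charge and energy, and in any normally ordered expansion in the $\alpha_{\pm k}$ all monomials have net degree zero. (Your own setup, in which $\widehat G$ is ``the operator realization of $G\in\overline\Lambda$,'' already forces this; the claim that $f_{\u\mu}$ carries a nonzero net degree confuses the diagonal character operator with the multiplication operator $p_{\u\mu}\mapsto\prod_i\alpha_{-\mu_i}$ on the bosonic Fock space, which is a different object.) Consequently inserting $\widehat{f_{\u\mu}}$ imposes no constraint of the form $\sum_{i\in I}b_i-\sum_{j\in J}b_j=c_{\u\mu}$: whatever energy-conservation relations hold in the trace come from the $\alpha_{b_i}$ insertions encoding the profile over $\infty$, and these are identical whether one inserts $f_{\u\mu}$, a product of $\overline p_k$, or nothing. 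The distinction between the two kinds of insertions is simply invisible to charge/energy bookkeeping. Your fallback Riemann--Hurwitz remark is also a non sequitur: adding a branch point lowers the dimension of the Hurwitz space, but a dimension drop does not convert into a linear relation among the perimeters $\u b$.

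The paper's argument is genuinely different and has two steps absent from your plan. First, since the $g_{\valu}$ generate $\overline\Lambda$ as a $\Lambda^*$-module, a product $F$ of $\overline p_k$'s is a linear combination of elements $g_{\valu'}$ and $g_{\valu'}f_{\u\mu'}$; this reduction is what gives $A_2'(\u b, F\cdot f_{\u\mu})$ an actual enumerative meaning, as a (signed) count of covers of the half-pillow carrying an extra branch point of profile $(\u\mu,1,\dots,1)$ — without it there is no cover count to which geometric reasoning can be applied. Second, one places that extra branch point at positive height in the flat picture and runs the slicing procedure of \cite{GM}: the resulting graph sum contains factors $A'(\u b^\bl_i,\u b^\wh_i,f_{\u\mu})$ as in \eqref{eq:hurwitzA}, i.e.\ two-point (cylinder) Hurwitz numbers, which vanish unless $|\u b^\bl_i|=|\u b^\wh_i|$. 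That degree-matching equation across the level of the extra ramification point is the asserted hyperplane. If you wish to keep an operator-theoretic formulation, this is the step you would need to reproduce: the hyperplane arises from the splitting of the correlator at the extra ramification level, not from any degree carried by $\widehat{f_{\u\mu}}$ itself.
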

\begin{proof}As the $g_{\valu}$ generate $\overline\Lambda$ as a $\Lambda^*$-module, such an expression $F$ can be written as a linear combination of terms $g_{\valu}f_{\u\mu'}$ and $g_{\valu'}$. Hence $A_2'(\u b, F\cdot f_{\u\mu})$ is a linear combination of terms $A_2'(\u b, g_{\valu}f_{\u\mu'}\cdot f_{\u\mu})$ or $A_2'(\u b, g_{\valu}\cdot f_{\u\mu})$ that actually count ramified coverings of the half pillow, where one can declare that the ramification point of profile $(\u\mu, 1, \dots, 1)$ is actually a point at a positive height on the half pillow (so distinct from the corner). Then we can apply the same slicing procedure as in \cite{GM} and write this number as graph sums (\cite[Prop 4.1]{GM}) involving Hurwitz numbers $A'(\u b_i^\bl, \u b_i^\wh, f_\mu)$ as in \eqref{eq:hurwitzA}, that are zero outside an hyperplane.
\end{proof}

The polynomiality features associated to the decomposition~\eqref{eq:compg} and their similarity with the ones associated to the decomposition \[f_{\u\mu}=\frac{1}{\prod \mu_i}p_{\u\mu}+ \mbox{linear combination of products of more than one }p_{\u\mu'}\]
(see \cite{OP}) lead to interpret $\overline g_{\valu}$ as a {\em completed cycle} following~\cite{OP} (or rather a completed conjugacy class).

Considering only the higher degree terms in the polynomials involved (i.e. the highest weight terms in the  elements of $\overline{\Lambda}$ involved), we recognize the situation that we have for $F^{\valu}_{g,n}$ and $N^{\valu}_{g,n}$. In fact, the only difference is in the setting in which we count the metrics: on one side it is on possibly disconnected ribbon graphs, on the other side, on connected ribbon graphs. To conclude on the interpretation of Theorem~\ref{thm:counting-funcs-on-walls} in this setting we then introduce further notations. We denote by $\tilde F^{\valu}_{g,n}$ as in Definition~\ref{def:counting_fct} by replacing everywhere ``ribbon graph'' by ``possibly disconnected ribbon graph''. Actually $\tilde F^{\valu}_{g,n}$ can be computed as a polynomial in $F^{\valu'}_{g',n'}$. 
 We define $\tilde N^{\valu}_{g,n}$ as the same formula in $N^{\valu'}_{g',n'}$. By Proposition~\ref{prop:Kon} it is the unique polynomial that coincides with the higher order terms of $\tilde F^{\valu}_{g,n}$ outside the walls. Comparing with \eqref{eq:compH}, we obtain $\tilde F^{\valu}_{g,n}(\u b)=A_2'(\u b, g_{\valu})$, and $\tilde N^{\valu}_{g,n}(\u b)$ coincides with the higher degree terms of $A_2'(\u b, \overline g_{\valu})$.

On the other hand, by similar classical results (see \cite{EOab} or \cite{GMab}), for the count of metrics on face bicolored ribbon graphs (with only one vertex) introduced in Proposition~\ref{prop:face-bicolored}, we have 
\begin{equation} F_{g,(n^\bl, n^\wh)}^{[2k]}(\u b^\bl, \u b^\wh)=A'(\u b^\bl, \u b^\wh, f_{2k}), \mbox{ where } A(\u b^\bl, \u b^\wh, F)=\frac{1}{\prod b_i^\bl\prod b_i^\wh}\sum_{|\u\lambda|=|\u b^\bl|}\chi^{\u\lambda}(\u b^\bl)\chi^{\u\lambda}(\u b^\wh)F(\u\lambda)\label{eq:hurwitzA}
\end{equation}
and $A'$ and $A$ are related by the usual inclusion-exclusion (see \cite[Lemma 2.4]{EOab} for instance). Note that here $A'$ coincides with $A^0$ since there is only one ramification point.

In conclusion, using Theorem~\ref{thm:counting-funcs-on-walls}, we can obtain an explicit version of \eqref{eq:compH} as follows:

\begin{Corollary}For any odd partition $\valu$, for all $\u b\in \Z^n$ such that $\textstyle \sum b_i$ is even, we have
\begin{equation}\label{eq:comp2}A'_2(\u b, \overline g_{\valu})=A'_2(\u b, g_{\valu})+\sum_{\u g, \u A, \u\varepsilon, a} \tilde C_{\u g, \u A; \u\varepsilon, a}\cdot A'_2(\u b_{0}, g_{\valu'})\prod_i A'(\u b_i^{\bl}, \u b_i^{\wh}, f_{2k_i}),
\end{equation} where the sum runs on ${\u g, \u A; \u\varepsilon, a}$ as in \eqref{eq:V-recursion}, the coefficients $\tilde C_{\u g, \u A; \u\varepsilon, a}$ are explicitly obtained from the coefficients $C_{\u g, \u A; \u\varepsilon, a}$ by applying inclusion-exclusion to the formula \eqref{eq:V-recursion} of Theorem~\ref{thm:counting-funcs-on-walls}, and $2k_i=4g_i-2+2n_i^\bl+2n_i^\wh$.
\end{Corollary}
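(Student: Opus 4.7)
The plan is to translate Theorem~\ref{thm:counting-funcs-on-walls}, which compares the connected counting function $F^{\valu}_{g,n}$ and the Kontsevich polynomial $N^{\valu}_{g,n}$, into the disconnected Hurwitz language of $A_2'$ and $A'$, using the dictionary already assembled in this appendix. First I would establish the translation for each building block: by Lemma~\ref{lem:cor} and Lemma~\ref{lem:Hurw}, together with the standard inclusion--exclusion removing unramified components, $A_2'(\u b, g_{\valu})$ equals the counting function $\tilde F^{\valu}_{g,n}(\u b)$ of integer metrics on possibly disconnected ribbon graphs (no circle components); analogously, $A'(\u b^{\bl}, \u b^{\wh}, f_{2k})$ equals $F^{[2k]}_{g,(n^\bl,n^\wh)}(\u b^\bl, \u b^\wh)$ by the usual bipartite correspondence (see \cite{EOab,GMab}). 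Finally, the decomposition~\eqref{eq:compg} singles out $A_2'(\u b,\overline g_{\valu})$ as the unique quasi-polynomial (on cosets of $2\Z^n$) whose top-degree part matches the top-degree part of $\tilde F^{\valu}_{g,n}$ outside all walls, i.e.\ it plays the role of the ``disconnected Kontsevich polynomial'' $\tilde N^{\valu}_{g,n}$.

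Next I would extend Theorem~\ref{thm:counting-funcs-on-walls} from the connected to the disconnected setting by inclusion--exclusion. Summing the connected recursion \eqref{eq:V-recursion} over partitions of the vertex set into connected components, and repackaging the resulting products of $V$'s into the top-degree terms of products $\tilde F^{\valu^{0}}_{g_0,n_0}\cdot \prod_i F^{[4g_i-2+2n_i^{\bl}+2n_i^{\wh}]}_{g_i,(n_i^\bl,n_i^\wh)}$, one obtains a relation
\[
\tilde N^{\valu,\,unlab}_{g,n}(\u b)\;=\;\tilde F^{\valu}_{g,n}(\u b)\;+\;\sum_{\u g,\u A,\u\varepsilon,a} \tilde C_{\u g,\u A,\u\varepsilon,a}\cdot \tilde F^{\valu^0}_{g_0,n_0}(\u b_{0})\prod_i F^{[4g_i-2+2n_i^\bl+2n_i^\wh]}_{g_i,(n_i^\bl,n_i^\wh)}(\u b_i^\bl,\u b_i^\wh),
\]
valid on each wall $W_\Pi$ outside lower-dimensional walls. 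The new coefficients $\tilde C_{\u g,\u A,\u\varepsilon,a}$ are produced from the $C_{\u g,\u A,\u\varepsilon,a}$ of~\eqref{eq:odd-degen-coeff} by the combinatorial inclusion--exclusion over the possible partitions into connected components together with the matching automorphism factors; this step is the main combinatorial bookkeeping of the proof.

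To upgrade this identity from a statement on individual walls to a global identity on $\{\u b\in\Z^n: \sum b_i\equiv 0 \pmod 2\}$, I would invoke polynomiality on both sides. By Theorem~\ref{thm:pbarpoly}, $A_2'(\u b,\overline g_{\valu})$ is a quasi-polynomial on each coset $\u m\in\{0,1\}^n$ with $\sum m_i$ even; by Proposition~\ref{prop:face-bicolored}, each factor $A'(\u b_i^\bl,\u b_i^\wh,f_{2k_i})$ is a polynomial on the hyperplane $\sum \u b_i^\bl=\sum \u b_i^\wh$ (and zero off it). Since products of such factors together with $A_2'(\u b_0, g_{\valu'})$ are quasi-polynomial on the union of walls indexed by the same degeneration data, and the identity already holds generically on these walls, two quasi-polynomials of bounded degree agreeing on enough lattice points must coincide, giving \eqref{eq:comp2}.

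The main obstacle is the bookkeeping in step two: distributing the $|\Aut(\Gamma)|$, $|\Aut(\valu)|$ and $m!\cdot 2^{m+|A_0|}$ factors of \eqref{eq:V-recursion} over the various ways of partitioning the connected ribbon graph into components, and recombining them so that what appears on the right-hand side really are the disconnected counts $A_2'(\u b_0,g_{\valu^0})$ and $A'(\u b_i^\bl,\u b_i^\wh,f_{2k_i})$ rather than some unlabelled variants. This is the same type of symmetry-factor juggling as in the proof of Theorem~\ref{thm:coeffs} in section~\ref{sec:loc:glob}, and can be handled in the same way (going through ordered decorated graphs and dividing by the appropriate number of orderings). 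Once this is done, the remaining polynomiality argument is essentially formal.
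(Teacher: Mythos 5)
Your proposal follows essentially the same route as the paper: the paper derives this Corollary by exactly the dictionary you set up ($\tilde F^{\valu}_{g,n}(\u b)=A_2'(\u b,g_{\valu})$, the top-degree terms of $A_2'(\u b,\overline g_{\valu})$ identified with $\tilde N^{\valu}_{g,n}$, and $F^{[2k]}_{g,(n^\bl,n^\wh)}=A'(\u b^\bl,\u b^\wh,f_{2k})$), followed by inclusion--exclusion applied to the recursion \eqref{eq:V-recursion}. In fact the paper's own justification is a single sentence pointing to Theorem~\ref{thm:counting-funcs-on-walls} and inclusion--exclusion, so your write-up, including the bookkeeping of symmetry factors and the polynomiality argument, is a more explicit version of the same argument.
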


\begin{Remark}At the earlier stage of the project, one hope was to find an explicit relation between $\overline g_{\valu}$ and $g_{\valu}$ directly by studying the shifted symmetric functions and related vertex operator expressions, and to derive from that the completion coefficients for the Hurwitz numbers appearing in the previous Corollary for instance. One issue here is that, contrary to the $f_{\u\mu}$, the $g_{\valu}$ do not form a basis of $\overline{\Lambda}$ over $\Lambda^*$. This might be studied in future work: Philip Engel communicated to the authors several results in that direction. %\textcolor{blue}{Ask Phil what he thinks}
\end{Remark}

\section{Tables}\label{app:tables}
\noindent

\begin{table}[h]
\renewcommand{\arraystretch}{1.2}
\[
\begin{array}{|c|c|c|c|c|c|c|c|c|c|c|} \hline
d & g & \mathrm{Stratum}        & \Vol                  & \cVol & &
d & g & \mathrm{Stratum}        & \Vol                  & \cVol \\ \hline

4 & 1 & \cQ(3,-1^3)     & {5}/{9}           & {2}/{3} & &
8 & 3 & \cQ(3^3,-1)     & {4499}/{68040}    & 179/2520 \\
6 & 0 & \cQ(3,-1^7)     & {3}/{4}           & {3}/{4} & &
8 & 3 & \cQ(5,1^3)      & {49}/{1080}       & {71}/{1512} \\
6 & 1 & \cQ(3,1,-1^4)   & {1}/{3}           & {7}/{20} & &
8 & 3 & \cQ(5,3,1,-1)   & {17}/{216}       & 1/12 \\
6 & 1 & \cQ(5,-1^5)     & {7}/{10}          & 3/4 & &
8 & 3 & \cQ(5^2,-1^2)   & {421}/{2520}      & 5/28 \\
6 & 2 & \cQ(3,1^2,-1)   & {1}/{9}           & {7}/{60} & &
8 & 3 & \cQ(7,1^2,-1)   & {143}/{1400}      & 77/720 \\
6 & 2 & \cQ(3^2,-1^2,)  & {53}/{270}        & {13}/{60} & &
8 & 3 & \cQ(7,3,-1^2)   & {51}/{280}        & 211/1080 \\
6 & 2 & \cQ(5,1,-1^2)   & {7}/{30}          & {1}/{4} & &
8 & 3 & \cQ(9,1,-1^2)   & {9383}/{37800}    & 21/80 \\
6 & 2 & \cQ(7,-1^3)     & {27}/{50}         & {217}/{360} & &
8 & 3 & \cQ(11,-1^3)    & {4506281}/{7144200} & 341/504 \\
8 & 0 & \cQ(3,1,-1^8)   & {3}/{8}           & 3/8 & &
10& 0 & \cQ(3,1^2,-1^9) & {3}/{16}          & 3/16 \\
8 & 0 & \cQ(5,-1^9)     & {5}/{8}           & 5/8 & &
10& 0 & \cQ(3^2,-1^{10})  & {9}/{32}          & 9/32 \\
8 & 1 & \cQ(3,1^2,-1^5) & {13}/{72}         & 31/168 & &
10& 0 & \cQ(5,1,-1^{10})  & {5}/{16}          & 5/16 \\
8 & 1 & \cQ(3^2,-1^6)   & {13}/{42}         & 9/28 & &
10& 0 & \cQ(7,-1^{11})    & {35}/{64}         & 35/64 \\
8 & 1 & \cQ(5,1,-1^6)   & {3}/{8}           & 65/168 & &
10& 1 & \cQ(3,1^3,-1^6) & {1159}/{12096}    & 391/4032 \\
8 & 1 & \cQ(7,-1^7)     & {45}/{56}         & {5}/{6} & &
10& 1 & \cQ(3^2,1,-1^7) & {47}/{288}         & 1/6 \\
8 & 2 & \cQ(3,1^3,-1^2) & {23}/{378}        & 157/2520 & &
10& 1 & \cQ(5,1^2,-1^7) & {113}/{576}       & 115/576 \\
8 & 2 & \cQ(3^2,1,-1^3) & {104}/{945}       & {97}/{840} & &
10& 1 & \cQ(5,3,-1^8)   & {139}/{432}       & 95/288 \\
8 & 2 & \cQ(5,1^2,-1^3) & {47}/{360}        & 17/126 & &
10& 1 & \cQ(7,1,-1^8)   & {5}/{12}          & 245/576 \\
8 & 2 & \cQ(5,3,-1^4)   & {17}/{72}         & 1/4 & &
10& 1 & \cQ(9,-1^9)     & {385}/{432}       & 175/192 \\
8 & 2 & \cQ(7,1,-1^4)   & {429}/{1400}      & 77/240 & &
10 & 3& \cQ(3, 1^5)     & {13}/{1134}       & {703}/{60480} \\
8 & 2 & \cQ(9,-1^5)     & {9383}/{12600}    & 9383/12600 & &
10& 3 & \cQ(7,3,1,-1^3) & 2027/20160        & 67/640 \\
8 & 3 & \cQ(3^2,1^2)    & {859}/{22680}     & 5/126 & &
&&&&\\
\hline
\end{array}\]
\caption{Completed volumes}
\end{table}

\newgeometry{left=0.1cm,bottom=0.1cm, top=0.1cm, right=0.1cm}
\begin{landscape}
\begin{table}
\renewcommand{\arraystretch}{1.5}
\[
\begin{array}{c|c|c|c}
N_{0,3}^{[3^2]}= 2 & 
N_{0,2}^{[3,1]}= 1 &&\\
N_{1,1}^{[3^2]}= \frac{b^2}{24}&&&\\
N_{0,4}^{[3^4]}= 6\sum_i b_i^2& 
N_{0,3}^{[3^3, 1]}=\frac{3}{2} b_i^2&
N_{0,2}^{[3^2, 1^2]}=\frac{b_i^2}{2}& 
N_{0,1}^{[3,1^3]}= \frac{1}{4}b_1^2\\
N_{1,2}^{[3^4]}=\frac{1}{16}(b_1^4+b_2^4)+\frac{1}{8}b_1^2b_2^2&
N_{1,1}^{[3^3, 1]}=\frac{b^4}{64}&&\\
N_{0,5}^{[3^6]}= \frac{45}{2}\sum_i b_i^4+90\sum_{i<j} b_i^2b_j^2&
N_{0,4}^{[3^5,1]}= \frac{15}{4}\sum_i b_i^4+15\sum_{i<j} b_i^2b_j^2& 
N_{0,3}^{[3^4,1^2]}= \frac{3}{4}\sum_i b_i^4+3\sum_{i<j} b_i^2b_j^2&
N_{0,2}^{[3^3,1^3]}= \frac{3}{16}\sum_i b_i^4+\frac{3}{4}\sum_{i<j} b_i^2b_j^2\\
N_{0,4}^{[5,3]}=9 &
N_{0,3}^{[5, 1]}= 3  && 
N_{0,1}^{[3^2, 1^4]}=\frac{1}{16}b^4\\
N_{1,2}^{[5,3]}=\frac{1}{4}(b_1^2+b_2^2)&
N_{1,1}^{[5, 1]}= \frac{1}{8}b_1^2&& \\
N_{0,5}^{[5,3^3]}=45\sum_i b_i^2 &
N_{0,4}^{[5,3^2, 1]}= 9\sum_i b_i^2& 
N_{0,3}^{[5,3, 1^2]}=\frac{9}{4}\sum_i b_i^2&
N_{0,2}^{[5, 1^3]}=\frac{3}{4}(b_1^2+b_2^2)\\
N_{1,3}^{[5,3^3]}=\frac{35}{64}\sum_i b_i^4+\frac{3}{2}\sum_{i<j}b_i^2b_j^2 & 
N_{1,2}^{[5,3^2, 1]}=\frac{23}{192}(b_1 ^4+b_2^4)+\frac{3}{8}b_1^2b_2^2 & 
N_{1,1}^{[5,3, 1^2]}=\frac{7}{192}b_1^4 \\
N_{2,1}^{[5,3^3]}=\frac{19}{7680}b_1^6 && \\
N_{0,6}^{[5,3^5]}=\frac{465}{2}\sum_i b_i^4+945\sum_{i<j}b_i^2b_j^2 & 
N_{0,5}^{[5,3^4, 1]}=33b_i^4+135 b_i^2b_j^2&
N_{0,4}^{[5,3^3,1^2]}=\frac{87}{16}b_i^4+\frac{45}{2}b_i^2b_j^2&
N_{0,2}^{[5,3,1^4]}=\frac{1}{4}(b_1^4+b_2^4)+\frac{9}{8}b_1^2b_2^2 \\
N_{0,5}^{[7,3]}= 60& 
N_{0,4}^{[7,1]}= 15&
N_{0,3}^{[5,3^2,1^3]}=\frac{17}{16}b_i^4+\frac{9}{2}b_i^2b_j^2&
N_{0,1}^{[5, 1^5]}=\frac{1}{16}b^4\\
N_{1,3}^{[7,3]}= \frac{15}{8}\sum_i b_i^2 &
N_{1,2}^{[7,1]}= \frac{5}{8}(b_1^2+b_2^2)&&\\
N_{2,1}^{[7,3]}= \frac{7}{384}b_1^4& &&\\
N_{0,6}^{[7,3^3]}= 450\sum_i b_i^2& 
N_{0,5}^{[7,3^2, 1]}=75\sum_i b_i^2 &
N_{0,4}^{[7,3, 1^2]}=15\sum_i b_i^2&
N_{0,3}^{[7,1^3]}=\frac{15}{4}\sum_i b_i^2\\
N_{1,4}^{[7,3^3]}=  \frac{195}{32}\sum_i b_i^4+\frac{75}{4}\sum_{i<j}b_i^2b_j^2&
N_{1,3}^{[7,3^2, 1]}=\frac{215}{192}\sum_i b_i^4+\frac{15}{4}\sum_{i<j} b_i^2b_j^2&
N_{1,2}^{[7,3, 1^2]}=\frac{25}{96}(b_1^4+b_2^4)+\frac{15}{16}b_1^2b_2^2&
N_{1,1}^{[7, 1^3]}=\frac{5}{64}b_1^4\\
N_{2,2}^{[7, 3^3]}=\frac{1}{32} b_i^6+ \frac{95}{512} b_i^2b_j^4& 
N_{2,1}^{[7,3^2, 1]}=\frac{29}{4608}b_i^6&&\\
N_{0,6}^{[9,3]}=525 & 
N_{0,5}^{[9,1]}=105 &&\\
N_{1,4}^{[9,3]}=\frac{35}{2}\sum_i b_i^2 & 
N_{1,3}^{[9, 1]}=\frac{35}{8}\sum_i b_i^2 &&\\
N_{2,2}^{[9,3]}=\frac{35}{192}\sum_i b_i^4+
\frac{35}{64}b_1^2b_2^2 &
N_{2,1}^{[9,1]}=\frac{7}{128}b_1^4&& \\
N_{0,7}^{[9, 3^3]}=\frac{11025}{2}b_i^2 & 
N_{0,6}^{[9, 3^2, 1]}=\frac{1575}{2}b_i^2 &
N_{0,5}^{[9, 3, 1^2]}=\frac{525}{4}b_i^2 &
N_{0,4}^{9, 1^3]}=\frac{105}{4}b_i^2  \\
N_{1,5}^{[9, 3^3]}=\frac{5145}{64}b_i^4+\frac{525}{2}b_i^2b_j^2 & 
N_{1,4}^{[9, 3^2,1]}=\frac{805}{64}b_i^4+\frac{175}{4}b_i^2b_j^2 &
N_{1,3}^{[9, 3, 1^2]}=\frac{455}{192}b_i^4+\frac{35}{4}b_i^2b_j^2 & 
N_{1,2}^{[9, 1^3]}=\frac{35}{64}b_i^4+\frac{35}{16}b_i^2b_j^2 \\
N_{2,3}^{[9, 3^3]}=\frac{231}{512}b_i^6+\frac{735}{256}b_i^4b_j^2+\frac{525}{64}b_i^2b_j^2b_k^2 &
N_{2,2}^{[9, 3^2, 1]}=\frac{119}{1536}b_i^6+\frac{805}{1536}b_i^4b_j^2 &
N_{2,1}^{[9,3,1^2]}=\frac{77}{4608}b^6&\\
N_{3,1}^{[9, 3^3]}=\frac{571}{442368}b_i^8&&&\\
\end{array}\]
\caption{Numbered Kontsevich polynomials}
\end{table}
\end{landscape}
\restoregeometry

\newpage
\begin{table}
\renewcommand{\arraystretch}{1.7}
\[
\begin{array}{c|c}
N_{2,2}^{[7,5]}=\frac{9}{64}b_i^4+\frac{29}{64}b_i^2b_j^2 & 
N_{2,1}^{[5,5]}=\frac{11}{640}b^4\\
N_{1,4}^{[7,5]}=\frac{105}{8}b_i^2 & 
N_{1,3}^{[5,5]}=\frac{3}{2}b_i^2\\
N_{0,6}^{[7,5]}=450 & 
N_{0,5}^{[5,5]}=54 \\
N_{0,7}^{[9,5]}=4725 & 
N_{1,5}^{[9,5]}=\frac{1155}{8}b_i^2\\
N_{2,3}^{[9,5]}=\frac{49}{32}b_i^4+\frac{77}{16}b_i^2b_j^2 & 
N_{3,1}^{[9,5]}=\frac{127}{15360}b^6\\
N_{0,7}^{[7^2]}=4500 & \\
N_{1,5}^{[7^2]}=\frac{525}{4}b_i^2 & \\
N_{2,3}^{[7^2]}=\frac{45}{32}b_i^4+\frac{145}{32}b_i^2b_j^2 & \\
N_{3,1}^{[7^2]}=\frac{53}{7168}b_i^6& \\
N_{0,7}^{[11,3]}=5670 & 
N_{0,6}^{[11, 1]}=945 \\
N_{1,5}^{[11,3]}=\frac{1575}{8}b_i^2 & 
N_{1,4}^{[11, 1]}=\frac{315}{8}b_i^2\\
N_{2,3}^{[11, 3]}=\frac{273}{128}b_i^4+\frac{105}{16}b_i^2b_j^2 & 
N_{2,2}^{[11, 1]}=\frac{63}{128}b_i^4+\frac{105}{64}b_i^2b_j^2 \\
N_{3,1}^{[11,3]}=\frac{11}{1024}b^6& \\
N_{0,8}^{[13,3]}=72765 & 
N_{0,7}^{[13,1]}=10395\\
N_{1,6}^{[13,3]}=\frac{10395}{4}b_i^2 &
 N_{1,5}^{[13,1]}=\frac{3465}{8}b_i^2\\
N_{2,4}^{[13,3]}=\frac{231}{8}b_i^4+\frac{5775}{64}b_i^2b_j^2 & N_{2,3}^{[13,1]}=\frac{693}{128}b_i^4+\frac{1156}{64}b_i^2b_j^2\\
N_{3,2}^{[13,3]}=\frac{77}{512}b_i^6+\frac{1001}{1024}b_i^4b_j^2&\\
\end{array}\]
\caption{Numbered Kontsevich polynomials}
\end{table}
\bibliographystyle{alpha}
\bibliography{refsvol}
\end{document}